\pgfplotsset{compat=newest}
\title{\large{\bf Regularity estimates to quasi-linear parabolic equations in non-divergence form with non-homogeneous signature}}
\author{\it by \smallskip \\ Junior da Silva Bessa \footnote{\noindent Universidade Estadual de Campinas - UNICAMP. Instituto de Matem\'{a}tica, Estat\'{i}stica e Computa\c{c}\~{a}o Cient\'{i}fica - IMECC. Departamento  de Matemática. Bar\~{a}o Geraldo, Campinas - SP, Brazil. \noindent \texttt{E-mail address: \url{jbessa@unicamp.br}}},\qquad
Jo\~{a}o Vitor  da Silva
\footnote{\noindent Universidade Estadual de Campinas - UNICAMP Instituto de Matem\'{a}tica, Estat\'{i}stica e Computa\c{c}\~{a}o Cient\'{i}fica - IMECC. Departamento  de Matemática. Bar\~{a}o Geraldo, Campinas - SP, Brazil. \noindent \texttt{E-mail address: \url{jdasilva@unicamp.br}}},\\ \quad $\&$ \\\quad Ginaldo de Santana S\'{a}\footnote{\noindent Universidade Estadual de Campinas - UNICAMP. Instituto de Matem\'{a}tica, Estat\'{i}stica e Computa\c{c}\~{a}o Cient\'{i}fica - IMECC. Departamento  de Matemática. Bar\~{a}o Geraldo, Campinas - SP, Brazil. \noindent \texttt{E-mail address: \url{ginaldo@ime.unicamp.br}}}
}
\newlength{\hchng}
\newlength{\vchng}
\def \dist {\mathrm{dist}}
\newcommand{\defeq}{\mathrel{\mathop:}=}
\newtheorem{theorem}{Theorem}[section]
\newtheorem{lemma}[theorem]{Lemma}
\newtheorem{proposition}[theorem]{Proposition}
\newtheorem{corollary}[theorem]{Corollary}
\theoremstyle{definition}
\newtheorem{definition}[theorem]{Definition}
\newtheorem{example}[theorem]{Example}
\theoremstyle{remark}
\newtheorem{remark}[theorem]{Remark}
\numberwithin{equation}{section}
\newcommand{\intav}[1]{\mathchoice {\mathop{\vrule width 6pt height 3 pt depth  -2.5pt
\kern -8pt \intop}\nolimits_{\kern -6pt#1}} {\mathop{\vrule width
5pt height 3  pt depth -2.6pt \kern -6pt \intop}\nolimits_{#1}}
{\mathop{\vrule width 5pt height 3 pt depth -2.6pt \kern -6pt
\intop}\nolimits_{#1}} {\mathop{\vrule width 5pt height 3 pt depth
-2.6pt \kern -6pt \intop}\nolimits_{#1}}}
\begin{document}
\maketitle

\begin{abstract}
In this manuscript, we establish the existence and sharp geometric regularity estimates for bounded solutions of a class of quasilinear parabolic equations in non-divergence form with non-homogeneous degeneracy. The model equation in this class is given by
\[
\partial_{t} u = \left(|\nabla u|^{\mathfrak{p}} + \mathfrak{a}(x, t)|\nabla u|^{\mathfrak{q}}\right)\Delta_{p}^{\mathrm{N}} u + f(x, t) \quad \text{in} \quad Q_1 = B_1 \times (-1, 0],
\]
where $p \in (1, \infty)$, $\mathfrak{p}, \mathfrak{q} \in [0, \infty)$, and $\mathfrak{a}, f: Q_1 \to \mathbb{R}$ are suitably defined functions. Our approach is based on geometric tangential methods, incorporating a refined oscillation mechanism, compactness arguments, ``alternative methods,'' and scaling techniques. Furthermore, we derive pointwise estimates in settings exhibiting singular-degenerate or doubly singular signatures. To some extent, our regularity estimates refine and extend previous results from \cite{FZ23} through distinct methodological advancements. Finally, we explore connections between our findings and fundamental nonlinear models in the theory of quasilinear PDEs, which may be of independent interest.

\medskip
\noindent \textbf{Keywords}: Regularity estimates, Quasi-linear parabolic PDEs, Doubly degenerate models.
\vspace{0.2cm}
	
\noindent \textbf{AMS Subject Classification: Primary 35K65; 35B65; 35K92; Secondary 35D40
}
\end{abstract}

\newpage

\section{Introduction}

In this work, we will establish geometric regularity estimates for bounded viscosity solutions of certain quasi-linear parabolic equations with non-homogeneous degeneracy as follows
\begin{equation}\label{Problem}
\partial_t u(x, t) =\mathscr{H}(x,t, \nabla u) \Delta_{p}^{\mathrm{N}}u(x, t)+f(x,t)\quad\text{in}\quad Q_1 = B_1 \times (-1, 0],
\end{equation}
where $p\in (1,\infty)$ and $u \mapsto \Delta_{p}^{\mathrm{N}}u$ denotes the normalized \(p\)-Laplace operator defined by
\[
\Delta_{p}^{\mathrm{N}}u(x, t)=\Delta u+(p-2)\left\langle D^2u \frac{\nabla u}{|\nabla u|},\frac{\nabla u}{|\nabla u|}\right\rangle.
\]
The vector field $\mathscr{H}: Q_1 \times \mathbb{R}^n \to \mathbb{R}$ is defined by
\[
\mathscr{H}(x,t,\xi)=|\xi|^{\mathfrak{p}}+\mathfrak{a}(x,t)|\xi|^{\mathfrak{q}},\quad \text{for} \quad (x,t,\xi)\in Q_{1}\times \mathbb{R}^{n}
\]
where
\begin{itemize}
    \item[(H1)] $
 0 \leq \mathfrak{p} \leq \mathfrak{q} < \infty
$; 
\end{itemize}
and the modulating coefficient \( (x, t) \mapsto  \mathfrak{a}(x, t) \) satisfies:
\begin{itemize}
    \item[(H2)] 
$\displaystyle 0 < \mathfrak{a}^- := \inf_{Q_1} \mathfrak{a}(x, t) \leq \mathfrak{a}(x, t) \leq \mathfrak{a}^+ := \sup_{Q_1} \mathfrak{a}(x, t) < \infty$;
\item[(H3)] $
\mathfrak{a}\in C^1_x(Q_1)\cap C^1_t(Q_1) \quad \text{and} \quad \mathfrak{A}_0 := \| D_{x,t} \mathfrak{a} \|_{L^\infty(Q_1)} < \infty$;\\
\\
Moreover, the source term $f: Q_1 \to \mathbb{R}$ fulfills
\item[(H4)] $f \in L^{\infty}(Q_1) \cap C^{0}(Q_1)$.
\end{itemize}

In this setting, our first result concerns regularity estimates for bounded viscosity solutions at interior points (cf. \cite{Attouchi20},  \cite{AtouParv18}, \cite{IJS19} and \cite{JinSilv17} for related regularity estimates). Particularly, our findings are a natural extension of Fang-Zhang's regularity estimates to homogeneous model addressed in \cite{FZ23}.

\begin{theorem}\label{Thm01} Assume that the assumptions $(\mathrm{H}1)-(\mathrm{H}4)$ hold. Let \( u \in C^0(Q_1) \) be a bounded viscosity solution of \eqref{Problem}. Then, \( u \) possesses a locally H\"{o}lder continuous gradient. Moreover, there exist constants $
\alpha = \alpha(p, \mathfrak{p}, \mathfrak{q}, n, \mathfrak{a}^-, \mathfrak{a}^+, \mathfrak{A}_0) \quad \text{with} \quad \alpha \in \left(0, \frac{1}{1+\mathfrak{p}}\right)$,
and 
$
\mathrm{C} = \mathrm{C}(p, \mathfrak{p}, \mathfrak{q}, n) > 0,
$
such that the following estimates hold
\[
\sup_{(x,t),(y,s)\in Q_{1/2}\atop{ (x,t)\neq (y,s)}}\frac{|\nabla u(x, t) - \nabla u(y, s)|}{|x - y|^\alpha + |t - s|^{\frac{\alpha}{2}} } \leq \mathrm{C}
\left(\|u\|_{L^\infty(Q_1)} + \|f\|_{L^\infty(Q_1)}^{\frac{1}{1+\mathfrak{p}}}
\right),
\]
and
\[
\sup_{(x,t),(x,s)\in Q_{1/2}\atop{ t\neq s}}\frac{|u(x, t) - u(x, s)|}{|s - t|^{\frac{1+\alpha}{2}}} \leq \mathrm{C}
\left(\|u\|_{L^\infty(Q_1)} + \|f\|_{L^\infty(Q_1)}^{\frac{1}{1+\mathfrak{p}}}
\right).
\]
\end{theorem}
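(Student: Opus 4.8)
The plan is to obtain the gradient H\"older estimate through a geometric tangential (improvement-of-flatness) iteration performed in the intrinsic $\mathfrak p$-parabolic geometry, built on a preliminary uniform Lipschitz-in-space / $\tfrac12$-H\"older-in-time bound and on an ``alternatives'' dichotomy at each dyadic scale. The first ingredient is a batch of \emph{scaling reductions}: since $\Delta_{p}^{\mathrm N}$ is $1$-homogeneous in $D^2u$ and $0$-homogeneous in $\nabla u/|\nabla u|$, the transformation $v(x,t)=\kappa^{-1}u\big(x_0+rx,\ t_0+r^{\,2+\mathfrak p}\kappa^{-\mathfrak p}t\big)$ carries a solution of \eqref{Problem} to a solution of an equation of the same type, with modulating coefficient $\mathfrak a(x_0+rx,\cdot)(r/\kappa)^{\mathfrak q-\mathfrak p}$ and source of sup-norm $r^{\,2+\mathfrak p}\kappa^{-(1+\mathfrak p)}\|f\|_{L^\infty}$. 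Choosing $\kappa\sim\|u\|_{L^\infty(Q_1)}+\|f\|_{L^\infty(Q_1)}^{1/(1+\mathfrak p)}$ and $r$ of unit order --- which is precisely where the exponent $\tfrac1{1+\mathfrak p}$ on $\|f\|$ originates --- reduces matters to the \emph{small regime} $\|u\|_{L^\infty(Q_1)}\le 1$, $\|f\|_{L^\infty(Q_1)}\le\varepsilon_0$ with $\varepsilon_0$ universal, preserving $(\mathrm{H}2)$--$(\mathrm{H}3)$ up to harmless constants and using $(\mathrm{H}1)$ to make the $\mathfrak q$-term subordinate to the $\mathfrak p$-term once $|\nabla u|\lesssim 1$. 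In this regime I would also establish the preliminary uniform bounds $\|\nabla u\|_{L^\infty(Q_{3/4})}\lesssim 1$ and $|u(x,t)-u(x,s)|\lesssim|t-s|^{1/2}$ via an Ishii--Lions doubling argument using $(\mathrm{H}2)$--$(\mathrm{H}4)$; this supplies the compactness needed below.

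\textbf{Improvement of flatness.} The heart of the matter is the following step: there exist universal $\rho\in(0,\tfrac12)$, $\varepsilon_0>0$ and $\alpha_0\in(0,\tfrac1{1+\mathfrak p})$ such that, for each $\alpha\in(0,\alpha_0)$, if $\|u\|_{L^\infty(Q_1)}\le 1$ and $\|f\|_{L^\infty(Q_1)}\le\varepsilon_0$, then there is an affine function $\ell(x)=a+\langle b,x\rangle$ with $|a|+|b|\le C(p,\mathfrak p,\mathfrak q,n)$ and
\[
\sup_{\,B_\rho\times(-\rho^{\,2-\alpha\mathfrak p},\,0]}\big|u(x,t)-\ell(x)\big|\ \le\ \rho^{\,1+\alpha}.
\]
I would prove this by compactness and contradiction. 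A failing sequence $u_j,\mathfrak a_j,f_j$ with $\|f_j\|_{L^\infty}\to 0$ admitting no such $\ell$ is, by the preliminary estimate, precompact in $C^0_{\mathrm{loc}}(Q_1)$; using $(\mathrm{H}3)$ to control the modulus of $\mathfrak a_j$ we may assume $\mathfrak a_j\to\bar{\mathfrak a}$ (a constant) and $u_j\to u_\infty$, and stability of viscosity solutions forces $u_\infty$ to solve the \emph{frozen homogeneous} equation $\partial_t u_\infty=\big(|\nabla u_\infty|^{\mathfrak p}+\bar{\mathfrak a}\,|\nabla u_\infty|^{\mathfrak q}\big)\Delta_{p}^{\mathrm N}u_\infty$. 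Such $u_\infty$ enjoys an interior $C^{1,\alpha_0}$ bound, obtained via two alternatives: where $\nabla u_\infty$ stays away from $0$, the operator is uniformly parabolic with continuous coefficients and the $C^{1,\alpha}$ theory for fully nonlinear parabolic equations (Wang) applies; where $\nabla u_\infty$ is small, the intrinsic $\mathfrak p$-rescaling renders the $\bar{\mathfrak a}|\nabla u_\infty|^{\mathfrak q}$ term asymptotically negligible when $\mathfrak q>\mathfrak p$ (a constant multiple when $\mathfrak q=\mathfrak p$), reducing to the homogeneous model of \cite{FZ23}. In either case $u_\infty$ admits an affine approximation strictly better than $\rho^{1+\alpha}$ on $B_\rho\times(-\rho^{2-\alpha\mathfrak p},0]$ for $\rho$ small and $\alpha<\alpha_0$; transplanting it to $u_j$ for $j$ large contradicts the negation.

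\textbf{Iteration and time regularity.} Fixing $(x_0,t_0)\in Q_{1/2}$ (translated to the origin) and iterating the flatness step --- rescaling at each stage by the intrinsic dilation $(x,t)\mapsto(\rho x,\rho^{2-\alpha\mathfrak p}t)$, $u\mapsto\rho^{-(1+\alpha)}(u-\ell)$ --- produces affine maps $\ell_k(x)=a_k+\langle b_k,x\rangle$ with $\sup_{B_{\rho^k}\times(-\rho^{k(2-\alpha\mathfrak p)},0]}|u-\ell_k|\le\rho^{k(1+\alpha)}$ and $|a_{k+1}-a_k|+\rho^k|b_{k+1}-b_k|\lesssim\rho^{k(1+\alpha)}$. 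The decisive check is that each rescaled problem falls again in the small regime: the rescaled source has sup-norm $\lesssim\rho^{k(1-\alpha(1+\mathfrak p))}\|f\|_{L^\infty}$, which stays $\le\varepsilon_0$ exactly because $\alpha<\tfrac1{1+\mathfrak p}$, while $\sum_k\rho^{k\alpha}<\infty$ makes the slopes converge. Hence $b_k\to\nabla u(0,0)$, and for the ordinary parabolic cylinder $B_r\times(-r^2,0]\subset B_r\times(-r^{2-\alpha\mathfrak p},0]$ one gets $\sup|u-a_\infty-\langle\nabla u(0,0),x\rangle|\lesssim r^{1+\alpha}$; a routine translation/interpolation argument upgrades this to $|\nabla u(x,t)-\nabla u(y,s)|\lesssim(|x-y|+|t-s|^{1/2})^{\alpha}$ on $Q_{1/2}$. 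The time estimate then follows from the same cylinder bound, $|u(x,t)-u(x,s)|\le 2\sup_{B_r\times(-r^2,0]}|u-\ell|\lesssim r^{1+\alpha}$ with $r=|t-s|^{1/2}$ (supplemented near non-degenerate points by the uniformly parabolic interior estimate), giving the $\tfrac{1+\alpha}{2}$-H\"older modulus. Undoing the initial normalization restores the stated dependence on $\|u\|_{L^\infty(Q_1)}+\|f\|_{L^\infty(Q_1)}^{1/(1+\mathfrak p)}$.

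\textbf{Main difficulty.} The crux is reconciling the non-homogeneous signature with the tangential scheme. First, in the compactness step one must guarantee that the coefficient multiplying $|\nabla v|^{\mathfrak q}$ in the rescaled problems does not blow up --- which forces the iteration to be run in the $\mathfrak p$-geometry, the $\mathfrak q$-term being treated perturbatively thanks to $(\mathrm{H}1)$--$(\mathrm{H}2)$. Second, one genuinely needs $(\mathrm{H}3)$, i.e.\ $C^1$ rather than merely bounded $\mathfrak a$, both for the preliminary Ishii--Lions estimate and in order to freeze $\mathfrak a$ at a point with a quantified error --- the dependence of $\alpha$ on $\mathfrak A_0,\mathfrak a^{\pm}$ being the visible trace of this. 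Third, the two alternatives must interface cleanly across all scales uniformly in the a priori unknown magnitude of $\nabla u$, so that a non-degenerate scale correctly hands off to the uniformly parabolic regime while a degenerate one feeds back into the degenerate compactness; this bookkeeping, together with identifying the right homogeneous limit profile, is where the bulk of the work resides.
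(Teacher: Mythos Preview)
Your overall strategy matches the paper's --- Ishii--Lions compactness (Section~\ref{Sec-Comp-Stab-Reg}), an approximation lemma by the homogeneous problem, an improvement-of-flatness step, and the degenerate/smooth alternatives --- but the iteration as you have written it does not close. After rescaling $u\mapsto\rho^{-(1+\alpha)}(u-\ell_k)$ the new function solves a \emph{translated} equation with drift $\vec q_k\sim\rho^{-k\alpha}b_k$; your flatness step, stated only for the original equation with $\|u\|_{L^\infty}\le 1$, does not apply to this translated problem, and when $\nabla u(0,0)\neq 0$ the drift diverges along the iteration. The paper fixes this by placing the dichotomy at the level of the iteration rather than inside the analysis of the limit profile: at each scale one checks whether $|\ell_j|\le C_2(1-\bar\theta)^j$ (Proposition~\ref{improvflat2}). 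If so, the rescaled drift $\vec q_j=\ell_j/(1-\bar\theta)^j$ stays bounded by $C_2$, and the flatness step of Proposition~\ref{improvflat1} is applied to $v_j=u_j+\vec q_j\cdot x$ (which is just a rescaled copy of $u$ and hence solves the original-type equation with oscillation at most $1+2C_2$); if not, the iteration stops at the minimal such $j$, one invokes the separate compactness for the translated problem (Lemmas~\ref{Holderesttransl}--\ref{Lipschtizregularityprobtransl}) to see that $|\nabla v_j|$ is bounded away from zero on $Q_{3/4}$, and Lieberman's uniformly parabolic Schauder theory finishes. Your ``Main difficulty'' paragraph shows you are aware of the issue; the fix is to carry the stopping rule explicitly, so that the two alternatives bifurcate scale-by-scale rather than only in the tangent equation.

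A minor organisational difference: the paper does \emph{not} freeze $\mathfrak a$ in the compactness step. The Approximation Lemma~\ref{Approximationlemma} sends only $f\to 0$ and keeps the full variable-coefficient homogeneous problem $\partial_t\mathfrak h=\mathscr H(x,t,\nabla\mathfrak h)\,\Delta_p^{\mathrm N}\mathfrak h$ as the target, then quotes \cite[Theorem~2.3]{FZ23} directly for its interior $C^{1,\alpha}$ estimate. This is simpler than your frozen-coefficient route, since \cite{FZ23} already handles variable $\mathfrak a$, and it spares you from re-deriving the regularity of the limit via a second layer of alternatives.
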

\bigskip
From now on the singular (or critical) set of existing solutions is given by
\[
\mathcal{S}_u(Q_1) = \{(x, t) \in Q_1: |\nabla u| = 0\}.
\]

In certain problems, obtaining sharp (or potentially enhanced) control over the behavior of solutions in specific sub-regions or at particular points becomes crucial for advancing research in a wide class of nonlinear parabolic models. By way of illustration, this is a central concern in the theory of non-physical free boundary problems, which for instance arise in the set of critical points of existing solutions (see, \cite{daSO19} and \cite{DaSOS18} for improved regularity estimates to dead-core problems along free boundary points).

In our second result, under a natural assumption on \( f \), we establish sharp regularity estimates for bounded viscosity solutions along critical points (cf. \cite{AdaSRT19} for related regularity estimates to weak solutions of quasilinear parabolic equations of $p$-Laplacian type).

\begin{theorem}[{\bf Optimal regularity along critical points}]\label{Optimal_continuity}
Suppose the assumptions of Theorem \ref{Thm01} are satisfied, \(\mathfrak{p}=p-2\) and \(\partial_{t}u\geq 0\) a.e. in \(Q_{1}\). Additionally, assume that \( (x_0, t_0) \in Q_1 \) is a local extremum point for \( u \). Then,
\begin{equation}\label{Higher Reg}
\sup_{Q_{r,\theta}(x_0, t_0)} |u(x, t)-u(x_0, t_0)| \leq \mathrm{C}_0 r^{1 + \frac{1}{p-1}},
\end{equation}
for \( r \in (0, 1/2) \), and \( \mathrm{C}_0 > 0 \) being a universal constant.
\end{theorem}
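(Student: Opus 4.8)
The plan is to run an iterative (geometric) argument that upgrades the $C^{1,\alpha}$ control from Theorem~\ref{Thm01} to the sharp exponent $1+\tfrac{1}{p-1}$ at an extremum point, exploiting the two extra hypotheses $\mathfrak{p}=p-2$ and $\partial_t u\ge 0$. After translating so that $(x_0,t_0)=(0,0)$ and $u(0,0)=0$, and noting that at a local extremum $\nabla u(0,0)=0$ (so $(0,0)\in\mathcal S_u(Q_1)$) and $u$ does not change sign near $(0,0)$, I would set up the quantity $S(r):=\sup_{Q_{r,\theta}(0,0)}|u|$ and aim to prove the discrete iteration $S(\rho^{k+1})\le \max\{\rho^{(1+\frac{1}{p-1})(k+1)},\, \rho^{1+\frac1{p-1}}S(\rho^k)\}$ for a fixed $\rho\in(0,1)$, which telescopes to \eqref{Higher Reg}. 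The natural intrinsic scaling here is dictated by the homogeneity: with $\mathfrak p=p-2$, the leading operator $|\nabla u|^{p-2}\Delta_p^{\mathrm N}u$ behaves like the $p$-Laplacian, so the parabolic cylinder $Q_{r,\theta}$ should be taken with $\theta\sim r^{p}$ (or the appropriate waiting-time scale making the equation scale-invariant), and I would normalize $f$ via $\|f\|_{L^\infty}^{1/(1+\mathfrak p)}=\|f\|_{L^\infty}^{1/(p-1)}$ exactly as in Theorem~\ref{Thm01}.

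The core is a contradiction/compactness step. Suppose the iteration fails at some scale; rescale $u$ by $u_k(x,t):=S(\rho^k)^{-1}u(\rho^k x,\rho^{?}t)$ (choosing the time dilation so the PDE is preserved up to a source that tends to zero, using the normalization of $f$ and the fact that we are below the critical growth). Then $\|u_k\|_{L^\infty(Q_1)}\le 1$, $u_k$ solves an equation of the same structural type with coefficients still satisfying $(\mathrm H1)$–$(\mathrm H4)$ (with $\mathfrak a$ essentially frozen and $\mathfrak A_0$ scaling favorably), $\partial_t u_k\ge 0$, and $u_k$ has a local extremum at the origin with $u_k(0,0)=0$. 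By the a~priori $C^{1,\alpha}$ estimate of Theorem~\ref{Thm01}, the $u_k$ are precompact in $C^1_{\mathrm{loc}}$; a limit $u_\infty$ solves a limiting equation — I expect $\partial_t u_\infty = c(x)|\nabla u_\infty|^{p-2}\Delta_p^{\mathrm N}u_\infty$ (homogeneous, no source), has a sign near $0$, vanishes to order exceeding $1+\tfrac1{p-1}$ at $0$ by construction, yet is nontrivial — and I would derive a contradiction from a Liouville-type / strong-minimum argument: a nonnegative supersolution (resp. nonpositive subsolution) of such a degenerate parabolic equation with $\partial_t u_\infty\ge 0$ that touches zero at an interior point, combined with the extremum condition, forces either $u_\infty\equiv 0$ in a backward cylinder (violating nontriviality/the assumed growth failure) or a barrier contradiction. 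The role of $\partial_t u\ge 0$ is precisely to make the time-derivative term a good sign in the barrier comparison, so that the elliptic comparison function $|x|^{1+\frac1{p-1}}$ (the fundamental homogeneity associated with $|\nabla\cdot|^{p-2}\Delta_p^{\mathrm N}$ and a constant right-hand side) remains a valid parabolic barrier on the intrinsic cylinders.

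Concretely, the cleanest route avoiding the full compactness machinery is a direct barrier argument: construct $\psi(x,t)=A|x|^{1+\frac1{p-1}} + B t$ (or $\psi(x,t)=A|x|^{1+\frac1{p-1}}$ alone, using $\partial_t u\ge 0$ to absorb the time term) and verify, using $\mathfrak p=p-2$ and $(\mathrm H2)$, that $\psi$ is a supersolution of the equation for $A$ large depending only on $n,p,\mathfrak p,\mathfrak q,\mathfrak a^\pm,\|f\|_{L^\infty}$; then comparison on $Q_{r,\theta}(0,0)$ with $u-u(0,0)$ (which is one-signed) gives the upper bound, and the matching lower control near an extremum is automatic since $u-u(0,0)$ has a strict sign. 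The main obstacle I anticipate is twofold: first, producing the barrier that is simultaneously valid for the full non-homogeneous operator $|\nabla u|^{p-2}+\mathfrak a|\nabla u|^{\mathfrak q}$ — the extra term $\mathfrak a|\nabla u|^{\mathfrak q}$ with $\mathfrak q\ge p-2$ must be shown to be lower-order near the critical point (it is, because $|\nabla\psi|\to0$ and $\mathfrak q\ge\mathfrak p$, so this term is dominated), and second, handling the degeneracy/singularity of $\Delta_p^{\mathrm N}$ at $\nabla u=0$ rigorously in the viscosity framework, i.e.\ checking the barrier inequalities in the viscosity sense at points where the test function has vanishing gradient. I would treat the latter by the standard device of perturbing $|x|$ to $(|x|^2+\varepsilon^2)^{1/2}$ and letting $\varepsilon\to0$, and by invoking the comparison principle for \eqref{Problem} (which follows from the structure conditions $(\mathrm H1)$–$(\mathrm H4)$ and is implicitly used already in Theorem~\ref{Thm01}).
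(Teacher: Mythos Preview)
Your compactness/contradiction route is essentially the paper's proof. The paper sets up the dyadic iteration $\mathscr S_{j+1}\le\max\{\mathfrak C_0\,2^{-\theta(j+1)},\,2^{-\theta}\mathscr S_j\}$ with $\theta=\tfrac{2+\mathfrak p}{1+\mathfrak p}=\tfrac{p}{p-1}$, assumes failure along a sequence $j_k$, and rescales $u_k(x,t)=\mathscr S_{j_k+1}^{-1}u(2^{-j_k}x,\mathfrak d_k t)$ with $\mathfrak d_k=2^{-j_k(2+\mathfrak p)}\mathscr S_{j_k+1}^{-\mathfrak p}$. Two points in your outline are worth sharpening. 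First, the limit $u_\infty$ solves the \emph{divergence-form} equation $\partial_t u_\infty=\kappa\,\Delta_p u_\infty$: since $\mathfrak p=p-2$ one has $|\nabla u|^{p-2}\Delta_p^{\mathrm N}u=\Delta_p u$, and the rescaled coefficient $\mathfrak a_k$ vanishes when $\mathfrak q>\mathfrak p$ (or freezes to $\mathfrak a(0,0)$ when $\mathfrak q=\mathfrak p$). Second, the contradiction is not obtained by a viscosity Liouville argument: the paper passes to \emph{weak} solutions of the $p$-parabolic equation via the equivalence in \cite{FZ22} and then invokes the strong-minimum mechanism of \cite[Lemma~3.1]{DaSOS18}, which uses exactly $u_\infty\ge0$, $u_\infty(0,0)=0$ and $\partial_t u_\infty\ge0$ to force $u_\infty\equiv0$, contradicting $\sup_{Q_{1/2,\theta}}u_\infty\ge1$. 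Compactness in the paper is via the H\"older/Lipschitz lemmas rather than Theorem~\ref{Thm01}, but your use of $C^{1,\alpha}$ would work too.

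Your proposed ``cleaner'' direct-barrier alternative has a genuine gap for the \emph{upper} bound. To apply comparison with $\psi=A|x|^{1+\frac1{p-1}}+Bt$ on $Q_{r,\theta}$ you must first verify $u-u(0,0)\le\psi$ on $\partial_{\mathrm{par}}Q_{r,\theta}$; but no a~priori control of $u$ on $\partial B_r\times[-r^\theta,0]$ by $Cr^{1+\frac1{p-1}}$ is available --- that is precisely the estimate you are trying to prove. Barriers of this shape run in the opposite direction and yield the \emph{lower} bound of Theorem~\ref{ThmNãoDeg} (if $u$ beats the barrier at the center, it must beat it somewhere on the boundary). The monotonicity $\partial_t u\ge0$ reduces $\sup_{Q_{r,\theta}}u$ to a supremum on the top time-slice, but it does not supply the missing lateral/initial boundary control, so the compactness argument is not optional here.
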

\medskip

A geometric interpretation of Theorem \ref{Optimal_continuity} states that if \( u \) solves \eqref{Problem} and \( (x_0, t_0) \in \mathcal{S}_{u}(Q_1)\),  then near \( (x_0, t_0)\), $u$ growths in a $(1+\beta)-$fashion as described in Theorem \ref{Higher Reg}. 
On the other hand, from a geometric perspective, it is crucial to derive a sharp lower bound estimate for such solutions of operators with non-homogeneous degeneracy. This feature is called the \textit{Non-degeneracy property of solutions}.

Consequently, we derive the non-degeneracy estimate for solutions in certain interior critical points (see, \cite{daSO19} for corresponding results to dead-core problems driven by evolution $p$-Laplacian).

\begin{theorem}[{\bf Non-degeneracy along extremum points}]\label{ThmNãoDeg}
Let \( u \in C^0(Q_1) \) be a viscosity solution of the problem \eqref{Problem}. Assume further that \(\displaystyle \sup_{Q_{1}}f\leq \mathfrak{c}_{0}\) for some negative constant \(\mathfrak{c}_{0}\). Then, for every local extremum point \( (x_0, t_0) \in Q_1 \) and every \( r>0\) fulfilling \( Q_{r,\theta}(x_0, t_0) \subset Q_1 \), we have
\[
\sup_{\partial_{\text{par}} Q_{r,\theta}(x_0, t_0)} \left( u(x, t) - u(x_0, t_0) \right) \geq \mathrm{C}_{\sharp}(n, p, \mathfrak{p}, \mathfrak{q}, \|\mathfrak{a}\|_{L^{\infty}(Q_1)}, \mathfrak{c}_{0})\cdot r^{1 + \frac{1}{1+\mathfrak{p}}}
\]
for a universal constant $\mathrm{C}_{\sharp}>0$.
\end{theorem}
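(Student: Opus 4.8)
The plan is to build an explicit comparison (barrier) function that is a strict viscosity subsolution of \eqref{Problem} near the extremum point, and then invoke the comparison principle. Without loss of generality assume $(x_0,t_0)=(0,0)$ and that $(0,0)$ is a local minimum for $u$, so $u(x,t)\geq u(0,0)$ for $(x,t)$ near $(0,0)$; after subtracting the constant $u(0,0)$ we may take $u(0,0)=0$ and $u\geq 0$ locally. The natural candidate barrier is
\[
\Phi(x,t) \;=\; c_0\,|x|^{1+\frac{1}{1+\mathfrak{p}}} \;+\; \kappa\, t,
\]
with $c_0,\kappa>0$ to be chosen small depending only on the universal data $(n,p,\mathfrak{p},\mathfrak{q},\|\mathfrak{a}\|_{L^\infty(Q_1)},\mathfrak{c}_0)$. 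Away from the origin $\Phi$ is smooth, $\nabla\Phi = c_0(1+\tfrac{1}{1+\mathfrak{p}})|x|^{\frac{1}{1+\mathfrak{p}}-1}x$, so $|\nabla\Phi| \sim c_0 |x|^{\frac{1}{1+\mathfrak{p}}}$, and a direct computation gives $\mathscr{H}(x,t,\nabla\Phi)\,\Delta_p^{\mathrm{N}}\Phi \sim C(n,p,\mathfrak{p})\,c_0^{1+\mathfrak{p}} + (\text{lower-order terms in }c_0)$ as a function comparable to $1$ in $|x|$, i.e. the degeneracy exponent $\frac{1}{1+\mathfrak{p}}$ of the growth rate is precisely tuned so that the leading power of $|x|$ cancels. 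Meanwhile $\partial_t\Phi = \kappa$. Thus one wants to choose $c_0$ small enough that $C(n,p,\mathfrak{p})\,c_0^{1+\mathfrak{p}}(1+\mathfrak{a}^+ c_0^{\mathfrak{q}-\mathfrak{p}}|x|^{\frac{\mathfrak{q}-\mathfrak{p}}{1+\mathfrak{p}}}) \le -\tfrac{1}{2}\mathfrak{c}_0$ uniformly on $B_r$, and then $\kappa$ small so that $\partial_t\Phi - \mathscr{H}\,\Delta_p^{\mathrm{N}}\Phi \le f$ in the viscosity sense on $Q_{r,\theta}\setminus\{x=0\}$; since $\sup_{Q_1} f \le \mathfrak{c}_0 < 0$, such a choice exists. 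On the axis $\{x=0\}$ one checks the subsolution condition directly, using that $\Phi$ has a genuine spatial minimum there so that any test function touching from above has vanishing gradient and the equation reduces (in the normalized, viscosity sense appropriate to these operators) to an inequality on $\partial_t$, which holds by taking $\kappa$ small; this is the standard way singular points are handled for normalized $p$-Laplacian-type equations.

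Next I would run the comparison argument on the (parabolically scaled) cylinder $Q_{r,\theta}(0,0)$. If the claimed bound failed, then $u(x,t) - u(0,0) < \varepsilon\, r^{1+\frac{1}{1+\mathfrak{p}}}$ on the parabolic boundary $\partial_{\mathrm{par}}Q_{r,\theta}$ for arbitrarily small $\varepsilon>0$; choosing $c_0$ in the barrier to dominate this boundary data while keeping the strict-subsolution inequality forces, by the comparison principle for \eqref{Problem} (viscosity sub- vs. super-solution), that $u \ge \Phi$ throughout $Q_{r,\theta}$ — in particular $u(0,0) \ge \Phi(0,0) = 0$ is consistent, but evaluating the inequality at a boundary point where $\Phi$ already equals $c_0 r^{1+\frac{1}{1+\mathfrak{p}}}$, while $u$ there is $< \varepsilon r^{1+\frac{1}{1+\mathfrak{p}}}$, yields a contradiction once $\varepsilon < c_0$. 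Reorganizing, this shows $\sup_{\partial_{\mathrm{par}}Q_{r,\theta}}(u - u(0,0)) \ge c_0\, r^{1+\frac{1}{1+\mathfrak{p}}}$, which is the assertion with $\mathrm{C}_\sharp = c_0$.

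The step I expect to be the main obstacle is making the barrier argument fully rigorous in the viscosity framework at the singular set $\{x=0\}$ — i.e., verifying that $\Phi$ is a viscosity subsolution of the degenerate equation \eqref{Problem} across the axis where $\nabla\Phi = 0$ and $\Delta_p^{\mathrm{N}}\Phi$ is a priori undefined. One must argue through the correct definition (using the semijets / the closure of the operator, or a doubling-of-variables regularization) that the non-homogeneous factor $\mathscr{H}(x,t,0) = 0$ kills the second-order term there, so the equation degenerates to $\partial_t\Phi \le f$, and $\kappa \le -\tfrac12\mathfrak{c}_0 \le \sup_{Q_1} f$ handles it; some care is also needed because $Q_{r,\theta}$ carries an intrinsic time-scaling $\theta = \theta(r)$ matching the $(1+\frac{1}{1+\mathfrak p})$-growth, so the choice of $\kappa$ must be compatible with that scaling (equivalently, one proves the $r=$ fixed case and rescales via the scaling invariance already exploited in Theorems \ref{Thm01}–\ref{Optimal_continuity}). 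A secondary technical point is the dependence of constants: one must track that $c_0$ and $\kappa$ depend only on $n,p,\mathfrak p,\mathfrak q,\|\mathfrak a\|_{L^\infty(Q_1)}$ and $\mathfrak{c}_0$, not on $u$ or $r$, which follows because the barrier inequality, after the parabolic rescaling, becomes $r$-independent.
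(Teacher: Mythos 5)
Your strategy — an explicit radial barrier with the critical exponent $1+\tfrac{1}{1+\mathfrak{p}}$ combined with the Comparison Principle — is the same type of argument the paper uses, but the signs are systematically reversed, and the argument does not close. The paper's barrier is
\[
\Phi(x,t)=\mathfrak{c}\Bigl[|x|^{1+\alpha}+(-t)^{\frac{1+\alpha}{\theta}}\Bigr]=\mathfrak{c}\bigl(|x|^{1+\alpha}-t\bigr),\qquad \alpha=\tfrac{1}{1+\mathfrak{p}},\ \theta=1+\alpha,
\]
which has $\partial_t\Phi=-\mathfrak{c}<0$ and is shown to be a viscosity \emph{super}solution with right-hand side $\sup_{Q_1} f\le \mathfrak{c}_0$. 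Your $\Phi=c_0|x|^{1+\alpha}+\kappa t$ has $\partial_t\Phi=\kappa>0$ and you want a \emph{sub}solution. The subsolution inequality $\kappa-\mathscr{H}\Delta_p^{\mathrm{N}}\Phi\le f\le\mathfrak{c}_0<0$ requires $\mathscr{H}\Delta_p^{\mathrm{N}}\Phi\ge\kappa+|\mathfrak{c}_0|$, i.e.\ the second-order term must be \emph{large}; your constraint $C\,c_0^{1+\mathfrak{p}}(\cdots)\le -\tfrac12\mathfrak{c}_0=\tfrac12|\mathfrak{c}_0|$ forces it to be small, which is the opposite of what is needed. Worse, at the axis $\{x=0\}$ where $\nabla\Phi=0$, the second clause of the viscosity subsolution definition reduces to $\kappa\le f(0,0)\le\mathfrak{c}_0<0$, which is impossible for any $\kappa>0$, so your $\Phi$ is simply not a subsolution there; the chain you wrote, $\kappa\le-\tfrac12\mathfrak{c}_0\le\sup_{Q_1}f$, has a sign error, since $-\tfrac12\mathfrak{c}_0>0>\mathfrak{c}_0\ge\sup_{Q_1}f$. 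The only way out is $\kappa<0$, which reproduces, up to notation, the paper's supersolution.

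The comparison step is also logically inverted. For $\Phi$ a subsolution and $u$ a supersolution, the Comparison Principle (Theorem \ref{CompPrinc}) yields $\Phi\le u$ in $Q_{r,\theta}$ \emph{provided} $\Phi\le u$ on $\partial_{\mathrm{par}}Q_{r,\theta}$; choosing $c_0$ so that $\Phi$ \emph{dominates} the boundary data (i.e.\ $\Phi\ge u$ there) is the wrong hypothesis, and the contradiction assumption $u-u(0,0)<\varepsilon r^{1+\alpha}$ gives only an upper bound on $u$, not the lower bound $u\ge\Phi$ required on the boundary — so nothing propagates inward. The argument that works, and that the paper runs, is the contrapositive with the supersolution: normalize so that $u(0,0)>0=\Phi(0,0)$; if $u\le\Phi$ on all of $\partial_{\mathrm{par}}Q_{r,\theta}$, comparison gives $u\le\Phi$ inside, contradicting $u(0,0)>\Phi(0,0)$; hence there is a boundary point with $u\ge\Phi$, and since $1+\alpha=\theta$ guarantees $\Phi\ge\mathfrak{c}\,r^{1+\alpha}$ on both the lateral and the bottom boundary, the stated estimate follows. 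Repairing your proposal therefore amounts to flipping the sign of the time term so $\Phi$ becomes a supersolution and rerunning this contrapositive.
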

\subsection*{Organization of the manuscript}

The state-of-the-art results, including the available regularity estimates and mathematical motivations (e.g., game-theoretic methods in parabolic PDEs and models with non-standard growth), are presented in Sub-section \ref{Sub-Sec1.1}. Section \ref{Sec-Prelim} is dedicated to outlining the preliminaries for our study.  Section \ref{Sec3-Exist-Sol} focuses on the existence and uniqueness of viscosity solutions (see, Theorem \ref{Thm-Exis-Uniq}) using Perron's method, following the proof of a Comparison Principle (see Theorem \ref{CompPrinc}). In Section \ref{Sec-Comp-Stab-Reg} we introduce several auxiliary tools, including compactness and lower regularity (local H\"{o}lder and Lipschitz estimates), which are essential for proving our main regularity results.  In Section \ref{Section-Proof-Thm01}, we present the proof of Theorem \ref{Thm01}. Section \ref{Section-Proof-ND} is dedicated to proving Theorem \ref{ThmNãoDeg}. Finally, in Section \ref{Section-Proof-Optimao-Reg}, we complete the analysis with the proof of Theorem \ref{Optimal_continuity}.

\subsection{State-of-the-art and mathematical motivations}\label{Sub-Sec1.1}

\subsubsection{Game theoretic methods to $p$-parabolic PDEs: A motivational study}

In the past two decades, a connection between stochastic Tug-of-War Games and nonlinear partial differential equations of \( p \)-Laplacian type, initiated by the pioneering papers of Peres \textit{et al.} \cite{PSSW09} and Peres–Sheffield \cite{PerShef08}, has garnered significant attention and provided new trends in the area of game theoretical methods in PDE. Particularly, in the parabolic setting, Manfredi \textit{et al.} \cite{MPR10} demonstrated that the viscosity solutions to
\begin{equation}\label{TofWG}
(n + p)\partial_t u(x, t) = \Delta_p^{\mathrm{N}} u(x, t) 
\end{equation}
can be obtained as the limits of value functions for Tug-of-War Games with noise as the parameter controlling the step size approaches zero.  Precisely, these are distinguished by the asymptotic mean value formula:
\[
u(x, t) = \frac{\alpha}{\varepsilon^2} 
\int_{t-\varepsilon^2}^{t}
\left(
\max_{y \in \overline{B_\varepsilon(x)}} u(y, s) + \min_{y \in \overline{B_\varepsilon(x)}} u(y, s)
\right) ds 
+ \frac{\beta}{\varepsilon^2} 
\int_{t-\varepsilon^2}^{t} \intav{B_\varepsilon(x)} u(y, s) \, dy \, ds 
+ \text{o}(\varepsilon^2), 
\]
as $\varepsilon \to 0$, which should hold in the viscosity sense, where $\alpha$ and $\beta$ are defined by 
$$
\alpha = \frac{p-2}{p+n} \quad \text{and} \quad \beta = \frac{2+n}{p+n}.
$$
Additionally, such mean value formulas are associated with the Dynamic Programming Principle (DPP for short), which is fulfilled by the value functions of parabolic tug-of-war games with noise. By way of clarification, the DPP corresponds exactly to the mean value formula without the correction term $o(\varepsilon^2)$.

Alternatively, Parviainen in \cite{Parv2024} also proposes that the game values \( u_\varepsilon \) to \eqref{TofWG} satisfy the parabolic dynamic programming principle with boundary values \( g \). Specifically, we have:
\begin{equation}\label{DPP-p-evol}
u_\varepsilon(x, t) = \frac{\alpha}{2} 
\left\{
\sup_{y \in B_\varepsilon(x)} 
u_\varepsilon 
\left( 
y, t - \frac{\varepsilon^2}{2} 
\right)
+ \inf_{y \in B_\varepsilon(x)} 
u_\varepsilon 
\left( 
y, t - \frac{\varepsilon^2}{2} 
\right)
\right\}
+ \beta 
\intav{B_\varepsilon(x)} 
u_\varepsilon 
\left( 
y, t - \frac{\varepsilon^2}{2} 
\right)
\, dy,
\end{equation}
for every $(x, t) \in \Omega \times (0, \infty)$, and $u_\varepsilon(x, t) = g(x, t)$, for every  $(x, t) \in \Gamma_\varepsilon^{\text{par}}$, which denotes the parabolic boundary strip given by
\[
\Gamma_\varepsilon^{\text{par}} = 
\left(
\Gamma_\varepsilon \times \left(-\frac{\varepsilon^2}{2}, \infty\right)
\right)
\cup
\left(
\Omega \times \left(-\frac{\varepsilon^2}{2}, 0\right]
\right),
\]
where 
\[
\Gamma_\varepsilon := \{x \in \mathbb{R}^n \setminus \Omega : \operatorname{dist}(x, \Omega) \leq \varepsilon\}.
\]

Precisely, Parviainen addresses the following convergence of value functions to a Normalized $p$-parabolic problem: 

\begin{theorem}[{\bf \cite[Theorem 3.6]{Parv2024}}]
   Let \( \Omega \) be a smooth domain, \( g : \mathbb{R}^{n+1} \setminus \Omega_{\mathrm{T}} \to \mathbb{R} \) a smooth function, and \( u \) the unique viscosity solution to the normalized \( p \)-parabolic boundary value problem:
\[
\begin{aligned}
\begin{cases}
(n + p) \partial_t u = \Delta_p^{\mathrm{N}} u & \text{in } \Omega \times (0, \mathrm{T}], \\
u = g & \text{on } \partial_p \Omega_{\mathrm{T}}.
\end{cases}
\end{aligned}
\]
Furthermore, let \( u_\varepsilon \) be the value function for the time-tracking tug-of-war with noise, with the boundary payoff function \( g \) given by \eqref{DPP-p-evol}. Then,
\[
u_\varepsilon \to u \quad \text{uniformly on } \Omega \times [0, \mathrm{T}], \quad \text{as } \varepsilon \to 0.
\]
\end{theorem}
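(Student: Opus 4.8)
The plan is to carry out the classical three-step passage from a discrete dynamic programming principle to a viscosity solution: (i) uniform a priori bounds together with an $\varepsilon$-independent asymptotic modulus of continuity, which yields compactness; (ii) identification of every subsequential limit as a viscosity solution of the limiting equation; (iii) uniqueness for the limiting problem, which upgrades subsequential convergence to convergence of the whole family. Because each use of \eqref{DPP-p-evol} steps the time variable backwards by $\varepsilon^2/2$ and the payoff $g$ is prescribed on the lower strip $\Gamma_\varepsilon^{\text{par}}$, the value function $u_\varepsilon$ is well defined through a finite backward iteration (about $2\mathrm{T}/\varepsilon^2$ stages); and since $\tfrac{\alpha}{2}+\tfrac{\alpha}{2}+\beta=\alpha+\beta=1$, the right-hand side of \eqref{DPP-p-evol} is a genuine weighted average, giving at once $\|u_\varepsilon\|_{L^\infty}\le\|g\|_{L^\infty}$ uniformly in $\varepsilon$.

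First I would establish the asymptotic equicontinuity: there are a modulus $\omega$ (depending only on $\Omega$, on the modulus of continuity of $g$, and on $n,p$) and a constant $C$ with $|u_\varepsilon(x,t)-u_\varepsilon(y,s)|\le\omega(|x-y|+|t-s|^{1/2})+C\varepsilon$ for all small $\varepsilon$. At interior points this follows from a coupling/reflection argument applied to the two games issued from $(x,t)$ and $(y,s)$---one player can essentially mirror the opponent so that the distance between the two tokens behaves like a (super)martingale, and the parabolic scaling $|t-s|^{1/2}$ reflects the $\varepsilon^2/2$ time step; near $\partial\Omega$ and near the initial time slice one instead compares $u_\varepsilon$ with explicit discrete barriers built from an exterior sphere condition for $\Omega$ and from the modulus of continuity of $g$. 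This $\varepsilon$-uniform estimate is the technical heart of the argument. Granting it, an Arzel\`a--Ascoli-type lemma for families satisfying such asymptotic estimates yields a subsequence $u_{\varepsilon_j}\to u$ uniformly on $\overline\Omega\times[0,\mathrm{T}]$, with $u\in C(\overline\Omega\times[0,\mathrm{T}])$ and $u=g$ on $\partial_p\Omega_{\mathrm{T}}$.

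Next I would verify that $u$ is a viscosity solution of $(n+p)\partial_t u=\Delta_p^{\mathrm{N}}u$. For the subsolution property (the supersolution case is symmetric), let $\phi\in C^2$ touch $u$ strictly from above at $(x_0,t_0)$ with $\nabla\phi(x_0,t_0)\neq 0$; uniform convergence furnishes points $(x_j,t_j)\to(x_0,t_0)$ where $u_{\varepsilon_j}-\phi$ has a local maximum, and the bound $u_{\varepsilon_j}\le\phi+(u_{\varepsilon_j}-\phi)(x_j,t_j)$ near $(x_j,t_j)$---the additive constant cancelling---turns \eqref{DPP-p-evol} into $\phi(x_j,t_j)\le\mathcal{D}_{\varepsilon_j}\phi(x_j,t_j)$, where $\mathcal{D}_\varepsilon\phi$ denotes the right-hand side of \eqref{DPP-p-evol} with $u_\varepsilon$ replaced by $\phi$. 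Expanding $\phi$ to second order in space about the points of $\overline{B_{\varepsilon_j}(x_j)}$ that realize the sup and the inf (the first-order terms $\pm\varepsilon_j|\nabla\phi|$ cancel), averaging over $B_{\varepsilon_j}(x_j)$, and Taylor-expanding the $\varepsilon_j^2/2$ time shift, one finds
\[
\mathcal{D}_{\varepsilon_j}\phi(x_j,t_j)=\phi(x_j,t_j)+\frac{\varepsilon_j^2}{2}\left(\frac{\Delta_p^{\mathrm{N}}\phi(x_j,t_j)}{p+n}-\partial_t\phi(x_j,t_j)\right)+o(\varepsilon_j^2),
\]
so dividing $\phi(x_j,t_j)\le\mathcal{D}_{\varepsilon_j}\phi(x_j,t_j)$ by $\varepsilon_j^2/2$ and letting $j\to\infty$ gives $(n+p)\partial_t\phi(x_0,t_0)\le\Delta_p^{\mathrm{N}}\phi(x_0,t_0)$. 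The degenerate case $\nabla\phi(x_0,t_0)=0$ is handled by the standard device for the normalized $p$-Laplacian---the relaxed viscosity formulation with sup/inf over admissible Hessian corrections---which is consistent with the scheme since $\sup_{B_\varepsilon}$ and $\inf_{B_\varepsilon}$ are insensitive to the direction ambiguity there.

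Finally, the limiting boundary value problem has at most one viscosity solution: off the set $\{\nabla u=0\}$ the operator $\Delta_p^{\mathrm{N}}$ is uniformly parabolic, and a comparison principle in the spirit of Theorem \ref{CompPrinc} (or the uniqueness assertion in \cite{Parv2024}) applies, with the singular set absorbed into the definition of solution. Hence the subsequential limit $u$ is exactly the unique viscosity solution of the stated problem; every subsequence of $(u_\varepsilon)$ therefore has a further subsequence converging to this same $u$, so the whole family converges, and uniformly on $\overline\Omega\times[0,\mathrm{T}]$ by the global nature of the estimates in Step (i). As indicated, the principal obstacle is precisely this $\varepsilon$-uniform asymptotic regularity estimate, notably its behavior up to the parabolic boundary, since both the compactness and the legitimacy of dividing by $\varepsilon^2$ in the limit rest on it.
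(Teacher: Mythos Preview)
The paper does not contain a proof of this statement: it is quoted verbatim as \cite[Theorem 3.6]{Parv2024} in the motivational Subsection~\ref{Sub-Sec1.1} and is used solely as background on game-theoretic methods, with no argument supplied. There is therefore nothing in the paper to compare your proposal against.

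That said, your sketch is the standard route by which such convergence results are established in the cited reference and in the surrounding literature (e.g.\ \cite{MPR10}, \cite{ParRuost16}): asymptotic Arzel\`a--Ascoli from $\varepsilon$-uniform modulus estimates, stability/consistency via Taylor expansion of the DPP to identify any limit as a viscosity solution, and uniqueness to pass from subsequential to full convergence. Your identification of the boundary/initial barrier construction and the coupling argument for interior asymptotic regularity as the technical core is accurate; those are precisely the nontrivial ingredients in \cite{Parv2024}.
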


Remember that the operator 
 $\Delta_p^{\mathrm{N}}u$ is nowadays labeled the normalized \(p\)-Laplacian (with $1<p< \infty $), which admits a non-variational nature in contrast with its variational counterpart. 
Moreover, we note that the normalized \( p \)-Laplacian can be viewed as the 1-homogeneous version of the standard \( p \)-Laplacian, or as a convex combination of the Laplacian and the normalized infinity-Laplacian operators:
\[
\Delta_p^{\mathrm{N}}u = \frac{1}{p-1}|\nabla u|^{2-p}\text{div}(|\nabla u|^{p-2}\nabla u) =  \frac{1}{p-1} \Delta u + \frac{p-2}{p-1}\Delta_{\infty}^{\mathrm{N}} u
\]
where
$$
\Delta_{\infty}^{\mathrm{N}} u \defeq |\nabla u|^{-2} \langle D^2 u \cdot  \nabla u, \nabla u \rangle,
$$

Regarding qualitative studies on evolution equations involving the normalized $p$-Laplacian using game theoretic methods, Does in \cite{Does11} investigated the initial-boundary value problem
$$
\left\{
\begin{array}{rcrcl}
\partial_tu + \mathcal{A}_p(u) & = & 0 & \text{in } & \Omega \times (0, \mathrm{T})\\
\frac{\partial u}{\partial \nu}  & = & 0 & \text{on} & \partial \Omega \times (0, \mathrm{T})\\
u(x, 0) & = & u_0 &  \text{in } & \overline{\Omega},
\end{array}
\right.
$$
where
\[
\mathcal{A}_p(u) = -\frac{1}{p} \text{tr}(D^2 u) - \frac{p-2}{p} \frac{\langle D^2 u \, \nabla u, \nabla u \rangle}{|\nabla u|^2},
\]
with \( p \in (1, \infty) \), \( \mathrm{T} > 0 \), and \( \Omega \subset \mathbb{R}^n \), \( n \geq 1 \) is an open domain. 

In this setting, the author establishes the existence and uniqueness of a viscosity solution that is globally Lipschitz continuous at \( t-\)variable and locally Lipschitz continuous in the spatial variable. Finally, the author also computed numerical solutions and proved that the discrete solutions converge locally uniformly to the unique viscosity solution \( u \).

Regards universal local estimates, in \cite{ParRuost16} Parviainen-Ruostenoja examined the local regularity of value functions arising from time-dependent tug-of-war games, which are associated with the normalized \( p \)-parabolic equation:
\[
\partial_t u = |\nabla u|^{2 - p} \, \text{div}(|\nabla u|^{p-2} \nabla u) \equiv \Delta u + (p(x,t) - 2) \Delta^{\mathrm{N}}_{\infty} u.
\]
Precisely, the value functions of this particular two-player zero-sum game satisfy the so-called dynamic programming principle:
\begin{equation}\label{DPP-PR}
u_\varepsilon(x, t) =
\frac{\alpha(x, t)}{2}
\left(
\sup_{B_\varepsilon(x)} 
u\left(y, t - \frac{\varepsilon^2}{2}\right)
+
\inf_{B_\varepsilon(x)} 
u\left(y, t - \frac{\varepsilon^2}{2}\right)
\right)
+ 
\beta(x, t)
\int_{B_\varepsilon(x)} 
u\left(y, t - \frac{\varepsilon^2}{2}\right) \, dy,
\end{equation}
which may arise, for instance, from stochastic games or discretization schemes, and 
$$
\alpha(x, t) = \frac{p(x, t)-2}{p(x, t)+n} \quad \text{and} \quad \beta(x, t) = \frac{2+n}{p(x, t)+n}.
$$

In a game theory perspective, the equation \eqref{DPP-PR} can be heuristically interpreted as a weighted sum of the three possible outcomes of a game round, with the respective \((x, t)\)-dependent probabilities. The step duration corresponds to \(\frac{\varepsilon^2}{2}\) units of time.

In the case of constant \( p \), the authors established local Lipschitz continuity, while for the more general setting, H\"{o}lder continuity and Harnack estimates are derived.

Additionally, one of the main contributions of \cite{ParRuost16} lies in presenting a local argument rooted in probabilistic game theory, inspired by Peres et al.'s pioneering work \cite{PSSW09}. Specifically, the problem is formulated using a suitable Tug-of-War game framework rather than directly analyzing the associated PDEs. Furthermore, for both the initial boundary value problem and the H\"{o}lder and Harnack estimates, the authors also addressed the case of non-constant \( p = p(x,t) \).
The methods in this work differ from usual PDE techniques, offering an alternative perspective for studying the associated equations.

In 2020, Han in \cite{Han2020} investigated functions satisfying the following dynamic programming principle:
\begin{equation}\label{DPPHan}
\begin{array}{rcr}
u_\varepsilon(x, t) & = & 
\displaystyle \frac{1}{2} 
\sup_{\nu \in \mathbb{S}^{n-1}} 
\left\{ 
\alpha u_\varepsilon 
\left(
x + \varepsilon \nu, t - \frac{\varepsilon^2}{2}
\right)
+ 
\beta \intav{B_\nu^\varepsilon} 
u_\varepsilon 
\left(
x + h, t - \frac{\varepsilon^2}{2}
\right)
\, d\mathcal{L}^{n-1}(h)
\right\}\\
& + & 
\displaystyle \frac{1}{2} 
\inf_{\nu \in \mathbb{S}^{n-1}} 
\left\{ 
\alpha u_\varepsilon 
\left(
x + \varepsilon \nu, t - \frac{\varepsilon^2}{2}
\right)
+ 
\beta \intav{B_\nu^\varepsilon} 
u_\varepsilon 
\left(
x + h, t - \frac{\varepsilon^2}{2}
\right)
\, d\mathcal{L}^{n-1}(h)
\right\}, 
\end{array}
\end{equation}
for small \( \varepsilon > 0 \). Here, \( \alpha, \beta \) are positive constants such that \( \alpha + \beta = 1 \), \( \mathbb{S}^{n-1} \) is the \((n-1)\)-dimensional unit sphere centered at the origin, and \( B_\nu^\varepsilon \) is an \((n-1)\)-dimensional \( \varepsilon \)-ball centered at the origin and orthogonal to the unit vector \( \nu \). 

In this scenario, Han demonstrated an interior (in the parabolic sense) asymptotic Lipschitz-type regularity for value functions \( u_\varepsilon \) satisfying \eqref{DPPHan}, namely
\[
|u_\varepsilon(x, t) - u_\varepsilon(z, s)| \leq \mathrm{C}_{\text{Lip}}\left(|x - z| + |t - s|^{\frac{1}{2}} + \varepsilon\right),
\]
for some constant \( \mathrm{C}_{\text{Lip}} > 0 \) and any \( (x, t), (z, s) \) within a parabolic cylinder in a given domain.

Han's motivation for studying such a DPP partly arises from its connection to stochastic games, namely game values of the time-tracking tug-of-war with noise. Additionally, Han's work is linked to the normalized parabolic \( p \)-Laplace equation:
\begin{equation}\label{Hanmodel}
\partial_t u = \Delta_p^{\mathrm{N}} u = \Delta u + (p - 2) \Delta_\infty^{\mathrm{N}} u.
\end{equation}

We should emphasize that although Parviainen-Ruosteenoja's work already proved Lipschitz-type estimates for functions satisfying a DPP related to the PDE \eqref{Hanmodel}, in contrast to Han's work they obtained a different DPP, which covers only the degenerate scenario, i.e., $2 < p < \infty$. Finally, we recommend that interested readers see \cite{Han2022} for recent Lipschitz regularity results for functions satisfying a sort of time-dependent dynamic programming principle.  

In conclusion, we believe our findings provided additional insight into obtaining game theoretic methods (Dynamic Programming Principle) for quasilinear parabolic models of normalized $p$-Laplacian type with double degeneracy. We intend to revise such a topic in future works.

\subsubsection{Quasi-linear parabolic PDEs in the non-divergence form: regularity theory}\label{subsection1.1.2}

Concerning higher regularity estimates to quasi-linear evolution models in the non-divergence form we must stress that Jin-Silvestre in their seminal work \cite{JinSilv17} established interior H\"{o}lder estimates for the spatial gradient of viscosity solutions to the parabolic homogeneous \(p\)-Laplacian equation:
$$
\partial_t u - |\nabla u|^{2-p} \operatorname{div} \left(|\nabla u|^{p-2} \nabla u\right) = 0,
$$
where \(1 < p < \infty\). Observe that such an equation can also be expressed as
\begin{equation}\label{EqJin-Silv}
\partial_t u = \sum_{i,j=1}^{n}\left(\delta_{ij} + (p-2)|\nabla u|^{-2} u_{x_i} u_{x_j}\right) u_{x_i x_j} = \Delta_p^{\mathrm{N}}u,
\end{equation}
which is a parabolic equation in non-divergence form and is fundamentally distinct from its divergence counterpart, namely the parabolic \(p\)-Laplacian equation:
\[
\partial_t u - \operatorname{div} \left(|\nabla u|^{p-2} \nabla u\right) = 0,
\]
which enjoys either degenerate or singular signature when \(p \neq 2\).

We highlight that to achieve their main result (namely \cite[Theorem 1.1]{JinSilv17}), the authors primarily employ tools from the theory of equations in non-divergence form, which include Maximum Principles and geometric approaches. Initially, they approximate equation \eqref{EqJin-Silv} with a regularized problem to overcome the lack of smoothness in viscosity solutions. Then, they establish uniform Lipschitz estimates for the solutions of the regularized problem. Furthermore, a crucial insight is that \(\varphi = (|\nabla u|^2 + \varepsilon^2)^{p/2}\) serves as a subsolution to the uniformly parabolic equation:
\[
\partial_t u = \sum_{i,j=1}^{n}\left(\delta_{ij} + (p-2) \frac{u_{x_i} u_{x_j}}{|\nabla u|^2 + \varepsilon^2}\right) u_{x_i x_j}.
\]
Finally, the authors derive uniform H\"{o}lder estimates for the spatial gradient of the approximating solutions via a stability procedure. Specifically

\begin{theorem}[{\bf \cite[Theorem 1.1]{JinSilv17}}]
Let \(u\) be a viscosity solution of \eqref{EqJin-Silv} in \(Q_1\), where \(1 < p < \infty\). Then, there exist constants \(\alpha \in (0, 1)\) and \(\mathrm{C}_0 > 0\), both depending only on \(n\) and \(p\), such that
\[
\|\nabla u\|_{C^\alpha(Q_{1/2})} \leq \mathrm{C}_0 \|u\|_{L^\infty(Q_1)}.
\]
Furthermore, it holds that
\[
\sup_{\substack{(x,t), (x,s) \in Q_{1/2} \\ t \neq s}} 
\frac{|u(x, t) - u(x, s)|}{|t - s|^{\frac{1+\alpha}{2}}} 
\leq \mathrm{C}_0 \|u\|_{L^\infty(Q_1)}.
\]

Here, \(Q_r = B_r \times (-r^2, 0]\) denotes the standard parabolic cylinder, where \(r > 0\) and \(B_r \subset \mathbb{R}^n\) is the ball of radius \(r\) centered at the origin. Finally,  we have
\[
|u(y, s) - u(x, t) - \nabla u(x, t) \cdot (y - x)| 
\leq \mathrm{C}_0 \|u\|_{L^\infty(Q_1)} \big(|y - x| + \sqrt{|t - s|}\big)^{1+\alpha}
\]
for all \((y, s), (x, t) \in Q_{1/2}\).
\end{theorem}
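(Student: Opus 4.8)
The statement is the Jin--Silvestre interior $C^{1,\alpha}$ estimate for the normalized parabolic $p$-Laplacian, and the plan is precisely the geometric, non-divergence scheme sketched in the paragraph preceding it: regularize to recover smoothness, prove \emph{uniform} (in the regularization parameter) Lipschitz and then $C^{1,\alpha}$ bounds, and pass to the limit. First I would fix, for $\varepsilon\in(0,1)$, the operator
\[
F_\varepsilon(\xi,M)=\sum_{i,j=1}^{n}\left(\delta_{ij}+(p-2)\frac{\xi_i\xi_j}{|\xi|^2+\varepsilon^2}\right)M_{ij},\qquad(\xi,M)\in\mathbb{R}^n\times\mathrm{Sym}(n),
\]
which is smooth in both arguments and uniformly parabolic with ellipticity constants $\min\{1,p-1\}$ and $\max\{1,p-1\}$, independent of $\varepsilon$. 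Solving the Dirichlet problem $\partial_t u^\varepsilon=F_\varepsilon(\nabla u^\varepsilon,D^2u^\varepsilon)$ in $Q_{3/4}$ with boundary datum $u$ on the parabolic boundary, classical theory for smooth uniformly parabolic quasilinear equations gives $u^\varepsilon\in C^\infty_{\mathrm{loc}}$, while the comparison principle yields $\|u^\varepsilon\|_{L^\infty(Q_{3/4})}\le\|u\|_{L^\infty(Q_1)}$.

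Next I would establish a uniform Lipschitz bound $\|\nabla u^\varepsilon\|_{L^\infty(Q_{1/2})}\le C(n,p)\|u\|_{L^\infty(Q_1)}$. The mechanism is the ``crucial insight'' recalled above: differentiating the equation and using the Cauchy--Schwarz inequality on the term carrying derivatives of the coefficients shows that $\varphi_\varepsilon:=(|\nabla u^\varepsilon|^2+\varepsilon^2)^{p/2}$ is a subsolution of a linear uniformly parabolic equation in non-divergence form with no zeroth-order term. The parabolic local maximum principle (Krylov--Safonov) then bounds $\sup_{Q_{1/2}}\varphi_\varepsilon$ by a low-exponent $L^{q_0}$ average of $\varphi_\varepsilon$ over a larger cylinder, and that average is in turn controlled by $\|u\|_{L^\infty(Q_1)}^{p}$ through a cutoff estimate (alternatively, the Lipschitz bound can be obtained directly by an Ishii--Lions doubling-of-variables / barrier argument, exploiting that the coefficients of $F_\varepsilon$ remain bounded regardless of $\nabla u^\varepsilon$).

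The core is the $\varepsilon$-uniform $C^{1,\alpha}$ estimate. Differentiating the regularized equation in an arbitrary direction $e$, the function $w=\partial_e u^\varepsilon$ solves $\partial_t w=\mathrm{tr}\big(A^\varepsilon(x,t)D^2w\big)+\mathcal{E}^\varepsilon$, where $A^\varepsilon=D_M F_\varepsilon(\nabla u^\varepsilon,\cdot)$ is uniformly elliptic with constants depending only on $n,p$ but merely measurable, and $\mathcal{E}^\varepsilon$ is an extra contribution involving second derivatives of $u^\varepsilon$ that is not genuinely lower order; hence Krylov--Safonov cannot be applied verbatim. Instead I would prove a geometric decay of $\mathrm{osc}_{Q_r}\nabla u^\varepsilon$ through an \emph{alternative} at each dyadic scale: either the normalized gradient is non-degenerate on the cylinder, in which case the smoothness of $\xi\mapsto a_{ij}(\xi)$ turns the equation for $\nabla u^\varepsilon$ into a uniformly parabolic system whose coefficients inherit a modulus of continuity, so a parabolic Schauder/Caffarelli-type improvement of oscillation applies; or $|\nabla u^\varepsilon|$ is small relative to the scale, in which case, after the parabolic rescaling $u^\varepsilon_r(x,t)=\big(u^\varepsilon(rx,r^2t)-c\big)/(\kappa r)$ with $\kappa$ the current gradient bound, the equation becomes a small perturbation of the heat equation and interior derivative estimates for caloric functions provide the gain. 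Iterating this alternative yields $[\nabla u^\varepsilon]_{C^\alpha(Q_{1/2})}\le C(n,p)\|u\|_{L^\infty(Q_1)}$ --- joint H\"{o}lder continuity of exponent $\alpha$ in $x$ and $\alpha/2$ in $t$ --- for some $\alpha=\alpha(n,p)\in(0,1)$, uniformly in $\varepsilon$. I expect this step to be the main obstacle, precisely because the alternative must be made quantitative and \emph{uniform in} $\varepsilon$: the regularization parameter interacts with the rescaling in the near-singular regime, so one has to track how it transforms and verify it never obstructs the oscillation decay.

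Finally, the uniform Lipschitz bound makes $\{u^\varepsilon\}$ precompact in $C^0_{\mathrm{loc}}$, its limit points solve \eqref{EqJin-Silv} with boundary datum $u$ and hence coincide with $u$ by the comparison principle, so $u^\varepsilon\to u$ locally uniformly; Arzel\`a--Ascoli together with the $\varepsilon$-uniform $C^{1,\alpha}$ bounds then transfers the estimate to $u$, giving $\|\nabla u\|_{C^\alpha(Q_{1/2})}\le C_0\|u\|_{L^\infty(Q_1)}$. The sharp time modulus $|u(x,t)-u(x,s)|\le C_0\|u\|_{L^\infty(Q_1)}|t-s|^{(1+\alpha)/2}$ follows from a standard parabolic comparison argument, carried out at the level of the smooth $u^\varepsilon$ and passed to the limit: one subtracts the affine function $\ell(y)=u(x,t)+\nabla u(x,t)\cdot(y-x)$ --- on the spatial slice at time $t$ the $C^{1,\alpha}$ bound gives $|u-\ell|\le C r^{1+\alpha}$ on $B_r(x)$ --- and dominates $u-\ell$ in the parabolic cylinder $B_r(x)\times(t-r^2,t]$ by translated paraboloid barriers of size $C(|y-x|+\sqrt{|t-s|})^{1+\alpha}$. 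The first-order Taylor estimate $|u(y,s)-u(x,t)-\nabla u(x,t)\cdot(y-x)|\le C_0\|u\|_{L^\infty(Q_1)}(|y-x|+\sqrt{|t-s|})^{1+\alpha}$ is the combination of the spatial $C^{1,\alpha}$ bound with this time estimate, and the displayed time-only inequality is its restriction to $y=x$.
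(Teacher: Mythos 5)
The paper does not prove this theorem --- it cites it as \cite[Theorem 1.1]{JinSilv17} and only sketches Jin--Silvestre's strategy in Subsection~\ref{subsection1.1.2} --- so the only comparison available is against that sketch, and your skeleton matches it: regularize, derive a uniform Lipschitz bound from the fact that $\varphi_\varepsilon=(|\nabla u^\varepsilon|^2+\varepsilon^2)^{p/2}$ is a subsolution of the linearized uniformly parabolic operator (or, as you note, run an Ishii--Lions doubling as this paper does in Lemma~\ref{Lipschitzlema}), obtain an $\varepsilon$-uniform $C^{1,\alpha}$ estimate, and pass to the limit; the barrier derivation of the time modulus is the standard closure. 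Where you depart from Jin--Silvestre's actual route is the mechanism for the $\varepsilon$-uniform $C^{1,\alpha}$ step. You propose the dichotomy ``slope bounded below, so Schauder applies'' versus ``slope small after rescaling, so the operator degenerates to the heat equation,'' iterated dyadically; that is precisely the \emph{alternatives plus improvement-of-flatness} scheme of Attouchi \cite{Attouchi20} and of this very paper (Propositions~\ref{improvflat1}--\ref{improvflat2}). Jin and Silvestre instead remain at the level of the regularized solutions and exhibit further scalar combinations of the directional derivatives $\partial_e u^\varepsilon$ as subsolutions of the frozen-coefficient operator, then run a De~Giorgi / Krylov--Safonov weak-Harnack iteration to decay $\operatorname{osc}_{Q_r}\nabla u^\varepsilon$; the ``stability procedure'' the paper alludes to is the passage of this decay to the $\varepsilon\to 0$ limit. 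The weak-Harnack route never rescales and hence never has to track how $\varepsilon$ transforms, while your alternatives route is more modular and extends readily to the degenerate and non-homogeneous settings, which is why this paper adopts it for its own Theorem~\ref{Thm01}. Your observation that rescaling sends $\varepsilon\mapsto\varepsilon/\kappa$ --- so the small-slope branch drifts toward the heat equation rather than toward a singular limit --- is the right heuristic, and making it quantitative (showing the two alternatives exhaust all cases uniformly in $\varepsilon$) is the substantive work your sketch still owes; you flag this gap correctly.
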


Subsequently, Attouchi-Parviainen in \cite{AtouParv18} studied an evolution equation involving the normalized \(p\)-Laplacian and a bounded, continuous source term, namely
\begin{equation}\label{Eq1.1}
\partial_t u -  
\Delta_p^{\mathrm{N}}u = f(x, t) \quad \text{in } \quad Q_1,
\end{equation}

In this scenario, they established local \(C^{\alpha, \frac{\alpha}{2}}\)-regularity for the spatial gradient of viscosity solutions:

\begin{theorem}[{\bf \cite[Theorem 1.1]{AtouParv18}}]
Assume that \(p > 1\) and \(f \in L^{\infty}(\Omega_T) \cap C^0(\Omega_T)\). There exists \(\alpha = \alpha(p, n) > 0\) such that any viscosity solution \(u\) of \eqref{Eq1.1} belongs to \(C^{1+\alpha, \frac{1+\alpha}{2}}_{\text{loc}}(\Omega_T)\).  Moreover, for any \(\Omega' \Subset \Omega\) and \(\varepsilon > 0\), the following estimates hold:
\[
\|\nabla u\|_{C^{\alpha, \frac{\alpha}{2}}(\Omega' \times (\varepsilon, T - \varepsilon))} \leq \mathrm{C} \left( \|u\|_{L^\infty(\Omega_T)} + \|f\|_{L^\infty(\Omega_T)} \right),
\]
and
\[
\sup_{\substack{(x,t), (x,s) \in \Omega' \times (\varepsilon, T - \varepsilon) \\ t \neq s}} 
\frac{|u(x, t) - u(x, s)|}{|t - s|^{\frac{1+\alpha}{2}}} 
\leq \mathrm{C} \left( \|u\|_{L^\infty(\Omega_T)} + \|f\|_{L^\infty(\Omega_T)} \right),
\]
where \(\mathrm{C} = \mathrm{C}(p, n, d, \varepsilon, d')\), \(d' = \mathrm{dist}(\Omega', \partial\Omega)\), and \(d = \mathrm{diam}(\Omega)\).
\end{theorem}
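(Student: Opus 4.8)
The plan is to follow the now-classical route of Jin--Silvestre: regularize the equation into a uniformly parabolic one with smooth solutions, establish interior estimates that are \emph{independent of the regularization parameter} (in the order $L^\infty$, then spatial Lipschitz and $\tfrac12$-Hölder in time, then $C^{1,\alpha}$ in space and $C^{\alpha/2}$ in time for the gradient), and finally pass to the limit using compactness together with the stability and uniqueness of viscosity solutions. Concretely, for $\varepsilon\in(0,1)$ I would mollify $f$ to get smooth $f_\varepsilon\to f$ locally uniformly with $\|f_\varepsilon\|_{L^\infty}\le\|f\|_{L^\infty}$, and solve the uniformly parabolic, smooth-coefficient problem
\[
\partial_t u_\varepsilon=\sum_{i,j=1}^{n}\Big(\delta_{ij}+(p-2)\frac{(u_\varepsilon)_{x_i}(u_\varepsilon)_{x_j}}{|\nabla u_\varepsilon|^2+\varepsilon^2}\Big)(u_\varepsilon)_{x_ix_j}+f_\varepsilon\quad\text{in }Q_{3/4},
\]
with $u_\varepsilon=u$ on $\partial_{\mathrm{par}}Q_{3/4}$ (after a further mollification of the boundary datum that is then undone). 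Classical quasilinear parabolic theory gives a unique smooth $u_\varepsilon$, and the comparison principle gives the uniform bound $\|u_\varepsilon\|_{L^\infty(Q_{3/4})}\le \|u\|_{L^\infty(Q_1)}+C\|f\|_{L^\infty(Q_1)}$.

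Next comes the uniform gradient bound. Differentiating the equation in $x_k$ and treating $\partial_{x_k}u_\varepsilon$ as a subsolution of a linear uniformly parabolic equation, or --- more robustly for tracking constants --- running a doubling-of-variables barrier argument on $w(x,y,t)=u_\varepsilon(x,t)-u_\varepsilon(y,t)-L\,\omega(|x-y|)-(\text{cutoff in }x,y,t)$ and ruling out a positive interior maximum, one obtains $\|\nabla u_\varepsilon\|_{L^\infty(Q_{1/2})}\le C(n,p)\big(\|u\|_{L^\infty(Q_1)}+\|f\|_{L^\infty(Q_1)}\big)$. Hölder-$\tfrac12$ continuity in time then follows from the spatial Lipschitz bound and the equation, by comparing $u_\varepsilon$ near a point with travelling planes $u_\varepsilon(x_0,t_0)+\nabla u_\varepsilon(x_0,t_0)\cdot(x-x_0)\pm(K|x-x_0|^2+C(t-t_0))$.

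The core of the argument is the \emph{uniform $C^{1,\alpha}$ estimate}, which I would prove by an ``alternatives'' iteration over dyadic parabolic cylinders $Q_{2^{-j}}(x_0,t_0)$. At each scale one distinguishes two regimes according to the size of $\sup|\nabla u_\varepsilon|$ over the cylinder relative to $2^{-j}$ (equivalently, to the oscillation of $u_\varepsilon$). In the \emph{non-degenerate regime}, where $|\nabla u_\varepsilon|$ stays bounded below by a fixed fraction of that scale, the matrix $a_{ij}=\delta_{ij}+(p-2)\frac{(u_\varepsilon)_{x_i}(u_\varepsilon)_{x_j}}{|\nabla u_\varepsilon|^2+\varepsilon^2}$ is uniformly elliptic with oscillation controlled by that of $\nabla u_\varepsilon$, so after rescaling to unit size one is in the realm of linear parabolic estimates (Krylov--Safonov together with Schauder), yielding a geometric decay of $\mathrm{osc}\,\nabla u_\varepsilon$ at the next scale. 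In the \emph{degenerate regime}, where $|\nabla u_\varepsilon|$ is comparatively small, one uses the Bernstein-type fact that $\varphi_\varepsilon:=(|\nabla u_\varepsilon|^2+\varepsilon^2)^{p/2}$ is a subsolution of a uniformly parabolic equation (with an $f_\varepsilon$-dependent right-hand side), so the local maximum estimate for subsolutions forces $\sup_{Q_{2^{-j-1}}}|\nabla u_\varepsilon|$ to be a definite fraction of $\sup_{Q_{2^{-j}}}|\nabla u_\varepsilon|$ plus an $O(\|f\|_{L^\infty})$ term at the right scale --- again improving the flatness of $u_\varepsilon$. Chaining the two alternatives and summing the geometric series produces $\alpha=\alpha(n,p)\in(0,1)$ and
\[
[\nabla u_\varepsilon]_{C^{\alpha,\alpha/2}(Q_{1/2})}\le C(n,p)\big(\|u\|_{L^\infty(Q_1)}+\|f\|_{L^\infty(Q_1)}\big),
\]
with constants independent of $\varepsilon$.

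Finally, Arzel\`a--Ascoli extracts a subsequence with $u_\varepsilon\to v$ in $C^1_{\mathrm{loc}}$; stability of viscosity solutions under this convergence (the coefficients converge to those of $\Delta_p^{\mathrm{N}}$ away from $\{\nabla v=0\}$, and test functions touching $v$ at critical points are handled in the usual viscosity way) shows $v$ solves \eqref{Eq1.1} with the boundary data of $u$, hence $v=u$ by the comparison principle and the whole family converges, so $u$ inherits the bound and $u\in C^{1+\alpha,\frac{1+\alpha}{2}}_{\mathrm{loc}}(\Omega_T)$ with the stated estimate on $\Omega'\times(\varepsilon,T-\varepsilon)$. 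The parabolic time modulus $|u(x,t)-u(x,s)|\le C|t-s|^{\frac{1+\alpha}{2}}$ is then deduced from the spatial $C^{1+\alpha}$ bound: with $h(x)=u(x_0,t_0)+\nabla u(x_0,t_0)\cdot(x-x_0)$, which satisfies $|u-h|\le C\rho^{1+\alpha}$ on $B_\rho(x_0)\times\{t_0\}$, one compares $u$ with the barriers $h(x)\pm\big(C\rho^{1+\alpha}+\Lambda\rho^{\alpha-1}|t-t_0|\big)\pm\|f\|_{L^\infty}|t-t_0|$ on $Q_{\rho,\theta}(x_0,t_0)$ with $\theta\sim\rho^{1+\alpha}$, which bounds $|u(x_0,t)-u(x_0,t_0)|$ by $C\rho^{1+\alpha}$ for $|t-t_0|\lesssim\rho^{1+\alpha}$, i.e.\ by $C|t-t_0|^{\frac{1+\alpha}{2}}$, and a telescoping over dyadic scales upgrades this to all of $Q_{1/2}$. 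The main obstacle is the degenerate-regime step: one must make both the Bernstein subsolution computation for $\varphi_\varepsilon$ and the ensuing gradient decay \emph{quantitative and uniform in $\varepsilon$}, and balance it precisely against the non-degenerate alternative so that a single Hölder exponent $\alpha(n,p)$ survives the iteration.
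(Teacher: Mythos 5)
Your proposal is not the argument behind the cited result, and as written it has two genuine gaps. Attouchi--Parviainen prove the estimate \emph{directly for an arbitrary viscosity solution} of \eqref{Eq1.1}: they study the deviation $w(x,t)=u(x,t)-\vec q\cdot x$, obtain Lipschitz/H\"older bounds for $w$ that are uniform in $\vec q$ by the Ishii--Lions doubling-of-variables method, and then run an improvement-of-flatness iteration, $\operatorname{osc}_{Q_{\rho^k}}(u-\vec q_k\cdot x)\le \mathrm{C}\rho^{k(1+\alpha)}$, whose inductive step is a compactness argument that only uses \emph{stability} of viscosity solutions (the scaled sources tend to $0$, so the limit solves the homogeneous equation, for which Jin--Silvestre's regularity applies). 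This is also the template the present paper follows in Sections \ref{Sec-Comp-Stab-Reg}--\ref{Section-Proof-Thm01}. The difference from your scheme is not cosmetic: it is precisely what lets one avoid the two steps of your plan that do not go through.

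First, your final identification ``$v=u$ by the comparison principle'' is unavailable. For $\partial_t u=\Delta_p^{\mathrm N}u+f$ with $f$ merely bounded, continuous and sign-changing, no comparison/uniqueness theorem for viscosity solutions is known (the operator is discontinuous at $\nabla u=0$, and the known comparison results require a sign condition on $f$ or extra regularity of one of the functions, as in Theorem \ref{CompPrinc} here, which assumes the supersolution is spatially Lipschitz). Without it, your limit $v$ is \emph{some} solution with the boundary values of $u$, not necessarily $u$ itself, so $u$ does not inherit the uniform estimates. Second, the degenerate-regime step relying on the Bernstein fact that $\varphi_\varepsilon=(|\nabla u_\varepsilon|^2+\varepsilon^2)^{p/2}$ is a subsolution is obtained by differentiating the equation, and with a source this produces a term of the form $\nabla u_\varepsilon\cdot\nabla f_\varepsilon$; since $f\in L^\infty\cap C^0$ only, mollification gives $\|\nabla f_\varepsilon\|_{L^\infty}\sim \varepsilon^{-1}\|f\|_{L^\infty}$, so the resulting decay of $\sup|\nabla u_\varepsilon|$ is \emph{not} uniform in $\varepsilon$ and the single exponent $\alpha(n,p)$ does not survive. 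This is exactly why the Jin--Silvestre regularization scheme is confined to $f\equiv 0$ (equation \eqref{EqJin-Silv}) and why the inhomogeneous case is handled by uniform-in-$\vec q$ estimates on $u-\vec q\cdot x$ plus stability, or by the method of alternatives carried out at the level of viscosity solutions (Attouchi's later work), rather than by differentiating a regularized equation.
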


The proof of such a result strongly relies on an improvement of flatness argument, and it is carried out through a geometric iterative procedure: If \(u\) can be approximated by an affine function within a cylinder \(Q\), then it is possible to achieve a more refined approximation in a smaller cylinder, and this process can be iterated. Specifically, they demonstrated by induction that for some \(\rho, \alpha \in (0, 1)\) and \(\mathrm{C} = \mathrm{C}(p, n)\), there exists a sequence \((\vec{q}_k)\) such that 
\[
\operatorname{osc}_{Q_{\rho^k}}(u(x, t) - \vec{q}_k \cdot x) \leq \mathrm{C} \rho^{k(1+\alpha)}.
\] 
The inductive step relies on establishing an improvement of flatness for the rescaled function 
\[
w_k(x, \cdot) = \rho^{-k(1+\alpha)} \big(u(\rho^k x, \cdot) - \vec{q}_k \cdot (\rho^k x)\big).
\] 

Therefore, the core objective is to analyze the equation satisfied by the deviation of \(u\) from its linear approximation, denoted as \(w(x, t) = u(x, t) - \vec{q} \cdot x\), and to derive a local \(C^{\beta, \beta/2}\) estimate for \(w\) that is independent of \(\vec{q}\).

Subsequently, Imbert \textit{et al.} in  \cite{IJS19} addressed interior H\"{o}lder estimates for the spatial gradients of the viscosity solutions to the singular/degenerate parabolic equation
\begin{equation}\label{EqIJS}
\partial_t u =|\nabla u|^\gamma 
\Delta_p^{\mathrm{N}}u  \quad \text{in } \quad Q_1,
\end{equation}
where $p\in(1,\infty)$ and $\gamma \in (1-p,\infty)$. This research includes the $C^{1,\alpha}$ regularity for the well-known parabolic models of $p$-Laplacian equations in both divergence (when $\gamma=0$), and non-divergence form (when $\gamma=2-p$). Precisely, 

\begin{theorem}[{\bf \cite[Theorem 1.1]{IJS19}}] Let \( u \) be a viscosity solution of \eqref{EqIJS} in \( Q_1 \), where \( 1 < p < \infty \) and \( \gamma \in (-1, \infty) \). Then, there exist two constants \( \alpha \in (0, 1) \) and \( \mathrm{C} > 0 \), both of which depend only on \( n, \gamma, p \), and \( \| u \|_{L^\infty(Q_1)} \), such that
\[
\| \nabla u \|_{C^\alpha(Q_{1/2})} \leq \mathrm{C}.
\]
Additionally, the following H\"{o}lder regularity in time holds
\[
\sup_{(x,t), (x,s) \in Q_{1/2} \atop{t \neq s}} \frac{|u(x, t) - u(x, s)|}{|t - s|^{\frac{1+\alpha}{2-\alpha\gamma}}} \leq \mathrm{C}.
\]
Note that \( \frac{1+\alpha}{2-\alpha\gamma} > \frac{1}{2} \) for every \( \alpha > 0 \) and \( \gamma > -1 \).

\end{theorem}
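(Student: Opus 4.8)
The plan is to implement a geometric tangential (improvement-of-flatness) scheme in the spirit of Attouchi--Parviainen and Imbert--Jin--Silvestre, adapted to the non-homogeneous signature of $\mathscr{H}$. First I would normalize: translating a fixed base point $(x_0,t_0)\in Q_{1/2}$ to the origin, replacing $u$ by $u/(\|u\|_{L^\infty(Q_1)}+\|f\|_{L^\infty(Q_1)}^{1/(1+\mathfrak p)})$, and applying a parabolic dilation $(x,t)\mapsto(\rho x,\rho^2 t)$, one reduces to proving a pointwise $C^{1,\alpha}$ expansion at the origin for a solution on $Q_1$ with $\|u\|_{L^\infty(Q_1)}\le 1$, $\|f\|_{L^\infty(Q_1)}\le\varepsilon_0$ and --- using $(\mathrm{H}3)$ --- $\operatorname{osc}_{Q_1}\mathfrak a\le\delta_0$, where $\varepsilon_0,\delta_0$ are small universal parameters fixed along the argument and the dilation sends $\mathfrak a$ to another coefficient still obeying $(\mathrm{H}2)$--$(\mathrm{H}3)$ with controlled constants. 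The local H\"older and Lipschitz gradient estimates and the compactness tools established in Section~\ref{Sec-Comp-Stab-Reg} are what make the forthcoming compactness passages legitimate; throughout, $Q_r=B_r\times(-r^2,0]$ denotes the standard parabolic cylinder.

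\textbf{The one-step oscillation-decay lemma.} The crux is the following: there exist $\lambda\in(0,1)$, $\alpha\in(0,\tfrac{1}{1+\mathfrak p})$ and a universal $\mathrm{C}$ such that, in the normalized regime above, one finds an affine map $\ell(x)=\vec q\cdot x+c$ with $|\vec q|\le\mathrm{C}$ and $\operatorname{osc}_{Q_\lambda}(u-\ell)\le\lambda^{1+\alpha}$. I would prove this by contradiction and compactness: assuming it fails along sequences $u_j,\mathfrak a_j,f_j$ with $\|f_j\|_{L^\infty}\to 0$ and $\operatorname{osc}_{Q_1}\mathfrak a_j\to 0$, the lower regularity gives (up to a subsequence) $u_j\to u_\infty$ and $\nabla u_j\to\nabla u_\infty$ locally uniformly while $\mathfrak a_j\to\mathfrak a_\infty\equiv\text{const}$. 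Here the non-homogeneous structure forces a dichotomy --- an ``alternative'' --- according to whether the working slope $\mu_j$ of $u_j$ stays bounded away from zero or degenerates. In the \emph{non-degenerate} branch the gradient is bounded below on the relevant set, so $\mathscr{H}(x,t,\nabla u_j)$ is trapped between two positive constants; after normalizing the deviation by $\mu_j$ the limiting profile satisfies a linear uniformly parabolic equation with constant coefficients, whose interior $C^{1,\alpha}$ (indeed $C^\infty$) estimate yields the flatness improvement. In the \emph{degenerate} branch one performs an intrinsic, slope-adapted parabolic rescaling --- spatial scale $r$, amplitude of order $\mu_j r$, time scale of order $r^2\mu_j^{-\mathfrak p}$ --- under which $\mathscr{H}$ reads $\mu_j^{\mathfrak p}\bigl(|\nabla w_j|^{\mathfrak p}+\mathfrak a_j\,\mu_j^{\mathfrak q-\mathfrak p}|\nabla w_j|^{\mathfrak q}\bigr)$, so that the $\mathfrak q$-contribution either vanishes ($\mathfrak q>\mathfrak p$) or collapses onto a constant multiple of the $\mathfrak p$-term ($\mathfrak q=\mathfrak p$), while the rescaled source carries the factor $r\,\mu_j^{-(1+\mathfrak p)}\|f_j\|_{L^\infty}$; the latter tends to zero precisely because the working slope is kept above the scale $\|f\|_{L^\infty}^{1/(1+\mathfrak p)}$, and this is exactly where the restriction $\alpha<\tfrac{1}{1+\mathfrak p}$ enters. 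The resulting limit solves $\partial_\tau w_\infty=|\nabla w_\infty|^{\mathfrak p}\Delta_p^{\mathrm{N}}w_\infty$ (after an innocuous time change in the case $\mathfrak q=\mathfrak p$), that is, equation \eqref{EqIJS} with $\gamma=\mathfrak p\ge 0>-1$, so the Imbert--Jin--Silvestre theorem recalled above supplies the $C^{1,\alpha}$ control that contradicts the failure hypothesis.

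\textbf{Iteration and H\"older continuity in time.} With the one-step lemma in hand I would iterate it dyadically about the base point: since $\operatorname{osc}_{Q_{\lambda^k}}\mathfrak a\le\mathfrak A_0\lambda^k$ and the source seen at scale $\lambda^k$ carries a factor $\lambda^{k(1-\alpha(1+\mathfrak p))}$, both smallness hypotheses of the lemma hold for all $k$ beyond some $k_0=k_0(p,\mathfrak p,\mathfrak q,n,\mathfrak a^-,\mathfrak a^+,\mathfrak A_0)$ (the first $k_0$ scales being absorbed into the constant by crude bounds). A routine induction then produces affine maps $\ell_k(x)=\vec q_k\cdot x+c_k$ with $\operatorname{osc}_{Q_{\lambda^k}}(u-\ell_k)\le\lambda^{k(1+\alpha)}$ and $|\vec q_{k+1}-\vec q_k|\lesssim\lambda^{k\alpha}$, whence $\vec q_k\to\nabla u(x_0,t_0)$ and $|u(y,s)-u(x_0,t_0)-\nabla u(x_0,t_0)\cdot(y-x_0)|\le\mathrm{C}\bigl(|y-x_0|+|s-t_0|^{1/2}\bigr)^{1+\alpha}$. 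As $(x_0,t_0)$ ranges over $Q_{1/2}$ with uniform $\mathrm{C}$, a standard telescoping/Campanato-type argument upgrades this to the asserted $C^\alpha$ seminorm bound for $\nabla u$ in the joint $(x,t)$ variable. The $\tfrac{1+\alpha}{2}$-H\"older bound for $u$ in time then follows from this spatial control via a barrier argument --- trapping $u$ on a small parabolic cylinder between its spatial linearization corrected by a paraboloid of height $\sim r^{1+\alpha}$ and optimizing the radius, exactly as in Jin--Silvestre. Undoing the initial normalization restores the dependence of the constants on $\|u\|_{L^\infty(Q_1)}+\|f\|_{L^\infty(Q_1)}^{1/(1+\mathfrak p)}$.

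\textbf{Expected main obstacle.} The principal difficulty is the bookkeeping in the compactness step: one must design the slope-adapted rescaling so that it simultaneously keeps the normalized gradient of order one, keeps $\mathfrak a$ inside its admissible class, and drives the rescaled forcing to zero, and then verify that \emph{the same} $\lambda$ and $\alpha$ serve both limiting problems --- the uniformly parabolic one and the $\mathfrak p$-degenerate model \eqref{EqIJS} --- so that the dichotomy can be iterated without any deterioration of the decay exponent. A closely related technical point is to guarantee that along the degenerate branch the slopes never fall below the $\|f\|_{L^\infty}^{1/(1+\mathfrak p)}$ scale; this is precisely what enforces $\alpha\in(0,\tfrac{1}{1+\mathfrak p})$ and has to be tracked carefully through the induction together with the smallness of $\operatorname{osc}\mathfrak a$.
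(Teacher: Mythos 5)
Your proposal does not actually prove the statement in question. The statement is the quoted result of Imbert--Jin--Silvestre for the homogeneous equation \eqref{EqIJS}, $\partial_t u=|\nabla u|^{\gamma}\Delta_p^{\mathrm{N}}u$ with $\gamma\in(-1,\infty)$ and no source term, and with the intrinsic time--H\"older exponent $\frac{1+\alpha}{2-\alpha\gamma}$; in the present paper this theorem is only cited as background, and your write-up is in substance a scheme for the paper's main Theorem~\ref{Thm01} (non-homogeneous degeneracy $\mathscr H(x,t,\nabla u)$, source $f$, hypotheses $(\mathrm H1)$--$(\mathrm H4)$, exponent restriction $\alpha<\frac{1}{1+\mathfrak p}$, constants involving $\|f\|_{L^\infty}^{1/(1+\mathfrak p)}$), which is a different assertion. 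Worse, as a proof of the stated theorem the argument is circular: in your degenerate branch the blown-up limit solves exactly \eqref{EqIJS} with $\gamma=\mathfrak p$, and you close the contradiction by invoking ``the Imbert--Jin--Silvestre theorem recalled above'' --- i.e.\ the very result to be proved. A genuine proof must establish the flatness improvement for \eqref{EqIJS} from scratch (in \cite{IJS19} this is done by regularizing the equation, proving uniform Lipschitz bounds, and running an oscillation-decay alternative for the gradient on intrinsic cylinders), not by quoting it.

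Two further points would break your argument even if the circularity were repaired. First, the time regularity you produce is $|t-s|^{\frac{1+\alpha}{2}}$, whereas the statement requires the exponent $\frac{1+\alpha}{2-\alpha\gamma}$; obtaining it forces the iteration to be carried out in intrinsic cylinders of the form $B_r\times(-r^2\lambda^{-\gamma},0]$ rather than the standard parabolic ones you use, and the barrier step must be adapted accordingly. Second, the statement covers the singular range $\gamma\in(-1,0)$, where your mechanism of ``keeping the working slope above the scale $\|f\|_{L^\infty}^{1/(1+\mathfrak p)}$'' is vacuous (here $f\equiv0$) and the restriction $\alpha<\frac{1}{1+\mathfrak p}$ plays no role; the singular case requires a separate treatment (in \cite{IJS19}, via uniform estimates for the regularized problems and a different handling of the scaling when $\gamma<0$), which your proposal does not address.
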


Recently, in \cite{Attouchi20} Attouchi investigated the regularity of viscosity solutions of the non-homogeneous degenerate/singular parabolic equations of the $p$-Laplacian type in the non-divergence form:
\begin{equation}\label{EqAttouchi}
\partial_t u = |\nabla u|^\gamma 
\Delta_p^{\mathrm{N}}u + f(x, t) \quad \text{in } \quad Q_1,
\end{equation}
where \(-1 < \gamma < \infty\), \(1 < p < \infty\), and \(f\) is a continuous and bounded function. Remember that the Normalized $p-$Laplacian operator is defined by
$$
\Delta_p^{\mathrm{N}} \,u = 
\Delta u + (p - 2) \left\langle 
\frac{D^2 u \, \nabla u}{|\nabla u|}, \frac{\nabla u}{|\nabla u|}
\right\rangle.
$$

In such a context, the author provided local H\"{o}lder and Lipschitz estimates for the solutions. Moreover, in the degenerate scenario, she proved the H\"{o}lder regularity of the gradient. 

\begin{theorem}[{\bf \cite[Theorem 1.1]{Attouchi20}}]
 Let \( 0 \leq \gamma < \infty \) and \( 1 < p < \infty \). Assume that \( f \) is a continuous and bounded function, and let \( u \) be a bounded viscosity solution of \eqref{EqAttouchi}. Then \( u \) has a locally H\"{o}lder continuous gradient, and there exists a constant \( \alpha = \alpha(p, n, \gamma) \) with \( \alpha \in \left(0, \frac{1}{1+\gamma}\right) \) and a constant \( \mathrm{C} = \mathrm{C}(p, n, \gamma) > 0 \) such that

\[
|\nabla u(x, t) - \nabla u(y, s)| \leq \mathrm{C}
\left(
1 + \|u\|_{L^\infty(Q_1)} + \|f\|_{L^\infty(Q_1)}
\right)
\left( |x - y|^\alpha + |t - s|^{\frac{\alpha}{2}} \right),
\]

and

\[
|u(x, t) - u(x, s)| \leq \mathrm{C}
\left(
1 + \|u\|_{L^\infty(Q_1)} + \|f\|_{L^\infty(Q_1)}
\right)
|s - t|^{\frac{1+\alpha}{2}}.
\]

\end{theorem}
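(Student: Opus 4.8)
I would follow the geometric tangential scheme for non\-divergence quasi\-linear parabolic equations with degenerate signature (Jin--Silvestre \cite{JinSilv17}, Attouchi--Parviainen \cite{AtouParv18}, Imbert--Jin--Silvestre \cite{IJS19}, Attouchi \cite{Attouchi20}): after a preliminary normalization, \emph{(1)} establish regularization\-free interior Lipschitz\-in\-$x$ and $\tfrac12$\-Hölder\-in\-$t$ bounds, for \eqref{EqAttouchi} and uniformly for its ``tilted'' translates; \emph{(2)} prove an \emph{improvement of flatness}: a normalized solution of a tilted version of \eqref{EqAttouchi} is, at a smaller intrinsic scale, closer to an affine function of $x$ by a fixed gain; \emph{(3)} iterate this using the intrinsic scaling of \eqref{EqAttouchi}, in which the constraint $\alpha<\tfrac1{1+\gamma}$ is forced, to obtain the $C^{1,\alpha}$\-in\-$x$ and $C^{(1+\alpha)/2}$\-in\-$t$ estimates. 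Dividing $u$ by $1+\|u\|_{L^\infty(Q_1)}+\|f\|_{L^\infty(Q_1)}$ and applying the parabolic dilation adapted to \eqref{EqAttouchi} (which, since $\gamma\ge0$, simultaneously shrinks the source by a positive power of the radius) reduces everything to $\operatorname{osc}_{Q_1}u\le1$ and $\|f\|_{L^\infty(Q_1)}\le\delta_0$, $\delta_0$ small and to be fixed in Step~2.

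\textbf{Step 1: first a priori bounds.} Following \cite{JinSilv17,Attouchi20}, for $\varepsilon\in(0,1)$ one introduces the uniformly parabolic, smooth\-coefficient regularization
\[
\partial_t u_\varepsilon=\big(|\nabla u_\varepsilon|^2+\varepsilon^2\big)^{\gamma/2}\!\left(\Delta u_\varepsilon+(p-2)\,\frac{\langle D^2u_\varepsilon\,\nabla u_\varepsilon,\nabla u_\varepsilon\rangle}{|\nabla u_\varepsilon|^2+\varepsilon^2}\right)+f_\varepsilon\quad\text{in }Q_{1-\varepsilon},
\]
with $f_\varepsilon$ smooth, $f_\varepsilon\to f$ locally uniformly, and parabolic datum a mollification of $u$; this has a classical solution and $u_\varepsilon\to u$ locally uniformly by comparison and stability of viscosity solutions. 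Polynomial barriers give an $\varepsilon$\-free $L^\infty$ bound; the crucial $\varepsilon$\-free interior gradient bound $\|\nabla u_\varepsilon\|_{L^\infty(Q_{1/2})}\le C(n,p,\gamma)$ follows either from the Ishii--Lions doubling\-of\-variables method applied to $u_\varepsilon$ or from a Bernstein estimate on $|\nabla u_\varepsilon|^2$; comparing $u_\varepsilon$ with sup/inf barriers built from this Lipschitz control and the equation then yields $|u_\varepsilon(x,t)-u_\varepsilon(x,s)|\le C|t-s|^{1/2}$. Passing $\varepsilon\to0$ transfers these estimates to $u$, and the same argument applies, with the same constants, to the ``tilted'' equations obtained by replacing $\nabla u$ with $\nabla u+\vec e$ everywhere, uniformly in $\vec e\in\mathbb{R}^n$ (a translate of a solution). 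From here on I would work directly with $u$, so that no small parameter survives into the iteration.

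\textbf{Step 2: improvement of flatness.} This is the heart. One proves the existence of $\lambda\in(0,1)$, $\alpha\in(0,\tfrac1{1+\gamma})$, $\delta_0>0$ and $C_*>0$, depending only on $n,p,\gamma$, such that: for every $\vec e\in\mathbb{R}^n$ and every viscosity solution $v$ of the $\vec e$\-tilted equation in $Q_1$ with $\operatorname{osc}_{Q_1}v\le1$ and $\|f\|_{L^\infty}\le\delta_0$, there is $\vec q$ with $|\vec q|\le C_*$ and
\[
\operatorname{osc}_{Q_{\lambda,\theta}}\big(v(x,t)-\vec q\cdot x\big)\le\lambda^{1+\alpha},\qquad Q_{\lambda,\theta}:=B_\lambda\times\big(-\theta\lambda^{\,2-\alpha\gamma},0\big].
\]
The proof splits according to $|\vec e|$. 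If $|\vec e|\ge\Lambda_0$, where $\Lambda_0$ exceeds twice the uniform Lipschitz bound of Step~1, then $\nabla v+\vec e$ stays away from $0$, so $v$ solves a genuinely uniformly parabolic fully nonlinear equation $\partial_t v=G(\nabla v,D^2v)+f$ with $G(\cdot,0)=0$ whose ellipticity ratio is $\max(1,p-1)/\min(1,p-1)$ — independent of $\vec e$; moreover, rescaling by $|\vec e|^\gamma$ shows $G$ is a small perturbation of a constant\-coefficient uniformly parabolic operator as $|\vec e|$ grows, so the interior $C^{1,\alpha}$ estimate for such equations provides the affine approximation with a gain and with constants uniform in $\vec e$. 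If $|\vec e|\le\Lambda_0$, argue by contradiction and compactness: a failing sequence $v_j$ (with $\vec e_j\to\vec e_\infty$ and $\|f_j\|_\infty\to0$) is equi\-Lipschitz in $x$ and equi\-$\tfrac12$\-Hölder in $t$ by Step~1, hence converges locally uniformly to $v_\infty$ for which $U_\infty:=v_\infty+\vec e_\infty\cdot x$ is, by stability of viscosity solutions, a solution of the homogeneous equation $\partial_t U_\infty=|\nabla U_\infty|^\gamma\Delta_p^{\mathrm{N}}U_\infty$; the interior $C^{1,\alpha}$ regularity of $U_\infty$ (Imbert--Jin--Silvestre, cited above) yields an affine approximation for $v_\infty$ with the required gain, contradicting for $j$ large the failure of the lemma.

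\textbf{Step 3: iteration, scaling, conclusion; main obstacle.} Fix $(x_0,t_0)\in Q_{1/2}$, normalize $u$ near it, and apply Step~2 repeatedly, each time rescaling through
\[
v(x,t)\ \longmapsto\ \lambda^{-(1+\alpha)}\big(v(\lambda x,\lambda^{\,2-\alpha\gamma}t)-\vec q\cdot\lambda x\big),
\]
which maps the class of tilted equations to itself, replaces the tilt $\vec e$ by $\lambda^{-\alpha}(\vec e+\vec q)$, and multiplies the source by $\lambda^{\,1-\alpha(1+\gamma)}<1$ — the step where $\alpha<\tfrac1{1+\gamma}$ is essential. One gets accumulated slopes $\vec Q_k$ with $|\vec Q_{k+1}-\vec Q_k|\le C_*\lambda^{k\alpha}$, hence $\vec Q_k\to\nabla u(x_0,t_0)$, together with $\operatorname{osc}_{Q_{\lambda^k,\theta}(x_0,t_0)}\big(u-\vec Q_k\cdot x\big)\le\lambda^{k(1+\alpha)}$. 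Interpolating between dyadic scales gives $[\nabla u]_{C^\alpha(Q_{1/2})}\le C$ and $|u(x,t)-u(x,s)|\le C|t-s|^{(1+\alpha)/2}$ (as $\gamma\ge0$, $2-\alpha\gamma\le2$, so the intrinsic time exponent only strengthens the bound and is absorbed into $C$); undoing the normalization restores the stated dependence on $1+\|u\|_{L^\infty(Q_1)}+\|f\|_{L^\infty(Q_1)}$. The main obstacle is Step~2: one must run the compactness in the small\-$\vec e$ regime so that the limit is \emph{genuinely} governed by the homogeneous equation near the degenerate set $\{\nabla U_\infty=0\}$ (where it is not uniformly parabolic), which is exactly why the dichotomy threshold $\Lambda_0$, the intrinsic time scaling $\lambda^{2-\alpha\gamma}$, and the range $\alpha\in(0,\tfrac1{1+\gamma})$ must be calibrated together; one must also check that the $C^{1,\alpha}$ gain in the large\-$\vec e$ regime is uniform as $|\vec e|\to\infty$ and that the intrinsic cylinders $Q_{\lambda^k,\theta}(x_0,t_0)$ nest correctly under the iteration.
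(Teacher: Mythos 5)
Your proposal is correct and is essentially the same argument as the one behind this statement, which the paper only quotes from \cite{Attouchi20} and summarizes (and later adapts for its own Theorem \ref{Thm01}): compactness-type Lipschitz-in-space and H\"older-in-time estimates, an improvement-of-flatness step proved by compactness against the homogeneous equation of \cite{IJS19} when the tilt is small and by uniformly parabolic theory when it is large, and an intrinsic iteration in cylinders of time-length $\lambda^{2-\alpha\gamma}$ in which the restriction $\alpha<\tfrac{1}{1+\gamma}$ is exactly what keeps the rescaled source small. The only difference is organizational: you place the small/large-slope dichotomy inside the flatness lemma, whereas Attouchi (and the present paper) run it as the ``degenerate vs.\ smooth alternative'' across the iteration, terminating with the uniformly parabolic estimates of \cite{LSU68}, \cite{Lieberman96} when the accumulated slope stops decaying.
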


In summary, Attouchi's approach is based on a combination of the method of alternatives and an improvement of flatness estimates:

\begin{enumerate}
    \item[\checkmark] \textbf{Degenerate Alternative:} For every \( k \in \mathbb{N} \), there exists a vector \( \mathfrak{l}_k \) with \( |\mathfrak{l}_k| \leq \mathrm{C}_0(1 - \delta)^k \) such that

\[
\operatorname{osc}_{(x,t) \in Q^{\lambda_k}_{r_k}} \left( u(x, t) - \mathfrak{l}_k \cdot x \right) \leq r_k \lambda_k,
\]

where \( r_k := \rho^k \), \( \lambda_k := (1 - \delta)^k \), and the cylinders

\[
Q^{\lambda_k}_{r_k} := B_{r_k}(0) \times \left(-r_k^2 \lambda_k^{-\gamma}, 0\right].
\]
In other words, we have an improvement in flatness at all scales.
    \item[\checkmark] \textbf{Smooth Alternative:} The preceding process terminates at a certain step \( k_0 \), meaning \( |\mathfrak{l}_{k_0}| \geq \mathrm{C}_0(1 - \delta)^{k_0} \). Furthermore, it can be demonstrated that the gradient of \( u \) remains bounded away from zero within a specific cylinder, allowing the application of established results for uniformly parabolic equations with smooth coefficients (\cite[Theorem 1.1]{LSU68} and \cite[Lemma 12.13]{Lieberman96}).
\end{enumerate}

Finally, Fang-Zhang in \cite{FZ23} analyzed the equation
\begin{equation}\label{EqFZ}
\partial_t u = \big[|\nabla u|^q + \mathfrak{a}(x, t)|\nabla u|^s\big]\left(\Delta u + (p-2)\left\langle D^2u \frac{\nabla u}{|\nabla u|}, \frac{\nabla u}{|\nabla u|} \right\rangle\right),
\end{equation}
which is a quasi-linear parabolic equation involving nonhomogeneous degeneracy and/or singularity, where 
\begin{equation}\label{EqFZ1.4}
1 < p < \infty \quad \text{and} \quad -1 < q \leq s < \infty,
\end{equation}
and that the modulating coefficient \( \mathfrak{a}(\cdot) \) satisfies
\begin{equation}\label{EqFZ1.5}
0 < \mathfrak{a}^- := \inf_{Q_1} \mathfrak{a}(x, t) \leq \mathfrak{a}(x, t) \leq \mathfrak{a}^+ := \sup_{Q_1} \mathfrak{a}(x, t) < \infty,
\end{equation}
and
\begin{equation}\label{EqFZ1.6}
\mathfrak{a}\in C^1(Q_1) \quad \text{and} \quad \mathfrak{A}_0 := \| D_{x,t} \mathfrak{a} \|_{L^\infty(Q_1)} < \infty.
\end{equation}

The author's motivation for studying this model equation arises not only from its connections to Tug-of-War-type stochastic games with noise but also from its relevance to non-standard growth problems of the double-phase type. Moreover, depending on the values of \(q\) and \(s\), such equations may exhibit nonhomogeneous degeneracy, singularity, or even a combination of both features. 

In particular, when \(q = p - 2\) and \(q < s\), the toy model encompasses the parabolic \(p\)-Laplacian in both divergence and non-divergence forms. 
In this context, under suitable assumptions, they employ geometric techniques to establish the local H\"{o}lder regularity of the spatial gradients of viscosity solutions. Precisely

\begin{theorem}[{\bf \cite[Theorem 2.3]{FZ23}}]\label{FZ23Theorem2.3} Let the conditions \eqref{EqFZ1.4}-\eqref{EqFZ1.6} be in force. Suppose that \( u \) is a bounded viscosity solution to equation \eqref{EqFZ}  in \( Q_1 \). Then, there are two constants \( \alpha \in (0, 1) \) and \( \mathrm{C}_0 > 0 \), both depending upon \( n, p, q, s, \mathfrak{a}^{-}, \mathfrak{a}^{+}, \mathfrak{A}_0 \) and \( \| u \|_{L^\infty(Q_1)} \), such that the following estimates hold:
\[
\| \nabla u \|_{C^\alpha(Q_{1/2})} \leq \mathrm{C}_0
\]
and
\[
\sup_{(x,t), (x,s) \in Q_{1/2}, \atop{t \neq s}} \frac{|u(x,t) - u(x,s)|}{|t - s|^{\frac{1 + \alpha}{2 - \alpha q}}} \leq \mathrm{C}_0.
\]

\end{theorem}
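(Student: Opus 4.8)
The plan is to run the geometric tangential (improvement‑of‑flatness) scheme of Jin--Silvestre and Imbert--Jin--Silvestre \cite{JinSilv17,IJS19}, combined with Attouchi's method of alternatives \cite{Attouchi20}, now adapted to the non‑homogeneous degeneracy factor $\mathscr{H}(x,t,\xi)=|\xi|^{q}+\mathfrak{a}(x,t)|\xi|^{s}$. First I would reduce the whole problem to a priori estimates: regularize the singular factor by $\mathscr{H}_\varepsilon(x,t,\xi)=(|\xi|^2+\varepsilon^2)^{q/2}+\mathfrak{a}(x,t)(|\xi|^2+\varepsilon^2)^{s/2}$ and the matrix $I+(p-2)\,\widehat{\nabla u}\otimes\widehat{\nabla u}$ by its $\varepsilon$‑mollified analogue, so the regularized equation is uniformly parabolic with smooth coefficients and admits classical solutions $u^\varepsilon$; it then suffices to prove both displayed estimates for $u^\varepsilon$ with constants independent of $\varepsilon$ and to pass to the limit by stability of viscosity solutions. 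It is convenient to write the equation as $\partial_t u=\mathscr{H}(x,t,\nabla u)\,\operatorname{tr}\!\big(\mathcal{A}(\nabla u)D^2u\big)$, where $\mathcal{A}(\xi)=I+(p-2)\tfrac{\xi}{|\xi|}\otimes\tfrac{\xi}{|\xi|}$ has eigenvalues trapped between $\min(1,p-1)$ and $\max(1,p-1)$.

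Before the iteration I would secure the lower‑order regularity that feeds the compactness step: local H\"older continuity of $u$ (the equation is uniformly parabolic in the intrinsic parabolic geometry, so a Krylov--Safonov / Ishii--Lions argument applies), a local Lipschitz estimate for $u$ in space, and the corresponding H\"older estimate in time. The Lipschitz bound is produced by the Ishii--Lions doubling‑of‑variables technique with the penalization $L|x-y|-\omega(|x-y|)$; here \eqref{EqFZ1.5}--\eqref{EqFZ1.6} are used to absorb the contribution coming from the $\mathfrak{a}$‑dependence of $\mathscr{H}$. Two uniformities are essential and must be tracked throughout: with respect to the regularization parameter $\varepsilon$, and --- crucially --- with respect to subtracting an affine function $\mathfrak{l}\cdot x$ from $u$, since the iteration is applied to such deviations.

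The core of the argument is the method of alternatives, iterated in the intrinsic cylinders $Q^{\lambda_k}_{r_k}=B_{r_k}(0)\times\big(-r_k^{2}\lambda_k^{-q},0\big]$ with $r_k=\rho^{k}$ and $\lambda_k=(1-\delta)^{k}$. By a contradiction/compactness argument I would establish the dichotomy: whenever $\operatorname{osc}_{Q^{\lambda}_{1}}(u-\mathfrak{l}\cdot x)\le\lambda$ with $|\mathfrak{l}|\le\mathrm{C}_0$, then either (i) \emph{(non‑degenerate alternative)} $|\mathfrak{l}|$ is comparable to $\lambda$, so on a definite sub‑cylinder $\nabla u$ stays away from $0$; then $\mathscr{H}(x,t,\nabla u)\in[c,C]$ by \eqref{EqFZ1.5}, the equation is uniformly parabolic with coefficients that are H\"older --- indeed $C^1$ in $(x,t)$ by \eqref{EqFZ1.6} --- and the classical linear parabolic $C^{1,\alpha}$ theory (\cite{LSU68,Lieberman96}) closes the estimate; or (ii) \emph{(degenerate alternative)} there is $\mathfrak{l}'$ with $|\mathfrak{l}'-\mathfrak{l}|$ small and $\operatorname{osc}_{Q^{(1-\delta)\lambda}_{\rho}}(u-\mathfrak{l}'\cdot x)\le(1-\delta)\rho\lambda$. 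Alternative (ii) is where compactness enters: rescaling $w(x,t)=\tfrac{1}{\rho\lambda}\big(u(\rho x,\lambda^{-q}\rho^{2}t)-\mathfrak{l}\cdot(\rho x)\big)$, the normalized degeneracy factor becomes $|\nabla w+\tfrac{\mathfrak{l}}{\lambda}|^{q}+\mathfrak{a}\,\lambda^{s-q}|\nabla w+\tfrac{\mathfrak{l}}{\lambda}|^{s}$; in the regime $|\mathfrak{l}|\lesssim\lambda$ the renormalized direction either stabilizes to a nonzero vector, making the limit a non‑degenerate uniformly parabolic equation, or it degenerates, in which case (using $\lambda^{s-q}\le1$ with $\lambda\to0$ to kill the $|\cdot|^{s}$ term, and \eqref{EqFZ1.6} to freeze $\mathfrak{a}(\rho x,\cdot)\to\mathfrak{a}(0,\cdot)$) the limit solves the homogeneous model $\partial_t w=|\nabla w|^{q}\Delta_{p}^{\mathrm{N}}w$ of \cite{JinSilv17,IJS19}; in both cases the limit enjoys interior $C^{1,\alpha}$ estimates, which yield the flatness improvement and contradict its failure. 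Iterating (ii) until (i) triggers --- or forever, in which case $|\mathfrak{l}_k|\le\mathrm{C}_0\lambda_k\to0$ and $\operatorname{osc}_{Q^{\lambda_k}_{r_k}}(u-\mathfrak{l}_k\cdot x)\le r_k\lambda_k$ directly --- gives $\nabla u\in C^{\alpha}(Q_{1/2})$ in space; the time exponent $\tfrac{1+\alpha}{2-\alpha q}$ is exactly what the intrinsic scaling $r\mapsto r$, $t\mapsto r^{2}\lambda^{-q}$ with $\lambda\sim r^{\alpha}$ forces.

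The step I expect to be the main obstacle is precisely the compactness lemma behind alternative (ii): one must juggle three coupled rescalings --- of space, of the solution, and of time through the degeneracy factor $\lambda^{-q}$ --- while $\nabla u+\mathfrak{l}/\lambda$ need not be small, so the limiting operator is not automatically uniformly parabolic and the two homogeneities $q$ and $s$ genuinely interact. The way out is to pick the intrinsic time‑scaling so that the $|\xi|^{q}$‑part normalizes, to treat the $\mathfrak{a}|\xi|^{s}$‑part as a lower‑order perturbation (permissible since $\lambda^{s-q}\le1$), and --- perhaps most delicately --- to prove the preliminary H\"older and Lipschitz estimates with constants independent both of $\varepsilon$ and of the affine adjustment $\mathfrak{l}$; this $\mathfrak{l}$‑independence is what makes the flatness improvement self‑propagating.
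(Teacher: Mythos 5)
This statement is a quoted theorem: in the paper it is explicitly attributed to Fang--Zhang (\cite[Theorem 2.3]{FZ23}), appears only in the literature-review subsection, and is used as a black box (for instance in the proof of Proposition~\ref{improvflat1}, where it supplies the interior $C^{1,\alpha}$ theory for the homogeneous model). The paper therefore contains no proof of it, so there is no in-paper argument for you to be compared against. Your sketch is a plausible reconstruction of the Fang--Zhang strategy as the paper describes it --- regularization, $\mathfrak{l}$-uniform H\"{o}lder/Lipschitz bounds via Ishii--Lions doubling, then Attouchi's ``method of alternatives'' in the intrinsic cylinders $Q^{\lambda_k}_{r_k}=B_{r_k}\times(-r_k^2\lambda_k^{-q},0]$ with a compactness lemma driving the improvement-of-flatness step --- and your accounting for the time exponent $\frac{1+\alpha}{2-\alpha q}$ from $\lambda\sim r^{\alpha}$ is the right bookkeeping. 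Note also that the paper's own main theorem (Theorem~\ref{Thm01}) runs a variant of this very scheme in Section~\ref{Section-Proof-Thm01}, but for the restricted range $0\le\mathfrak{p}\le\mathfrak{q}$, which is why it lands the simpler time exponent $\frac{1+\alpha}{2}$ rather than $\frac{1+\alpha}{2-\alpha q}$; your sketch, like Fang--Zhang's original, is aimed at the full range $-1<q\le s<\infty$, where the singular case $q<0$ requires the intrinsic exponent and additional care in Alternative~(ii) because $\lambda^{-q}\to0$ rather than $\to\infty$.
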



Subsequently, we summarize regularity estimates reported in the literature for several problems related to \eqref{Problem} in the table below:

\medskip 

\begin{center}
{\scriptsize{
\begin{tabular}{||c c c c||}
 \hline
{\it Model PDEs} & {\it Structural conditions }  & {\it Regularity estimates} & \textit{References} \\[1.5ex]
 \hline\hline
 $\partial_t u =  
\Delta_p^{\mathrm{N}}u$ & $p  \in (1, \infty)$ & $C_
{\text{loc}}^{1+\alpha, \frac{1+\alpha}{2}}$ &  \cite[Theorem 1.1]{JinSilv17}  \\\hline
  $\partial_t u -  
\Delta_p^{\mathrm{N}}u = f(x, t)$ & $p  \in (1, \infty)$ & $C_
{\text{loc}}^{1+\alpha, \frac{1+\alpha}{2}}$ &   \cite[Theorem 1.1]{AtouParv18}  \\\hline
 $\partial_t u =|\nabla u|^\gamma 
\Delta_p^{\mathrm{N}}u $ & $p  \in (1, \infty)$ \,\,\, \text{and}\,\,\, $\gamma \in (-1, \infty)$ & $C_{\text{loc}}^{1+\alpha, \frac{1+\alpha}{2-\alpha\gamma}}$ &  \cite[Theorem 1.1]{IJS19} \\
 \hline
  $\partial_t u - |\nabla u|^\gamma 
\Delta_p^{\mathrm{N}}u = f(x, t)$ & $p  \in (1, \infty)$ \,\,\, \text{and}\,\,\, $\gamma \in [0, \infty)$ & $C_{\text{loc}}^{1+\alpha, \frac{1+\alpha}{2}}$ &  \cite[Theorem 1.1]{Attouchi20} \\
 \hline
 $\partial_t u = \big[|\nabla u|^q + \mathfrak{a}(x, t)|\nabla u|^s\big]\Delta_p^{\mathrm{N}}u$ & \eqref{EqFZ1.4}-\eqref{EqFZ1.6} & $C_
{\text{loc}}^{1+\alpha, \frac{1+\alpha}{2-\alpha q}}$ &  \cite[Theorem 2.3]{FZ23} \\
\hline
$\partial_t u = \mathscr{H}(x,t, \nabla u)\Delta_p^{\mathrm{N}}u+ f(x, t)$ & $(\mathrm{H}1)-(\mathrm{H}4)$ & $C_
{\text{loc}}^{1+\alpha, \frac{1+\alpha}{2}}$ & Theorem \ref{Thm01} \\ \hline
$\partial_t u = \mathscr{H}(x,t, \nabla u)\Delta_p^{\mathrm{N}}u+ f(x, t)$ & $(\mathrm{H}1)-(\mathrm{H}4)$, $\partial_t u\geq 0$\,\,\text{and}\,\,$(x_0, t_0) \in \mathcal{S}_u(Q_1)$ & $\text{par}-C_
{(x_0, t_0)}^{1+\frac{1}{p-1}}$ & Theorem \ref{Optimal_continuity} \\ \hline
\end{tabular}
}}
\end{center}

\medskip

\subsubsection{Further related results and applications}\label{Applic}

Now, concerning divergence models, De Filippis in \cite[Theorem 1]{DeF20} addressed quantitative gradient bounds (higher Sobolev regularity) for weak solutions to parabolic double-phase equations, whose simplest model equation is given by:
$$
\left\{
\begin{array}{rcrcl}
\partial_t u - \text{div} \big( \left(|\nabla u|^{p-2} + a(x, t) |\nabla u|^{q-2} \right)\nabla u \big) & = & 0 & \text{in} & \Omega_T,\\
u & = & g & \text{on} & \partial_{\text{par}} \Omega_T
\end{array}
\right.
$$
where \( g \in C_{\text{loc}}(\mathbb{R}; L^2_{\text{loc}}(\mathbb{R}^n)) \cap L^r_{\text{loc}}(\mathbb{R}; W^{1,r}_{\text{loc}}(\mathbb{R}^n)) \), and \( r = p'(q-1) \).

Finally, Fang-Zhang in \cite{FZ22} explored the connection between weak and viscosity solutions of parabolic equations exhibiting nonstandard growth conditions as follows
\[
\partial_t u - \text{div} \left( |\nabla u|^{p-2} \nabla u + \sum_{i=1}^k |\nabla u|^{q_i-2} \nabla u \right) = f(x,t,u, \nabla u),
\]
where \( 1 < p \leq q_1 \leq q_2 \leq \dots \leq q_k < \infty \). Moreover, the following assumptions are imposed on \( f \):

\begin{itemize}
    \item[(F1)] \( f(x, t, \tau, \xi) \) decreases monotonically with respect to \( \tau \) and is uniformly continuous for all variables within \( \Xi \times \mathbb{R} \times \mathbb{R}^n \).
    \item[(F2)] \( f(x, t, \tau, \xi) \) adheres to the growth condition:
    \[
    |f(x, t, \tau, \xi)| \leq \gamma(|\tau|)(1 + |\xi|^\beta) + \phi(x, t),
    \]
    where \( 1 \leq \beta < q \), \( \phi \in L^\infty_{\text{loc}}(\Xi) \), and \( \gamma(\cdot) \geq 0 \) is continuous on \( \mathbb{R}^+ \).
    \item[(F3)] \( f(x, t, \tau, \xi) \) exhibits local Lipschitz continuity concerning \( \xi \) in \( \Xi \times \mathbb{R} \times \mathbb{R}^n \).
\end{itemize}

More precisely, they proved (see, \cite[Theorem 3.7]{FZ22}) that bounded viscosity solutions coincide with bounded weak solutions that are continuous.
\medskip

\subsection*{Applications in nonlinear models with $(p\&q)$-type growth}

As a consequence of our results, by combining the equivalence established earlier with Theorem \ref{Thm01}, we investigate the regularity of weak solutions for nonlinear problems with \((p, q)\)-growth, described as follows:
\begin{eqnarray}\label{eqpqparab}
\partial_{t} u-\operatorname{div}\left((|\nabla u|^{p-2}+\mathfrak{a}(x,t)|\nabla u|^{q-2})\nabla u\right)= f(x,t),
\end{eqnarray}
where \(2 \leq p \leq q < \infty\) and \(\mathfrak{a}(x,t) \equiv \mathfrak{a}_{0}\) for some positive constant \(\mathfrak{a}_{0}\). 

In this context, the operator is given by
\[
\mathcal{L}u := \operatorname{div}\left((|\nabla u|^{p-2}+\mathfrak{a}_{0}|\nabla u|^{q-2})\nabla u\right),
\]
which can be expressed in its non-variational form as:
\[
\mathcal{L}u = \mathscr{H}_{p,q}(x,t,\nabla u)\Delta_{p}^{\rm{N}}u+\mathfrak{a}_{0}(q-p)|\nabla u|^{q-2}\Delta_{\infty}^{\rm{N}}u.
\]
Here,
\[
\mathscr{H}_{p,q}(x,t,\xi) = |\xi|^{p-2} + \mathfrak{a}_{0}|\xi|^{q-2}, \quad \text{and} \quad 
\Delta_{\infty}^{\mathrm{N}}u = \left\langle D^{2}u\frac{\nabla u}{|\nabla u|},\frac{\nabla u}{|\nabla u|}\right\rangle
\]
is the normalized \(\infty\)-Laplacian operator.

Under these conditions, and under suitable extra assumptions, we obtain the following result as a direct application of Theorem \ref{Thm01}:

\begin{corollary}
Let \(u \in C^0(Q_1)\) be a bounded viscosity solution of \eqref{eqpqparab} in \(Q_{1}\). Assume one of the following conditions holds:
\begin{itemize}
    \item [(i)] \(\mathcal{G}(D^{2}u,\nabla u) := |\nabla u|^{q-2}\Delta_{\infty}^{\mathrm{N}}u \in L^{\infty}(Q_{1})\) and \(f \in L^{\infty}(Q_{1})\).
    \item [(ii)] \(\Delta_{\infty}^{\mathrm{N}}u, f \in L^{\infty}(Q_{1})\), \(u \in C^{0,1}_{x}(Q_{1})\), and \(q \geq 2\).
\end{itemize}

Then, \(u \in C^{1,\alpha,\frac{1+\alpha}{2}}_{loc}(Q_{1})\) for some \(\alpha \in \left(0, \frac{1}{p-1}\right)\). Moreover, the following estimates hold:
\[
\sup_{(x,t),(y,s)\in Q_{1/2}\atop{ (x,t)\neq (y,s)}}\frac{|\nabla u(x, t) - \nabla u(y, s)|}{|x - y|^\alpha + |t - s|^{\frac{\alpha}{2}} } \leq \mathrm{C}
\left[\|u\|_{L^\infty(Q_1)} + \left(\|f\|_{L^\infty(Q_1)}+\|\mathcal{G}\|_{L^{\infty}(Q_1)}\right)^{\frac{1}{1+\mathfrak{p}}}
\right],
\]
and
\[
\sup_{(x,t),(x,s)\in Q_{1/2}\atop{ t\neq s}}\frac{|u(x, t) - u(x, s)|}{|s - t|^{\frac{1+\alpha}{2}}} \leq \mathrm{C}
\left[\|u\|_{L^\infty(Q_1)} + \left(\|f\|_{L^\infty(Q_1)}+\|\mathcal{G}\|_{L^{\infty}(Q_1)}\right)^{\frac{1}{1+\mathfrak{p}}}
\right].
\]
\end{corollary}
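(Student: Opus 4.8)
The plan is to reduce the problem to a direct application of Theorem~\ref{Thm01} by showing that, under either hypothesis (i) or (ii), the bounded viscosity solution $u$ of the divergence-form equation \eqref{eqpqparab} is in fact a bounded viscosity solution of an equation of the type \eqref{Problem} with a suitably modified, bounded source term. First I would rewrite the operator $\mathcal{L}$ in its non-variational form, as already displayed in the excerpt:
\[
\mathcal{L}u = \mathscr{H}_{p,q}(x,t,\nabla u)\Delta_{p}^{\mathrm{N}}u + \mathfrak{a}_{0}(q-p)|\nabla u|^{q-2}\Delta_{\infty}^{\mathrm{N}}u,
\]
so that \eqref{eqpqparab} becomes
\[
\partial_t u = \mathscr{H}_{p,q}(x,t,\nabla u)\,\Delta_{p}^{\mathrm{N}}u + \widetilde f(x,t),
\qquad
\widetilde f(x,t) := f(x,t) - \mathfrak{a}_{0}(q-p)\,\mathcal{G}(D^2u,\nabla u),
\]
where $\mathcal{G}(D^2u,\nabla u) = |\nabla u|^{q-2}\Delta_\infty^{\mathrm N}u$. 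Here $\mathscr{H}_{p,q}$ has exactly the structure of $\mathscr{H}$ in \eqref{Problem} with the identifications $\mathfrak{p} = p-2 \ge 0$, $\mathfrak{q} = q-2 \ge \mathfrak{p}$ (using $2 \le p \le q$), and modulating coefficient $\mathfrak{a}(x,t)\equiv \mathfrak{a}_0$, which trivially satisfies (H2) and (H3) with $\mathfrak{A}_0 = 0$. Thus (H1)--(H3) hold, and it remains to verify (H4) for $\widetilde f$, i.e.\ that $\widetilde f \in L^\infty(Q_1)\cap C^0(Q_1)$.

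Under hypothesis (i), $\mathcal{G} \in L^\infty(Q_1)$ and $f\in L^\infty(Q_1)$ are assumed directly, giving $\widetilde f \in L^\infty(Q_1)$; the continuity is part of the standing regularity. Under hypothesis (ii), I would instead write $\mathcal{G}(D^2u,\nabla u) = |\nabla u|^{q-2}\Delta_\infty^{\mathrm N}u$ and bound it by $\|\nabla u\|_{L^\infty(Q_1)}^{q-2}\|\Delta_\infty^{\mathrm N}u\|_{L^\infty(Q_1)}$, using $q\ge 2$ so that the exponent $q-2$ is nonnegative and the Lipschitz bound $u \in C^{0,1}_x(Q_1)$ keeps $|\nabla u|^{q-2}$ bounded; combined with $\Delta_\infty^{\mathrm N}u \in L^\infty(Q_1)$ this again yields $\widetilde f \in L^\infty(Q_1)$. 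Once (H1)--(H4) are in place for the rewritten equation, Theorem~\ref{Thm01} applies verbatim: it produces $\alpha\in\bigl(0,\tfrac{1}{1+\mathfrak p}\bigr) = \bigl(0,\tfrac{1}{p-1}\bigr)$ and a constant $\mathrm C = \mathrm C(p,q,n)$ such that the two displayed seminorm estimates hold with $\|f\|_{L^\infty(Q_1)}$ replaced by $\|\widetilde f\|_{L^\infty(Q_1)}^{1/(1+\mathfrak p)} \le \mathrm C\bigl(\|f\|_{L^\infty(Q_1)} + \|\mathcal G\|_{L^\infty(Q_1)}\bigr)^{1/(1+\mathfrak p)}$, which is precisely the asserted bound (after absorbing the harmless constant $\mathfrak{a}_0(q-p)$ into $\mathrm C$).

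The main obstacle — and the only genuinely nontrivial point — is the passage between the divergence-form and non-divergence-form notions of viscosity solution, that is, justifying that a bounded viscosity solution of \eqref{eqpqparab} is a viscosity solution of $\partial_t u = \mathscr{H}_{p,q}(x,t,\nabla u)\Delta_p^{\mathrm N}u + \widetilde f$ in the sense required by Theorem~\ref{Thm01}, including the correct handling of the test-function condition at points of the singular set $\mathcal{S}_u(Q_1)$ where $\nabla u = 0$ and the operators are not defined pointwise. I would address this by invoking the standard equivalence between weak and viscosity solutions for $(p,q)$-growth parabolic equations (in the spirit of \cite{FZ22}) to first obtain that $u$ is locally Lipschitz in space, and then verifying the viscosity inequalities directly for test functions touching $u$, using the usual convention (as in \cite{Attouchi20}, \cite{IJS19}) that when the touching test function has vanishing spatial gradient the term $|\nabla u|^{\mathfrak q-2}\Delta_\infty^{\mathrm N}u$ is interpreted via $\liminf$/$\limsup$ and contributes harmlessly because $\mathfrak q - 2 = q-2 \ge 0$. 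All remaining steps — the algebraic rewriting of $\mathcal L$, the $L^\infty$ bounds on $\widetilde f$, and the bookkeeping of constants and exponents — are routine.
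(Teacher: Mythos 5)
Your proposal reconstructs exactly the argument the paper intends: rewrite $\mathcal{L}$ in non-divergence form, move the $\mathfrak{a}_0(q-p)|\nabla u|^{q-2}\Delta_\infty^{\mathrm{N}}u$ term into the right-hand side as a bounded source $\widetilde f$, identify $\mathfrak{p}=p-2$, $\mathfrak{q}=q-2$, $\mathfrak{a}\equiv\mathfrak{a}_0$ so that (H1)--(H4) hold under each of (i)/(ii), and apply Theorem~\ref{Thm01} together with the viscosity/weak equivalence of \cite{FZ22}. Two minor nitpicks: from $\partial_t u - \mathcal{L}u = f$ one gets $\widetilde f = f + \mathfrak{a}_0(q-p)\mathcal{G}$ (your minus sign is a harmless typo for the $L^\infty$ bound), and the continuity part of (H4) for $\widetilde f$ is taken for granted since $\mathcal{G}\in L^\infty(Q_1)$ alone does not yield $\mathcal{G}\in C^0(Q_1)$, though this gap is equally present in the corollary's own hypotheses and is not introduced by you.
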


\section{Preliminaries}\label{Sec-Prelim}

Throughout this work, we will adopt the following notation: For \( x_0 \in \mathbb{R}^n \), \( t_0 \in \mathbb{R} \), and \( r > 0 \), we denote the Euclidean ball by  
\[
B_r(x_0) = B(x_0, r) := \{ x \in \mathbb{R}^n: |x - x_0| < r \},
\]
and the parabolic cylinder by  
\[
Q_r(x_0, t_0) := B_r(x_0) \times (t_0 - r^2, t_0].
\]
Additionally, we define the re-scaled (or intrinsic) parabolic cylinders as  
\[
Q_r^\lambda(x_0, t_0) := B_r(x_0) \times (t_0 - r^2 \lambda^{-\mathfrak{p}}, t_0],
\]
which are appropriately adjusted to account for the degeneracy of \eqref{Problem}. When \( x_0 = 0 \) and \( t_0 = 0 \), we omit the centers in the above notations.

We also introduce the intrinsic $\theta-$parabolic cylinder
\begin{equation}\label{parabolic cylinder}
Q_{\rho,\theta}(x_0,t_0)\coloneqq B_\rho(x_0)\times I_{\rho,\theta}(t_0),
\end{equation}
where $I_{\rho,\theta}(t_0)\coloneqq(-\rho^\theta+t_0, t_0]$ with $\theta>1$. We also denote $Q_1\coloneqq Q_{1,\theta}$.

Consider the set \(\Omega_{\mathrm{T}}=\Omega\times(-\mathrm{T},0]\) for some \(\mathrm{T}>0\) and \(\Omega\subset\mathbb{R}^{n}\) an open set, we define
\[ \partial_{par} \Omega_{\mathrm{T}}=(\partial \Omega \times [-\mathrm{T},0))\cup (\Omega\times \{t=-\mathrm{T}\}) \] represents its parabolic boundary. Moreover, we define the parabolic distance between the points \(\mathrm{P}_{1}=(x,t), \mathrm{P}_{2}=(y,s)\) in \(\mathbb{R}^{n+1}\) by
\[
d(\mathrm{P}_{1},\mathrm{P}_{2})=|x-y|+|t-s|^{\frac{1}{2}}.
\]

We will use the following notations for the parabolic Hölder spaces: the semi-norms for \( \alpha \in (0, 1] \),
\[
[u]_{C^{\alpha, \alpha/2}(Q_r)} := \sup_{\substack{(x,t), (y,s) \in Q_r \\ (x,t) \neq (y,s)}} \frac{|u(y, s) - u(x, t)|}{|x - y|^\alpha + |t - s|^{\alpha/2}},
\]
\[
[u]_{C^{0, \alpha}_{x}(Q_r)} := \sup_{\substack{(x,t), (y,t) \in Q_r \\ x \neq y}} \frac{|u(y, s) - u(x, t)|}{|x - y|^\alpha}
\]
and the norm  
\[
\|u\|_{C^{\alpha, \alpha/2}(Q_r)} := \|u\|_{L^\infty(Q_r)} + [u]_{C^{\alpha, \alpha/2}(Q_r)}.
\]

The space \( C^{1+\alpha, (1+\alpha)/2}(Q_r) \) is defined as the set of all functions with finite norm  
\[
\|u\|_{C^{1+\alpha, (1+\alpha)/2}(Q_r)} := \|u\|_{L^\infty(Q_r)} + \|\nabla u\|_{L^\infty(Q_r)} + [u]_{C^{1+\alpha, (1+\alpha)/2}(Q_r)},
\]
where  
\[
[u]_{C^{1+\alpha, (1+\alpha)/2}(Q_r)} := \sup_{\substack{(x,t), (y,s) \in Q_r \\ (x,t) \neq (y,s)}} \frac{|\nabla u(x, t) - \nabla u(y, s)|}{|x - y|^\alpha + |t - s|^{\alpha/2}} 
+ \sup_{\substack{(x,t), (x,s) \in Q_r \\ t \neq s}} \frac{|u(x, t) - u(x, s)|}{|t - s|^{(1+\alpha)/2}}.
\]

Now, we will establish the notion of the solution considered in this paper.

\begin{definition}[{\bf Viscosity solution  - \cite{Attouchi20}, \cite{AtouParv18} and \cite{Demengel11}}] 
A function \( u \) that is locally bounded and upper semi-continuous in \( Q_1 \) is referred to as a viscosity subsolution of \eqref{Problem} if, for any point \( (x_0, t_0) \in Q_1 \), one of the following conditions is satisfied:  

\begin{enumerate}
    \item Either for every \( \phi \in C^{2,1}(Q_1) \), such that \( u - \phi \) attains a local maximum at \( (x_0, t_0) \) and \( \nabla\phi(x_0, t_0) \neq 0 \), it holds that  
    \[
    \partial_t \phi(x_0, t_0) - \mathscr{H}(x_0, t_0, \nabla\phi(x_0, t_0)) \Delta_{p}^{\mathrm{N}}\phi(x_0, t_0) \leq f(x_0, t_0).
    \]
    \item Or if there exist \( \delta_1 > 0 \) and \( \phi \in C^{1}((t_0 - \delta_1, t_0 + \delta_1)) \) such that  
    \[
    \begin{cases} 
    \phi(t_0) = 0, \\ 
    u(x_0, t_0) \geq u(x_0, t) - \phi(t), \quad \forall t \in (t_0 - \delta_1, t_0 + \delta_1), \\ 
    \displaystyle \sup_{t \in (t_0 - \delta_1, t_0 + \delta_1)} (u(x, t) - \phi(t)) \text{ is constant in a neighborhood of } x_0,
    \end{cases}
    \]
    then  
    \[
    \phi'(t_0) \leq f(x_0, t_0).
    \]
\end{enumerate}

Similarly, a function \( u \) that is locally bounded and lower semi-continuous in \( Q_1 \) is called a viscosity supersolution of \eqref{Problem} if, for any point \( (x_0, t_0) \in Q_1 \), one of the following conditions is fulfilled:  

\begin{enumerate}
    \item Either for every \( \phi \in C^{2,1}(Q_1) \), such that \( u - \phi \) attains a local minimum at \( (x_0, t_0) \) and \( D\phi(x_0, t_0) \neq 0 \), it holds that  
    \[
    \partial_t \phi(x_0, t_0) - \mathscr{H}(x_0, t_0, \nabla\phi(x_0, t_0)) \Delta_{p}^{\mathrm{N}}\phi(x_0, t_0)  \geq f(x_0, t_0).
    \]
    \item Or if there exist \( \delta_1 > 0 \) and \( \phi \in C^{1}((t_0 - \delta_1, t_0 + \delta_1)) \) such that  
    \[
    \begin{cases} 
    \phi(t_0) = 0, \\ 
    u(x_0, t_0) \leq u(x_0, t) - \phi(t), \quad \forall t \in (t_0 - \delta_1, t_0 + \delta_1), \\ 
    \displaystyle \inf_{t \in (t_0 - \delta_1, t_0 + \delta_1)} (u(x, t) - \phi(t)) \text{ is constant in a neighborhood of } x_0,
    \end{cases}
    \]
    then  
    \[
    \phi'(t_0) \geq f(x_0, t_0).
    \]
\end{enumerate}

Finally, a continuous function \( u \) is called a viscosity solution of \eqref{Problem} if it satisfies the conditions for both a viscosity subsolution and a viscosity supersolution.
\end{definition}

In the sequel, we will require a stability result for problem \eqref{Problem}. The proof follows the framework established in \cite{Attouchi20} and \cite{FZ23}; therefore, we omit the details here for brevity.

\begin{lemma}[{\bf Stability}]\label{estabilidade}
Suppose that \((u_{k})_{k\in\mathbb{N}}\) is a sequence of continuous functions in \(Q_{\mathrm{T}}\) which converges locally uniformly to be a function \(u\). Moreover, suppose that \(u_{k}\) is a viscosity solution to
\[
\partial_{t}u_{k}-(|\nabla u_{k}|^{\mathfrak{p}}+\mathfrak{a}_{k}(x,t)|\nabla u_{k}|^{\mathfrak{q}})\Delta_{p}^{\mathrm{N}}u_{k}=f_{k}(x,t)\,\,\ \text{in}\,\,\, Q_{\mathrm{T}}.
\]
Also, assume that \(f_{k}\in L^{\infty}(Q_{\mathrm{T}})\cap C^0(Q_{\mathrm{T}})\) and \(\mathfrak{a}_{k}\in C^0(Q_{\mathrm{T}})\) converges locally uniformly to functions \(f\) and \(\mathfrak{a}\), respectively. Then, \(u\) is a viscosity solution to 
\[
\partial_{t}u-(|\nabla u|^{\mathfrak{p}}+\mathfrak{a}(x,t)|\nabla u|^{\mathfrak{q}})\Delta_{p}^{\mathrm{N}}u=f(x,t)\,\,\ \text{in}\,\,\, Q_{\mathrm{T}}.
\]
\end{lemma}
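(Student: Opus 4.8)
## Proof Proposal for the Stability Lemma

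The plan is to pass to the limit in the viscosity inequalities, handling separately the two alternatives in the definition of viscosity sub/supersolution, exactly as in the standard stability theory for viscosity solutions of degenerate parabolic equations. I will prove that the locally uniform limit $u$ is a viscosity subsolution; the supersolution case is entirely symmetric, and since each $u_k$ is continuous and the convergence is locally uniform, $u$ is continuous and hence a viscosity solution.

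First I would fix $(x_0,t_0)\in Q_{\mathrm{T}}$ and a test function $\phi\in C^{2,1}(Q_1)$ such that $u-\phi$ has a strict local maximum at $(x_0,t_0)$ (the strictness is achieved at no cost by subtracting a term like $|x-x_0|^4+|t-t_0|^2$, which does not alter the first or second derivatives at $(x_0,t_0)$). By the locally uniform convergence $u_k\to u$, there is a sequence of points $(x_k,t_k)\to(x_0,t_0)$ at which $u_k-\phi$ attains a local maximum. Here the argument bifurcates according to whether $\nabla\phi(x_0,t_0)\neq 0$ or $\nabla\phi(x_0,t_0)=0$. In the first case, $\nabla\phi(x_k,t_k)\neq 0$ for $k$ large by continuity, so $u_k$ being a subsolution gives
\[
\partial_t\phi(x_k,t_k)-\big(|\nabla\phi(x_k,t_k)|^{\mathfrak p}+\mathfrak a_k(x_k,t_k)|\nabla\phi(x_k,t_k)|^{\mathfrak q}\big)\Delta_p^{\mathrm N}\phi(x_k,t_k)\le f_k(x_k,t_k),
\]
and one passes to the limit using the continuity of $\phi,\nabla\phi,D^2\phi$, the locally uniform convergence $\mathfrak a_k\to\mathfrak a$ and $f_k\to f$, and the fact that $\mathscr H$ is continuous in $(x,t,\xi)$ for $\xi\neq 0$; this yields the desired inequality at $(x_0,t_0)$. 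When $\nabla\phi(x_0,t_0)=0$, one cannot test directly because $\Delta_p^{\mathrm N}\phi$ is singular there; instead I would invoke the standard fact (see \cite{Attouchi20}, \cite{AtouParv18}, \cite{Demengel11}) that for this class of 1-homogeneous normalized operators it suffices to check the subsolution property against test functions with non-vanishing gradient together with the second ``flatness'' alternative, and verify that alternative is inherited in the limit: if $\phi=\phi(t)$ depends only on $t$ with $\phi(t_0)=0$ and $u(x_0,t)-\phi(t)\le u(x_0,t_0)$ with the sup in a neighborhood of $x_0$ being constant, then the same configuration for $u_k$ holds up to a vanishing error, the inequality $\phi'(t_k)\le f_k(x_k,t_k)$ passes to the limit, giving $\phi'(t_0)\le f(x_0,t_0)$.

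The main obstacle — and the reason the lemma is not completely trivial — is the singular set $\{\nabla\phi=0\}$, where the operator $\mathscr H(x,t,\nabla\phi)\Delta_p^{\mathrm N}\phi$ is not defined in the classical sense and the two competing alternatives in the definition of viscosity solution must be reconciled. Concretely, one must ensure that the limit of the maximum points $(x_k,t_k)$ does not produce a test inequality that is meaningless, and that the structural restriction on the admissible reduced test functions (the second alternative) is stable under locally uniform limits. This is handled by a now-standard perturbation/doubling argument in the references cited, which is why the authors are justified in omitting the details; the remaining verifications are routine continuity arguments. I would remark, finally, that the hypotheses $(\mathrm{H}1)$–$(\mathrm{H}2)$ guarantee $\mathfrak a_k\ge\mathfrak a^->0$ and the exponents are ordered, so no degeneracy in the coefficient itself obstructs the passage to the limit; only the gradient-degeneracy of $\Delta_p^{\mathrm N}$ matters, and that is exactly what the two-alternative definition is designed to accommodate.
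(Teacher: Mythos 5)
Your plan is the standard viscosity-stability argument, and the paper itself does not present a proof of this lemma: it simply cites \cite{Attouchi20} and \cite{FZ23}. So there is no in-paper proof to compare against. Your treatment of the first alternative is correct: for a $C^{2,1}$ test function $\phi$ touching $u$ from above at $(x_0,t_0)$ with $\nabla\phi(x_0,t_0)\neq 0$, make the maximum strict, locate nearby maxima $(x_k,t_k)\to(x_0,t_0)$ of $u_k-\phi$, use the subsolution inequality for $u_k$ at $(x_k,t_k)$ (valid since $\nabla\phi(x_k,t_k)\neq 0$ for $k$ large), and pass to the limit using the locally uniform convergence of $\mathfrak a_k,f_k$ and the continuity of $\mathscr H$ and $\Delta_p^{\mathrm N}\phi$ away from $\{\nabla\phi=0\}$.

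Your description of how the second alternative is inherited in the limit, however, is not accurate and would not survive being written out. Given a time-only test function $\phi(t)$ satisfying the constant-sup structure of alternative (2) for $u$, it is \emph{not} the case that ``the same configuration for $u_k$ holds up to a vanishing error'': the requirement that $\sup_{t}\bigl(u_k(x,t)-\phi(t)\bigr)$ be \emph{constant} in a spatial neighborhood of $x_0$ is rigid and is destroyed by an arbitrarily small uniform perturbation of $u$, so you cannot invoke alternative (2) for $u_k$ with a perturbed time-only $\phi$. The actual argument (the one implicit in \cite{Attouchi20} and \cite{Demengel11}, going back to the Ohnuma--Sato treatment of singular parabolic equations) replaces $\phi(t)$ by a genuine space-time perturbation $\psi(x,t)=\phi(t)+g(|x-x_0|)+\sigma(t-t_0)^2$ with $g\geq 0$ smooth, $g(0)=g'(0)=0$, $g>0$ for $r>0$, and $g$ vanishing to high order at $r=0$ (say $g(r)=r^4$). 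One then takes maximum points $(x_k,t_k)\to(x_0,t_0)$ of $u_k-\psi$ and performs a dichotomy. If $x_k\neq x_0$ along a subsequence, then $\nabla\psi(x_k,t_k)\neq 0$ and alternative (1) applies to $u_k$; the crucial point is that $\mathscr H(x_k,t_k,\nabla\psi(x_k,t_k))\,\Delta_p^{\mathrm N}\psi(x_k,t_k)\to 0$, and this must be argued: when $\mathfrak p>0$ the factor $\mathscr H$ vanishes in the limit while $\Delta_p^{\mathrm N}\psi$ stays bounded, but when $\mathfrak p=0$ (allowed by $(\mathrm H1)$) the factor $\mathscr H\to 1+\mathfrak a(x_0,t_0)\cdot 0^{\mathfrak q}$ does not decay, and one needs $D^2\psi(x_k,t_k)\to 0$, which is exactly why $g$ must vanish to high order at $0$. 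If instead $x_k=x_0$ along a subsequence, one applies alternative (2) for $u_k$ directly. Only after this dichotomy does one recover $\phi'(t_0)\leq f(x_0,t_0)$ in the limit. This perturbation and dichotomy are the real technical content that your phrase ``up to a vanishing error'' elides.
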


\section{Existence of viscosity solutions}\label{Sec3-Exist-Sol}

In this section, we will be interested in the existence and uniqueness of solutions for the problem \eqref{Problem}.

For what follows, we need to introduce the concepts of parabolic sub/super differentials of a function $v$  at a point $ (x,t)$. Precisely, we define
\begin{eqnarray*}
\mathcal{J}^{\pm}(v)(x,t)=\{(\tau,\xi,\mathrm{X})\in\mathbb{R}\times\mathbb{R}^{n}\times \text{Sym}(n) \, |\, (\tau, \xi,\mathrm{X})=(\phi_{t}(x,t),\nabla \phi(x,t), D^{2}\phi(x,t)),\\ \, \text{for}\, \phi\in C^{2,1} \, \text{touching}\, v\, \text{from below (above) at}\, (x,t) \}
\end{eqnarray*}
and the associated limiting super-/sub-differentials
\begin{eqnarray*}
\overline{\mathcal{J}}^{\pm}(v)(x,t)=\left\{(\tau,\xi,\mathrm{X})\in\mathbb{R}\times\mathbb{R}^{n}\times \text{Sym}(n) \, |\,\exists(x_{k},t_{k})\to (x,t),\right.\\  \exists(\tau_{k}, \xi_{k},\mathrm{X}_{k})\in\mathcal{J}^{\pm}(u)(x_{k},t_{k}) \,\text{such that}\\ \left.(\tau_{k},\xi_{k},\mathrm{X}_{k})\to (\tau,\xi,\mathrm{X}) \quad \text{and}\quad v(x_{k},t_{k})\to v(x,t)\right\}.
\end{eqnarray*}

In the sequel, we present Jensen-Ishii's Lemma, a fundamental tool for obtaining compactness results in the theory of partial differential equations. For further details, we refer the reader to \cite[Theorem 12.2]{Crandall} and \cite[Theorem 8.3]{CrandallIshiiLions}.

\begin{lemma}[\bf Jensen-Ishii's Lemma]\label{JensenIshii}
Let $u_i$ be a upper semicontinuous function in $Q_1$ for $i=1,\ldots,k$. Let $\varphi$ be defined on $(B_1)^k \times (-1,0)$ and such that the function 
\[
(x_1,\dots,x_k,t) \to \varphi(x_1,\dots,x_k,t) 
\]
is once continuously differentiable in $t$ and twice continuously differentiable in $(x_1,\dots,x_k) \in (B_1)^k$. Suppose that 
\[
w(x_1,\dots,x_k,t) := u_1(x_1,t) + \cdots + u_k(x_k,t) - \varphi(x_1,\dots,x_k,t)
\]
attains a local maximum at $(\bar{x}_1,\dots,\bar{x}_k,\bar{t}) \in (B_1)^k \times (-1,0)$. Assume, moreover, that there exists an $r > 0$ such that for every $\mathrm{M}_{\star} > 0$ there is a constant $\mathrm{C}_{\star}$ such that for $i=1,\ldots,k$,
\[
b_i \leq \mathrm{C}_{\star} \quad \text{whenever } (b_i,\vec{q}_i,X_i) \in \mathcal{J}^{+}u_i(x_i,t),
\]
\[
|x_i - \bar{x}_i| + |t-\bar{t}| \leq r, \quad \text{and} \quad |u_i(x_i,t)| + |\vec{q}_i| + \|\mathrm{X}_i\| \leq \mathrm{C}_{\star}.
\]
Then for each $\varepsilon > 0$, there exist $\mathrm{X}_i \in \mathrm{Sym}(n)$ such that:
\begin{itemize}
\item[(i)] $(b_i, D_{x_i}\varphi(\bar{x}_1,\ldots,\bar{x}_k),\mathrm{X}_i) \in \overline{\mathcal{J}}^{+}u_i(\bar{x}_i,\bar{t})$ for $i=1,\dots,k$,
    \item[(ii)] $- \left( \frac{1}{\varepsilon} + \|\mathrm{A}\| \right) \mathrm{Id}_n \leq \begin{pmatrix} \mathrm{X}_1 & & 0 \\ & \ddots & \\ 0 & & \mathrm{X}_k \end{pmatrix} \leq \mathrm{A} + \varepsilon \mathrm{A}^2$,
    \item[(iii)] $b_1 + \cdots + b_k = \varphi_t(\bar{x}_1,\ldots,\bar{x}_k,\bar{t}),$
\end{itemize}
where $\mathrm{A} = D^2 \varphi(\bar{x}_1,\ldots,\bar{x}_k,\bar{t})$.
\end{lemma}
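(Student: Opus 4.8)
The plan is to follow the classical route to the parabolic \emph{theorem on sums} of Crandall--Ishii--Lions, adapted to the parabolic jets $\mathcal{J}^{\pm},\overline{\mathcal{J}}^{\pm}$ defined above. The first move is a reduction to a quadratic test function. Replacing $\varphi$ by its second-order Taylor polynomial at $(\bar x_1,\dots,\bar x_k,\bar t)$ plus the harmless higher-order perturbation $\sum_i|x_i-\bar x_i|^{4}+(t-\bar t)^{2}$ (and shrinking the neighborhood), we may assume the local maximum of $w$ is strict and that $\varphi$ is, modulo errors that are negligible for the limiting argument, quadratic in $(x_1,\dots,x_k)$; the quartic correction guarantees that argmaxima depend continuously on small perturbations. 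The matrix $A=D^{2}\varphi$ at the maximum is unaffected by this reduction.

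\noindent\textbf{Sup-convolution regularization.} For small $\lambda>0$ I would replace each $u_i$ by its spatial sup-convolution
\[
\hat u_i^{\lambda}(x_i,t)=\sup_{y\in B_1}\left(u_i(y,t)-\frac{|x_i-y|^{2}}{2\lambda}\right),
\]
which is semiconvex in $x_i$ with constant $1/\lambda$, jointly upper semicontinuous, and enjoys the usual ``magic property'': if $(b,q,X)\in\mathcal{J}^{+}\hat u_i^{\lambda}(x_i,t)$ then $(b,q,X)\in\mathcal{J}^{+}u_i(x_i+\lambda q,t)$, with the spatial point merely shifted by $\lambda q$. It is precisely here that the hypothesis ``$b_i\le \mathrm{C}_\star$ for all nearby subjets'' is used: it forces the sup to be attained at points close to $x_i$ with controlled time component, so the regularization is well posed and stable. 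The perturbed summed function $w^{\lambda}:=\sum_i\hat u_i^{\lambda}-\varphi$ then has a near-maximum at a point $(\bar x_1^{\lambda},\dots,\bar x_k^{\lambda},\bar t^{\lambda})$, and $(\bar x_i^{\lambda},\bar t^{\lambda})\to(\bar x_i,\bar t)$ as $\lambda\to 0$ by the strictness of the maximum.

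\noindent\textbf{Jensen's lemma and the matrix inequality.} Fixing $t=\bar t^{\lambda}$, apply Jensen's lemma to the semiconvex map $(x_1,\dots,x_k)\mapsto\sum_i\hat u_i^{\lambda}(x_i,\bar t^{\lambda})-\varphi$: for a dense set of small linear perturbations $p=(p_1,\dots,p_k)$ the perturbed function has a local maximum at a point where, by Alexandrov's theorem, every $\hat u_i^{\lambda}(\cdot,\bar t^{\lambda})$ is twice differentiable; at such a point the block-diagonal Hessian satisfies $\operatorname{diag}(D^{2}_{x_i}\hat u_i^{\lambda})\le D^{2}\varphi=A$, while semiconvexity gives the lower bound $\ge-\tfrac1\lambda\,\mathrm{Id}$. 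Letting $p\to0$ along such points produces matrices $X_i^{\lambda}$ with $\operatorname{diag}(X_i^{\lambda})\le A$ and $\ge-\tfrac1\lambda\,\mathrm{Id}$, which the standard block-matrix manipulation (``\,$-(\varepsilon^{-1}+\|A\|)\mathrm{Id}\le\operatorname{diag}(X_i)\le A+\varepsilon A^{2}$''-lemma of the User's Guide) converts to the precise form in (ii). The first-order condition at the maximum yields $D_{x_i}\hat u_i^{\lambda}=D_{x_i}\varphi$, and summing the time-derivative relation for $w^{\lambda}$ gives $\sum_i b_i^{\lambda}=\varphi_t$.

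\noindent\textbf{Passage to the limit, and the main obstacle.} Finally let $\lambda\to0$: the shifted spatial points $x_i^{\lambda}+\lambda q_i^{\lambda}\to\bar x_i$, the triples $(b_i^{\lambda},q_i^{\lambda},X_i^{\lambda})$ are bounded (the $b_i$ by hypothesis, $q_i^{\lambda}=D_{x_i}\varphi\to D_{x_i}\varphi(\bar x_1,\dots,\bar x_k,\bar t)$, and $X_i^{\lambda}$ by the two-sided matrix bounds), hence converge along a subsequence, and by the very definition of $\overline{\mathcal{J}}^{+}$ the limits lie in $\overline{\mathcal{J}}^{+}u_i(\bar x_i,\bar t)$, giving (i); (ii) and (iii) pass to the limit directly. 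The technical heart — and the place where one must be most careful — is the parabolic time variable: unlike in the elliptic setting one cannot symmetrically regularize in $t$, so the one-sided bound $b_i\le\mathrm{C}_\star$ must be leveraged to keep the sup-convolutions well behaved, to prevent the near-maxima $\bar t^{\lambda}$ from drifting to the temporal boundary, and to recover the sharp additivity $\sum_i b_i=\varphi_t$ in the limit; this, together with the Alexandrov/Jensen measure-theoretic step, is the delicate part. Since the full argument is carried out in \cite[Theorem 12.2]{Crandall} and \cite[Theorem 8.3]{CrandallIshiiLions}, in practice one simply invokes those references.
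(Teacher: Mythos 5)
Your proposal is correct and takes the same approach as the paper: the paper states this lemma without giving a proof, simply referring the reader to \cite[Theorem 12.2]{Crandall} and \cite[Theorem 8.3]{CrandallIshiiLions}, which is precisely what you do at the end of your write-up. Your intermediate sketch — quadratic reduction, spatial sup-convolution, Jensen/Alexandrov, the block-matrix lemma of the User's Guide, and the limit $\lambda\to 0$ — is a fair outline of the argument in those references, including the correct identification of the one-sided temporal bound $b_i\le\mathrm{C}_\star$ as the parabolic subtlety.
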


The next result is the classical Comparison Principle for the problem \eqref{Problem}, whose ideas were inspired by \cite[Proposition 5.1]{FZ23} and \cite[Theorem 1]{Demengel11}. We must emphasize that the proof holds for a more general range of exponents, namely  \(-1<\mathfrak{p}\leq \mathfrak{q}<\infty\). 

\begin{theorem}[\bf Comparison Principle] \label{CompPrinc}
Assume that assumptions \(\mathrm{(H1)-(H4)}\) are satisfied. Suppose that $u$ and $v$ are, respectively, a viscosity subsolution with source term \(f\), and a locally uniformly Lipschitz in spatial variable viscosity supersolution with source term \(g\) of \eqref{Problem} in $\Omega_{\mathrm{T}}$. Moreover, assume that \(f\leq g\) in $\Omega_{\mathrm{T}}$. If $u\leq v$ on $\partial_{par} \Omega_{\mathrm{T}}$, 
then $u\leq v$ in $\Omega_{\mathrm{T}}$.
\end{theorem}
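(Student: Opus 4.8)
The plan is to argue by contradiction via the classical doubling-of-variables technique, adapted to the singular/degenerate nonlinearity $\mathscr{H}(x,t,\xi)=|\xi|^{\mathfrak{p}}+\mathfrak{a}(x,t)|\xi|^{\mathfrak{q}}$. Suppose that $M:=\sup_{\Omega_{\mathrm T}}(u-v)>0$. To handle the terminal time one first perturbs $v$ to $v_\sigma:=v+\frac{\sigma}{\mathrm T+t}$ (or replaces $u$ by $u-\frac{\sigma}{\mathrm T+t}$), so that $v_\sigma$ is a strict supersolution, $u-v_\sigma\to-\infty$ as $t\to-\mathrm T^+$, and the supremum of $u-v_\sigma$ is attained at an interior time; it suffices to reach a contradiction for all small $\sigma>0$ and then let $\sigma\to0$. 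Next, for $j\in\mathbb N$ introduce the doubled function
\[
\Phi_j(x,y,t)=u(x,t)-v_\sigma(y,t)-\frac{j}{2}|x-y|^2
\]
on $\overline{\Omega_{\mathrm T}}\times\overline{\Omega}$, let $(x_j,y_j,t_j)$ be a maximum point, and record the standard facts: $j|x_j-y_j|^2\to0$, $x_j,y_j\to z_0$ for some interior point $z_0$ with $(u-v_\sigma)(z_0,t_0)$ close to $\sup(u-v_\sigma)$, and $t_j$ bounded away from $-\mathrm T$ for $j$ large (using that $u\le v_\sigma$ on $\partial_{\mathrm{par}}\Omega_{\mathrm T}$ together with the blow-up of the perturbation).

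The core of the argument is the case analysis according to whether the ``common gradient'' $\xi_j:=j(x_j-y_j)$ vanishes. \textbf{Case 1: $\xi_j\neq0$.} Apply Jensen--Ishii's Lemma (Lemma \ref{JensenIshii}) with $k=2$, $u_1=u$, $u_2=-v_\sigma$, and $\varphi(x,y,t)=\frac{j}{2}|x-y|^2$: one obtains numbers $b_1,b_2$ with $b_1+b_2=0$ and matrices $\mathrm X_1,\mathrm X_2$ satisfying the usual matrix inequality $-3j\,\mathrm{Id}\le\mathrm{diag}(\mathrm X_1,-\mathrm X_2)\le 3j\begin{pmatrix}\mathrm{Id}&-\mathrm{Id}\\-\mathrm{Id}&\mathrm{Id}\end{pmatrix}$, so in particular $\mathrm X_1\le\mathrm X_2$ and the common gradient entering both jets is $\xi_j\neq0$. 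Using that $u$ is a subsolution and $v_\sigma$ a strict supersolution at these points, subtract the two viscosity inequalities:
\[
0<\frac{\sigma}{(\mathrm T+t_j)^2}\le \mathscr{H}(x_j,t_j,\xi_j)\,\mathrm{Tr}\!\big(\mathcal{M}(\xi_j)\mathrm X_1\big)-\mathscr{H}(y_j,t_j,\xi_j)\,\mathrm{Tr}\!\big(\mathcal{M}(\xi_j)\mathrm X_2\big),
\]
where $\mathcal{M}(\xi)=\mathrm{Id}+(p-2)\frac{\xi\otimes\xi}{|\xi|^2}$ is positive definite (since $p>1$) with eigenvalues between $\min(1,p-1)$ and $\max(1,p-1)$. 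Split this into $\mathscr{H}(x_j,t_j,\xi_j)\,\mathrm{Tr}(\mathcal M(\mathrm X_1-\mathrm X_2))\le0$ plus an error term $[\mathscr{H}(x_j,t_j,\xi_j)-\mathscr{H}(y_j,t_j,\xi_j)]\,\mathrm{Tr}(\mathcal M\,\mathrm X_2)$; the first term is $\le0$ by $\mathrm X_1\le\mathrm X_2$ and positivity of $\mathcal M$, while the error is controlled using hypothesis (H3) (Lipschitz dependence of $\mathfrak a$, hence of $\mathscr H$ in $(x,t)$) together with the bound $\|\mathrm X_2\|\le Cj$ and the spatial Lipschitz continuity of $v_\sigma$, giving $|\mathscr H(x_j,\cdot)-\mathscr H(y_j,\cdot)|\le C\mathfrak A_0\,|x_j-y_j|(1+|\xi_j|^{\mathfrak q})$ and $|\xi_j|$ bounded (by the assumed Lipschitz bound on $v$ near $z_0$). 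Thus the right-hand side is $o(1)$ as $j\to\infty$, contradicting the strict positivity of $\sigma/(\mathrm T+t_j)^2\ge c(\sigma)>0$.

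\textbf{Case 2: $\xi_j=0$ (along a subsequence).} Here the gradient-dependent test-function branch of the definition of viscosity solution is not available directly, so one uses the second (``flat'') alternative in the definition of viscosity sub/supersolution: since at $(x_j,y_j,t_j)$ the map $x\mapsto u(x,t)-\tfrac{j}{2}|x-y_j|^2$ has a maximum with zero gradient, $u$ is locally constant in $x$ near $x_j$ in the relevant sense, and symmetrically for $v_\sigma$; applying conditions (2) of the definitions with appropriate $C^1$-in-time test functions yields $b_1\le f(x_j,t_j)$ and $b_2\ge g(y_j,t_j)-\frac{\sigma}{(\mathrm T+t_j)^2}$ — wait, more carefully, one gets the time-derivative inequalities that, upon adding $b_1+b_2=0$ (or the analogous balance), give $f(x_j,t_j)-g(y_j,t_j)\ge \frac{\sigma}{(\mathrm T+t_j)^2}>0$, contradicting $f\le g$ after passing to the limit. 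This mirrors \cite[Theorem 1]{Demengel11} and \cite[Proposition 5.1]{FZ23}. The main obstacle is the bookkeeping in Case 1: one must ensure the error term from the $(x,t)$-dependence of $\mathscr H$ genuinely vanishes, which is exactly where the hypotheses that $v$ is locally uniformly Lipschitz in space (to bound $|\xi_j|=|j(x_j-y_j)|$ uniformly, keeping $\mathscr H(x_j,t_j,\xi_j)$ bounded and bounded away from $0$ on the set where $\xi_j\neq0$) and $\mathfrak a\in C^1$ with $\mathfrak A_0<\infty$ are used; this is also why the stated range $-1<\mathfrak p\le\mathfrak q<\infty$ is admissible, since only $\mathscr H\ge0$ and its Lipschitz modulus, not the sign of $\mathfrak p$, enter.
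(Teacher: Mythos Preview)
Your overall architecture---strictify the supersolution, double variables, invoke Jensen--Ishii, and split into a non-degenerate case $\xi_j\neq0$ and a degenerate case---is exactly the paper's. But two steps do not go through as written.

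\textbf{Case 1 (the main gap).} Your naive decomposition
\[
\mathscr{H}(x_j,t_j,\xi_j)\,\mathrm{Tr}\big(\mathcal M(\xi_j)(\mathrm X_1-\mathrm X_2)\big)+\big[\mathscr{H}(x_j,t_j,\xi_j)-\mathscr{H}(y_j,t_j,\xi_j)\big]\,\mathrm{Tr}\big(\mathcal M(\xi_j)\mathrm X_2\big)
\]
does \emph{not} produce an $o(1)$ error with the quadratic penalty. From Jensen--Ishii you only have $\|\mathrm X_2\|\le C j$, while $|\mathscr{H}(x_j)-\mathscr{H}(y_j)|\le \mathfrak A_0|x_j-y_j|\,|\xi_j|^{\mathfrak q}$; multiplying gives an error of order $j|x_j-y_j|\cdot|\xi_j|^{\mathfrak q}=|\xi_j|^{1+\mathfrak q}$, which the Lipschitz bound on $v$ makes \emph{bounded} ($|\xi_j|=j|x_j-y_j|\le 2L$) but not tending to zero. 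So the claimed contradiction with the fixed strictness gap $\sigma/(\mathrm T+t_j)^2$ never materialises. The standard remedy is to exploit the full block inequality from Jensen--Ishii with test vectors $\big(\sqrt{\mathscr H(x_j,t_j,\xi_j)}\,\mathcal M^{1/2}e_i,\ \sqrt{\mathscr H(y_j,t_j,\xi_j)}\,\mathcal M^{1/2}e_i\big)$, which yields directly
\[
\mathscr H(x_j)\,\mathrm{Tr}(\mathcal M\mathrm X_1)-\mathscr H(y_j)\,\mathrm{Tr}(\mathcal M\mathrm X_2)\ \le\ Cj\big(\sqrt{\mathscr H(x_j)}-\sqrt{\mathscr H(y_j)}\big)^2\ \le\ C'\,j|x_j-y_j|^2\to0.
\]
You never invoke this device; without it the argument stalls.

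\textbf{Case 2.} With single-time doubling $\Phi_j(x,y,t)$, once $x_j=y_j$ the putative ``time test function'' for $u$ at $(x_j,t_j)$ would have to involve $t\mapsto v_\sigma(x_j,t)$, which is merely continuous, so condition~(2) of the definition is not available. The paper avoids this by doubling in time as well, using
\[
\phi_j(x,y,t,s)=u(x,t)-v(y,s)-\tfrac{j}{\mathfrak m}|x-y|^{\mathfrak m}-\tfrac{j}{2}(t-s)^2,\qquad \mathfrak m>\max\Big\{2,\tfrac{\mathfrak p+2}{\mathfrak p+1},\tfrac{\mathfrak q+2}{\mathfrak q+1}\Big\},
\]
so that when $x_j=y_j$ the smooth $C^1$ function $t\mapsto \tfrac j2(t-s_j)^2$ serves as $\phi$ and \cite[Lemma~1]{Demengel11} supplies the flat-alternative conclusion $j(t_j-s_j)\le f(x_j,t_j)$ (and symmetrically for $v$), forcing $g(x_j,s_j)-f(x_j,t_j)<0$, a contradiction in the limit. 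The paper then shows $x_j\neq y_j$ for $j\gg1$ and refers to \cite[Proposition~5.2]{FZ22} for the remaining (now non-degenerate) step. Minor point: your perturbation $v_\sigma=v+\sigma/(\mathrm T+t)$ has the wrong sign; it is $v_\varepsilon=v-\varepsilon/(\mathrm T+t)$ that is a \emph{strict} supersolution.
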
 

\begin{proof}
First, we can assume without loss of generality that \(v\) is a strict viscosity supersolution. More precisely, we assume that
\begin{eqnarray}\label{eqsupersol}
\partial_{t}v - \mathscr{H}(x, t, \nabla v)\Delta_{p}^{\rm{N}}v > g(x, t) \,\,\, \text{in} \,\,\, \Omega_{\mathrm{T}},
\end{eqnarray}
in the viscosity sense.  Indeed, for \(\varepsilon > 0\), define \(v_{\varepsilon}(x, t) = v(x, t) - \frac{\varepsilon}{\mathrm{T} + t}\). Now, let \(\psi \in C^{2, 1}(\Omega_{\mathrm{T}})\) be such that \(v_{\varepsilon} - \psi\) attains a local minimum at \((x_{0}, t_{0}) \in \Omega_{\mathrm{T}}\) and \(\nabla \psi(x_{0}, t_{0}) \neq 0\).  Hence, the function \(\phi(x, t) = \psi(x, t) + \frac{\varepsilon}{\mathrm{T} + t}\) also lies in \(C^{2,1}(\Omega_{\mathrm{T}})\). Moreover, \(v - \phi\) attains a local minimum at \((x_{0}, t_{0})\), and \(\nabla \psi(x_{0}, t_{0}) = \nabla \phi(x_{0}, t_{0})\).  Thus, since \(v\) is a viscosity supersolution and $\varepsilon>0$, we have that
\begin{eqnarray*}
g(x_{0},t_{0})&\leq& \partial_{t}\phi(x_{0},t_{0})-\mathscr{H}(x_{0},t_{0},\nabla\phi(x_{0},t_{0}))\Delta_{p}^{\rm{N}}\phi(x_{0},t_{0})\\
&\leq&\partial_{t}\psi(x_{0},t_{0})-\frac{\varepsilon}{(\mathrm{T}+t_{0})^{2}}-\mathscr{H}(x_{0},t_{0},\nabla\psi(x_{0},t_{0}))\Delta_{p}^{\rm{N}}\psi(x_{0},t_{0})\\
&<&\partial_{t}\psi(x_{0},t_{0})-\mathscr{H}(x_{0},t_{0},\nabla\psi(x_{0},t_{0}))\Delta_{p}^{\rm{N}}\psi(x_{0},t_{0}).
\end{eqnarray*}
On the other hand, suppose that there exist \(\delta_{1} > 0\) and \(\psi \in C^{1}((t_{0} - \delta_{1}, t_{0} + \delta_{1}))\) such that
\[
\begin{cases} 
\psi(t_0) = 0, \\ 
w(x_0, t_0) \leq w(x_0, t) - \psi(t), \quad \forall t \in (t_0 - \delta_1, t_0 + \delta_1), \\ 
\displaystyle \inf_{t \in (t_0 - \delta_1, t_0 + \delta_1)} (w(x, t) - \psi(t)) \text{ is constant in a neighborhood of } x_0.
\end{cases}
\] 
Now, define 
\[
\phi(t) = \psi(t) - \frac{\varepsilon}{\mathrm{T} + t_{0}} + \frac{\varepsilon}{\mathrm{T} + t}.
\]
Observe that \(\phi \in C^{1}((t_{0} - \delta_{1}, t_{0} + \delta_{1}))\), and by the properties of \(\psi\), we deduce that
\[
\begin{cases} 
\phi(t_0) = \psi(t_{0}) = 0, \\ 
v(x_0, t_0) \leq v(x_0, t) - \phi(t), \quad \forall t \in (t_0 - \delta_1, t_0 + \delta_1), \\ 
\displaystyle \inf_{t \in (t_0 - \delta_1, t_0 + \delta_1)} (v(x, t) - \phi(t)) \text{ is constant in a neighborhood of } x_0.
\end{cases}
\] 
Thus, since \(v\) is a viscosity supersolution, we obtain that
\[
g(x_{0}, t_{0}) \leq \phi'(t_{0}) = \psi'(t_{0}) - \frac{\varepsilon}{(\mathrm{T} + t_{0})^{2}} < \psi'(t_{0}).
\]
Consequently, by the arbitrariness of the point \((x_{0}, t_{0})\) and the function \(\psi\) described above, the claim \eqref{eqsupersol} follows.

Now proceed to prove the theorem's conclusion. We will establish this result using \textit{reductio ad absurdum}. Consequently, define    
\begin{equation*}
\mathrm{M}_{0} = \sup_{(x,t)\in \overline{\Omega_{T}}} \left\{ u(x,t) - v(x,t) \right\} > 0,
\end{equation*}
where \(\mathrm{M}_{0}\) is attained at the point \( (\bar{x}, \bar{t}) \in \Omega \times (-\mathrm{T}, 0) \). 

Now, for \(j\) and \(\mathfrak{m} > \max\left\{ 2, \frac{\mathfrak{p} + 2}{\mathfrak{p} + 1}, \frac{\mathfrak{q} + 2}{\mathfrak{q} + 1} \right\}\), we define the auxiliary function
\[
\phi_{j}(x, y, t, s) = u(x, t) - v(y, s) - \psi_{j}(x, y, t, s),
\]
where
\[
\psi_{j}(x, y, t, s) = \frac{j}{\mathfrak{m}} |x - y|^{\mathfrak{m}} + \frac{j}{2}(t - s)^{2}.
\]
Under these conditions, \(\phi_{j}\) achieves its maximum at the point \((x_{j}, y_{j}, t_{j}, s_{j}) \in \Omega \times \Omega \times (-\mathrm{T}, 0) \times (-\mathrm{T}, 0)\) for sufficiently large \(j\), and \((x_{j}, y_{j}, t_{j}, s_{j}) \to (\bar{x}, \bar{x}, \bar{t}, \bar{t})\) as \(j \to \infty\) (cf. \cite[Theorem 1]{Demengel11} and \cite[Proposition 5.1]{FZ22}).

\textbf{Claim:} \(x_{j} \neq y_{j}\) for \(j \gg 1\). 

To prove this, suppose that \(x_{j} = y_{j}\). Then the function 
\[
\hat{\phi}(y, s) = v(y_{j}, s_{j}) - \frac{j}{\mathfrak{m}} |x_{j} - y|^{\mathfrak{m}} - \frac{j}{2}(s - t_{j})^{2} + \frac{j}{2}(t_{j} - s_{j})^{2}
\]
touches \(v\) from below at \((x_{j}, s_{j})\). By \cite[Lemma 1]{Demengel11}, it follows that
\begin{eqnarray}\label{est2comp}
j(t_{j} - s) > g(x_{j}, s_{j}).
\end{eqnarray}
On the other hand, we observe that
\[
\Psi(x, t) := \psi_{j}(x, y_{j}, t, s_{j}) - \psi_{j}(x_{j}, y_{j}, t_{j}, s_{j}) + u(x_{j}, t_{j})
\]
is a test function for \(u\) at \((x_{j}, t_{j})\). Similarly to the above case for \(v\), it holds that
\begin{eqnarray}\label{est3comp}
j(t_{j} - s_{j}) \leq f(x_{j}, t_{j}).
\end{eqnarray}
Combining \eqref{est2comp} and \eqref{est3comp}, we deduce that
\[
0 = j(t_{j} - s_{j}) - j(t_{j} - s_{j}) > g(x_{j}, s_{j}) - f(x_{j}, t_{j}) \geq \liminf_{j \to \infty} (g(x_{j}, s_{j}) - f(x_{j}, t_{j})) = 0,
\]
which is a contradiction.  Therefore, we conclude that \(x_{j} \neq y_{j}\) for \(j \gg 1\), as claimed.

In this scenario, we arrive at a contradiction similar to the one presented in \cite[Proposition 5.2]{FZ22} for the non-homogeneous case, arising from the continuity of the source terms \(f\) and \(g\), as well as the previously mentioned convergences. Thus, the proof is complete.
\end{proof}

We conclude this section by proving the existence and uniqueness of the solution to the Dirichlet problem associated with equation \eqref{Problem}. The proof follows standard techniques, which rely on the Comparison Principle above (cf. \cite{Demengel11} and \cite{IS}). For brevity, the detailed proof is omitted.

\begin{theorem}[{\bf Existence and Uniqueness of Solutions}]\label{Thm-Exis-Uniq}
Assume that \(-1<\mathfrak{p}\leq \mathfrak{q}<\infty\), \(\mathfrak{a}\in C^0(Q_{\mathrm{T}})\), \(f\in L^{\infty}(Q_{\mathrm{T}})\cap C^0(Q_{\mathrm{T}})\), and \(g\in C^0(\partial_{par}Q_{\mathrm{T}})\) for \(\mathrm{T}>0\). Then, there exists a unique viscosity solution \(u\) to the problem
\begin{equation}\label{Dirichletprob}
\left\{
\begin{array}{rclcl}
\partial_{t}u(x, t)-\mathscr{H}(x,t,\nabla u)\Delta_{p}^{\mathrm{N}}u(x, t) &=& f(x,t)& \mbox{in} &   Q_{\mathrm{T}}, \\
u(x, t) &=& g(x,t) &\mbox{on}& \partial_{par}Q_{\mathrm{T}}.
\end{array}
\right.
\end{equation}
\end{theorem}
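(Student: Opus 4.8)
\textbf{Plan of proof for Theorem \ref{Thm-Exis-Uniq}.} The strategy is the classical Perron's method adapted to the viscosity framework for degenerate/singular parabolic operators, exactly in the spirit of \cite{Demengel11} and \cite{IS}, with the Comparison Principle (Theorem \ref{CompPrinc}) supplying both uniqueness and the ordering needed to run Perron. Uniqueness is immediate: if $u_1,u_2$ are two viscosity solutions agreeing with $g$ on $\partial_{par}Q_{\mathrm{T}}$, then applying Theorem \ref{CompPrinc} in both directions (noting that any viscosity solution is in particular locally uniformly Lipschitz in space by the lower regularity estimates established in Section \ref{Sec-Comp-Stab-Reg}, which is what the Comparison Principle requires of the supersolution) gives $u_1\le u_2$ and $u_2\le u_1$, hence $u_1=u_2$.

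\textbf{Construction of barriers and the Perron envelope.} First I would construct, for each $z=(x_0,t_0)\in\partial_{par}Q_{\mathrm{T}}$, a pair of continuous sub/supersolutions $\underline{w}_z,\overline{w}_z$ attaining the boundary value $g(z)$ at $z$ and lying below/above $g$ on $\partial_{par}Q_{\mathrm{T}}$; since $\partial Q_{\mathrm{T}}=B_{\mathrm{T}}$ satisfies an exterior sphere/cylinder condition one can take radial-in-space, affine-in-time barriers of the form $\pm A(|x-y_0|^2-R^2)\mp B(t_0-t)+g(z)$ with $A,B$ large depending on $\|f\|_{L^\infty}$, $\mathfrak{a}^\pm$, the modulus of continuity of $g$, and the exponents $\mathfrak{p},\mathfrak{q}$ — here one uses that on the region where the test gradient is nonzero the operator is bounded, and where it vanishes the ``second alternative'' in the definition of viscosity sub/supersolution is handled by the time term. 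Then define
\[
u(x,t):=\sup\{\,v(x,t)\ :\ v \text{ is a viscosity subsolution of \eqref{Dirichletprob} with } v\le g \text{ on } \partial_{par}Q_{\mathrm{T}}\,\}.
\]
The barrier family guarantees this set is nonempty and that $u$ is finite, with $\underline{w}_z\le u\le \overline{w}_z$ near each boundary point.

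\textbf{$u$ is a viscosity solution.} The two remaining steps are standard Perron lemmas: (a) the upper semicontinuous envelope $u^*$ is a viscosity subsolution, by the usual argument that a supremum of subsolutions (closed under upper semicontinuous relaxation) is a subsolution — here one must check the argument respects the two-part structure of the definition, i.e.\ handles test functions with vanishing gradient via the second alternative, using the stability Lemma \ref{estabilidade} implicitly; (b) the lower semicontinuous envelope $u_*$ is a viscosity supersolution, argued by contradiction: if it failed at some point, one could bump $u$ upward by a small smooth perturbation of a test function to produce a strictly larger subsolution, contradicting maximality — the bump construction again must be compatible with the degenerate structure near critical points of the test function. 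Finally, attainment of the boundary data follows from squeezing $u_*\ge g\ge u^*$ on $\partial_{par}Q_{\mathrm{T}}$ via the barriers, so $u_*=u^*=g$ there; combined with (a), (b) and Theorem \ref{CompPrinc} one gets $u^*\le u_*$, forcing $u:=u^*=u_*$ to be continuous and the desired solution.

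\textbf{Main obstacle.} The delicate point, and the reason the proof is not entirely routine, is the non-standard two-alternative definition of viscosity solution forced by the degeneracy/singularity at $\{\nabla u=0\}$: both the barrier construction and the Perron bump argument must be verified to interact correctly with the second alternative (the one involving only a $C^1$-in-time test function and a local-constancy condition in space). This is precisely why the paper points to \cite{Demengel11} and \cite{IS}, where these technical verifications are carried out in detail; having the Comparison Principle already in hand (Theorem \ref{CompPrinc}) and the stability result (Lemma \ref{estabilidade}) reduces the remaining work to transcribing those arguments, which is why the detailed proof is omitted.
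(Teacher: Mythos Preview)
Your proposal is correct and follows essentially the same approach as the paper: Perron's method built on the Comparison Principle (Theorem \ref{CompPrinc}), with references to \cite{Demengel11} and \cite{IS} for the technical verifications. The paper in fact omits the detailed proof entirely, giving only the Perron envelope $u=\sup_{\mathcal{S}}v$ over the class $\mathcal{S}=\{v\in C^{0}(\overline{Q_{\mathrm{T}}}):u_{\star}\le v\le u^{\star},\ v\text{ subsolution}\}$, so your outline (barriers, sub/supersolution property of the envelopes, boundary attainment, uniqueness via comparison) is a faithful and more explicit rendering of what the paper intends.
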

\noindent The idea of the proof follows Perron's method. Specifically, considering \( u_{\star} \) and \( u^{\star} \), respectively,  a viscosity subsolution and a viscosity supersolution of \eqref{Dirichletprob}, then 
\[
u(x,t)=:\sup_{\mathcal{S}}v(x,t)
\]
is a viscosity solution of \eqref{Dirichletprob}, where
\[
\mathcal{S}=\{v\in C^{0}(\overline{Q_{\mathrm{T}}}):u_{\star}\leq v \leq u^{\star}\,\,\, \text{and}\,\,\ v \, \text{is a subsolution of}\,\eqref{Dirichletprob} \}.
\]

\section{Compactness of solutions: lower regularity results}\label{Sec-Comp-Stab-Reg}

The main goal of this section is to establish compactness results for the solutions of \eqref{Problem}. These results are essential for achieving Hölder regularity, as stated in Theorem \ref{Thm01}, and for obtaining improved regularity in the proof of Theorem \ref{Higher Reg}. Unless otherwise stated, throughout this section, we assume that \(-1 < \mathfrak{p} \leq \mathfrak{q} < \infty\).  We begin by proving local Hölder regularity in the spatial variable for problem \eqref{Problem}. More specifically, we state the following result:
\begin{lemma}[\bf H\"{o}lder estimates in spatial variables]\label{Holderestforregpro}
Let \( u \) be a bounded viscosity solution of \eqref{Problem}. Assume that the structural conditions \(\rm{(H2)}\) and \(\rm{(H3)}\) are satisfied. For a given \(\mu \in (0,1)\), there exists a constant \(\mathrm{C} = \mathrm{C}(n, p, \mathfrak{p}, \mathfrak{q}, (\mathfrak{a}^{-})^{-1}, [\mathfrak{a}]_{C^{0,1}_{x}(Q_{1})}) > 0\) such that, for all \(x, y \in B_{\frac{15}{16}}\) and \(t \in \left(-\left(\frac{15}{16}\right)^{2}, 0\right]\), the following holds: 
\begin{eqnarray*}
|u(x,t)-u(y,t)|\leq \mathrm{C}\left(\|u\|_{L^{\infty}(Q_{1})}+\|u\|_{L^{\infty}(Q_{1})}^{\frac{1}{1+\mathfrak{p}}}+\|f\|_{L^{\infty}(Q_{1})}^{\frac{1}{1+\mathfrak{p}}}\right)|x-y|^{\mu}.
\end{eqnarray*}
\end{lemma}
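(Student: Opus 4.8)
The plan is to establish this spatial H\"older estimate via the classical Ishii--Lions doubling-of-variables technique, adapted to the doubly-degenerate structure of \eqref{Problem}. The key point is that for a fixed time slice the equation behaves like a fully nonlinear (possibly degenerate) equation in $x$, and the presence of the factor $\mathscr{H}(x,t,\nabla u)$ only rescales the right-hand side; since $\mathfrak{a}^- > 0$ and $\mathfrak{p} \le \mathfrak{q}$, we have the uniform lower bound $\mathscr{H}(x,t,\xi) \ge |\xi|^{\mathfrak{p}}$, which lets us absorb the degeneracy.

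\smallskip

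\textbf{Step 1 (Reduction and normalization).} First I would normalize: setting $\kappa := \|u\|_{L^\infty(Q_1)} + \|u\|_{L^\infty(Q_1)}^{1/(1+\mathfrak{p})} + \|f\|_{L^\infty(Q_1)}^{1/(1+\mathfrak{p})}$, it suffices to prove the estimate with the right-hand side replaced by $\mathrm{C}\,\kappa\,|x-y|^\mu$ after dividing through; one checks how $f$ and the source scale under the natural parabolic rescaling associated with the degeneracy exponent $\mathfrak{p}$ (this is why the exponent $\tfrac{1}{1+\mathfrak{p}}$ appears). Fix the target cylinder $B_{15/16} \times (-(15/16)^2,0]$ and two spatial points $x_0, y_0$ there at a common time; the estimate is needed only for these, the rest following by a covering/radius argument.

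\smallskip

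\textbf{Step 2 (The auxiliary function and penalization).} I would introduce, for $x,y$ near $x_0,y_0$ and $t$ near the fixed time, the function
\[
\Phi(x,y,t) = u(x,t) - u(y,t) - L\,\omega(|x-y|) - \frac{M}{2}|x-x_0|^2 - \frac{M}{2}|y-y_0|^2 - \frac{\Lambda}{2}(t - t_0)^2,
\]
where $\omega(r) = r - \omega_0 r^{3/2}$ (or $\omega(r)=r^\mu$ with a concave correction) is a standard concave modulus on a small interval, $L$ is a large constant to be chosen (proportional to $\kappa$ up to the universal factor), and $M,\Lambda$ are fixed large auxiliary constants ensuring the maximum of $\Phi$ is interior and localized near $(x_0,y_0,t_0)$. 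The goal is to show $\sup \Phi \le 0$ for suitable $L$; if not, the maximum is attained at some interior $(\hat x,\hat y,\hat t)$ with $\hat x \ne \hat y$ (the case $\hat x=\hat y$ being handled by the $C^1$-in-time branch of the viscosity definition, exactly as in the Comparison Principle proof above).

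\smallskip

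\textbf{Step 3 (Jensen--Ishii and the viscosity inequalities).} Apply Lemma \ref{JensenIshii} at the maximum point to produce matrices $X, Y \in \mathrm{Sym}(n)$ and a common time-derivative splitting $b_1 - b_2 = \Lambda(\hat t - t_0)$, with $X \le Y$ in the sense of the matrix inequality coming from $D^2(L\omega(|x-y|))$ plus the $M$-terms. The gradient at both points is $\hat\xi = L\omega'(|\hat x - \hat y|)\tfrac{\hat x-\hat y}{|\hat x-\hat y|} + O(M\,\mathrm{diam})$, which is nonzero and of size comparable to $L$ once $L$ dominates $M$ times the localization radius — this is the crucial non-vanishing-gradient fact that puts us in branch (1) of the viscosity definition. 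Subtracting the subsolution inequality at $(\hat x,\hat t)$ from the supersolution inequality at $(\hat y,\hat t)$, the time terms cancel up to $\Lambda(\hat t - t_0)$ (small), the source terms contribute at most $2\|f\|_{L^\infty} \le 2\kappa^{1+\mathfrak{p}}$, and one is left with
\[
\mathscr{H}(\hat x,\hat t,\hat\xi)\,\mathcal{M}^+(X) - \mathscr{H}(\hat y,\hat t,\hat\xi)\,\mathcal{M}^-(Y) \;\ge\; -\,C\kappa^{1+\mathfrak p} - (\text{lower order}),
\]
where $\mathcal{M}^\pm$ are the Pucci-type operators coming from $\Delta_p^{\mathrm N}$ evaluated along $\hat\xi/|\hat\xi|$. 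Here one uses the ellipticity bounds $\min(1,p-1)|\eta|^2 \le \langle (\mathrm{Id}+(p-2)e\otimes e)\eta,\eta\rangle \le \max(1,p-1)|\eta|^2$ uniformly in unit vectors $e$.

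\smallskip

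\textbf{Step 4 (Exploiting concavity of $\omega$).} The standard Ishii--Lions computation: feeding the test vector $\hat x - \hat y$ (normalized) into the matrix inequality $X-Y \le D^2(L\omega(|\cdot|))(\hat x-\hat y) + CM\,\mathrm{Id}$ and using $\omega''(r) = -\tfrac{3}{4}\omega_0 r^{-1/2} < 0$ produces a strictly negative contribution of order $L\,|\hat x-\hat y|^{-1/2}\,\omega_0$ in the direction of $\hat x - \hat y$. Combining this with the bound on $\mathrm{tr}(X-Y)$ and the ellipticity constants of $\Delta_p^{\mathrm N}$, the left side of the subtracted inequality is bounded above by $-c\,\mathscr{H}(\hat x,\hat t,\hat\xi)\,L\,\omega_0\,|\hat x-\hat y|^{-1/2} + C\,\mathscr{H}\cdot M + (\text{coefficient-variation terms})$. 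Since $\mathscr{H}(\hat x,\hat t,\hat\xi) \ge |\hat\xi|^{\mathfrak p} \gtrsim L^{\mathfrak p}$, the leading negative term is of order $L^{1+\mathfrak p}\omega_0|\hat x-\hat y|^{-1/2}$, and it must dominate $C\kappa^{1+\mathfrak p}$ plus $\mathscr{H}\cdot M \lesssim L^{\mathfrak q}M$; choosing first $M$ (depending on the data), then $\omega_0$, then $L = C_\star \kappa$ with $C_\star$ large universal, we reach a contradiction, proving $\sup\Phi \le 0$ and hence $|u(x_0,t)-u(y_0,t)| \le L\,\omega(|x_0-y_0|) \le \mathrm{C}\kappa\,|x_0-y_0|^\mu$.

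\smallskip

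\textbf{Main obstacle.} The delicate point is controlling the coefficient-variation terms: when subtracting the two equations, $\mathscr{H}$ is evaluated at $\hat x$ versus $\hat y$ with the \emph{same} frozen gradient $\hat\xi$, so the discrepancy is $|\mathscr{H}(\hat x,\hat t,\hat\xi) - \mathscr{H}(\hat y,\hat t,\hat\xi)| = |\mathfrak a(\hat x,\hat t) - \mathfrak a(\hat y,\hat t)|\,|\hat\xi|^{\mathfrak q} \le [\mathfrak a]_{C^{0,1}_x}|\hat x-\hat y|\,L^{\mathfrak q}$ multiplied by $\|Y\|$; one must check that, against the gain $L^{1+\mathfrak p}|\hat x-\hat y|^{-1/2}$ from concavity, this term is genuinely lower order — which works because it carries a full power $|\hat x-\hat y|^{+1}$ rather than $|\hat x-\hat y|^{-1/2}$, and because $\|Y\|$ is itself controlled (on the good direction) by the same negative quantity. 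A secondary subtlety, already flagged in the Comparison Principle proof, is the branch $\hat x = \hat y$, disposed of by the alternative $C^1$-in-$t$ clause of the viscosity definition together with the strictness built in by the $\Lambda$-term. Since the final constant $\mathrm{C}$ is required to depend only on $n,p,\mathfrak p,\mathfrak q,(\mathfrak a^-)^{-1},[\mathfrak a]_{C^{0,1}_x}$, one tracks that $M$ and $\omega_0$ depend only on these quantities and $\mu$, which is exactly the output of the scheme above.
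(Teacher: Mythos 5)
Your overall framework is the right one — a doubling-of-variables/Ishii--Lions argument built on Jensen--Ishii's Lemma \ref{JensenIshii}, with localization by quadratic penalties in $x,y,t$ — and that is exactly what the paper does. However, there are two concrete gaps in the way you have set it up.

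\textbf{Wrong modulus for the H\"older step.} The modulus you primarily compute with, $\omega(r)=r-\omega_0 r^{3/2}$ with $\omega''(r)\sim -r^{-1/2}$ and $\omega'\in[3/4,1]$, is the concave Lipschitz-type modulus that the paper uses in the \emph{Lipschitz} estimate (Lemma \ref{Lipschitzlema}), and there its Ishii--Lions argument is closed only after the H\"older estimate of the present lemma has already been proved (one needs $\max\{\mathfrak{L}_1|\bar y-y_0|,\mathfrak{L}_1|\bar x-x_0|\}\le \mathrm{C}_\mu|\bar x-\bar y|^{\mu/2}$, which comes from the prior H\"older bound). For the H\"older lemma itself the paper takes $\omega(s)=s^\mu$ directly, so that $|\varsigma_i|\sim \mathfrak{L}_2\mu|\bar x-\bar y|^{\mu-1}$ blows up as $|\bar x-\bar y|\to 0$, $\|\mathrm X\|,\|\mathrm Y\|\lesssim \mathfrak{L}_2\mu|\bar x-\bar y|^{\mu-2}$, and the negative matrix contribution along $\xi=\tfrac{\bar x-\bar y}{|\bar x-\bar y|}$ is of order $\mathfrak{L}_2\mu\tfrac{1-\mu}{3-\mu}|\bar x-\bar y|^{\mu-2}$, not $L|\bar x-\bar y|^{-1/2}$. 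Using $\omega(s)=s^\mu$ is what lets the argument close \emph{without} any a priori control of $u$ beyond $\|u\|_{L^\infty}$.

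\textbf{The absorption of the coefficient-variation term does not work as you describe.} You compare the bad term coming from $\bigl(\mathfrak a(\hat x,\hat t)-\mathfrak a(\hat y,\hat t)\bigr)|\hat\xi|^{\mathfrak q}\|\mathrm Y\|\sim [\mathfrak a]_{C^{0,1}_x}L^{1+\mathfrak q}$ (with your modulus) against the gain $L^{1+\mathfrak p}|\hat x-\hat y|^{-1/2}$ coming from the lower bound $\mathscr H\ge|\hat\xi|^{\mathfrak p}$. Since $\mathfrak q\ge\mathfrak p$ and no upper bound on $\mathfrak q-\mathfrak p$ is assumed, making $L$ large only makes the bad term $L^{1+\mathfrak q}$ grow faster than the good one $L^{1+\mathfrak p}|\hat x-\hat y|^{-1/2}$; and since from $\phi(\bar x,\bar y,\bar t)>0$ one only has $|\bar x-\bar y|\lesssim\|u\|_{L^\infty}/L$, one cannot make $|\hat x-\hat y|^{-1/2}$ compensate. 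The paper's mechanism is different: it decomposes the subtracted viscosity inequality into six pieces $\mathfrak I_1,\dots,\mathfrak I_6$ and, crucially, extracts a \emph{second} negative contribution of $\mathfrak q$-order from the $\mathfrak a(\bar x,\bar t)|\varsigma|^{\mathfrak q}\Delta_p^{\mathrm N}$ part of the operator itself (the term $\mathfrak I_6$), of size $\sim -\mathfrak a^- \mathfrak{L}_2^{1+\mathfrak q}\mu^{1+\mathfrak q}|\bar x-\bar y|^{(\mu-1)(1+\mathfrak q)-1}$. The coefficient-variation term $\mathfrak I_5\sim [\mathfrak a]_{C^{0,1}_x}\mathfrak{L}_2^{1+\mathfrak q}\mu^{1+\mathfrak q}|\bar x-\bar y|^{(\mu-1)(1+\mathfrak q)}$ is of the same order in $\mathfrak{L}_2$ and carries one extra factor of $|\bar x-\bar y|$, so the $\mathfrak q$-good-term absorbs the $\mathfrak q$-bad-term; similarly the $\mathfrak p$-good-term absorbs the $\mathfrak p$-order bad terms. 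Only the lower bound $\mathscr H\ge|\xi|^{\mathfrak p}$, as you propose, is not enough — one must use both halves of $\mathscr H$. In fact this is precisely why the assumption $\mathfrak a^->0$ (H2) appears in the hypotheses and why the constant $\mathrm C$ in the conclusion depends on $(\mathfrak a^-)^{-1}$.

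A secondary issue: in the subtracted inequalities the gradients at $(\bar x,\bar t)$ and $(\bar y,\bar t)$ are $\varsigma_1,\varsigma_2$ which differ by the localization terms $\mathfrak L_1(\bar x-x_0)+\mathfrak L_1(\bar y-y_0)$; they are not the ``same frozen gradient $\hat\xi$'' as you write. The paper controls the resulting mismatch terms ($\mathfrak I_3, \mathfrak I_4$) via \cite[Lemma 6.1]{FZ23} and the Mean Value Theorem, and these too must be absorbed against the good terms.
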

\begin{proof}
We fix $x_{0},y_{0}\in B_{\frac{15}{16}}$ and $t_{0}\in\left(-\left(\frac{15}{16}\right)^{2},0\right)$. For suitable positive constants $\mathfrak{L}_{1}$ and $\mathfrak{L}_{2}$, we define the following function
\begin{eqnarray*}
\phi(x,y,t):=u(x,t)-u(y,t)-\mathfrak{L}_{2}\omega(|x-y|)-\frac{\mathfrak{L}_{1}}{2}|x-x_{0}|^{2}-\frac{\mathfrak{L}_{1}}{2}|y-y_{0}|^{2}-\frac{\mathfrak{L}_{1}}{2}(t-t_{0})^{2},    
\end{eqnarray*}
where the function \(\omega\) is defined by \(\omega(s) = s^{\mu}\). We assert that \(\phi \leq 0\) in \(\overline{Q_{\frac{15}{16}}}\). We will proceed with a \textit{reductio ad absurdum} argument. Assume, for contradiction, that there exists a maximum point \((\bar{x}, \bar{y}, \bar{t})\) where \(\phi\) is positive. Since \(\phi(\bar{x}, \bar{y}, \bar{t}) > 0\), it follows that \(\bar{x} \neq \bar{y}\). Now, choosing,
\begin{eqnarray*}
\mathfrak{L}_{1}\geq \frac{32\|u\|_{L^{\infty}(Q_{1})}}{(\min\{\dist((x_{0},t_{0}),\partial Q_{15/16}),\dist((y_{0},t_{0}),\partial Q_{15/16})\})^{2}}=\mathrm{C}\|u\|_{L^{\infty}(Q_{1})},
\end{eqnarray*}
we have that
\begin{eqnarray*}
|\bar{x}-x_{0}|+|\bar{t}-t_{0}|\leq \frac{\dist((x_{0},t_{0}),\partial Q_{15/16})}{2}
\end{eqnarray*}
and
\begin{eqnarray*}
|\bar{y}-y_{0}|+|\bar{t}-t_{0}|\leq \frac{\dist((y_{0},t_{0}),\partial Q_{15/16})}{2}.
\end{eqnarray*}
With this choice and the two inequalities above, we can conclude that $\bar{x},\bar{y}\in B_{\frac{15}{16}}$ and $\bar{t}\in \left(-\left(\frac{15}{16}\right)^{2},0\right)$. Furthermore, if $\mathfrak{L}_{2}\gg 1$, then 
\begin{eqnarray}\label{diamL2}
|\bar{x}-\bar{y}|\leq \left(\frac{2\|u\|_{L^{\infty}(Q_{1})}}{\mathfrak{L}_{2}}\right)^{\frac{1}{\mu}}\ll1.
\end{eqnarray}
Having made these observations, we apply Jensen-Ishii's Lemma \ref{JensenIshii} with the functions
$$\tilde{u}(x,t)= u(x,t)-\frac{\mathfrak{L}_{1}}{2}|x-x_{0}|^{2}-\frac{\mathfrak{L}_{1}}{2}(t-t_{0})^{2}\quad  \text{and} \quad \tilde{v}(y,t)=-u(y,t)-\frac{\mathfrak{L}_{1}}{2}|y-y_{0}|^{2}
$$
and obtain
\begin{eqnarray*}
 \left(\tau+\mathfrak{L}_{1}(\bar{t}-t_{0}),\varsigma_{1},\mathrm{X}+\mathfrak{L}_{1}\mathrm{Id}_n\right)\in\overline{\mathcal{J}}^{+}(u)(\bar{x},\bar{t})\quad\text{and}\quad \left(\tau,\varsigma_{2},\mathrm{Y}-\mathfrak{L}_{1}\mathrm{Id}_n\right)\in\overline{\mathcal{J}}^{-}(u)(\bar{y},\bar{t}).
\end{eqnarray*}
Here,
\begin{eqnarray*}
\varsigma_{1}=\mathfrak{L}_{2}\omega'(|\bar{x}-\bar{y}|)\frac{\bar{x}-\bar{y}}{|\bar{x}-\bar{y}|}+\mathfrak{L}_{1}(\bar{x}-x_{0})\quad\text{and}\quad
\varsigma_{2}=\mathfrak{L}_{2}\omega'(|\bar{x}-\bar{y}|)\frac{\bar{x}-\bar{y}}{|\bar{x}-\bar{y}|}-\mathfrak{L}_{1}(\bar{y}-y_{0}).
\end{eqnarray*}
Imposing the condition that $\mathfrak{L}_{2}>\frac{\mathfrak{L}_{1}2^{4-\mu}}{\mu}$, we have that
\begin{eqnarray}
 2\mathfrak{L}_{2}\mu|\bar{x}-\bar{y}|^{\mu-1}\geq |\varsigma_{i}|\geq \frac{\mathfrak{L}_{2}}{2}\mu|\bar{x}-\bar{y}|^{\mu-1},\, i=1,2.\label{relacaodosvarsigma}
\end{eqnarray}
Furthermore, by Jensen-Ishii's Lemma \ref{JensenIshii}, we can take matrices $\mathrm{X}, \mathrm{Y}\in \text{Sym}(n)$ such that for any $\kappa>0$ satisfying $\kappa \mathrm{Z}<\mathrm{Id}_n$, it holds that
\begin{equation}\label{ineqmatrices}
-\frac{2}{\kappa}\begin{pmatrix} \mathrm{Id}_n & 0 \\ 0 & \mathrm{Id}_n \end{pmatrix}
\leq 
\begin{pmatrix} \mathrm{X} & 0 \\ 0 & -\mathrm{Y} \end{pmatrix}
\leq 
\begin{pmatrix} \mathrm{Z}^{\kappa} & -\mathrm{Z} \\ -\mathrm{Z} & \mathrm{Z}^{\kappa} \end{pmatrix},
\end{equation}
where
\begin{align*}
\mathrm{Z} &= \mathfrak{L}_{2} \omega''(|\bar{x} - \bar{y}|) \frac{\bar{x} - \bar{y}}{|\bar{x} - \bar{y}|} \otimes \frac{\bar{x} - \bar{y}}{|\bar{x} - \bar{y}|} 
+ \mathfrak{L}_{2} \omega'(|\bar{x} - \bar{y}|) \left( \mathrm{Id}_n - \frac{\bar{x} - \bar{y}}{|\bar{x} - \bar{y}|} \otimes \frac{\bar{x} - \bar{y}}{|\bar{x} - \bar{y}|} \right) \\
&= \mathfrak{L}_{2} \mu |\bar{x} - \bar{y}|^{\mu - 2} \left( \mathrm{Id}_n + (\mu - 2) \frac{\bar{x} - \bar{y}}{|\bar{x} - \bar{y}|} \otimes \frac{\bar{x} - \bar{y}}{|\bar{x} - \bar{y}|} \right),
\end{align*}
and 
\begin{equation*}
\mathrm{Z}^{\kappa}=(\mathrm{Id}_n-\kappa \mathrm{Z})^{-1}\mathrm{Z}.
\end{equation*}
Taking $\kappa= (2\mathfrak{L}_{2}\mu|\bar{x}-\bar{y}|^{\mu-2})^{-1}$ we have by Sherman-Morrison formula (see \cite{SherMorr50}) that
\begin{eqnarray*}
\mathrm{Z}^{\kappa}=2\mathfrak{L}_{2}\mu|\bar{x}-\bar{y}|^{\mu-2}\left(\mathrm{Id}_n-2\frac{2-\mu}{3-\mu}\frac{\bar{x} - \bar{y}}{|\bar{x} - \bar{y}|} \otimes \frac{\bar{x} - \bar{y}}{|\bar{x} - \bar{y}|} \right).
\end{eqnarray*}
Writing $\xi = \frac{\bar{x} - \bar{y}}{|\bar{x} - \bar{y}|}$, we obtain that
\begin{eqnarray*}
\langle \mathrm{Z}^{\kappa}\xi,\xi\rangle&=&2\mathfrak{L}_{2}\mu|\bar{x}-\bar{y}|^{\mu-2}\left(\langle\xi,\xi\rangle-2\frac{2-\mu}{3-\mu}\xi\xi^{T}\xi\right)\\&=&2\mathfrak{L}_{2}\mu|\bar{x}-\bar{y}|^{\mu-2}\langle \xi,\xi\rangle\left(1-2\frac{2-\mu}{3-\mu}\right)\\
&=&2\mathfrak{L}_{2}\mu|\bar{x}-\bar{y}|^{\mu-2}\frac{\mu-1}{3-\mu}<0,
\end{eqnarray*}
since $\mu\in(0,1)$. Furthermore, by inequality \eqref{ineqmatrices} it follows that
\begin{eqnarray}\label{estmatricesXY}
\mathrm{X}\leq \mathrm{Y} \, \, \,\text{and} \,\,\, \max\{\|\mathrm{X}\|,\|\mathrm{Y}\|\}\leq 4\mathfrak{L}_{2}\mu|\bar{x}-\bar{y}|^{\mu-2}.
\end{eqnarray}
In this context, we use the following notation for each $\eta\in\mathbb{R}^{n}$
\begin{eqnarray*}
\mathfrak{A}(\eta)=\mathrm{Id}_n+(p-2)\frac{\eta}{|\eta|}\otimes \frac{\eta}{|\eta|}.
\end{eqnarray*}
As in \cite{Attouchi20,FZ23}, we observe that the eigenvalues of the matrix \( \mathfrak{A}(\eta) \) belong to open interval  \( (\min\{1, p-1\}, \max\{1, p-1\}) \). Using the information from the sub-/super-differential we obtain that
\[
\tau + \mathfrak{L}_{1}(\bar{t} - t_0) - \mathscr{H}(\bar{x},\bar{t},\varsigma_{1}) \operatorname{tr}(\mathfrak{A}(\varsigma_1)(\mathrm{X} + \mathfrak{L}_{1} \mathrm{Id}_n)) \leq f(\bar{x},\bar{t})
\]
and
\[
-\tau +\mathscr{H}(\bar{y},\bar{t},\varsigma_{2}) \operatorname{tr}(\mathfrak{A}(\varsigma_2)(\mathrm{Y} - \mathfrak{L}_{1} \mathrm{Id}_n)) \leq f(\bar{y},\bar{t}).
\]
Combining these two inequalities, it follows that
\begin{eqnarray*}
\mathfrak{L}_{1}(\bar{t} - t_0)&\leq&   \mathscr{H}(\bar{x},\bar{t},\varsigma_{1}) \operatorname{tr}(\mathfrak{A}(\varsigma_1)(\mathrm{X} + \mathfrak{L}_{1} \mathrm{Id}_n))\\
&-&\mathscr{H}(\bar{y},\bar{t},\varsigma_{2}) \operatorname{tr}(\mathfrak{A}(\varsigma_2)(\mathrm{Y} - \mathfrak{L}_{1} \mathrm{Id}_n))+2 \|f\|_{L^{\infty}(Q_{1})}.
\end{eqnarray*}
By the definition of $\mathscr{H}$, $|\bar{t}-t_{0}|\leq 1$ and rearranging the terms, we obtain from this last inequality that
\begin{eqnarray}
0&\leq& \mathfrak{L}_{1}+2\|f\|_{L^{\infty}(Q_{1})}+\mathfrak{L}_{1}[|\varsigma_{1}|^{\mathfrak{p}}\operatorname{tr}(\mathfrak{A}(\varsigma_{1}))+|\varsigma_{2}|^{\mathfrak{p}}\operatorname{tr}(\mathfrak{A}(\varsigma_{2}))]+|\varsigma_{1}|^{\mathfrak{p}}\operatorname{tr}(\mathfrak{A}(\varsigma_{1})(\mathrm{X}-\mathrm{Y}))\nonumber\\
&+&|\varsigma_{1}|^{\mathfrak{p}}\operatorname{tr}((\mathfrak{A}(\varsigma_{1})-\mathfrak{A}(\varsigma_{2}))\mathrm{Y})+[|\varsigma_{1}|^{\mathfrak{p}}-|\varsigma_{2}|^{\mathfrak{p}}]\operatorname{tr}(\mathfrak{A}(\varsigma_{2})\mathrm{Y})
\nonumber\\
&+&(\mathfrak{a}(\bar{x},\bar{t})-\mathfrak{a}(\bar{y},\bar{t}))|\varsigma_{2}|^{\mathfrak{q}}(\operatorname{tr}(\mathfrak{A}(\varsigma_{2})\mathrm{Y})-\mathfrak{L}_{1}\operatorname{tr}(\mathfrak{A}(\varsigma_{2})))\nonumber\\
&+&\mathfrak{a}(\bar{x},\bar{t})[|\varsigma_{1}|^{\mathfrak{q}}(\operatorname{tr}(\mathfrak{A}(\varsigma_{1})\mathrm{X})-\mathfrak{L}_{1}\operatorname{tr}\mathfrak{A}(\varsigma_{1})))-|\varsigma_{2}|^{\mathfrak{q}}(\operatorname{tr}(\mathfrak{A}(\varsigma_{2})\mathrm{Y})-\mathfrak{L}_{1}\operatorname{tr}(\mathfrak{A}(\varsigma_{2})))]\nonumber\\
&=:& \mathfrak{L}_{1}+2\|f\|_{L^{\infty}(Q_{1})}+\mathfrak{I}_{1}+\mathfrak{I}_{2}+\mathfrak{I}_{3}+\mathfrak{I}_{4}+\mathfrak{I}_{5}+\mathfrak{I}_{6}\label{estholder1}.
\end{eqnarray}
Now, we will estimate each of the terms \( \mathfrak{I}_j \), \( j = 1, \dots, 6 \):
\begin{itemize}
\item({\bf Estimate of \(\mathfrak{I}_{1}\)})\\
Since the eigenvalues of \( \mathfrak{A}(\varsigma_i) \) are in the interval \( (\min\{1, p-1\}, \max\{1, p-1\}) \), we have that
\begin{eqnarray}
\operatorname{tr}(\mathfrak{A}(\varsigma_{i}))\in(n\min\{1, p-1\},n\max\{1, p-1\}),\,\, i=1,2.\label{estimateAesp}
\end{eqnarray}
In particular,
\[
\mathfrak{I}_{1}\leq \mathfrak{L}_{1}[|\varsigma_{1}|^{\mathfrak{p}}+|\varsigma_{2}|^{\mathfrak{p}}]n\max\{1,p-1\}.
\]
Using \eqref{relacaodosvarsigma}, it follows that
\begin{eqnarray}
|\varsigma_{i}|^{\mathfrak{p}}\leq \max\{2^{\mathfrak{p}},2^{-\mathfrak{p}}\}\mathfrak{L}^{\mathfrak{p}}_{2}\mu^{\mathfrak{p}}|\bar{x}-\bar{y}|^{\mathfrak{p}(\mu-1)}, \,\, i=1,2.\label{estimatedotermonorm}
\end{eqnarray}
Hence,
\begin{eqnarray}\label{estI1}
\mathfrak{I}_{1}\leq 4n\max\{2^{\mathfrak{p}},2^{-\mathfrak{p}}\}\max\{1,p-1\}\mathfrak{L}_{1}\mathfrak{L}^{\mathfrak{p}}_{2}\mu^{\mathfrak{p}}|\bar{x}-\bar{y}|^{\mathfrak{p}(\mu-1)-1},
\end{eqnarray}
since $|\bar{x}-\bar{y}|\leq 2$.
\item ({\bf Estimate of \(\mathfrak{I}_{2}\)})\\
Denoting by \( \sigma_i(\mathrm{M}) \) the \( i \)-th eigenvalue of a matrix \( \mathrm{M} \), we have, by the matrix inequalities \eqref{ineqmatrices} applied to the vector \( (\xi, -\xi) \) for \( \xi = \frac{\bar{x} - \bar{y}}{|\bar{x} - \bar{y}|} \), that
\[
\langle(\mathrm{X}-\mathrm{Y})\xi,\xi\rangle\leq 4\langle \mathrm{Z}^{\tau}\xi,\xi\rangle\leq-8\mathfrak{L}_{2}\mu|\bar{x}-\bar{y}|^{\mu-2}\left(\frac{1-\mu}{3-\mu}\right)<0.
\]
Consequently,
\[
\sigma_{i}(\mathrm{X}-\mathrm{Y})\leq -8\mathfrak{L}_{2}\mu|\bar{x}-\bar{y}|^{\mu-2}\left(\frac{1-\mu}{3-\mu}\right),\,\, i=1,\ldots,n. 
\]
In this way, using \eqref{estimatedotermonorm}, we obtain that
\begin{eqnarray}\label{estI2}
\mathfrak{I}_{2}&\leq&\max\{2^{\mathfrak{p}},2^{-\mathfrak{p}}\}\mathfrak{L}^{\mathfrak{p}}_{2}\mu^{\mathfrak{p}}|\bar{x}-\bar{y}|^{\mathfrak{p}(\mu-1)}\sum_{i=1}^{n}\sigma_{i}(\mathfrak{A}(\varsigma_{1}))\sigma_{i}(\mathrm{X}-\mathrm{Y})\nonumber\\
&\leq&\max\{2^{\mathfrak{p}},2^{-\mathfrak{p}}\}\mathfrak{L}^{\mathfrak{p}}_{2}\mu^{\mathfrak{p}}|\bar{x}-\bar{y}|^{\mathfrak{p}(\mu-1)}\min\{1,p-1\}\min_{1\leq i\leq n}\sigma_{i}(\mathrm{X}-\mathrm{Y})\nonumber\\
&\leq&-8\max\{2^{\mathfrak{p}},2^{-\mathfrak{p}}\}\mathfrak{L}^{1+\mathfrak{p}}_{2}\mu^{1+\mathfrak{p}}\left(\frac{1-\mu}{3-\mu}\right)\min\{1,p-1\}|\bar{x}-\bar{y}|^{(\mu-1)(1+\mathfrak{p})-1}.
\end{eqnarray}
\item ({\bf Estimate of \(\mathfrak{I}_{3}\)})\\
First, we know by \cite[Lemma 6.1]{FZ23} that
\[
\|\mathfrak{A}(\varsigma_{1})-\mathfrak{A}(\varsigma_{2})\|\leq \frac{16|p-2|\mathfrak{L}_{1}}{\mathfrak{L}_{2}\mu|\bar{x}-\bar{y}|^{\mu-1}}.
\]
Thus,
\begin{eqnarray}\label{estI3}
\mathfrak{I}_{3}\leq 64n\max\{2^{\mathfrak{p}},2^{-\mathfrak{p}}\}|p-2|\mathfrak{L}_{1}\mathfrak{L}_{2}^{\mathfrak{p}}\mu^{\mathfrak{p}}|\bar{x}-\bar{y}|^{(\mu-1)\mathfrak{p}-1}.
\end{eqnarray}
\item({\bf Estimate of \(\mathfrak{I}_{4}\)})\\
By Mean Value Theorem, there exist $\varrho$ between $|\varsigma_{1}|$ and $|\varsigma_{2}|$ such that
\begin{eqnarray*}
||\varsigma_{1}|^{\mathfrak{p}}-|\varsigma_{2}|^{\mathfrak{p}}|&=&|\mathfrak{p}|\varrho^{\mathfrak{p}-1}||\varsigma_{1}|-|\varsigma_{2}||\\
&\leq&|\mathfrak{p}|\max\{2^{(\mathfrak{p}-1)},2^{-(\mathfrak{p}-1)}\}\mathfrak{L}_{2}^{\mathfrak{p}-1}\mu^{\mathfrak{p}-1}|\bar{x}-\bar{y}|^{(\mu-1)(\mathfrak{p}-1)}|\varsigma_{1}-\varsigma_{2}|\\
&=&|\mathfrak{p}|\max\{2^{(\mathfrak{p}-1)},2^{-(\mathfrak{p}-1)}\}\mathfrak{L}_{2}^{\mathfrak{p}-1}\mu^{\mathfrak{p}-1}|\bar{x}-\bar{y}|^{(\mu-1)(\mathfrak{p}-1)}\\
&\cdot&|(\bar{x}-x_{0})+(\bar{y}-y_{0})|\\
&\leq&4|\mathfrak{p}|\max\{2^{(\mathfrak{p}-1)},2^{-(\mathfrak{p}-1)}\}\mathfrak{L}_{2}^{\mathfrak{p}-1}\mu^{\mathfrak{p}-1}|\bar{x}-\bar{y}|^{(\mu-1)(\mathfrak{p}-1)}.
\end{eqnarray*}
Consequently, by \eqref{estmatricesXY} and \eqref{estimateAesp} it follows that 
\begin{eqnarray}\label{estI4}
\mathfrak{I}_{4}\leq 16|\mathfrak{p}|n\max\{2^{(\mathfrak{p}-1)},2^{-(\mathfrak{p}-1)}\}\max\{1,p-1\}\mathfrak{L}_{2}^{\mathfrak{p}}\mathfrak{L}_{1}\mu^{\mathfrak{p}}|\bar{x}-\bar{y}|^{(\mu-1)\mathfrak{p}-1}.
\end{eqnarray}
\item ({\bf Estimate of \(\mathfrak{I}_{5}\)})\\
By the H\"{o}lder regularity of the modulating function \( \mathfrak{a} \) concerning the spatial variable and the above estimates, replacing the exponent \( \mathfrak{p} \) with \( \mathfrak{q} \), we have that
\begin{eqnarray}\label{estI5}
\mathfrak{I}_{5}&\leq& 4n[\mathfrak{a}]_{C^{0,1}_{x}(Q_{1})}\max\{2^{\mathfrak{q}},2^{-\mathfrak{q}}\}\max\{1,p-1\}\mathfrak{L}^{1+\mathfrak{q}}_{2}\mu^{1+\mathfrak{q}}|\bar{x}-\bar{y}|^{(\mu-1)(1+\mathfrak{q})}\nonumber\\
&+&n[\mathfrak{a}]_{C^{0,1}_{x}(Q_{1})}\max\{2^{\mathfrak{q}},2^{-\mathfrak{q}}\}\max\{1,p-1\}\mathfrak{L}^{\mathfrak{q}}_{2}\mathfrak{L}_{1}\mu^{\mathfrak{q}}|\bar{x}-\bar{y}|^{(\mu-1)\mathfrak{q}+1}.
\end{eqnarray}
\item ({\bf Estimate of \(\mathfrak{I}_{6}\)})
Similarly to the estimates  \eqref{estI1}, \eqref{estI2}, \eqref{estI3} and \eqref{estI4}, it is possible to verify that
\begin{eqnarray}\label{estI6}
\mathfrak{I}_{6}&\leq&\mathfrak{a}(\bar{x},\bar{t})[-8\max\{2^{\mathfrak{q}},2^{\mathfrak{q}}\}\mathfrak{L}^{1+\mathfrak{q}}_{2}\mu^{1+\mathfrak{q}}\left(\frac{1-\mu}{3-\mu}\right)\min\{1,p-1\}|\bar{x}-\bar{y}|^{(\mu-1)(1+\mathfrak{q})-1}\nonumber\\
&+&64n\max\{2^{\mathfrak{q}},2^{-\mathfrak{q}}\}|p-2|\mathfrak{L}_{1}\mathfrak{L}_{2}^{\mathfrak{q}}\mu^{\mathfrak{q}}|\bar{x}-\bar{y}|^{(\mu-1)\mathfrak{q}-1}\nonumber\\
&+&16|\mathfrak{q}|n\max\{2^{(\mathfrak{q}-1)},2^{-(\mathfrak{q}-1)}\}\mathfrak{L}_{2}^{\mathfrak{q}}\mathfrak{L}_{1}\mu^{\mathfrak{q}}|\bar{x}-\bar{y}|^{(\mu-1)\mathfrak{q}-1}\nonumber\\
&+&4n\max\{2^{\mathfrak{q}},2^{-\mathfrak{q}}\}\max\{1,p-1\}\mathfrak{L}_{1}\mathfrak{L}^{\mathfrak{q}}_{2}\mu^{\mathfrak{q}}|\bar{x}-\bar{y}|^{\mathfrak{q}(\mu-1)-1}].
\end{eqnarray}
\end{itemize}
Hence, by substituting \eqref{estI1},\eqref{estI2}, \eqref{estI3}, \eqref{estI4}, \eqref{estI5} and \eqref{estI6} in \eqref{estholder1} and by rearranging the terms, we have that
\begin{eqnarray*}
0&\leq& \mathfrak{L}_{1}+2\|f\|_{L^{\infty}(Q_{1})}+|\bar{x}-\bar{y}|^{(\mu-1)(1+\mathfrak{p})-1}\mathfrak{L}_{2}^{1+\mathfrak{p}}\mu^{1+\mathfrak{p}}\Bigg[-8\max\{2^{\mathfrak{p}},2^{-\mathfrak{p}}\}\left(\frac{1-\mu}{3-\mu}\right)\nonumber\\
&\cdot&\min\{1,p-1\}+\frac{84n\max\{2^{\mathfrak{p}},2^{-(\mathfrak{p}-1)},|\mathfrak{p}|, 1,p-1,|p-2|\}}{\mathfrak{L}_{2}\mu|\bar{x}-\bar{y}|^{\mu-1}}\mathfrak{L}_{1}\Bigg]\nonumber
\end{eqnarray*}
\begin{eqnarray}\label{contradicao}
&+&\mathfrak{a}(\bar{x},\bar{t})[-8\max\{2^{\mathfrak{q}},2^{-\mathfrak{q}}\}\mathfrak{L}^{1+\mathfrak{q}}_{2}\mu^{1+\mathfrak{q}}\left(\frac{1-\mu}{3-\mu}\right)\min\{1,p-1\}|\bar{x}-\bar{y}|^{(\mu-1)(1+\mathfrak{q})-1}\nonumber\\
&+&\mathfrak{a}(\bar{x},\bar{t})^{-1}(84n\max\{2^{\mathfrak{q}},2^{-(\mathfrak{q}-1)},|\mathfrak{q}|, 1,p-1,|p-2|\}\mathfrak{L}_{1}\mathfrak{L}_{2}^{\mathfrak{q}}\mu^{\mathfrak{q}}|\bar{x}-\bar{y}|^{(\mu-1)\mathfrak{q}-1}\nonumber\\
&+&4n[\mathfrak{a}]_{C^{0,1}_{x}(Q_{1})}\max\{2^{\mathfrak{q}},2^{-\mathfrak{q}}\}\max\{1,p-1\}\mathfrak{L}^{1+\mathfrak{q}}_{2}\mu^{1+\mathfrak{q}}|\bar{x}-\bar{y}|^{(\mu-1)(1+\mathfrak{q})}\nonumber\\
&+&4n[\mathfrak{a}]_{C^{0,1}_{x}(Q_{1})}\max\{2^{\mathfrak{q}},2^{-\mathfrak{q}}\}\max\{1,p-1\}\mathfrak{L}^{\mathfrak{q}}_{2}\mathfrak{L}_{1}\mu^{\mathfrak{q}}|\bar{x}-\bar{y}|^{(\mu-1)\mathfrak{q}-1})]
\end{eqnarray}
Now, we chosen $\mathfrak{L}_{2}$ large enough such that
$$
\begin{cases}
\mathfrak{L}_{2}\mu|\bar{x}-\bar{y}|^{\mu-1}\geq \frac{84n\max\{2^{\mathfrak{p}},2^{-(\mathfrak{p}-1)},|\mathfrak{p}|, 1,p-1,|p-2|\}}{\min\{1,p-1\}\left(\frac{1-\mu}{3-\mu}\right)\max\{2^{\mathfrak{p}},2^{-\mathfrak{p}}\}}\mathfrak{L}_{1}\\
\mathfrak{L}_{2}\mu|\bar{x}-\bar{y}|^{\frac{(\mu-1)(1+\mathfrak{p})-1}{1+\mathfrak{p}}}\geq\frac{\mathfrak{L}_{1}+\mathfrak{L}_{1}^{\frac{1}{1+\mathfrak{p}}}+\|f\|_{L^{\infty}(Q_{1})}^{\frac{1}{1+\mathfrak{p}}}}{\left(\min\{1,p-1\}\left(\frac{1-\mu}{3-\mu}\right)\max\{2^{\mathfrak{p}},2^{-\mathfrak{p}}\}\right)^{\frac{1}{1+\mathfrak{p}}}} \\
\mathfrak{L}_{2}\mu|\bar{x}-\bar{y}|^{\mu-1}\geq \frac{84n\max\{2^{\mathfrak{q}},2^{-(\mathfrak{q}-1)},|\mathfrak{q}|, 1,p-1,|p-2|\}}{\min\{1,p-1\}\left(\frac{1-\mu}{3-\mu}\right)\max\{2^{\mathfrak{q}},2^{-\mathfrak{q}}\}}\mathfrak{L}_{1}\\
\mathfrak{L}_{2}\mu|\bar{x}-\bar{y}|^{\mu-1}\geq \frac{4n[\mathfrak{a}]_{C^{0,1}_{x}(Q_{1})}(\mathfrak{a}^{-})^{-1}\max\{1,p-1\}}{\min\{1,p-1\}\left(\frac{1-\mu}{3-\mu}\right)}\mathfrak{L}_{1}\\
\left(\frac{2\|u\|_{L^{\infty}(Q_{1})}}{\mathfrak{L}_{2}}\right)^{\frac{1}{\mu}}\leq \frac{\left(\frac{1-\mu}{3-\mu}\right)\min\{1,p-1\}}{4n[\mathfrak{a}]_{C^{0,1}_{x}(Q_{1}}(\mathfrak{a}^{-})^{-1}\max\{1,p-1\}}.
\end{cases}
$$
With these choices and \eqref{diamL2}, we obtain in \eqref{contradicao} that
\begin{eqnarray*}
0&\leq& -4\max\{2^{\mathfrak{p}},2^{-\mathfrak{p}}\}\left(\frac{1-\mu}{3-\mu}\right)\min\{1,p-1\}\mathfrak{L}_{2}^{1+\mathfrak{p}}\mu^{1+\mathfrak{p}}|\bar{x}-\bar{y}|^{(\mu-1)(1+\mathfrak{p})-1}\\
&-&5a_{-}\max\{2^{\mathfrak{q}},2^{-\mathfrak{q}}\}\left(\frac{1-\mu}{3-\mu}\right)\min\{1,p-1\}\mathfrak{L}_{2}^{1+\mathfrak{q}}\mu^{1+\mathfrak{q}}|\bar{x}-\bar{y}|^{(\mu-1)(1+\mathfrak{q})-1}<0,
\end{eqnarray*}
which yields a contradiction. By dependence of $\mathfrak{L}_{2}$ above it follows the proof.
\end{proof}
\begin{remark}
We can relax the assumption \( \mathfrak{a}^{-} > 0 \) by imposing the restriction \( \mathfrak{p} \leq \mathfrak{q} < \mathfrak{p} + \alpha' \) (see \cite[Remark 6.2]{FZ23} for more details).
\end{remark}
With this result, we can prove the local Lipschitz regularity in the spatial variable for solutions of problem \eqref{Problem}.
\begin{lemma}[\bf Local Lipschitz estimates in spatial variable]\label{Lipschitzlema}
Let $u$ be a bounded viscosity solution to \eqref{Problem} in $Q_{1}$. Suppose that structural conditions \(\mathrm{(H2)-(H4)}\) are satisfies. Then, for all $r\in \left(0,\frac{7}{8}
\right]$ there holds that  
\begin{eqnarray}
|u(x,t)-u(y,t)|\leq \mathrm{C}\left(\|u\|_{L^{\infty}(Q_{1})}+\|u\|_{L^{\infty}(Q_{1})}^{\frac{1}{1+\mathfrak{p}}}+\|f\|_{L^{\infty}(Q_{1})}^{\frac{1}{1+\mathfrak{p}}}\right)|x-y|
\end{eqnarray}
for all $x,y\in \overline{B_{r}}$ and $t\in[-r^{2},0]$, where $\mathrm{C}=\mathrm{C}\left(n,p,\mathfrak{p},\mathfrak{q},\mathfrak{a}^{-},[\mathfrak{a}]_{C^{0,1}_{x}(Q_{1})}\right)>0$.
\end{lemma}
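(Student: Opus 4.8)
The plan is to upgrade the Hölder estimate of Lemma \ref{Holderestforregpro} to a Lipschitz estimate by a variant of the same doubling-of-variables argument, where the concave modulus $\omega(s)=s^\mu$ is replaced by a modulus $\omega$ that behaves linearly near the origin but retains strict concavity (so that $\omega''<0$ on the relevant scale). A standard choice is to set $\omega(s)=s-\omega_0 s^{\beta}$ for suitable $\beta>1$ and small $\omega_0>0$ on an interval $[0,s_0]$, extended appropriately, or alternatively $\omega(s)=\min\{s,\;s_0\}$ regularized; the key features needed are $\omega(0)=0$, $\omega'(0^+)\in(0,\infty)$, $\omega'$ bounded above and below by positive constants on $[0,s_0]$, and $\omega''(s)\le -c\,s^{\beta-2}<0$. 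With this $\omega$, one fixes $x_0,y_0\in\overline{B_r}$, $t_0\in[-r^2,0]$, and considers
\[
\phi(x,y,t):=u(x,t)-u(y,t)-\mathfrak{L}_2\,\omega(|x-y|)-\tfrac{\mathfrak{L}_1}{2}|x-x_0|^2-\tfrac{\mathfrak{L}_1}{2}|y-y_0|^2-\tfrac{\mathfrak{L}_1}{2}(t-t_0)^2,
\]
aiming to show $\phi\le 0$ for $\mathfrak{L}_2$ large depending only on the allowed constants, which upon letting $x=x_0$, $y=y_0$ and sending $\mathfrak{L}_1$ in via the already-known boundedness of $u$ gives $|u(x_0,t_0)-u(y_0,t_0)|\le \mathrm{C}\,\omega(|x_0-y_0|)\le \mathrm{C}|x_0-y_0|$.

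The execution mirrors Lemma \ref{Holderestforregpro} step by step. First I would argue by contradiction: if $\phi$ has a positive interior maximum at $(\bar x,\bar y,\bar t)$ then $\bar x\ne\bar y$, and the choice of $\mathfrak{L}_1$ (proportional to $\|u\|_{L^\infty(Q_1)}$, as in the previous lemma) forces $(\bar x,\bar t),(\bar y,\bar t)$ to stay inside the smaller cylinder; moreover $|\bar x-\bar y|$ is controlled by $\omega^{-1}(2\|u\|_{L^\infty}/\mathfrak{L}_2)$, hence small for $\mathfrak{L}_2\gg1$. Then I apply Jensen--Ishii's Lemma \ref{JensenIshii} with $\tilde u(x,t)=u(x,t)-\tfrac{\mathfrak{L}_1}{2}|x-x_0|^2-\tfrac{\mathfrak{L}_1}{2}(t-t_0)^2$ and $\tilde v(y,t)=-u(y,t)-\tfrac{\mathfrak{L}_1}{2}|y-y_0|^2$, producing sub-/super-jets $(\tau+\mathfrak{L}_1(\bar t-t_0),\varsigma_1,\mathrm X+\mathfrak{L}_1\mathrm{Id}_n)\in\overline{\mathcal J}^+(u)(\bar x,\bar t)$ and $(\tau,\varsigma_2,\mathrm Y-\mathfrak{L}_1\mathrm{Id}_n)\in\overline{\mathcal J}^-(u)(\bar y,\bar t)$ with $\varsigma_i=\mathfrak{L}_2\omega'(|\bar x-\bar y|)\tfrac{\bar x-\bar y}{|\bar x-\bar y|}\pm\mathfrak{L}_1(\cdot)$ and matrices $\mathrm X\le\mathrm Y$ governed by the matrix $\mathrm Z$ built from $\omega',\omega''$. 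Subtracting the two viscosity inequalities and using $\mathscr H(x,t,\xi)=|\xi|^{\mathfrak p}+\mathfrak a(x,t)|\xi|^{\mathfrak q}$, I split the difference into the same six terms $\mathfrak I_1,\dots,\mathfrak I_6$. The only structural change is bookkeeping: since now $\omega'(|\bar x-\bar y|)\asymp 1$ and $|\varsigma_i|\asymp\mathfrak{L}_2$, the ``good'' negative term $\mathfrak I_2$ (and its $\mathfrak a$-counterpart inside $\mathfrak I_6$) carries a factor $-c\,\mathfrak{L}_2^{1+\mathfrak p}\,|\bar x-\bar y|^{\beta-2}$ coming from $\langle(\mathrm X-\mathrm Y)\xi,\xi\rangle\le 4\langle\mathrm Z^\kappa\xi,\xi\rangle<0$ with the Sherman--Morrison computation exactly as before, whereas the competing terms $\mathfrak I_1,\mathfrak I_3,\mathfrak I_4,\mathfrak I_5$ are bounded by $\mathrm{C}\,\mathfrak{L}_1\mathfrak{L}_2^{\mathfrak p}|\bar x-\bar y|^{\beta-2}$ plus lower-order powers. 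Choosing $\mathfrak{L}_2$ large relative to $\mathfrak{L}_1$, $\|f\|_{L^\infty}^{1/(1+\mathfrak p)}$, $(\mathfrak a^-)^{-1}$, $[\mathfrak a]_{C^{0,1}_x(Q_1)}$ and the various absorbed constants absorbs all bad terms into the good one, yielding $0<0$, the desired contradiction; the final constant $\mathrm C$ then comes out depending only on $n,p,\mathfrak p,\mathfrak q,\mathfrak a^-,[\mathfrak a]_{C^{0,1}_x(Q_1)}$.

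The main obstacle is choosing the regularized modulus $\omega$ so that it is simultaneously (i) concave enough that $\omega''$ is strictly negative with a quantitatively usable lower bound on $|\omega''|$ at scale $|\bar x-\bar y|$, (ii) uniformly Lipschitz (i.e.\ $\omega'$ bounded above) so that the ``bad'' gradient terms $|\varsigma_i|^{\mathfrak p}$, $|\varsigma_i|^{\mathfrak q}$ do not blow up, and (iii) linear near $0$ so the conclusion is genuinely Lipschitz rather than merely $C^{0,\mu}$. Concretely this forces the comparison between $\omega''$ and $(\omega')^2$-type quantities in the $\mathfrak I_2$ versus $\mathfrak I_1,\mathfrak I_3,\mathfrak I_4$ inequality to remain favorable, which is why one typically first proves the statement only on a sufficiently small cylinder (here the restriction $r\le 7/8$, together with a covering/rescaling argument that uses Lemma \ref{Holderestforregpro} to control oscillations on unit scale) and then patches. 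Since the algebra is entirely parallel to Lemma \ref{Holderestforregpro}, with the exponent $\mu$ replaced by the near-$1$ behavior of $\omega$, I would only indicate the modified estimates for $\mathfrak I_2$ and $\mathfrak I_6$ and refer to the previous proof (and to \cite[Lemma 6.1]{FZ23}, \cite{Attouchi20}) for the remaining routine bounds.
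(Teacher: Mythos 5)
Your overall strategy — replace $\omega(s)=s^\mu$ by a strictly concave, asymptotically linear modulus $\omega(s)=s-k_0 s^\nu$, redo the doubling argument with Jensen--Ishii, and estimate the six terms $\mathfrak I_1,\dots,\mathfrak I_6$ — is exactly the paper's. The issue is that you misidentify the role of the prior H\"older estimate (Lemma \ref{Holderestforregpro}), and the argument does not close without the way the paper actually uses it.

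The paper does not invoke Lemma \ref{Holderestforregpro} through a ``covering/rescaling argument on unit scale.'' It invokes it \emph{pointwise at the contact point} $(\bar x,\bar y,\bar t)$: from $\phi(\bar x,\bar y,\bar t)>0$ and $|u(\bar x,\bar t)-u(\bar y,\bar t)|\le \mathrm{C}_\mu|\bar x-\bar y|^\mu$, together with the normalization $2\mathfrak{L}_1\le\mathrm{C}_\mu$, one deduces
\[
\max\bigl\{\mathfrak{L}_1|\bar x-x_0|,\ \mathfrak{L}_1|\bar y-y_0|\bigr\}\le \mathrm{C}_\mu|\bar x-\bar y|^{\mu/2}.
\]
This is the quantitative input that makes the bad terms $\mathfrak I_3$ and $\mathfrak I_4$ absorbable. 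Indeed, $\varsigma_1-\varsigma_2=\mathfrak{L}_1\bigl((\bar x-x_0)+(\bar y-y_0)\bigr)$, so without the above one only knows $|\varsigma_1-\varsigma_2|\le 4\mathfrak{L}_1$. Since $\|\mathrm Y\|\lesssim \mathfrak{L}_2\bigl(|\omega''|+\omega'/|\bar x-\bar y|\bigr)\sim \mathfrak{L}_2/|\bar x-\bar y|$ and $\|\mathfrak A(\varsigma_1)-\mathfrak A(\varsigma_2)\|\lesssim |\varsigma_1-\varsigma_2|/\mathfrak{L}_2$, one would get $\mathfrak I_3\lesssim \mathfrak{L}_2^{\mathfrak p}\mathfrak{L}_1/|\bar x-\bar y|$, whereas the good term gives $-\mathfrak I_2\gtrsim \mathfrak{L}_2^{1+\mathfrak p}|\bar x-\bar y|^{\nu-2}$. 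Absorbing then requires $\mathfrak{L}_2|\bar x-\bar y|^{\nu-1}\gg\mathfrak{L}_1$, which \emph{fails} as $|\bar x-\bar y|\to0$ because $\nu>1$ — so choosing $\mathfrak{L}_2$ large does not help, and the contradiction argument collapses. With the pointwise H\"older bound one instead gets $|\varsigma_1-\varsigma_2|\lesssim \mathrm{C}_\mu|\bar x-\bar y|^{\mu/2}$, hence $\mathfrak I_3,\mathfrak I_4\lesssim \mathrm{C}_\mu\mathfrak{L}_2^{\mathfrak p}|\bar x-\bar y|^{\mu/2-1}$; the paper then \emph{couples} the modulus exponent to the H\"older exponent by taking $\nu=1+\mu/2$, so that $\omega''(|\bar x-\bar y|)\sim -|\bar x-\bar y|^{\mu/2-1}$ matches exactly, and the extra factor $\mathfrak{L}_2$ in $-\mathfrak I_2$ wins. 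This coupling $\nu=1+\mu/2$, and the pointwise use of the prior H\"older estimate that forces it, is the genuine idea you are missing; without it, your ``choose $\mathfrak{L}_2$ large'' step cannot absorb $\mathfrak I_3,\mathfrak I_4$ uniformly in $|\bar x-\bar y|$. The restriction $r\le 7/8$ is merely to leave room for Lemma \ref{Holderestforregpro} (proved on $B_{15/16}$) and the localization terms; it is not where the difficulty is resolved.
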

\begin{proof}
We fix $r=7/8$ and fix $x_{0},y_{0}\in B_{r}$, $t_{0}\in(-r^{2},0)$. For positive constants $\mathfrak{L}_{1}$ and $\mathfrak{L}_{2}$ define the auxiliary function
\begin{eqnarray*}
\phi(x,y,t):=u(x,t)-u(y,t)-\mathfrak{L}_{2}\omega(|x-y|)-\frac{\mathfrak{L}_{1}}{2}|x-x_{0}|^2-\frac{\mathfrak{L}_{1}}{2}|y-y_{0}|^2-\frac{\mathfrak{L}_{1}}{2}(t-t_{0})^2,
\end{eqnarray*}
where $\omega$ is defined by
\begin{eqnarray*}
\omega(s)=\begin{cases}
s-k_{0}s^{\nu},& \, \text{if }\, 0\leq s\leq s_{1}=\left(\frac{1}{4\nu k_{0}}\right)^{\frac{1}{\nu-1}},\\
\omega(s_{1}),&\, \text{otherwise}.
\end{cases}
\end{eqnarray*}
Here, $\nu\in(1,2)$ and $k_0>0$ is taken such that $s_{1}>2$ and $\nu k_{0}s_{1}^{n-1}\leq 1/4$. In this form, $\omega$ is smooth in open interval $(0, s_1)$ with
$$
\begin{cases}
\omega'(s) = 1 - \nu \kappa_0 s^{\nu-1}\\ \omega''(s) = -\nu(\nu - 1)\kappa_0 s^{\nu-2}.
\end{cases}
$$
Moreover, by chosen above we have that $\omega'(s)\in[3/4,1]$ and $\omega''(s)<0$ for $s\in(0,2]$. With these observations made, we state that $\phi$ is non-positive in $\overline{B}{r} \times \overline{B}{r} \times [-r^{2}, 0]$. We now argue by contradiction. Assume that $\phi(\bar{x},\bar{y},\bar{t})>0$ for some $(\bar{x}, \bar{y}, \bar{t}) \in \overline{B_r} \times \overline{B_r} \times [-r^2, 0]$ be a maximum point of $\phi$. As in the proof of Lemma \ref{Holderestforregpro}, we can note that  $\bar{x} \neq \bar{y}$ and $\bar{x}, \bar{y} \in B_r$, $\bar{t} \in (-r^2, 0)$ if we taken $\mathfrak{L}_{1} \geq \max\{1,\mathrm{C} \|u\|_{L^\infty(Q_1)}\}$ (as in Lemma \ref{Holderestforregpro}). Hereafter, we can apply Lemma \ref{Holderestforregpro} to ensure that \( u \) is locally \( \mu \)- H\"{o}lder continuous in spatial variable for any \( \mu \in (0,1) \), and there exists a positive constant \( \rm{C_{\mu}} \) such that \[
|u(x, t) - u(y, t)| \leq \mathrm{C_{\mu}} |x - y|^\mu, \,\, \text{for all}\,\, x,y\in B_{r}, \, t\in (-r^{2},0)
\]
where 
\[
\mathrm{C_{\mu}}=\mathrm{C}\left(\|u\|_{L^{\infty}(Q_{1})}+\|u\|_{L^{\infty}(Q_{1})}^{{\frac{1}{1+\mathfrak{p}}}}+\|f\|_{L^{\infty}(Q_{1})}^{{\frac{1}{1+\mathfrak{p}}}}\right).
\] 
Using this estimate and choosing $2\mathfrak{L}_1 \leq \mathrm{C_{\mu}}$, we get
\[
\max\{\mathfrak{L}_1| \bar{y} - y_0|, \mathfrak{L}_1| \bar{x} - x_0|\} \leq \mathrm{C_{\mu}} | \bar{x} - \bar{y}|^{\frac{\mu}{2}}.
\]
Furthermore, by fact that $\phi(\bar{x},\bar{y},\bar{t})>0$ we known that
\[
\mathfrak{L}_{2}|\bar{x}-\bar{y}|(1-k_{0}|\bar{x}-\bar{y}|^{\nu-1})\leq 2\|u\|_{L^{\infty}(Q_{1})}. 
\]
Thus, we can chosen $k_{0}\in (0,1)$ such that 
\[
1-k_{0}|\bar{x}-\bar{y}|^{\nu-1}\geq \frac{1}{2}
\]
consequently of the two last inequalities, we conclude that
\begin{eqnarray}\label{conddenorma}
|\bar{x}-\bar{y}|\leq \frac{4\|u\|_{L^{\infty}(Q_{1})}}{\mathfrak{L}_{2}}.
\end{eqnarray}
Now, we apply Jensen-Ishii's Lemma \ref{JensenIshii} to obtain,
\begin{eqnarray*}
(\tau+\mathfrak{L}_{1}(\bar{t}-t_{0}),\varsigma_{1},\mathrm{X}+\mathfrak{L}_{1}\mathrm{Id}_n)\in \overline{\mathcal{J}}^{+}(u)(\bar{x},\bar{t}),\quad
(\tau,\varsigma_{2},\mathrm{Y}-\mathfrak{L}_{1}\mathrm{Id}_n)\in \overline{\mathcal{J}}^{-}(u)(\bar{y},\bar{t})
\end{eqnarray*}
for 
\begin{eqnarray*}
\varsigma_{1}=\mathfrak{L}_{2}\omega'(|\bar{x}-\bar{y}|)\frac{\bar{x}-\bar{y}}{|\bar{x}-\bar{y}|}+\mathfrak{L}_{1}(\bar{x}-x_{0})\,\,\,\text{and}\,\,\,\,
\varsigma_{2}=\mathfrak{L}_{2}\omega'(|\bar{x}-\bar{y}|)\frac{\bar{x}-\bar{y}}{|\bar{x}-\bar{y}|}-\mathfrak{L}_{1}(\bar{y}-y_{0})
\end{eqnarray*}
and for all $\kappa>0$
\begin{equation}\label{ineqmatrices2}
-(\kappa+2\|\mathrm{Z}\|)\begin{pmatrix} \mathrm{Id}_n & 0 \\ 0 & \mathrm{Id}_n \end{pmatrix}
\leq 
\begin{pmatrix} \mathrm{X} & 0 \\ 0 & -\mathrm{Y} \end{pmatrix}
\leq 
\begin{pmatrix} \mathrm{Z} & -\mathrm{Z} \\ -\mathrm{Z} & \mathrm{Z} \end{pmatrix}+\frac{2}{\kappa}\begin{pmatrix} \mathrm{Z}^2 & -\mathrm{Z}^2 \\ -\mathrm{Z}^2 & \mathrm{Z}^2 \end{pmatrix}.
\end{equation}
Here,
\begin{align*}
\mathrm{Z} = \mathfrak{L}_{2} \omega''(|\bar{x} - \bar{y}|) \frac{\bar{x} - \bar{y}}{|\bar{x} - \bar{y}|} \otimes \frac{\bar{x} - \bar{y}}{|\bar{x} - \bar{y}|} 
+ \mathfrak{L}_{2} \omega'(a|\bar{x} - \bar{y}|) \left( \mathrm{Id}_n - \frac{\bar{x} - \bar{y}}{|\bar{x} - \bar{y}|} \otimes \frac{\bar{x} - \bar{y}}{|\bar{x} - \bar{y}|} \right)
\end{align*}
and
\begin{align*}
\mathrm{Z}^2 &= \mathfrak{L}_{2}^2\left( \frac{(\omega'(|\bar{x} - \bar{y}|))^2}{|\bar{x}-\bar{y}|^2}  
+ (\omega''(|\bar{x} - \bar{y}|))^2\right)\left( \mathrm{Id}_n - \frac{\bar{x} - \bar{y}}{|\bar{x} - \bar{y}|} \otimes \frac{\bar{x} - \bar{y}}{|\bar{x} - \bar{y}|} \right).
\end{align*}
As in \cite{Attouchi20,FZ23} it follows that if $\mathfrak{L}_{2}\geq 4\mathrm{C_{\mu}}$ then
\[
2\mathfrak{L}_{2}\geq |\varsigma_{i}|\geq \mathfrak{L}_{2}\omega'(|\bar{x}-\bar{y}|)-\mathrm{C_{\mu}}|\bar{x}-\bar{y}|^{\frac{\mu}{2}}\geq \frac{\mathfrak{L}_{2}}{2},\,\, i=1,2,
\]
since $\omega'\geq \frac{3}{4}$. Also, there holds
\[
\|\mathrm{Z}\|\leq \mathfrak{L}_{2}\frac{\omega'(|\bar{x}-\bar{y}|)}{|\bar{x}-\bar{y}|}\,\,\, \text{and}\,\,\, \|\mathrm{Z}^2\|\leq \mathfrak{L}_{2}^{2}\left(|\omega''(|\bar{x}-\bar{y}|)|+\frac{\omega'(|\bar{x}-\bar{y}|)}{|\bar{x}-\bar{y}|}\right)^{2}
\]
and by inequality \eqref{ineqmatrices2} for $\xi=\frac{\bar{x}-\bar{y}}{|\bar{x}-\bar{y}|}$ that
\[
\langle \mathrm{Z}\xi,\xi\rangle=\mathfrak{L}_{2}\omega''(|\bar{x}-\bar{y}|)<0\,\,\, \text{and}\,\,\, \langle \mathrm{Z}^2\xi,\xi\rangle=\mathfrak{L}^{2}_{2}(\omega''(|\bar{x}-\bar{y}|))^{2}.
\]
We take $\kappa=4\mathfrak{L}_{2}(|\omega'(|\bar{x}-\bar{y}|)|+|\omega'(|\bar{x}-\bar{y}|)||\bar{x}-\bar{y}|^{-1})$ and we observe for same $\xi$ above,
\[
\langle \mathrm{Z}\xi,\xi\rangle+\frac{2}{\kappa}\langle \mathrm{Z}^2\xi,\xi\rangle=\mathfrak{L}_{2}\left(\omega''(|\bar{x}-\bar{y}|)+\frac{2}{\kappa}\mathfrak{L}_{2}(\omega''(|\bar{x}-\bar{y}|))^{2}\right)\leq \frac{\mathfrak{L}_{2}}{2}\omega''(|\bar{x}-\bar{y}|)<0.
\]
With these inequalities, we conclude that 
\[
\langle(\mathrm{X}-\mathrm{Y})\xi,\xi\rangle\leq 4\left(\langle \mathrm{Z}\xi,\xi\rangle+\frac{2}{\kappa}\langle \mathrm{Z}^{2}\xi,\xi\rangle\right)\leq 2\mathfrak{L}_{2}\omega''(|\bar{x}-\bar{y}|) \,\, \text{and}\,\,\max\{\|\mathrm{X}\|,\|\mathrm{Y}\|\}\leq \kappa+2\|\mathrm{Z}\|.
\]
Now, using the information on sub-/super-differential and combining the inequalities
\begin{eqnarray}
0&\leq& \mathfrak{L}_{1}+2\|f\|_{L^{\infty}(Q_{1})}+\mathfrak{L}_{1}[|\varsigma_{1}|^{\mathfrak{p}}\operatorname{tr}(\mathfrak{A}(\varsigma_{1}))+|\varsigma_{2}|^{\mathfrak{p}}\operatorname{tr}(\mathfrak{A}(\varsigma_{2}))]+|\varsigma_{1}|^\mathfrak{p}\operatorname{tr}(\mathfrak{A}(\varsigma_{1})(\mathrm{X}-\mathrm{Y}))\nonumber\\
&+&|\varsigma_{1}|^{\mathfrak{p}}\operatorname{tr}((\mathfrak{A}(\varsigma_{1})-\mathfrak{A}(\varsigma_{2}))\mathrm{Y})+[|\varsigma_{1}|^{\mathfrak{p}}-|\varsigma_{2}|^{\mathfrak{p}}]\operatorname{tr}(\mathfrak{A}(\varsigma_{2})\mathrm{Y})\nonumber\\
&+&(\mathfrak{a}(\bar{x},\bar{t})-\mathfrak{a}(\bar{y},\bar{t}))|\varsigma_{2}|^{\mathfrak{q}}(\operatorname{tr}(\mathfrak{A}(\varsigma_{2})\mathrm{Y})-\mathfrak{L}_{1}\operatorname{tr}(\mathfrak{A}(\varsigma_{2})))\nonumber\\
&+&\mathfrak{a}(\bar{x},\bar{t})[|\varsigma_{1}|^{\mathfrak{q}}(\operatorname{tr}(\mathfrak{A}(\varsigma_{1})\mathrm{X})-\mathfrak{L}_{1}\operatorname{tr}(\mathfrak{A}(\varsigma_{1})))-|\varsigma_{2}|^{\mathfrak{q}}(\operatorname{tr}(\mathfrak{A}(\varsigma_{2})\mathrm{Y})-\mathfrak{L}_{1}\operatorname{tr}(\mathfrak{A}(\varsigma_{2})))]\nonumber\\
&=:& \mathfrak{L}_{1}+2\|f\|_{L^{\infty}(Q_{1})}+\mathfrak{I}_{1}+\mathfrak{I}_{2}+\mathfrak{I}_{3}+\mathfrak{I}_{4}+\mathfrak{I}_{5}+\mathfrak{I}_{6}\label{estlipschitz1}.
\end{eqnarray}
As in the proof of the Lemma \ref{Holderestforregpro} we have:
\begin{itemize}
\item $\operatorname{tr}(\mathfrak{A}(\varsigma_{i}))\in (n\min\{1,p-1\},n\max\{1,p-1\})$, $i=1,2$,
\item $\operatorname{tr}(\mathfrak{A}(\varsigma_{1})(\mathrm{X}-\mathrm{Y}))\leq 2\mathfrak{L}_{2}\min\{1,p-1\}\omega''(|\bar{x}-\bar{y}|)$,
\item $\operatorname{tr}((\mathfrak{A}(\varsigma_{1})-\mathfrak{A}(\varsigma_{2}))\mathrm{Y})\leq \frac{16\mathrm{C_{\mu}}}{\mathfrak{L}_{2}}n|p-2||\bar{x}-\bar{y}|^{\frac{\mu}{2}}\|\mathrm{Y}\|$,
\item $\|\mathrm{Y}\|\leq 4\mathfrak{L}_{2}\left(|\omega''(|\bar{x}-\bar{y}|)|+\frac{\omega'(|\bar{x}-\bar{y}|)}{|\bar{x}-\bar{y}|}\right)$.
\item \(||\varsigma_{1}|^{\mathfrak{p}}-|\varsigma_{2}|^{\mathfrak{p}}|\leq 4|\mathfrak{p}|\mathrm{C_{\mu}}\max\{2^{(\mathfrak{p}-1)},2^{-(\mathfrak{p}-1)}\}\mathfrak{L}_{2}^{p-1}|\bar{x}-\bar{y}|^{\frac{\mu}{2}}\)
\end{itemize}
Thus, we can compute and conclude
\begin{eqnarray}
\mathfrak{I}_{1}&\leq& 2n\max\{2^{\mathfrak{p}},2^{-\mathfrak{p}}\}\max\{1,p-1\}\mathfrak{L}_{1}\mathfrak{L}_{2}^{\mathfrak{p}},\nonumber\\
\mathfrak{I}_{2}&\leq& 2\max\{2^{\mathfrak{p}},2^{-\mathfrak{p}}\}\min\{1,p-1\}\mathfrak{L}_{2}^{1+\mathfrak{p}}\omega''(|\bar{x}-\bar{y}|),\nonumber\\
\mathfrak{I}_{3}&\leq& 64n\mathrm{C_{\mu}}\max\{2^{\mathfrak{p}},2^{-\mathfrak{p}}\}|p-2|\mathfrak{L}_{2}^{p}(|\omega''(|\bar{x}-\bar{y}|)|+|\bar{x}-\bar{y}|^{\frac{\mu}{2}-1}),\nonumber\\
\mathfrak{I}_{4}&\leq& 16n\mathrm{C_{\mu}}\max\{2^{(\mathfrak{p}-1)},2^{-(\mathfrak{p}-1)}\}|\mathfrak{p}|\max\{1,p-1\}\mathfrak{L}_{2}^{\mathfrak{p}}(|\omega''(|\bar{x}-\bar{y}|)|+|\bar{x}-\bar{y}|^{\frac{\mu}{2}-1})\nonumber\\
\mathfrak{I}_{5}&\leq& 4n[\mathfrak{a}]_{C^{0,1}_{x}(Q_{1})}\max\{2^{\mathfrak{q}},2^{-\mathfrak{q}}\}\max\{1,p-1\}\mathfrak{L}^{\mathfrak{q}}_{2}(\mathfrak{L}_{2}(1+|\omega''(|\bar{x}-\bar{y}|)||\bar{x}-\bar{y}|)+\mathfrak{L}_{1})\nonumber\\
\mathfrak{I}_{6}&\leq&\mathfrak{a}(\bar{x},\bar{t})[2\max\{2^{\mathfrak{q}},2^{-\mathfrak{q}}\}\min\{1,p-1\}\mathfrak{L}^{1+\mathfrak{q}}_{2}\omega''(|\bar{x}-\bar{y}|)\nonumber\\
&+&64n\mathrm{C_{\mu}}\max\{2^{\mathfrak{q}},2^{-\mathfrak{q}}\}|p-2|\mathfrak{L}_{2}^{\mathfrak{q}}(|\omega''(|\bar{x}-\bar{y}|)|+|\bar{x}-\bar{y}|^{\frac{\mu}{2}-1})\nonumber\\
&+&16n\mathrm{C_{\mu}}\max\{2^{(\mathfrak{q}-1)},2^{-(\mathfrak{q}-1)}\}|\mathfrak{q}|\max\{1,p-1\}|\mathfrak{L}_{2}^{\mathfrak{q}}(|\omega''(|\bar{x}-\bar{y}|)|+|\bar{x}-\bar{y}|^{\frac{\mu}{2}-1})\nonumber\\
&+&2n\max\{2^{\mathfrak{q}},2^{-\mathfrak{q}}\}\max\{1,p-1\}\mathfrak{L}_{1}\mathfrak{L}^{\mathfrak{q}}_{2}]\nonumber.
\end{eqnarray}
Now, we take $\nu=1+\frac{\mu}{2}$, consequently, $\omega''(|\bar{x}-\bar{y}|)=-(1+\frac{\mu}{2})\frac{\mu}{2}k_{0}|\bar{x}-\bar{y}|^{\frac{\mu}{2}-1}$. Thus,
\begin{eqnarray}
0&\leq& \mathfrak{L}_{1}+2\|f\|_{L^{\infty}(Q_{1})}+|\bar{x}-\bar{y}|^{\frac{\mu}{2}-1}\mathfrak{L}_{2}^{1+\mathfrak{p}}\Bigg[-2\left(1+\frac{\mu}{2}\right)\frac{\mu}{2}\max\{2^{\mathfrak{p}},2^{-\mathfrak{p}}\}\nonumber\\
&\cdot&\min\{1,p-1\}+\frac{2n\max\{2^{\mathfrak{p}},2^{-\mathfrak{p}}\}\max\{1,p-1\}\mathfrak{L}_{1}}{\mathfrak{L}_{2}|\bar{x}-\bar{y}|^{\frac{\mu}{2}-1}}\nonumber\\
&+&\frac{80n\mathrm{C_{\mu}}\max\{2^{\mathfrak{p}},2^{-(\mathfrak{p}-1)},1,|\mathfrak{p}|,p-1,|p-2|\}\left(1+\left(1+\frac{\mu}{2}\right)\frac{\mu}{2}\right)}{\mathfrak{L}_{2}}\Bigg]\nonumber\\
&+&\mathfrak{a}(\bar{x},\bar{t})L^{1+\mathfrak{q}}_{2}|\bar{x}-\bar{y}|^{\frac{\mu}{2}-1}\Bigg[-2\left(1+\frac{\mu}{2}\right)\frac{\mu}{2}\max\{2^{\mathfrak{q}},2^{-\mathfrak{q}}\}\min\{1,p-1\}\nonumber\\
&+&\frac{80n\mathrm{C_{\mu}}\max\{2^{\mathfrak{q}},2^{-(\mathfrak{q}-1)},1,|\mathfrak{q}|,p-1,|p-2|\}\left(1+\left(1+\frac{\mu}{2}\right)\frac{\mu}{2}\right)}{\mathfrak{L}_{2}}\nonumber\\
&+&\frac{2n\max\{2^{\mathfrak{q}},2^{-\mathfrak{q}}\}\max\{1,p-1\}\mathfrak{L}_{1}}{\mathfrak{L}_{2}|\bar{x}-\bar{y}|^{\frac{\mu}{2}-1}}+\nonumber\\
&+&\mathfrak{a}(\bar{x},\bar{t})^{-1}\Bigg(\frac{16n[\mathfrak{a}]_{C^{0,1}_{x}(Q_{1})}\|u\|_{L^{\infty}(Q_{1})}\max\{2^{\mathfrak{q}},2^{-\mathfrak{q}}\}\left(1+2^{\frac{\mu}{2}}\right)\max\{1,p-1\}}{\mathfrak{L}_{2}|\bar{x}-\bar{y}|^{\frac{\mu}{2}}}\nonumber\\
&+&\frac{4n[\mathfrak{a}]_{C^{0,1}_{x}(Q_{1})}\max\{2^{\mathfrak{q}},2^{-\mathfrak{q}}\}\max\{1,p-1\}\mathfrak{L}_{1}}{\mathfrak{L}_{2}|\bar{x}-\bar{y}|^{\frac{\mu}{2}-1}}\Bigg)\Bigg]\label{concllipsreg}
\end{eqnarray}
since the last term, we have used the inequality \eqref{conddenorma}. As in Lemma \ref{Holderestforregpro}, choosing \(\mathfrak{L}_{2}\) sufficiently large such that
$$
\begin{cases}
\mathfrak{L}_{2}|\bar{x}-\bar{y}|^{\frac{\mu}{2}-1}\geq \frac{10n\max\{1,p-1\}}{\min\{1,p-1\}\left(1+\frac{\mu}{2}\right)\frac{\mu}{2}}\mathfrak{L}_{1}\\
\mathfrak{L}_{2}|\bar{x}-\bar{y}|^{\frac{\frac{\mu}{2}-1}{1+\mathfrak{p}}}\geq\frac{15\left(\mathfrak{L}_{1}+\mathfrak{L}_{1}^{\frac{1}{1+\mathfrak{p}}}+\|f\|_{L^{\infty}(Q_{1})}^{\frac{1}{1+\mathfrak{p}}}\right)}{\left(\min\{1,p-1\}\left(1+\frac{\mu}{2}\right)\frac{\mu}{2}\max\{2^{\mathfrak{p}},2^{-\mathfrak{p}}\}\right)^{\frac{1}{1+\mathfrak{p}}}}\\
\mathfrak{L}_{2}\geq \frac{400n\mathrm{C_{\mu}}\max\{2^{\mathfrak{p}},2^{-(\mathfrak{p}-1)},1,|\mathfrak{p}|,p-1,|p-2|\}\left(1+\left(1+\frac{\mu}{2}\right)\frac{\mu}{2}\right)}{\min\{1,p-1\}\left(1+\frac{\mu}{2}\right)\frac{\mu}{2}\max\{2^{\mathfrak{p}},2^{-\mathfrak{p}}\}}\\
\mathfrak{L}_{2}\geq \frac{320n\mathrm{C_{\mu}}\max\{2^{\mathfrak{q}},2^{-(\mathfrak{q}-1)},1,|\mathfrak{q}|,p-1,|p-2|\}\left(1+\left(1+\frac{\mu}{2}\right)\frac{\mu}{2}\right)}{\min\{1,p-1\}\left(1+\frac{\mu}{2}\right)\frac{\mu}{2}\max\{2^{\mathfrak{q}},2^{-\mathfrak{q}}\}}\\
\mathfrak{L}_{2}|\bar{x}-\bar{y}|^{\frac{\mu}{2}-1}\geq \frac{8n\max\{1,p-1\}}{\min\{1,p-1\}\left(1+\frac{\mu}{2}\right)\frac{\mu}{2}}\mathfrak{L}_{1}\\
\mathfrak{L}_{2}|\bar{x}-\bar{y}|^{\frac{\mu}{2}-1}\geq \frac{16n\max\{1,p-1\}(\mathfrak{a}^{-})^{-1}[\mathfrak{a}]_{C^{0,1}_{x}(Q_{1})}}{\min\{1,p-1\}\left(1+\frac{\mu}{2}\right)\frac{\mu}{2}}\mathfrak{L}_{1}\\
\mathfrak{L}_{2}|\bar{x}-\bar{y}|^{\frac{\mu}{2}}\geq \frac{64n\max\{1,p-1\}(\mathfrak{a}^{-})^{-1}[\mathfrak{a}]_{C^{0,1}_{x}(Q_{1})}\|u\|_{L^{\infty}(Q_{1})}(1+2^{\frac{\mu}{2}})}{\min\{1,p-1\}\left(1+\frac{\mu}{2}\right)\frac{\mu}{2}}.
\end{cases}
$$
Thus, in \eqref{concllipsreg} we obtain that
\begin{eqnarray*}
0\leq -\left(1+\frac{\mu}{2}\right)\frac{\mu}{2}\mathfrak{L}_{2}^{1+\mathfrak{p}}|\bar{x}-\bar{y}|^{\frac{\mu}{2}-1}\min\{1,p-1\}\max\{2^{\mathfrak{q}},2^{-\mathfrak{p}}\}\big[1+\mathfrak{a}(\bar{x},\bar{t})\big]<0
\end{eqnarray*}
which is a contradiction. Thus, it follows the result.
\end{proof}
\begin{remark}
We can replace the hypothesis \(\mathfrak{a}^{-} > 0\) with the condition that \(\mathfrak{a}\) is non-negative. In this case, we must impose that \(\mathfrak{p} \leq \mathfrak{q} \leq \mathfrak{p} + \frac{1}{2}\) (cf. \cite[Remark 6.3]{FZ23} for more details).
\end{remark}

Now, we will use the Local Lipschitz estimates in the spatial variable obtained in the previous lemma to obtain Local estimates in the temporal variable for solutions of the problem \eqref{Problem}.

\begin{lemma}[\bf Local H\"{o}lder estimates in the temporal variable]\label{Holderesttime}
Consider \(u\) to be a bounded viscosity solution to \eqref{Problem}. Assume the structural conditions \(\mathrm{(H2)-(H4)}\) are valid. Then, the following assertions hold:
\begin{itemize}
\item [(a)] For \(0\leq\mathfrak{p}\leq \mathfrak{q}\), \(u\in C^{0,\frac{1}{2+\mathfrak{q}}}_{t}(Q_{3/4})\), and 
\[
[u]_{C^{0,\frac{1}{2+\mathfrak{q}}}_{t}(Q_{3/4})}\leq \mathrm{C}.
\]
\item [(b)] For \(-1<\mathfrak{p}\leq \mathfrak{q}<0\) or \(-1<\mathfrak{p}<0\leq \mathfrak{q}\), \(u\in C^{0,\frac{1+\mathfrak{p}}{2+\mathfrak{p}+\mathfrak{q}}}_{t}(Q_{3/4})\) and 
\[
[u]_{C^{0,\frac{1+\mathfrak{p}}{2+\mathfrak{p}+\mathfrak{q}}}_{t}(Q_{3/4})}\leq \mathrm{C}.
\]
\end{itemize}
In these two cases, \(\mathrm{C}>0\) depends only on \(n\), \(p\), \(\mathfrak{p}\), \(\mathfrak{q}\), \(\mathfrak{a}^{-}\), \(\mathfrak{a}^{+}\), \([\mathfrak{a}]_{C^{0,1}_{x}(Q_{1})}\), \(\|u\|_{L^{\infty}(Q_{1})}\) and \(\|f\|_{L^{\infty}(Q_{1})}\).    
\end{lemma}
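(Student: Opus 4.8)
The plan is to bootstrap the spatial Lipschitz bound of Lemma~\ref{Lipschitzlema} into a time modulus of continuity by a barrier--comparison argument, in the spirit of \cite{JinSilv17}, \cite{AtouParv18}, \cite{Attouchi20} and \cite{FZ23}. Fix $(x_{0},t_{1}),(x_{0},t_{2})\in Q_{3/4}$ with $t_{1}<t_{2}$ and put $h:=t_{2}-t_{1}$. Adding a constant (the equation is invariant under $u\mapsto u+c$), we may assume $u(x_{0},t_{1})=0$; and since $|u(x_{0},t_{2})-u(x_{0},t_{1})|\le 2\|u\|_{L^{\infty}(Q_{1})}$, it is enough to consider $h\le h_{0}$ for a small universal $h_{0}$. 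By Lemma~\ref{Lipschitzlema} there is a constant $\mathrm{L}$, of the form $\mathrm{C}(n,p,\mathfrak{p},\mathfrak{q},\mathfrak{a}^{-},[\mathfrak{a}]_{C^{0,1}_{x}(Q_{1})})\big(\|u\|_{L^{\infty}(Q_{1})}+\|u\|_{L^{\infty}(Q_{1})}^{1/(1+\mathfrak{p})}+\|f\|_{L^{\infty}(Q_{1})}^{1/(1+\mathfrak{p})}\big)$, with $|u(x,t)-u(y,t)|\le\mathrm{L}|x-y|$ for all $x,y\in B_{7/8}$ and $t\in[-(7/8)^{2},0]$; in particular $|u(\cdot,t_{1})|\le\mathrm{L}\,|\cdot-x_{0}|$ on $B_{\rho}(x_{0})$ whenever $\rho<\tfrac18$ (so that $B_{\rho}(x_{0})\subset B_{7/8}$).

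\textbf{Degenerate case $0\le\mathfrak{p}\le\mathfrak{q}$.} On $\mathcal{C}_{\rho}:=B_{\rho}(x_{0})\times(t_{1},t_{2}]$, with $\rho<\tfrac18$ to be chosen, I would compare $u$ with the smooth functions
\[
\Psi^{\pm}(x,t):=\pm\Big(\mathrm{A}\,\tfrac{|x-x_{0}|^{2}}{\rho^{2}}+\varepsilon_{0}+\Lambda\,(t-t_{1})\Big),\qquad \mathrm{A}:=2\|u\|_{L^{\infty}(Q_{1})},\quad \varepsilon_{0}:=\tfrac{\mathrm{L}^{2}\rho^{2}}{\mathrm{A}},
\]
and $\Lambda:=\|f\|_{L^{\infty}(Q_{1})}+\mathrm{C}(n,p,\mathfrak{q},\mathfrak{a}^{+})\,\mathrm{A}^{1+\mathfrak{q}}\rho^{-(2+\mathfrak{q})}$. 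Since $\Delta_{p}^{\mathrm{N}}\big(|x-x_{0}|^{2}\big)=2(n+p-2)>0$, $|\nabla\Psi^{\pm}|\le 2\mathrm{A}/\rho$, and $\mathscr{H}(x,t,\nabla\Psi^{\pm})\le(1+\mathfrak{a}^{+})(2\mathrm{A}/\rho)^{\mathfrak{q}}$ for $\rho$ small, one checks that $\partial_{t}\Psi^{+}-\mathscr{H}(x,t,\nabla\Psi^{+})\Delta_{p}^{\mathrm{N}}\Psi^{+}\ge\|f\|_{L^{\infty}(Q_{1})}\ge f$ and, symmetrically, $\partial_{t}\Psi^{-}-\mathscr{H}(x,t,\nabla\Psi^{-})\Delta_{p}^{\mathrm{N}}\Psi^{-}\le f$, at every point of $\mathcal{C}_{\rho}$ with $x\ne x_{0}$; at $x=x_{0}$, where $\nabla\Psi^{\pm}$ vanishes, the barriers have a strict spatial extremum, so both alternatives in the definition of viscosity sub/supersolution are vacuous there. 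Hence $\Psi^{+}$ is a (smooth, hence locally Lipschitz in $x$) viscosity supersolution and $\Psi^{-}$ a viscosity subsolution of \eqref{Problem}. The choices of $\mathrm{A}$ and $\varepsilon_{0}$ force $\Psi^{-}\le u\le\Psi^{+}$ on $\partial_{\mathrm{par}}\mathcal{C}_{\rho}$: on the bottom slice $t=t_{1}$ this reduces to $\mathrm{L}\,s\le\mathrm{A}\,s^{2}/\rho^{2}+\varepsilon_{0}$ for $s=|x-x_{0}|\in[0,\rho]$, which holds because $\max_{s\ge0}\big(\mathrm{L}s-\mathrm{A}s^{2}/\rho^{2}\big)=\mathrm{L}^{2}\rho^{2}/(4\mathrm{A})\le\varepsilon_{0}$, and on $\{|x-x_{0}|=\rho\}$ it follows from $\mathrm{A}\ge\|u\|_{L^{\infty}(Q_{1})}$. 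Theorem~\ref{CompPrinc} then applies --- crucially, $u$ itself serves as the ``locally Lipschitz in space'' sub/supersolution thanks to Lemma~\ref{Lipschitzlema} --- and yields $\Psi^{-}\le u\le\Psi^{+}$ in $\mathcal{C}_{\rho}$; evaluating at $(x_{0},t_{2})$ gives $|u(x_{0},t_{2})|\le\varepsilon_{0}+\Lambda h\le\mathrm{C}\big(\rho^{2}+h\,\rho^{-(2+\mathfrak{q})}\big)$. Choosing $\rho\simeq h^{1/(4+\mathfrak{q})}$ (admissible when $h\le h_{0}$) produces a time-Hölder estimate with exponent $\tfrac{2}{4+\mathfrak{q}}$, hence in particular the exponent $\tfrac{1}{2+\mathfrak{q}}$ of (a); removing the restriction $h\le h_{0}$ and taking the supremum over pairs $(x_{0},t_{1}),(x_{0},t_{2})\in Q_{3/4}$ gives the seminorm bound.

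\textbf{Singular case $\mathfrak{p}<0$.} The quadratic barrier now fails near the vertex: as $x\to x_{0}$ one has $|\nabla\Psi^{\pm}|\to0$, hence $|\nabla\Psi^{\pm}|^{\mathfrak{p}}\to+\infty$, so $\mathscr{H}(x,t,\nabla\Psi^{\pm})\Delta_{p}^{\mathrm{N}}\Psi^{\pm}$ is unbounded and no finite $\Lambda$ makes $\Psi^{+}$ a supersolution. The remedy is to replace $|x-x_{0}|^{2}/\rho^{2}$ by $\mathrm{B}\,|x-x_{0}|^{\beta}$ with the critical exponent $\beta:=\tfrac{\mathfrak{p}+2}{\mathfrak{p}+1}>2$; using $\Delta_{p}^{\mathrm{N}}\big(|x-x_{0}|^{\beta}\big)=\beta\big((p-1)(\beta-1)+(n-1)\big)|x-x_{0}|^{\beta-2}$ together with the identity $\beta-2+(\beta-1)\mathfrak{p}=0$, the ``singular'' part of $\mathscr{H}(x,t,\nabla\Psi^{\pm})\Delta_{p}^{\mathrm{N}}\Psi^{\pm}$ becomes constant in $|x-x_{0}|$, so the product stays bounded and $\Psi^{\pm}$ is again admissible, now with $\Lambda\simeq\|f\|_{L^{\infty}(Q_{1})}+\mathrm{B}^{1+\mathfrak{p}}+\mathrm{B}^{1+\mathfrak{q}}\rho^{(\mathfrak{q}-\mathfrak{p})/(\mathfrak{p}+1)}$; at $x_{0}$ the viscosity property is again vacuous because $\Psi^{\pm}$ has a strict spatial extremum there. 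Taking $\mathrm{B}\simeq\|u\|_{L^{\infty}(Q_{1})}\rho^{-\beta}$ (to trap $u$ on $\partial_{\mathrm{par}}\mathcal{C}_{\rho}$) one finds $\varepsilon_{0}\simeq\rho^{\mathfrak{p}+2}$ and $\Lambda\simeq\rho^{-(2+\mathfrak{q})}$, whence $|u(x_{0},t_{2})|\lesssim\rho^{\mathfrak{p}+2}+h\,\rho^{-(2+\mathfrak{q})}$; optimizing in $\rho$ gives a Hölder exponent at least $\tfrac{\mathfrak{p}+2}{\mathfrak{p}+\mathfrak{q}+4}\ge\tfrac{1+\mathfrak{p}}{2+\mathfrak{p}+\mathfrak{q}}$, i.e. assertion (b). The subcase $-1<\mathfrak{p}<0\le\mathfrak{q}$ is covered by the same computation, since only the sign of $\mathfrak{p}$ entered; apart from the change of profile, the structure (reduction to $u(x_{0},t_{1})=0$, use of Lemma~\ref{Lipschitzlema} and Theorem~\ref{CompPrinc}, optimization in $\rho$) is identical.

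\textbf{Main obstacle.} I expect the delicate point to be the singular regime $\mathfrak{p}<0$: one must produce a single barrier that (i) traps $u$ on the \emph{whole} parabolic boundary of a small cylinder, (ii) remains a viscosity super/subsolution all the way up to the degenerate vertex $x_{0}$, where $\nabla\Psi^{\pm}$ vanishes and where both the equation and the comparison theorem behave singularly, and (iii) has vertex value close enough to $u(x_{0},t_{1})$ to yield the sharp time exponent. The choice of the critical power $\beta=\tfrac{\mathfrak{p}+2}{\mathfrak{p}+1}$ is precisely what reconciles (ii) and (iii) and is not obvious a priori; checking the viscosity property of the barriers at the vertex (via the second alternative of the definition and the sign of $\Delta_{p}^{\mathrm{N}}$ acting on radial functions) and keeping track of the correct intrinsic time scaling $h\simeq\rho^{\mathfrak{p}+\mathfrak{q}+4}$ are the two steps requiring genuine care, the rest being the routine estimates sketched above.
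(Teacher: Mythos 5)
Your argument is correct and rests on the same core mechanism as the paper's proof --- trap $u(x_{0},\cdot)$ between explicit radial--in--$x$, affine--in--$t$ barriers, use the spatial Lipschitz bound of Lemma~\ref{Lipschitzlema} to control the bottom slice via Young's inequality, and invoke the Comparison Principle~\ref{CompPrinc}. The difference is in the parametrization. The paper fixes the spatial domain $B_{3/4}$ and writes the barrier as $\eta+\mathfrak{M}_{1}(t-t_{0})+\tfrac{\mathfrak{M}_{2}}{\eta}|x|^{2}$ (respectively $\mathfrak{M}_{2}|x|^{\beta}$), tying the quadratic coefficient to the vertex value $\eta$ and optimizing in $\eta$; this produces $\mathfrak{M}_{1}\simeq\eta^{-(1+\mathfrak{q})}$ and, after optimizing $\eta+\eta^{-(1+\mathfrak{q})}|t-t_{0}|$, the exponent $\tfrac{1}{2+\mathfrak{q}}$. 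You instead shrink the cylinder to $B_{\rho}(x_{0})\times(t_{1},t_{2}]$ and optimize in $\rho$; the ``intrinsic'' scaling $\rho^{2}\simeq h\,\rho^{-(2+\mathfrak{q})}$ yields the slightly sharper exponent $\tfrac{2}{4+\mathfrak{q}}$ in case~(a), and analogously $\tfrac{2+\mathfrak{p}}{4+\mathfrak{p}+\mathfrak{q}}$ in case~(b), both of which dominate the exponents asserted in the lemma (since $\mathfrak{q}\geq\mathfrak{p}$). Your handling of the degenerate vertex $x=x_{0}$ is right: since $\inf_{t}(\Psi^{\pm}(x,t)-\phi(t))$ differs from its value at $x_{0}$ by the strictly positive radial term, no admissible $\phi$ exists, so both alternatives in the viscosity definition are vacuous there.

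One point worth flagging in your favor: in the singular regime your barrier profile $|x-x_{0}|^{\beta}$ with $\beta=\tfrac{2+\mathfrak{p}}{1+\mathfrak{p}}$ is precisely what makes $|\nabla\Psi^{\pm}|^{\mathfrak{p}}\Delta_{p}^{\mathrm{N}}\Psi^{\pm}$ constant in $r$ (the identity $(1+\mathfrak{p})(\beta-1)-1=0$), while the $\mathfrak{q}$-part then carries the nonnegative power $r^{(\mathfrak{q}-\mathfrak{p})/(1+\mathfrak{p})}$; since $\mathfrak{p}\leq\mathfrak{q}$, this is the right term to cancel. The paper's stated profile $|x|^{(2+\mathfrak{p})/(1+\mathfrak{q})}$ does \emph{not} cancel either contribution when $\mathfrak{p}<\mathfrak{q}$ (one checks $(1+\mathfrak{p})(\gamma-1)-1=\tfrac{(2+\mathfrak{p})(\mathfrak{p}-\mathfrak{q})}{1+\mathfrak{q}}<0$), so your choice is the correct one and the paper's is presumably a misprint for $\tfrac{2+\mathfrak{p}}{1+\mathfrak{p}}$. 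Two small things to make explicit in a final write-up: (i) the restriction $\rho<\tfrac18$ forces you to treat $h\geq h_{0}$ by the trivial $L^{\infty}$ bound, which you mention but should state as part of the seminorm estimate; (ii) in the supersolution check, the bound for $\mathscr{H}(x,t,\nabla\Psi^{+})$ should retain both contributions $\mathrm{A}^{1+\mathfrak{p}}\rho^{-(2+\mathfrak{p})}$ and $\mathrm{A}^{1+\mathfrak{q}}\rho^{-(2+\mathfrak{q})}$, and the optimization should be done with both; since $\mathfrak{q}\geq\mathfrak{p}$ the latter dominates for small $\rho$, so your final exponent is unchanged, but the statement ``$\Lambda\simeq\rho^{-(2+\mathfrak{q})}$'' deserves that one-line justification.
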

\begin{proof}
First, we prove \((a)\). We assert that for all \(t_{0} \in \left[-(3/4)^{2},0\right)\) and \(\eta > 0\), there exist positive constants \(\mathfrak{M}_{1}\) and \(\mathfrak{M}_{2}\) such that  
\[
\bar{u}(x,t):= u(x,t)-u(0,t_{0})\leq \eta+\mathfrak{M}_{1}(t-t_{0})+\frac{\mathfrak{M}_{2}}{\eta}|x|^{2}=:\bar{v}(x,t)
\] 
in \(\overline{B_{3/4}} \times [t_{0},0]\), and \(\bar{v}\) is a supersolution of \eqref{Problem} with the source term \(\|f\|_{L^{\infty}(Q_{1})}\). For this purpose, by applying Lemma \ref{Lipschitzlema}, we obtain that  
\begin{eqnarray}\label{est1baru}
\bar{u}(x,t_{0})\leq[u]_{C^{0,1}_{x}(Q_{7/8})}|x|\leq \frac{[u]_{C^{0,1}_{x}(Q_{7/8})}^{2}}{\eta}|x|^{2}+\eta, \forall x\in B_{3/4},
\end{eqnarray}
by Young's inequality, and for all \((x,t) \in \partial B_{3/4} \times [t_{0},t]\), we obtain that  
\begin{eqnarray}\label{est2baru}
\bar{u}(x,t)&\leq&2\|u\|_{L^{\infty}(Q_{3/4})}=2\|u\|_{L^{\infty}(Q_{3/4})}\frac{4}{3}|x|\nonumber\\
&\leq& \frac{8}{3}\|u\|_{L^{\infty}(Q_{1})}|x|\leq \left(\frac{8}{3}\right)^{2}\frac{\|u\|_{L^{\infty}(Q_{1})}^{2}}{\eta}|x|^{2}+\eta
\end{eqnarray}
where, again, we used Young's inequality. Let us choose
\[
\mathfrak{M}_{2}=[u]_{C^{0,1}_{x}(Q_{7/8})}^{2}+\left(\frac{8}{3}
\right)^{2}\|u\|_{L^{\infty}(Q_{1})}^{2}. 
\]
By this choice, and taking into account \eqref{est1baru} and \eqref{est2baru}, we immediately have that \(\bar{u} \leq \bar{v}\) on \(\partial_{par}(B_{3/4} \times (t_{0}, 0])\). On the other hand, let us note that 
\begin{eqnarray*}
\partial_{t}\bar{v}=\mathfrak{M}_{1}, \, \nabla\bar{v}=2\frac{\mathfrak{M}_{2}}{\eta}x \, \, \text{and} \, \, D^{2}\bar{v}=2\frac{\mathfrak{M}_{2}}{\eta}\mathrm{Id}_n+2\frac{\mathfrak{M}_{2}}{\eta}\frac{x}{|x|}\otimes \frac{x}{|x|},
\end{eqnarray*}
where we can see, in particular, that
\[
\|D^{2}\bar{v}\|\leq 4\frac{\mathfrak{M}_{2}}{\eta}|x|
\]
Thus, we can estimate the second-order term of the regularized operator in equation \eqref{Problem} that does not include the time derivative and obtain
\begin{eqnarray}
\mathscr{H}(x,t,\nabla\bar{v})\left(\delta_{ij}+(p-2)\frac{\bar{v}_{x_{i}}\bar{v}_{x_{j}}}{|\nabla\bar{v}|^2}\right)\bar{v}_{x_{i}x_{j}}&\leq& \mathrm{C(n,p)}\mathscr{H}(x,t,\nabla \bar{v})\frac{\mathfrak{M}_{2}}{\eta}\nonumber\\
&\leq&\mathrm{C(n,p)}\frac{\mathfrak{M}_{2}}{\eta}\Bigg[2^{\mathfrak{p}}\frac{\mathfrak{M}_{2}^{\mathfrak{p}}}{\eta^{\mathfrak{p}}}+a^{+}2^{\mathfrak{q}}\frac{\mathfrak{M}_{2}^{\mathfrak{q}}}{\eta^{\mathfrak{q}}}\Bigg]\nonumber\\
&\leq& \mathrm{C(n,p,\mathfrak{a}^{+},\mathfrak{q})}\left(\frac{\mathfrak{M}_{2}^{1+\mathfrak{p}}}{\eta^{1+\mathfrak{p}}}+\frac{\mathfrak{M}_{2}^{1+\mathfrak{q}}}{\eta^{1+\mathfrak{q}}}\right).\label{estoperregbarv}
\end{eqnarray}
Now, we choose
\[
\mathfrak{M}_{1}=\mathrm{C(n,p,\mathfrak{a}^{+},\mathfrak{q})}\left(\frac{\mathfrak{M}_{2}^{1+\mathfrak{p}}}{\eta^{1+\mathfrak{p}}}+\frac{\mathfrak{M}_{2}^{1+\mathfrak{q}}}{\eta^{1+\mathfrak{q}}}\right)+\|f\|_{L^{\infty}(Q_{1})}
\]
to conclude that \(\bar{v}\) is a supersolution of \eqref{Problem} in \(B_{3/4} \times (t_{0},0]\), and we can apply the Comparison Principle \ref{CompPrinc} to obtain that \(\bar{u} \leq \bar{v}\) in \(B_{3/4} \times [t_{0},0]\). In particular,
\begin{eqnarray}
u(0,t)-u(0,t_{0})&=&\bar{u}(0,t)\leq\bar{v}(0,t)=\eta +\mathfrak{M}_{1}(t-t_{0})\nonumber\\
&\leq&\eta+ \mathrm{C_{1}}\frac{\left([u]^{2}_{C^{0,1}_{x}(Q_{7/8})}+\|u\|_{L^{\infty}(Q_{1})}^{2}\right)^{1+\mathfrak{p}}}{\eta^{1+\mathfrak{p}}}|t-t_{0}|\nonumber\\
&+&\mathrm{C_{2}}\frac{\left([u]^{2}_{C^{0,1}_{x}(Q_{7/8})}+\|u\|_{L^{\infty}(Q_{1})}^{2}\right)^{1+\mathfrak{q}}}{\eta^{1+\mathfrak{q}}}|t-t_{0}|+\|f\|_{L^{\infty}(Q_{1})}|t-t_{0}|.\label{est3baru}
\end{eqnarray}
Taking 
\begin{eqnarray*}
\eta=\max\Bigg\{\left([u]^{2}_{C^{0,1}_{x}(Q_{7/8})}+\|u\|_{L^{\infty}(Q_{1})}^{2}\right)^{\frac{1+\mathfrak{p}}{2+\mathfrak{p}}}|t-t_{0}|^{\frac{1}{2+\mathfrak{p}}}\\
,\left([u]^{2}_{C^{0,1}_{x}(Q_{7/8})}+\|u\|_{L^{\infty}(Q_{1})}^{2}\right)^{\frac{1+\mathfrak{q}}{2+\mathfrak{q}}}|t-t_{0}|^{\frac{1}{2+\mathfrak{q}}}\Bigg\}
\end{eqnarray*}
we have from \eqref{est3baru} that
\begin{eqnarray}
u(0,t)-u(0,t_{0})&\leq& \eta+ \mathrm{C_{1}}\left([u]^{2}_{C^{0,1}_{x}(Q_{7/8})}+\|u\|_{L^{\infty}(Q_{1})}^{2}\right)^{\frac{1+\mathfrak{p}}{2+\mathfrak{p}}}|t-t_{0}|^{\frac{1}{2+\mathfrak{p}}}\nonumber\\
&+&\mathrm{C_{2}}\left([u]^{2}_{C^{0,1}_{x}(Q_{7/8})}+\|u\|_{L^{\infty}(Q_{1})}^{2}\right)^{\frac{1+\mathfrak{q}}{2+\mathfrak{q}}}|t-t_{0}|^{\frac{1}{2+\mathfrak{q}}}+\|f\|_{L^{\infty}(Q_{1})}|t-t_{0}|\nonumber\\
&\leq&\mathrm{C'}|t-t_{0}|^{\frac{1}{2+\mathfrak{q}}},\label{est4baru}
\end{eqnarray}
where
\begin{align*}
\mathrm{C}^{\prime} =&(1+\mathrm{C_{1}}+\mathrm{C_{2}})\max\Bigg\{\left([u]^{2}_{C^{0,1}_{x}(Q_{7/8})}+\|u\|_{L^{\infty}(Q_{1})}^{2}\right)^{\frac{1+\mathfrak{p}}{2+\mathfrak{p}}}
,\left([u]^{2}_{C^{0,1}_{x}(Q_{7/8})}+\|u\|_{L^{\infty}(Q_{1})}^{2}\right)^{\frac{1+\mathfrak{q}}{2+\mathfrak{q}}}\Bigg\}\\
& +\|f\|_{L^{\infty}(Q_{1})},
\end{align*}
and we used that \(|t - t_{0}| < 1\) and \(\mathfrak{p} \leq \mathfrak{q}\). The reverse inequality follows by comparing the analogous barrier and performing similar computations.  
  
Finally, for the proof of \((b)\), we consider the following barrier function:  
\[
\bar{v}(x,t)=\eta+\mathfrak{M}_{1}(t-t_0)+\mathfrak{M}_{2}|x|^{\frac{2+\mathfrak{p}}{1+\mathfrak{q}}},
\]
for suitable positive constants \(\mathfrak{M}_{1},\, \mathfrak{M}_{2}\), and \(\eta\), and analogously to \cite[Lemma 3.3]{FZ22}, we ensure that \(\bar{u} \leq \bar{v}\) for the same function \(\bar{u}\) defined in item \((a)\), which satisfies the desired regularity in the temporal variable. This concludes the proof.  
\end{proof}

In the approach we will adopt, we will need the precompactness of solutions to the following problem, translated by a vector \(\vec{q} \in \mathbb{R}^{n}\):  
\begin{eqnarray}\label{problemtransl}
\partial_{t}(w+\vec{q}\cdot x)-\mathscr{H}(x,t,\nabla(w+\vec{q}\cdot x))\Delta_{p}^{\rm{N}}(w+\vec{q}\cdot x)= \hat{f}(x,t)\,\,\,\text{in}\,\,\, Q_{1}.
\end{eqnarray}
Depending on the control of the vector \(\vec{q}\) concerning the initial data \(w\) and \(\hat{f}\), we already have such a property. Specifically, if \(|\vec{q}| \leq \mathcal{A}_{0} := 1 + \|w\|_{L^{\infty}(Q_{1})} + \|\hat{f}\|_{L^{\infty}(Q_{1})}\), then, by Lemma \ref{Lipschitzlema} applied to the function \(u(x,t) = w(x,t) + \vec{q} \cdot x\), we have that  
\begin{eqnarray*}
|w(x,t)-w(y,t)|&\leq& |u(x,t)-u(y,t)|+|\vec{q}||x-y|\\
&\leq&\left(|\vec{q}|+\mathrm{C}\left(\|u\|_{L^{\infty}(Q_{1})}+\|u\|^{\frac{1}{1+\mathfrak{p}}}_{L^{\infty}(Q_{1})}+\|\hat{f}\|^{\frac{1}{1+\mathfrak{p}}}_{L^{\infty}(Q_{1})}\right)\right)|x-y|\\
&\leq&\mathrm{C}_{\star}\left(1+\|w\|_{L^{\infty}(Q_{1})}+\|\hat{f}\|_{L^{\infty}(Q_{1})}+\|w\|^{\frac{1}{1+\mathfrak{p}}}_{L^{\infty}(Q_{1})}+\|\hat{f}\|^{\frac{1}{1+\mathfrak{p}}}_{L^{\infty}(Q_{1})}\right)|x-y|.
\end{eqnarray*} 
In the case \(|\vec{q}| \geq \mathcal{A}_{0}\), we also have uniform local Lipschitz regularity, a fact that we will demonstrate below. On the other hand, to this end, and following the ideas developed above, we will first prove the uniform local H\"{o}lder regularity of solutions to the problem \eqref{problemtransl}.  
\begin{lemma}\label{Holderesttransl}
Let \(\vec{q} \in \mathbb{R}^{n}\). Consider \(w\) to be a bounded viscosity solution to \eqref{problemtransl}. Assume that \(|\vec{q}| > \mathcal{A}_{0}\) and the structural conditions \rm{(H2)}-\rm{(H4)} are in force. Then, there exists \(\alpha_{0} \in (0,1)\) that depends only on \(n\), \(p\), \(\mathfrak{p}\), \(\mathfrak{q}\), \(\mathfrak{a}^{-}\), and \([\mathfrak{a}]_{C^{0,1}_{x}(Q_{1})}\) such that for all \(x, y \in B_{13/16}\) and \(t \in \left(-\left(\frac{13}{16}\right)^{2}, 0\right]\), it holds  
\[
|w(x,t)-w(y,t)|\leq \mathrm{C}\left(1+\|w\|_{L^{\infty}(Q_{1})}+\|\hat{f}\|_{L^{\infty}(Q_{1})}+\|w\|^{\frac{1}{1+\mathfrak{p}}}_{L^{\infty}(Q_{1})}+\|\hat{f}\|^{\frac{1}{1+\mathfrak{p}}}_{L^{\infty}(Q_{1})}\right)|x-y|^{\alpha_{0}},
\]
where \(\mathrm{C}\) is a positive constant that depends only on \(n\), \(p\), \(\mathfrak{p}\), \(\mathfrak{q}\), \(\mathfrak{a}^{-}\), and \([\mathfrak{a}]_{C^{0,1}_{x}(Q_{1})}\).  
\end{lemma}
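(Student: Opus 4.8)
The plan is to rerun the doubling-of-variables argument of Lemma~\ref{Holderestforregpro} directly on the translated equation \eqref{problemtransl}, keeping track of the vector $\vec q$ by hand. Observe first that $u(x,t):=w(x,t)+\vec q\cdot x$ is a bounded viscosity solution of \eqref{Problem} in $Q_1$ with source $\hat f$, and that since $w$ and $u$ differ by the linear function $\vec q\cdot x$, the desired estimate for $w$ is exactly a Hölder estimate for $u$ that is \emph{uniform} over $|\vec q|>\mathcal A_0$. One cannot simply quote Lemma~\ref{Holderestforregpro} applied to $u$, because its constant involves $\|u\|_{L^\infty(Q_1)}\le\|w\|_{L^\infty(Q_1)}+|\vec q|$, which degenerates as $|\vec q|\to\infty$.

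Concretely, I would fix $x_0,y_0\in B_{13/16}$, $t_0\in(-(13/16)^2,0)$, and for positive constants $\mathfrak L_1,\mathfrak L_2$ and a provisional exponent $\mu\in(0,1)$ consider
\[
\phi(x,y,t):=w(x,t)-w(y,t)-\mathfrak L_2|x-y|^{\mu}-\tfrac{\mathfrak L_1}{2}|x-x_0|^2-\tfrac{\mathfrak L_1}{2}|y-y_0|^2-\tfrac{\mathfrak L_1}{2}(t-t_0)^2 ,
\]
and argue by contradiction assuming $\phi$ has a positive maximum at $(\bar x,\bar y,\bar t)$. Exactly as in Lemma~\ref{Holderestforregpro}, choosing $\mathfrak L_1\gtrsim 1+\|w\|_{L^\infty(Q_1)}$ confines the maximum to the interior, forces $\bar x\ne\bar y$, and yields $|\bar x-\bar y|\lesssim(\|w\|_{L^\infty(Q_1)}/\mathfrak L_2)^{1/\mu}\ll1$. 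Jensen--Ishii's Lemma~\ref{JensenIshii} applied to $u$ then produces jets $(\tau+\mathfrak L_1(\bar t-t_0),\varsigma_1+\vec q,\mathrm X+\mathfrak L_1\mathrm{Id}_n)\in\overline{\mathcal J}^{+}(u)(\bar x,\bar t)$ and $(\tau,\varsigma_2+\vec q,\mathrm Y-\mathfrak L_1\mathrm{Id}_n)\in\overline{\mathcal J}^{-}(u)(\bar y,\bar t)$, with $\varsigma_i$, $\mathrm X$, $\mathrm Y$ of the same form and satisfying the same bounds as in Lemma~\ref{Holderestforregpro} (in particular $\tfrac{\mathfrak L_2\mu}{2}|\bar x-\bar y|^{\mu-1}\le|\varsigma_i|\le 2\mathfrak L_2\mu|\bar x-\bar y|^{\mu-1}$, $\mathrm X\le\mathrm Y$, $\max\{\|\mathrm X\|,\|\mathrm Y\|\}\lesssim\mathfrak L_2|\bar x-\bar y|^{\mu-2}$, and $\langle(\mathrm X-\mathrm Y)\xi,\xi\rangle\lesssim-\mathfrak L_2|\bar x-\bar y|^{\mu-2}\tfrac{1-\mu}{3-\mu}$ for $\xi=\tfrac{\bar x-\bar y}{|\bar x-\bar y|}$). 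Subtracting the two viscosity inequalities gives, as in \eqref{estholder1}, an inequality $0\le\mathfrak L_1+2\|\hat f\|_{L^\infty(Q_1)}+\mathfrak I_1+\cdots+\mathfrak I_6$ in which every occurrence of $\varsigma_i$ inside $\mathscr H$ and $\mathfrak A$ is now replaced by $\varsigma_i+\vec q$.

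The heart of the matter is to show the negative coercive contribution $\mathfrak I_2\sim-|\varsigma_1+\vec q|^{\mathfrak p}\min\{1,p-1\}\tfrac{1-\mu}{3-\mu}\mathfrak L_2|\bar x-\bar y|^{\mu-2}$ still beats the remaining terms uniformly in $|\vec q|>\mathcal A_0$. I would split according to the size of $|\varsigma_i+\vec q|$. In the regime $|\varsigma_i|\ge|\vec q|$ one has $|\varsigma_i+\vec q|\sim|\varsigma_i|\sim\mathfrak L_2|\bar x-\bar y|^{\mu-1}$, so the estimates of $\mathfrak I_1,\dots,\mathfrak I_6$ from Lemma~\ref{Holderestforregpro} go through verbatim with $\varsigma_i$ replaced by $\varsigma_i+\vec q$ — in particular the bound $\|\mathfrak A(\varsigma_1+\vec q)-\mathfrak A(\varsigma_2+\vec q)\|\lesssim|p-2|\mathfrak L_1/(\mathfrak L_2\mu|\bar x-\bar y|^{\mu-1})$ of \cite[Lemma~6.1]{FZ23} survives — and $\mathfrak L_2$ is chosen large exactly as there, depending only on $n,p,\mathfrak p,\mathfrak q,\mathfrak a^-,[\mathfrak a]_{C^{0,1}_x(Q_1)},\|w\|_{L^\infty(Q_1)},\|\hat f\|_{L^\infty(Q_1)}$. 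In the opposite regime $|\varsigma_i|<|\vec q|$ I would use the finer bound $\|\mathfrak A(\varsigma_1+\vec q)-\mathfrak A(\varsigma_2+\vec q)\|\lesssim|p-2|\,|\varsigma_1-\varsigma_2|/|\varsigma_i+\vec q|\lesssim|p-2|\mathfrak L_1/|\varsigma_i+\vec q|$ together with the $\vec q$-independent eigenvalue localization of $\mathfrak A$; when moreover $|\varsigma_i+\vec q|$ stays above a fixed multiple of $\mathfrak L_1$ this renders $\mathfrak I_3$ (and likewise $\mathfrak I_1,\mathfrak I_4,\mathfrak I_5,\mathfrak I_6$) negligible against $\mathfrak I_2$, while in the residual near-cancellation sub-case $|\vec q|\approx|\varsigma_i|\approx\mathfrak L_2\mu|\bar x-\bar y|^{\mu-1}$ the gradient $\nabla u$ is pinned to a bounded moderate range, so $w\mapsto\mathrm{tr}(\mathfrak A(\nabla w+\vec q)D^2 w)$ is uniformly parabolic with ellipticity constants $\min\{1,p-1\},\max\{1,p-1\}$ independent of $\vec q$, and the interior parabolic Hölder (Krylov--Safonov) estimate closes the contradiction. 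This last mechanism also explains why, unlike $\mu$ in Lemma~\ref{Holderestforregpro}, the exponent $\alpha_0$ cannot be taken arbitrary: one takes $\alpha_0$ to be the minimum of the exponent produced by the doubling and the fixed Hölder exponent of that uniformly parabolic operator.

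I expect the near-cancellation sub-case to be the main obstacle. The coercivity constant of \eqref{problemtransl} scales like $|\nabla w+\vec q|^{\mathfrak p}$, which is \emph{not} bounded below when $\varsigma_i$ and $\vec q$ nearly annihilate; making the estimate uniform in $|\vec q|$ there requires showing that in that regime the degenerate terms carrying $|\varsigma_i+\vec q|^{\mathfrak p}$ and $|\varsigma_i+\vec q|^{\mathfrak q}$ are themselves small enough to be absorbed, exploiting the homogeneity of $\mathscr H$, the $\vec q$-independent ellipticity of $\mathfrak A$, and the fact that near-cancellation ties $|\vec q|$, $|\bar x-\bar y|$ and $\mathfrak L_2$ to a common scale — and this is precisely where the hypothesis $|\vec q|>\mathcal A_0$ enters in an essential way.
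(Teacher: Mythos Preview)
Your setup is the right one, but you miss the mechanism that makes the paper's argument work, and your proposed workaround for the near-cancellation regime does not close. The paper does \emph{not} split into cases according to the relative size of $|\varsigma_i|$ and $|\vec q|$. Instead it first invokes the already-proven Lipschitz estimate (Lemma~\ref{Lipschitzlema}) for $u=w+\vec q\cdot x$: since $|\vec q|>\mathcal A_0\ge 1$ dominates $\|w\|_{L^\infty}$ and $\|\hat f\|_{L^\infty}$, that lemma gives $|w(\bar x,\bar t)-w(\bar y,\bar t)|\le \mathrm C_\star|\vec q|\,|\bar x-\bar y|$ with $\mathrm C_\star$ universal. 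Combined with $\phi(\bar x,\bar y,\bar t)>0$ this yields $\mathfrak L_2|\bar x-\bar y|^{\alpha_0-1}\le \mathrm C_\star|\vec q|$, hence $|\varsigma_i|\le 2\alpha_0\mathfrak L_2|\bar x-\bar y|^{\alpha_0-1}\le 2\alpha_0\mathrm C_\star|\vec q|$. Now choosing $\alpha_0\le\tfrac{1}{4\mathrm C_\star}$ forces $|\varsigma_i|\le\tfrac12|\vec q|$, so $\varrho_i:=\varsigma_i+\vec q$ satisfies $\tfrac12|\vec q|\le|\varrho_i|\le 2|\vec q|$ and in particular $|\varrho_i|\ge 2\mathfrak L_2\alpha_0|\bar x-\bar y|^{\alpha_0-1}$. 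The near-cancellation regime you worry about simply never occurs; this is precisely why $\alpha_0$ is fixed by the data and cannot be taken arbitrary.

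With $|\varrho_1|\sim|\varrho_2|\sim|\vec q|$ the paper divides the subtracted viscosity inequality through by $|\varrho_1|^{\mathfrak p}+|\varrho_1|^{\mathfrak q}$, so that the coercive term becomes $\operatorname{tr}(\mathfrak A(\varrho_1)(\mathrm X-\mathrm Y))\lesssim -\mathfrak L_2|\bar x-\bar y|^{\alpha_0-2}$ with a \emph{$\vec q$-free} constant, while all remaining terms are controlled using $|\varrho_2|/|\varrho_1|\le 4$, $|\varrho_1-\varrho_2|\le 4\mathfrak L_1$, and $|\varrho_1-\varrho_2|/|\varrho_1|\lesssim\mathfrak L_1/(\mathfrak L_2|\bar x-\bar y|^{\alpha_0-1})$. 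Your proposal to handle near-cancellation by appealing to Krylov--Safonov inside the pointwise contradiction is not viable: at the single maximum point you only have jets, not an a~priori gradient bound on a full cylinder, and the operator $\mathscr H(\cdot,\varrho)\mathfrak A(\varrho)$ is genuinely degenerate when $|\varrho|$ is small (the uniform ellipticity of $\mathfrak A$ alone does not help once $\mathscr H\to 0$). The fix is exactly the one above: use the prior Lipschitz bound on $u$ and the freedom in $\alpha_0$ to rule the bad regime out altogether.
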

\begin{proof} 
Fix \(x_{0}, y_{0} \in B_{13/16}\) and \(t_{0} \in (-(13/16)^{2}, 0)\). We claim that there exist positive constants \(\mathfrak{L}_{1}\), \(\mathfrak{L}_{2}\), and \(\alpha_{0} \in (0,1)\) such that the auxiliary function  
\[
\phi(x,y,t):=w(x,t)-w(y,t)-\mathfrak{L}_{2}|x-y|^{\alpha_{0}}-\frac{\mathfrak{L}_{1}}{2}|x-x_{0}|^{2}-\frac{\mathfrak{L}_{1}}{2}|y-y_{0}|^{2}-\frac{\mathfrak{L}_{1}}{2}(t-t_{0})^{2}
\]  
is non-positive in \(\overline{B}_{13/16} \times \overline{B}_{13/16} \times [-(13/16)^{2}, 0]\). We will argue by contradiction. More precisely, we suppose that there exists \((\bar{x}, \bar{y}, \bar{t}) \in \overline{B}_{13/16} \times \overline{B}_{13/16} \times [-(13/16)^{2}, 0]\), a point of positive maximum of \(\phi\). Since \(\phi(\bar{x}, \bar{y}, \bar{t}) > 0\), it follows that \(\bar{x} \neq \bar{y}\). Moreover, by the Lipschitz regularity in Lemma \ref{Lipschitzlema}, we have that  
\begin{eqnarray*}
|w(\bar{x},\bar{t})-w(\bar{y},t)|&\leq&\bigg(|\vec{q}|+\mathrm{C}\bigg(|\vec{q}|+\|w\|_{L^{\infty}(Q_{1})}+|\vec{q}|^{\frac{1}{1+\mathfrak{p}}}\\
&+&\|w\|^{\frac{1}{1+\mathfrak{p}}}_{L^{\infty}(Q_{1})}+\|f\|^{\frac{1}{1+\mathfrak{p}}}_{L^{\infty}(Q_{1})}\bigg)\bigg)|x-y|\\
&\leq&\mathrm{C}_{\star}|\vec{q}||\bar{x}-\bar{y}|,
\end{eqnarray*}
since \(|\vec{q}| \geq \mathcal{A}_{0} \geq 1\). Now, we choose \(\mathfrak{L}_{1} \geq \|w\|_{L^{\infty}(Q_{1})}\), \(\mathfrak{L}_{2} \geq \mathrm{C}\mathfrak{L}_{1}\), and \(0 < \alpha_{0} \leq \frac{1}{4\mathrm{C}_{\star}}\), where \(\mathrm{C}_{\star} > 0\) is a constant of Hölder regularity. We will proceed similarly to Lemma \ref{Holderestforregpro} and highlight only the changes in the proof. Apply the Jensen-Ishii's Lemma \ref{JensenIshii} and conclude that
$$
(\tau+\mathfrak{L}_{1}(\bar{t}-t_{0}),\varsigma_{1},\mathrm{X}+\mathfrak{L}_{1}\mathrm{Id}_n)\in \mathcal{J}^{+}(w)(\bar{x},\bar{t})\quad\text{and}\quad
(\tau,\varsigma_{2},\mathrm{Y}-\mathfrak{L}_{1}\mathrm{Id}_n)\in \mathcal{J}^{-}(w)(\bar{y},\bar{t}).
$$
We observe that \(\varsigma_{1}\), \(\varsigma_{2}\), \(\mathrm{X}\), and \(\mathrm{Y}\) fulfill the following properties:  
\begin{itemize}
\item \(2\mathfrak{L}_{2}\alpha_{0}|\bar{x}-\bar{y}|^{\alpha_{0}-1}\geq |\varsigma_{i}|\geq \frac{\mathfrak{L}_{2}}{2}|\bar{x}-\bar{y}|^{\alpha_{0}-1}\), i=1, 2. 
\item The matrices \(\mathrm{X}, \mathrm{Y} \in \text{Sym}(n)\) can be taken such that for any \(\kappa > 0\) such that \(\kappa \mathrm{Z} < \mathrm{Id}_n\), it holds  
\begin{equation*}
-\frac{2}{\kappa}\begin{pmatrix} \mathrm{Id}_n & 0 \\ 0 & \mathrm{Id}_n \end{pmatrix}
\leq 
\begin{pmatrix} \mathrm{X} & 0 \\ 0 & -\mathrm{Y} \end{pmatrix}
\leq 
\begin{pmatrix} \mathrm{Z}^{\kappa} & -\mathrm{Z} \\ -\mathrm{Z} & \mathrm{Z}^{\kappa} \end{pmatrix},
\end{equation*}
where 
\[
\mathrm{Z}^{\kappa}=2\mathfrak{L}_{2}\alpha_{0}|\bar{x}-\bar{y}|^{\alpha_{0}-2}\left(\mathrm{Id}_n-2\frac{2-\alpha_{0}}{2-\alpha_{0}}\frac{\bar{x}-\bar{y}}{|\bar{x}-\bar{y}|}\otimes\frac{\bar{x}-\bar{y}}{|\bar{x}-\bar{y}|}\right).
\]
Moreover, for \(\xi=\frac{\bar{x}-\bar{y}}{|\bar{x}-\bar{y}|}\) we have,
\[
\langle \mathrm{Z}^{\kappa}\xi,\xi\rangle=-2\mathfrak{L}_{2}\alpha_{0}|\bar{x}-\bar{y}|^{\alpha_{0}-2}\left(\frac{1-\alpha_{0}}{3-\alpha_{0}}\right)<0, \,\, (\text{since } \alpha_{0}\in (0,1)).
\]
\item \(\mathrm{X}\leq \mathrm{Y}\) and \(\max\{\|\mathrm{X}\|,\|\mathrm{Y}\|\}\leq 4\mathfrak{L}_{2}\alpha_{0}|\bar{x}-\bar{y}|^{\alpha_{0}-2}\).
\end{itemize}
Thus, by the choice of \(\alpha_{0}\) and \(\phi(\bar{x}, \bar{y}, \bar{t}) > 0\), it follows that
\begin{equation}\label{relqebarubarv}
2\alpha_{0}\mathfrak{L}_{2}|\bar{x}-\bar{y}|^{\alpha_{0}-1}\leq 2\alpha_{0}\frac{|w(\bar{x},\bar{t})-w(\bar{y},\bar{t})|}{|\bar{x}-\bar{y}|}\leq2\alpha_{0}\mathrm{C}_{\star}|\vec{q}|\leq\frac{|\vec{q}|}{2}.
\end{equation}
Now, defining \(\varrho_{i} = \varsigma_{i} + \vec{q}\) for \(i = 1, 2\), we have, by \eqref{relqebarubarv} and the estimates for \(|\varsigma_{i}|\), that  
\begin{eqnarray}\label{conditiononvarrhoi}
2|\vec{q}|\geq |\varrho_{i}|\geq \frac{|\vec{q}|}{2}\geq 2\mathfrak{L}_{2}\alpha_{0}|\bar{x}-\bar{y}|^{\alpha_{0}-1},\,\, i=1,2.
\end{eqnarray}
By the viscosity inequalities, we have that
\begin{eqnarray}
0&\leq& (|\varrho_{1}|^{\mathfrak{p}}+|\varrho_{1}|^{\mathfrak{q}})^{-1}(\mathfrak{L}_{1}+2\|f\|_{L^{\infty}(Q_{1})})+\mathfrak{L}_{1}[\operatorname{tr}(\mathfrak{A}(\varrho_{1}))+(|\varrho_{1}|^{-1}|\varrho_{2}|)^{\mathfrak{p}}\operatorname{tr}(\mathfrak{A}(\varrho_{2}))]\nonumber\\
&+&\operatorname{tr}(\mathfrak{A}(\varrho_{1})(\mathrm{X}-\mathrm{Y}))+\operatorname{tr}((\mathfrak{A}(\varrho_{1})-\mathfrak{A}(\varrho_{2}))\mathrm{Y})+[(|\varrho_{1}|^{\mathfrak{p}}-|\varrho_{2}|^{\mathfrak{p}})|\varrho_{1}|^{-\mathfrak{p}}]\operatorname{tr}(\mathfrak{A}(\varrho_{2})\mathrm{Y})
\nonumber\\
&+&(\mathfrak{a}(\bar{x},\bar{t})-\mathfrak{a}(\bar{y},\bar{t}))(\operatorname{tr}(\mathfrak{A}(\varrho_{2})\mathrm{Y})-\mathfrak{L}_{1}\operatorname{tr}(\mathfrak{A}(\varrho_{2})))\nonumber\\
&+&\mathfrak{a}(\bar{x},\bar{t})[(\operatorname{tr}(\mathfrak{A}(\varrho_{1})\mathrm{X})-\mathfrak{L}_{1}\operatorname{tr}(\mathfrak{A}(\varrho_{1})))\nonumber\\
&-&(|\varrho_{1}|^{\mathfrak{p}}+|\varrho_{1}|^{\mathfrak{q}})^{-1}|\varsigma_{2}|^{\mathfrak{q}}(\operatorname{tr}(\mathfrak{A}(\varsigma_{2})\mathrm{Y})-\mathfrak{L}_{1}\operatorname{tr}(\mathfrak{A}(\varsigma_{2})))]\nonumber\\
&=:&(|\varrho_{1}|^{\mathfrak{p}}+|\varrho_{1}|^{\mathfrak{q}})^{-1}(\mathfrak{L}_{1}+2\|f\|_{L^{\infty}(Q_{1})})+\mathfrak{I}_{1}+\mathfrak{I}_{2}+\mathfrak{I}_{3}+\mathfrak{I}_{4}+\mathfrak{I}_{5}+\mathfrak{I}_{6}\label{estholdertransl1}.
\end{eqnarray}
As in Lemma \ref{Holderestforregpro}, we know that
\begin{itemize}
\item $\operatorname{tr}(\mathfrak{A}(\varrho_{i}))\in (n\min\{1,p-1\},n\max\{1,p-1\})$, $i=1,2$,
\item $\operatorname{tr}(\mathfrak{A}(\varrho_{1})(\mathrm{X}-\mathrm{Y}))\leq -8\mathfrak{L}_{2}\alpha_{0}\left(\frac{1-\alpha_{0}}{3-\alpha_{0}}\right)\min\{1,p-1\}|\bar{x}-\bar
{y}|^{\alpha_{0}-2}$,
\item \(\frac{|\varrho_{2}|}{|\varrho_{1}|}\leq 4\),  \(|\varrho_{1}-\varrho_{2}|\leq 4\mathfrak{L}_{1}\) and \(\frac{|\varrho_{1}-\varrho_{2}|}{|\varrho_{1}|}\leq \frac{2\mathfrak{L}_{1}}{\alpha_{0}\mathfrak{L}_{2}|\bar{x}-\bar{y}|^{\alpha_{0}-1}}
\).
\end{itemize}
Thus, similar to the estimates in Lemma \ref{Holderestforregpro}, we can obtain that
\begin{eqnarray}
\mathfrak{I}_{1}&\leq& 5n\max\{1,p-1\}\mathfrak{L}_{1}\nonumber\\ 
\mathfrak{I}_{2}&\leq& -8\left(\frac{1-\alpha_{0}}{3-\alpha_{0}}\right)\alpha_{0}\min\{1,p-1\}\mathfrak{L}_{2}|\bar{x}-\bar{y}|^{\alpha_{0}-2}\nonumber\\
\mathfrak{I}_{3}&\leq& 128n|p-2|\mathfrak{L}_{1}|\bar{x}-\bar{y}|^{-1}\nonumber\\
\mathfrak{I}_{4}&\leq& 4^{\mathfrak{p}+\frac{3}{2}}|\mathfrak{p}|n\max\{1,p-1\}\mathfrak{L}_{1}|\bar{x}-\bar{y}|^{-1}\nonumber\\
\mathfrak{I}_{5}&\leq& n[\mathfrak{a}]_{C^{0,1}_{x}(Q_{1})}\max\{1,p-1\}(4\mathfrak{L}_{2}\alpha_{0}|\bar{x}-\bar{y}|^{\alpha_{0}-1}+2\mathfrak{L}_{1})\nonumber\\
\mathfrak{I_{6}}&\leq&  a(\bar{x},\bar{t})\bigg[-8\left(\frac{1-\alpha_{0}}{3-\alpha_{0}}\right)\alpha_{0}\min\{1,p-1\}\mathfrak{L}_{2}|\bar{x}-\bar{y}|^{\alpha_{0}-2}\nonumber\\
&+&5n\max\{1,p-1\}\mathfrak{L}_{1}+128n|p-2|\mathfrak{L}_{1}|\bar{x}-\bar{y}|^{-1}\nonumber\\
&+&4^{\mathfrak{q}+\frac{3}{2}}|\mathfrak{q}|n\max\{1,p-1\}\mathfrak{L}_{1}|\bar{x}-\bar{y}|^{-1}\bigg].
\end{eqnarray}
Thus, we choose \(\mathfrak{L}_{2}\) large enough such that  
$$
\begin{cases}
\mathfrak{L}_{2}\alpha_{0}|\bar{x}-\bar{y}|^{\alpha_{0}-2}\geq \frac{5n(1+(\mathfrak{a}^{-})^{-1})\max\{ 1,p-1\}}{\min\{1,p-1\}\left(\frac{1-\alpha_{0}}{3-\alpha_{0}}\right)}\mathfrak{L}_{1}\\
\mathfrak{L}_{2}\alpha_{0}|\bar{x}-\bar{y}|^{\alpha_{0}-1}\geq\frac{(128n|p-2|+4^{\mathfrak{q}+\frac{3}{2}}n|\mathfrak{q}|\max\{1,p-1\})(1+(\mathfrak{a}^{-})^{-1})}{\min\{1,p-1\}\left(\frac{1-\alpha_{0}}{3-\alpha_{0}}\right)}\mathfrak{L}_{1}\\
\mathfrak{L}_{2}\geq \frac{16n[\mathfrak{a}]_{C^{0,1}_{x}(Q_{1})}(\mathfrak{a}^{-})^{-1}\max\{1,p-1\}}{\min\{1,p-1\}\left(\frac{1-\alpha_{0}}{3-\alpha_{0}}\right)}\\
\mathfrak{L}_{2}\alpha_{0}|\bar{x}-\bar{y}|^{\alpha_{0}-2}\geq\frac{ 2^{\mathfrak{p}}(1+\mathfrak{L}_{1}+\|f\|_{L^{\infty}(Q_{1})}+\mathfrak{L}_{1}^{\frac{1}{1+\mathfrak{p}}}+\|f\|_{L^{\infty}(Q_{1})}^{\frac{1}{1+\mathfrak{p}}})}{\min\{1,p-1\}\left(\frac{1-\alpha_{0}}{3-\alpha_{0}}\right)} 
\\
\mathfrak{L}_{2}\alpha_{0}|\bar{x}-\bar{y}|^{\alpha_{0}-2}\geq \frac{2n[\mathfrak{a}]_{C^{0,1}_{x}(Q_{1})}(\mathfrak{a}^{-})^{-1}\max\{1,p-1\}}{\min\{1,p-1\}\left(\frac{1-\alpha_{0}}{3-\alpha_{0}}\right)}\mathfrak{L}_{1}\\
\frac{4\|w\|_{L^{\infty}(Q_{1})}}{\mathfrak{L}_{2}}\geq|\bar{x}-\bar{y}|.
\end{cases}
$$
By this choice, we have in \eqref{estholdertransl1} that  
\[
0\leq-(2+3(\mathfrak{a}^{-})^{-1})\mathfrak{L}_{2}\alpha_{0}\left(\frac{1-\alpha_{0}}{3-\alpha_{0}}\right)|\bar{x}-\bar{y}|^{\alpha_{0}-2}\min\{1,p-1\}<0
\]
which is a contradiction. This proves the desired result.
\end{proof}
As a consequence, we have Lipschitz regularity for the translated problem \eqref{problemtransl}.
\begin{lemma}\label{Lipschtizregularityprobtransl}
Let \(\vec{q} \in \mathbb{R}^{n}\). Consider \(w\) to be a viscosity solution to \eqref{problemtransl}. Assume that \(|\vec{q}| > \mathcal{A}_{0}\) and the structural conditions \rm{(H2)}-\rm{(H4)} are in force, and \(\mathfrak{q} \geq 0\). Then, for all \(r \in \left(0, \frac{3}{4}\right)\) and for all \(x, y \in \overline{B}_{r}\) and \(t \in [-r^2, 0]\), it holds  
\[
|w(x,t)-w(y,t)|\leq \mathrm{C}\left(1+\|w\|_{L^{\infty}(Q_{1})}+\|\hat{f}\|_{L^{\infty}(Q_{1})}+\|w\|^{\frac{1}{1+\mathfrak{p}}}_{L^{\infty}(Q_{1})}+\|\hat{f}\|^{\frac{1}{1+\mathfrak{p}}}_{L^{\infty}(Q_{1})}\right)|x-y|,
\]
where \(\mathrm{C}\) is a positive constant that depends only on \(n\), \(p\), \(\mathfrak{p}\), \(\mathfrak{q}\), \(\mathfrak{a}^{-}\), and \([\mathfrak{a}]_{C^{0,1}_{x}(Q_{1})}\).  
\end{lemma}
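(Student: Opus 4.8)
The strategy mirrors the argument of Lemma \ref{Lipschitzlema}, but now exploiting that the gradient is comparable to $\vec q$ (i.e.\ the equation is uniformly parabolic in the relevant region), which is exactly the regime made available by Lemma \ref{Holderesttransl}. Concretely, I would fix $r=3/4$, fix $x_0,y_0\in B_r$, $t_0\in(-r^2,0)$, and consider the auxiliary function
\[
\phi(x,y,t):=w(x,t)-w(y,t)-\mathfrak{L}_2\,\omega(|x-y|)-\frac{\mathfrak{L}_1}{2}|x-x_0|^2-\frac{\mathfrak{L}_1}{2}|y-y_0|^2-\frac{\mathfrak{L}_1}{2}(t-t_0)^2,
\]
with the same concave modulus $\omega(s)=s-k_0 s^{\nu}$ (truncated beyond $s_1$) used in Lemma \ref{Lipschitzlema}, so that $\omega'(s)\in[3/4,1]$ and $\omega''(s)<0$ on $(0,2]$. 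Arguing by contradiction, assume $\phi$ has a positive maximum at $(\bar x,\bar y,\bar t)$; as before $\bar x\ne\bar y$, and choosing $\mathfrak{L}_1\geq\max\{1,\mathrm{C}\,\|w\|_{L^\infty(Q_1)}\}$ forces $\bar x,\bar y\in B_r$, $\bar t\in(-r^2,0)$.

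Next I would invoke the $\alpha_0$-Hölder estimate of Lemma \ref{Holderesttransl} for $w$ in $B_{13/16}$ in place of the a priori Hölder bound used in Lemma \ref{Lipschitzlema}. This yields, on one hand, control of the penalization terms: $\max\{\mathfrak{L}_1|\bar x-x_0|,\mathfrak{L}_1|\bar y-y_0|\}\leq \mathrm{C}_\star|\bar x-\bar y|^{\alpha_0/2}$ after choosing $2\mathfrak{L}_1\leq \mathrm{C}_\star$ with $\mathrm{C}_\star$ the Hölder constant; and on the other, since $\phi(\bar x,\bar y,\bar t)>0$ together with $1-k_0|\bar x-\bar y|^{\nu-1}\geq\frac12$, the smallness bound $|\bar x-\bar y|\leq 4\|w\|_{L^\infty(Q_1)}/\mathfrak{L}_2$. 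Then I apply Jensen--Ishii's Lemma \ref{JensenIshii} to produce $(\tau+\mathfrak{L}_1(\bar t-t_0),\varsigma_1,\mathrm{X}+\mathfrak{L}_1\mathrm{Id}_n)\in\overline{\mathcal J}^+(w)(\bar x,\bar t)$ and $(\tau,\varsigma_2,\mathrm{Y}-\mathfrak{L}_1\mathrm{Id}_n)\in\overline{\mathcal J}^-(w)(\bar y,\bar t)$, set $\varrho_i=\varsigma_i+\vec q$, and record the key comparisons: $\mathfrak{L}_2/2\leq|\varsigma_i|\leq 2\mathfrak{L}_2$, and — because $2\alpha_0\mathrm{C}_\star\leq\frac12$ from the choice of $\alpha_0$ in Lemma \ref{Holderesttransl} — $\frac{|\vec q|}{2}\leq|\varrho_i|\leq 2|\vec q|$, so $|\varrho_1|\sim|\varrho_2|\sim|\vec q|$. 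Also $\langle(\mathrm{X}-\mathrm{Y})\xi,\xi\rangle\leq 2\mathfrak{L}_2\,\omega''(|\bar x-\bar y|)<0$ and $\max\{\|\mathrm{X}\|,\|\mathrm{Y}\|\}\leq \kappa+2\|\mathrm{Z}\|$ exactly as in Lemma \ref{Lipschitzlema}.

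Subtracting the two viscosity inequalities and dividing by the (now uniformly positive, uniformly bounded) coefficient $|\varrho_1|^{\mathfrak{p}}+\mathfrak{a}(\bar x,\bar t)|\varrho_1|^{\mathfrak{q}}$, I would obtain an inequality $0\leq (\text{coeff})^{-1}(\mathfrak{L}_1+2\|\hat f\|_{L^\infty(Q_1)})+\mathfrak{I}_1+\cdots+\mathfrak{I}_6$, where the $\mathfrak{I}_j$ are the analogues of those in Lemma \ref{Holderesttransl}: the ``good'' negative term $\mathfrak{I}_2\lesssim \mathfrak{L}_2\,\omega''(|\bar x-\bar y|)\min\{1,p-1\}$ of order $|\bar x-\bar y|^{\mu/2-1}\mathfrak{L}_2$, and ``error'' terms involving $\|\mathrm{X}-\mathrm{Y}\|$, $\|\mathfrak{A}(\varrho_1)-\mathfrak{A}(\varrho_2)\|\lesssim |\varrho_1-\varrho_2|/|\varrho_1|\lesssim \mathfrak{L}_1/(\alpha_0\mathfrak{L}_2|\bar x-\bar y|^{\mu/2})$ (bounded since $|\varrho_i|\sim|\vec q|\geq 1$), $||\varrho_1|^{\mathfrak{p}}-|\varrho_2|^{\mathfrak{p}}|\,|\varrho_1|^{-\mathfrak{p}}$, and the oscillation of $\mathfrak{a}$ controlled via (H3) by $[\mathfrak{a}]_{C^{0,1}_x(Q_1)}|\bar x-\bar y|$; the double-phase term gives an identical structure with $\mathfrak{q}$ in place of $\mathfrak{p}$ and the extra factor $\mathfrak{a}(\bar x,\bar t)\in[\mathfrak{a}^-,\mathfrak{a}^+]$, using $\mathfrak{q}\geq0$ to keep powers of $|\varrho_i|$ bounded below. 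Taking $\nu=1+\mu/2$ and choosing $\mathfrak{L}_2$ large enough (a finite list of threshold conditions, each of the form $\mathfrak{L}_2|\bar x-\bar y|^{\mu/2-1}\geq \mathrm{C}\,\mathfrak{L}_1$, $\mathfrak{L}_2|\bar x-\bar y|^{(\mu/2-1)/(1+\mathfrak{p})}\geq \mathrm{C}(\mathfrak{L}_1+\mathfrak{L}_1^{1/(1+\mathfrak{p})}+\|\hat f\|_{L^\infty(Q_1)}^{1/(1+\mathfrak{p})})$, and $\mathfrak{L}_2\geq \mathrm{C}$, with constants depending only on $n,p,\mathfrak{p},\mathfrak{q},\mathfrak{a}^-,[\mathfrak{a}]_{C^{0,1}_x(Q_1)}$) absorbs all error terms into the good negative term, yielding $0<0$, the desired contradiction. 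Hence $\phi\leq 0$ on $\overline{B}_r\times\overline{B}_r\times[-r^2,0]$; evaluating at $x_0,y_0,t_0$ and tracking the dependence of $\mathfrak{L}_2$ on $\mathfrak{L}_1\sim \|w\|_{L^\infty(Q_1)}$ and on $\|\hat f\|_{L^\infty(Q_1)}$ gives the stated Lipschitz bound.

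The main obstacle is purely bookkeeping: one must verify that every error term genuinely carries a negative power of $\mathfrak{L}_2$ (or a fixed power of $|\bar x-\bar y|$ that is subleading relative to $|\bar x-\bar y|^{\mu/2-1}$) after dividing by the coefficient $|\varrho_1|^{\mathfrak{p}}+\mathfrak{a}|\varrho_1|^{\mathfrak{q}}$; the delicate points are the terms $\mathfrak{I}_3,\mathfrak{I}_4,\mathfrak{I}_6$ where the ratio $|\varrho_1-\varrho_2|/|\varrho_1|$ and the difference $||\varrho_1|^{\mathfrak p}-|\varrho_2|^{\mathfrak p}||\varrho_1|^{-\mathfrak p}$ must be shown to stay bounded — this is exactly where $|\varrho_i|\sim|\vec q|\geq \mathcal A_0\geq 1$ (hence the equation is uniformly parabolic) and $\mathfrak{q}\geq 0$ are essential, and it is why the result is stated only for $|\vec q|>\mathcal A_0$ and $\mathfrak{q}\geq0$.
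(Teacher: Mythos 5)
Your plan follows the paper's proof of Lemma~\ref{Lipschtizregularityprobtransl} step for step: the same auxiliary function $\phi$ with the concave modulus $\omega(s)=s-k_0 s^{\nu}$, the Hölder input from Lemma~\ref{Holderesttransl} (in place of Lemma~\ref{Holderestforregpro}), Jensen--Ishii's lemma, the shift $\varrho_i=\varsigma_i+\vec q$, division by the coefficient, the same six error terms, and $\nu=1+\alpha_0/2$. So the route is the paper's, not a genuinely different one.

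There is, however, a gap in the one place where you supply your own justification. You claim $\frac{|\vec q|}{2}\leq|\varrho_i|\leq 2|\vec q|$ follows from the choice $2\alpha_0\mathrm{C}_\star\leq\frac12$ made in Lemma~\ref{Holderesttransl}. That mechanism does not transfer. In the Hölder lemma, $\omega(s)=s^{\alpha_0}$, so $|\varsigma_i|\sim\alpha_0\mathfrak{L}_2|\bar x-\bar y|^{\alpha_0-1}$ and the smallness of $\alpha_0$ (against $\mathrm{C}_\star$) is exactly what buys $|\varsigma_i|\leq|\vec q|/2$ via~\eqref{relqebarubarv}. Here, with $\omega(s)=s-k_0 s^{\nu}$, one has $\omega'\in[3/4,1]$ and therefore $|\varsigma_i|\sim\mathfrak{L}_2$ with no mitigating small factor; to deduce $|\varrho_i|\geq|\vec q|/2$ you need $\mathfrak{L}_2\leq|\vec q|/4$, which is not a consequence of the $\alpha_0$-choice. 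The paper handles this by additionally imposing $|\vec q|\geq 4\mathfrak{L}_2$ at this point in the argument (and one should, strictly speaking, also address the intermediate range $\mathcal{A}_0<|\vec q|<4\mathfrak{L}_2$, e.g.\ by noting that in that regime $|\vec q|$ is comparable to $\mathcal A_0$ so the direct application of Lemma~\ref{Lipschitzlema} to $w+\vec q\cdot x$, as in the paragraph preceding Lemma~\ref{Holderesttransl}, already yields the bound). As written, your proof would lose uniform parabolicity precisely where it is needed; replacing your cited justification with the explicit threshold on $|\vec q|$ (plus the case split for the gap range) closes the argument.
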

\begin{proof}
The proof of this result follows the framework of the previous compactness lemmas. Therefore, we will omit the analogous details and highlight only the significant changes concerning Lemma \ref{Lipschitzlema}. We fix \(r = \frac{3}{4}\), and \(x_{0}, y_{0} \in B_{r}\), \(t_{0} \in (-r^{2}, 0)\). We define the auxiliary function for suitable positive constants \(\mathfrak{L}_{1}\) and \(\mathfrak{L}_{2}\),
\begin{eqnarray*}
\phi(x,y,t):=w(x,t)-w(y,t)-\mathfrak{L}_{2}\omega(|x-y|)-\frac{\mathfrak{L}_{1}}{2}|x-x_{0}|^2-\frac{\mathfrak{L}_{1}}{2}|y-y_{0}|^2-\frac{\mathfrak{L}_{1}}{2}(t-t_{0})^2.
\end{eqnarray*}
Here, \(\omega\) is defined by  
\begin{eqnarray*}
\omega(s)=\begin{cases}
s-k_{0}s^{\nu},& \, \text{if }\, 0\leq s\leq s_{1}=\left(\frac{1}{4\nu k_{0}}\right)^{\frac{1}{\nu-1}},\\
\omega(s_{1}),&\, \text{otherwise},
\end{cases}
\end{eqnarray*}
where \(\nu \in (1, 2)\) and \(k_0 > 0\) are chosen such that \(s_{1} > 2\) and \(\nu k_{0} s_{1}^{n-1} \leq \frac{1}{4}\). We know that  
$$
\begin{cases}
\omega'(s) = 1 - \nu \kappa_0 s^{\nu-1}\\ \omega''(s) = -\nu(\nu - 1)\kappa_0 s^{\nu-2}.
\end{cases}
$$
Moreover, by the choice made above, we have that \(\omega'(s) \in [3/4, 1]\) and \(\omega''(s) < 0\) for \(s \in (0, 2]\). With these observations made, we state that \(\phi\) is non-positive in \(\overline{B_{r}} \times \overline{B_{r}} \times [-r^{2}, 0]\). We argue by contradiction. Assume that \(\phi(\bar{x}, \bar{y}, \bar{t}) > 0\) for some \((\bar{x}, \bar{y}, \bar{t}) \in \overline{B_{r}} \times \overline{B_{r}} \times [-r^2, 0]\) is a maximum point of \(\phi\). We can note that \(\bar{x} \neq \bar{y}\) and \(\bar{x}, \bar{y} \in B_r\), \(\bar{t} \in (-r^2, 0)\) if we take \(\mathfrak{L}_{1} \geq \max\{1, \mathrm{C} \|w\|_{L^\infty(Q_1)}\}\). Hereafter, we can apply Lemma \ref{Holderesttransl} to ensure that \(w\) is locally \(\alpha_{0}\)-Hölder continuous in the spatial variable for some \(\alpha_{0} \in (0, 1)\), and there exists a positive constant \(\mathrm{C_{\alpha_{0}}}\) such that  
\[
|w(x, t) - w(y, t)| \leq \mathrm{C_{\alpha_{0}}} |x - y|^{\alpha_{0}}, \,\, \text{for all}\,\, x,y\in B_{r}, \, t\in (-r^{2},0)
\]
where 
\[
\mathrm{C_{\alpha_{0}}}=\mathrm{C}\left(1+\|w\|_{L^{\infty}(Q_{1})}+\|\hat{f}\|_{L^{\infty}(Q_{1})}+\|w\|_{L^{\infty}(Q_{1})}^{{\frac{1}{1+\mathfrak{p}}}}+\|\hat{f}\|_{L^{\infty}(Q_{1})}^{{\frac{1}{1+\mathfrak{p}}}}\right)
\] 
Taking \(2\mathfrak{L}_1 \leq \mathrm{C_{\alpha_{0}}}\) and \(k_{0} \in (0, 1)\) (see the proof of Lemma \ref{Lipschitzlema}), we get:
\begin{itemize}
\item \(\max\{\mathfrak{L}_1| \bar{y} - y_0|, \mathfrak{L}_1| \bar{x} - x_0|\} \leq \mathrm{C_{\alpha_{0}}} | \bar{x} - \bar{y}|^{\frac{\alpha_{0}}{2}}\).
\item \(|\bar{x}-\bar{y}|\leq \frac{4\|w\|_{L^{\infty}(Q_{1})}}{\mathfrak{L}_{2}}\).
\end{itemize}
Now, we apply Jensen-Ishii's Lemma \ref{JensenIshii} to obtain that:
\begin{eqnarray*}
(\tau+\mathfrak{L}_{1}(\bar{t}-t_{0}),\varsigma_{1},\mathrm{X}+\mathfrak{L}_{1}\mathrm{Id}_n)\in \bar{J}^{+}(w)(\bar{x},\bar{t}),\\
(\tau,\varsigma_{2},\mathrm{Y}-\mathfrak{L}_{1}\mathrm{Id}_n)\in \bar{J}^{-}(w)(\bar{y},\bar{t}),
\end{eqnarray*}
where we take \(\mathfrak{L}_{2} \geq 4\mathrm{C_{\alpha_{0}}}\) and \(\varsigma_{1}\), \(\varsigma_{2}\), \(\mathrm{X}\), and \(\mathrm{Y}\) satisfying the following properties:
\begin{itemize}
\item \(2\mathfrak{L}_{2}\geq |\varsigma_{i}|\geq \mathfrak{L}_{2}\omega'(|\bar{x}-\bar{y}|)-\mathrm{C_{\alpha_{0}}}|\bar{x}-\bar{y}|^{\frac{\alpha_{0}}{2}}\geq \frac{\mathfrak{L}_{2}}{2},\,\, i=1,2,\)
\item The matrices \(\mathrm{X}\) and \(\mathrm{Y}\) can be taken such that \(\mathrm{X} \leq \mathrm{Y}\), and for all \(\kappa > 0\), it holds:
\begin{equation*}
-(\kappa+2\|\mathrm{Z}\|)\begin{pmatrix} \mathrm{Id}_n & 0 \\ 0 & \mathrm{Id}_n \end{pmatrix}
\leq 
\begin{pmatrix} \mathrm{X} & 0 \\ 0 & -\mathrm{Y} \end{pmatrix}
\leq 
\begin{pmatrix} \mathrm{Z} & -\mathrm{Z} \\ -\mathrm{Z} & \mathrm{Z} \end{pmatrix}+\frac{2}{\kappa}\begin{pmatrix} \mathrm{Z}^2 & -\mathrm{Z}^2 \\ -\mathrm{Z}^2 & \mathrm{Z}^2 \end{pmatrix}.
\end{equation*}
Here,
\begin{align*}
\mathrm{Z} = \mathfrak{L}_{2} \omega''(|\bar{x} - \bar{y}|) \frac{\bar{x} - \bar{y}}{|\bar{x} - \bar{y}|} \otimes \frac{\bar{x} - \bar{y}}{|\bar{x} - \bar{y}|} 
+ \mathfrak{L}_{2} \omega'(|\bar{x} - \bar{y}|) \left( \mathrm{Id}_n - \frac{\bar{x} - \bar{y}}{|\bar{x} - \bar{y}|} \otimes \frac{\bar{x} - \bar{y}}{|\bar{x} - \bar{y}|} \right),
\end{align*}
\begin{align*}
\mathrm{Z}^2 =\left[ \mathfrak{L}_{2}^2 \frac{(\omega'(|\bar{x} - \bar{y}|))^2}{|\bar{x}-\bar{y}|^2}+\mathfrak{L}_{2}^{2} (\omega''(|\bar{x} - \bar{y}|))^2 \right]\left(\mathrm{Id}_n-\frac{\bar{x} - \bar{y}}{|\bar{x} - \bar{y}|} \otimes \frac{\bar{x} - \bar{y}}{|\bar{x} - \bar{y}|}\right),
\end{align*}
Moreover, 
\[
\langle(\mathrm{X}-\mathrm{Y})\xi,\xi\rangle\leq 4\left(\langle \mathrm{Z}\xi,\xi\rangle+\frac{2}{\kappa}\langle \mathrm{Z}^{2}\xi,\xi\rangle\right)\leq 2\mathfrak{L}_{2}\omega''(|\bar{x}-\bar{y}|)
\]
and \(\max\{\|\mathrm{X}\|,\|\mathrm{Y}\|\}\leq \kappa+2\|\mathrm{Z}\|\).
\end{itemize}
Additionally, writing \(\varrho_{i} = \varsigma_{i} + \vec{q}\), for \(i = 1, 2\), we observe from the estimates of \(\varsigma_{i}\), \(\vec{q}\), and by choosing \(|\vec{q}| \geq 4\mathfrak{L}_{2}\), we have that:
\[
3|\vec{q}|\geq |\varrho_{i}|\geq \frac{|\vec{q}|}{2}\geq\frac{\mathfrak{L}_{2}}{2},\, i=1,2.
\]
By, viscosity inequalities we have
\begin{eqnarray}
0&\leq& 2(\mathfrak{L}_{1}+\|f\|_{L^{\infty}(Q_{1})})(|\varrho_{1}|^{\mathfrak{p}}+|\varrho_{1}|^{\mathfrak{q}})^{-1}+\mathfrak{L}_{1}[\operatorname{tr}(\mathfrak{A}(\varrho_{1}))+(|\varrho_{2}||\varrho_{1}|^{-1})^{\mathfrak{p}}\operatorname{tr}(\mathfrak{A}(\varrho_{2}))]\nonumber\\
&+&\operatorname{tr}(\mathfrak{A}(\varrho_{1})(\mathrm{X}-\mathrm{Y}))+\operatorname{tr}((\mathfrak{A}(\varrho_{1})-\mathfrak{A}(\varrho_{2}))\mathrm{Y})+[|\varrho_{1}|^{\mathfrak{p}}-|\varrho_{2}|^{\mathfrak{p}}]|\varrho_{1}|^{-\mathfrak{p}}\operatorname{tr}(\mathfrak{A}(\varrho_{2})\mathrm{Y})\nonumber\\
&+&(\mathfrak{a}(\bar{x},\bar{t})-\mathfrak{a}(\bar{y},\bar{t}))(|\varrho_{2}||\varrho_{1}|^{-1})^{\mathfrak{q}}(\operatorname{tr}(\mathfrak{A}(\varrho_{2})\mathrm{Y})-\mathfrak{L}_{1}\operatorname{tr}(\mathrm{A}(\varrho_{2})))\nonumber\\
&+&\mathfrak{a}(\bar{x},\bar{t})[(\operatorname{tr}(\mathfrak{A}(\varrho_{1})\mathrm{X})-\mathfrak{L}_{1}\operatorname{tr}(\mathfrak{A}(\varrho_{1})))\nonumber\\
&-&(|\varrho_{2}||\varrho_{1}|^{-1})^{\mathfrak{q}}(\operatorname{tr}(\mathfrak{A}(\varrho_{2})\mathrm{Y})-\mathfrak{L}_{1}\operatorname{tr}(\mathfrak{A}(\varrho_{2})))]\nonumber\\
&=:& \mathfrak{L}_{1}+2\|f\|_{L^{\infty}(Q_{1})}+\mathfrak{I}_{1}+\mathfrak{I}_{2}+\mathfrak{I}_{3}+\mathfrak{I}_{4}+\mathfrak{I}_{5}+\mathfrak{I}_{6}\nonumber.
\end{eqnarray}
Now, again, as in the Local Lipschitz regularity Lemma \ref{Lipschitzlema}, we have:
\begin{itemize}
\item $\operatorname{tr}(\mathfrak{A}(\varrho_{i}))\in (n\min\{1,p-1\},n\max\{1,p-1\})$, $i=1,2$,
\item $\operatorname{tr}(\mathfrak{A}(\varrho_{1})(\mathrm{X}-\mathrm{Y}))\leq 2\mathfrak{L}_{2}\min\{1,p-1\}\omega''(|\bar{x}-\bar{y}|)$,
\item $\|\mathrm{Y}\|\leq 4\mathfrak{L}_{2}\left(|\omega''(|\bar{x}-\bar{y}|)|+\frac{\omega'(|\bar{x}-\bar{y}|)}{|\bar{x}-\bar{y}|}\right)$.
\end{itemize}
Thus, we can compute it and conclude that
\begin{eqnarray}
\mathfrak{I}_{1}&\leq& (1+\max\{6^{\mathfrak{p}},6^{-\mathfrak{p}}\})n\max\{1,p-1\}\mathfrak{L}_{1},\nonumber\\
\mathfrak{I}_{2}&\leq& 2\min\{1,p-1\}\mathfrak{L}_{2}\omega''(|\bar{x}-\bar{y}|),\nonumber\\
\mathfrak{I}_{3}&\leq& 128\mathrm{C_{\alpha_{0}}}n|p-2|(|\omega''(|\bar{x}-\bar{y}|)|+\omega'(|\bar{x}-\bar{y})|\bar{x}-\bar{y}|^{\frac{\alpha_{0}}{2}-1})\nonumber\\
\mathfrak{I}_{4}&\leq& 32n\mathrm{C_{\alpha_{0}}}\max\{6^{\mathfrak{p}},6^{-\mathfrak{p}}\}|\mathfrak{p}|\max\{1,p-1\}(|\omega''(|\bar{x}-\bar{y}|)|+|\bar{x}-\bar{y}|^{\frac{\alpha_{0}}{2}-1})\nonumber\\
\mathfrak{I}_{5}&\leq& 4.6^{\mathfrak{q}}n[\mathfrak{a}]_{C^{0,1}_{x}(Q_{1})}\max\{1,p-1\}(4\mathfrak{L}_{2}(1+|\omega''(|\bar{x}-\bar{y}|)||\bar{x}-\bar{y}|)+\mathfrak{L}_{1})\nonumber\\
\mathfrak{I}_{6}&\leq&\mathfrak{a}(\bar{x},\bar{t})[(1+6^{\mathfrak{q}})n\max\{1,p-1\}\mathfrak{L}_{1}+2\min\{1,p-1\}\mathfrak{L}_{2}\omega''(|\bar{x}-\bar{y}|)\nonumber\\
&+&128\mathrm{C_{\alpha_{0}}}n|p-2|(|\omega''(|\bar{x}-\bar{y}|)|+\omega'(|\bar{x}-\bar{y})|\bar{x}-\bar{y}|^{\frac{\alpha_{0}}{2}-1})\nonumber\\
&+&32. 6^{\mathfrak{q}}n\mathrm{C_{\alpha_{0}}}|\mathfrak{q}|\max\{1,p-1\}(|\omega''(|\bar{x}-\bar{y}|)|+|\bar{x}-\bar{y}|^{\frac{\alpha_{0}}{2}-1})]\nonumber.
\end{eqnarray}

Now, let us choose the constant \(\nu\). Take \(\nu\) to be \(\nu = 1 + \frac{\alpha_{0}}{2}\). Thus, 
$$
\omega''(|\bar{x}-\bar{y}|)=-\left(1+\frac{\alpha_0}{2}\right)\frac{\alpha_0}{2}k_{0}|\bar{x}-\bar{y}|^{\frac{\alpha_{0}}{2}-1}.
$$
Thus,
\begin{eqnarray}
0&\leq& \mathfrak{L}_{1}+\|\hat{f}\|_{L^{\infty}(Q_{1})}+|\bar{x}-\bar{y}|^{\frac{\alpha_{0}}{2}-1}\mathfrak{L}_{2}\Bigg[-2\left(1+\frac{\alpha_0}{2}\right)\frac{\alpha_0}{2}(1+\max\{6^{\mathfrak{p}},6^{-\mathfrak{p}}\})\min\{1,p-1\}\nonumber\\
& &+\frac{2n\max\{1,p-1\}\mathfrak{L}_{1}}{\mathfrak{L}_{2}|\bar{x}-\bar{y}|^{\frac{\alpha_{0}}{2}-1}}+\frac{160n\mathrm{C_{\alpha_0}}\max\{6^{\mathfrak{p}},6^{-\mathfrak{p}},1,|\mathfrak{p}|,p-1,|p-2|\}\left(1+\left(1+\frac{\alpha_0}{2}\right)\frac{\alpha_0}{2}\right)}{\mathfrak{L}_{2}}\Bigg]\nonumber\\
& &+\mathfrak{a}(\bar{x},\bar{t})\mathfrak{L}_{2}|\bar{x}-\bar{y}|^{\frac{\alpha_0}{2}-1}\Bigg[-2\left(1+\frac{\alpha_0}{2}\right)\frac{\alpha_0}{2}(1+6^{\mathfrak{q}})\min\{1,p-1\}\nonumber\\
& &+\frac{160n\mathrm{C_{\alpha_0}}\max\{6^{\mathfrak{q}},1,|\mathfrak{q}|,p-1,|p-2|\}\left(1+\left(1+\frac{\alpha_0}{2}\right)\frac{\alpha_0}{2}\right)}{\mathfrak{L}_{2}}
+\frac{2n\max\{1,p-1\}\mathfrak{L}_{1}}{\mathfrak{L}_{2}|\bar{x}-\bar{y}|^{\frac{\alpha_0}{2}-1}}\nonumber\\
& &+\mathfrak{a}(\bar{x},\bar{t})^{-1}\Bigg(\frac{48.6^{\mathfrak{q}}n[\mathfrak{a}]_{C^{0,1}_{x}(Q_{1})}\|w\|_{L^{\infty}(Q_{1})}\max\{1,p-1\}}{\mathfrak{L}_{2}|\bar{x}-\bar{y}|^{\frac{\alpha_0}{2}}}\nonumber\\
& &+\frac{4.6^{\mathfrak{q}}n[\mathfrak{a}]_{C^{0,1}_{x}(Q_{1})}\max\{1,p-1\}\mathfrak{L}_{1}}{\mathfrak{L}_{2}|\bar{x}-\bar{y}|^{\frac{\alpha_0}{2}-1}}\Bigg)\Bigg]\label{concllipsreg2}
\end{eqnarray}
since \(|\varrho_{1}| \geq 1\) and \(\mathfrak{q} \geq 0\). Now, choosing \(\mathfrak{L}_{2}\) sufficiently large such that:
$$
\begin{cases}
\mathfrak{L}_{2}|\bar{x}-\bar{y}|^{\frac{\alpha_0}{2}-1}\geq \frac{8n\max\{1,p-1\}}{(1+\max\{6^{\mathfrak{p}},6^{-\mathfrak{p}}\})\min\{1,p-1\}\left(1+\frac{\alpha_0}{2}\right)\frac{\alpha_0}{2}}\mathfrak{L}_{1}\\
\mathfrak{L}_{2}|\bar{x}-\bar{y}|^{\frac{\frac{\alpha_0}{2}-1}{1+\mathfrak{p}}}\geq\frac{8\left(\mathfrak{L}_{1}+\mathfrak{L}_{1}^{\frac{1}{1+\mathfrak{p}}}+\|\hat{f}\|_{L^{\infty}(Q_{1})}^{\frac{1}{1+\mathfrak{p}}}\right)}{\left((1+\max\{6^{\mathfrak{p}},6^{-\mathfrak{p}}\})\min\{1,p-1\}\left(1+\frac{\alpha_0}{2}\right)\frac{\alpha_0}{2}\right)^{\frac{1}{1+\mathfrak{p}}}}\\
\mathfrak{L}_{2}\geq \frac{640n\mathrm{C_{\alpha_0}}\max\{6^{\mathfrak{p}},6^{-\mathfrak{p}},1,|\mathfrak{p}|,p-1,|p-2|\}\left(1+\left(1+\frac{\alpha_0}{2}\right)\frac{\alpha}{2}\right)}{\min\{1,p-1\}\left(1+\frac{\alpha_0}{2}\right)\frac{\alpha_0}{2}(1+\max\{6^{\mathfrak{p}},6^{-\mathfrak{p}}\})}\\
\mathfrak{L}_{2}\geq \frac{640n\mathrm{C_{\alpha_0}}\max\{6^{\mathfrak{q}},1,|\mathfrak{q}|,p-1,|p-2|\}\left(1+\left(1+\frac{\alpha_0}{2}\right)\frac{\alpha_0}{2}\right)}{\min\{1,p-1\}\left(1+\frac{\alpha_0}{2}\right)\frac{\alpha_0}{2}(1+6^{\mathfrak{q}})}\\
\mathfrak{L}_{2}|\bar{x}-\bar{y}|^{\frac{\alpha_0}{2}-1}\geq \frac{8n\max\{1,p-1\}}{\min\{1,p-1\}\left(1+\frac{\alpha_0}{2}\right)\frac{\alpha_0}{2}(1+6^{\mathfrak{q}})}\mathfrak{L}_{1}\\
\mathfrak{L}_{2}|\bar{x}-\bar{y}|^{\frac{\alpha_0}{2}-1}\geq \frac{16 .6^{\mathfrak{q}}n\max\{1,p-1\}(\mathfrak{a}^{-})^{-1}[\mathfrak{a}]_{C^{0,1}_{x}(Q_{1})}}{\min\{1,p-1\}\left(1+\frac{\alpha_0}{2}\right)\frac{\alpha_0}{2}(1+6^{\mathfrak{q}})}\mathfrak{L}_{1}\\
\mathfrak{L}_{2}|\bar{x}-\bar{y}|^{\frac{\alpha_0}{2}}\geq \frac{192.6^{\mathfrak{q}}n\max\{1,p-1\}(\mathfrak{a}^{-})^{-1}[\mathfrak{a}]_{C^{0,1}_{x}(Q_{1})}\|w\|_{L^{\infty}(Q_{1})}(1+2^{\frac{\alpha_0}{2}})}{\min\{1,p-1\}\left(1+\frac{\alpha_0}{2}\right)\frac{\alpha_0}{2}(1+6^{\mathfrak{q}})}.
\end{cases}
$$
Consequently, from \eqref{concllipsreg2}, we can conclude that:
\begin{eqnarray*}
0\leq -\left(1+\frac{\alpha_0}{2}\right)\frac{\alpha_0}{2}\mathfrak{L}_{2}|\bar{x}-\bar{y}|^{\frac{\alpha_0}{2}-1}\min\{1,p-1\}(1+\max\{6^{\mathfrak{q}},6^{-\mathfrak{p}}\})\big[1+\mathfrak{a}(\bar{x},\bar{t})\big]<0,
\end{eqnarray*}
which is a contradiction. This ends the proof.
\end{proof}
\section{H\"{o}lder regularity of gradient: Proof of Theorem \ref{Thm01}}\label{Section-Proof-Thm01}

In this section, we will present a proof of Theorem \ref{Thm01}. As in \cite{Attouchi20,AttouRuos20}, we will use the method of alternatives described in the subsection \ref{subsection1.1.2}, along with the improvement of flatness.

For this approach, we will initially need an approximation result and will fix the following constant: 
\[
\mathrm{C}_{\mathrm{Lip}}=\mathrm{C}\left(1+\|w\|_{L^{\infty}(Q_{1})}+\|\hat{f}\|_{L^{\infty}(Q_{1})}+\|w\|_{L^{\infty}(Q_{1})}^{\frac{1}{1+\mathfrak{p}}}+\|\hat{f}\|_{L^{\infty}(Q_{1})}^{\frac{1}{1+\mathfrak{p}}}\right)
\]
in the Lemma \ref{Lipschtizregularityprobtransl} and we fix \(\|f\|_{L^{\infty}(Q_{1})}\leq 2\) and \(\|w\|_{L^{\infty}(Q_{1})}\leq 4\). We denote, \(\mathrm{C}_{2}=2\mathrm{C}_{\mathrm{Lip}}\) and \(\mathrm{C}_{1}=1+2\mathrm{C}_{2}\).

\begin{lemma}[\bf Approximation]\label{Approximationlemma}
Assume that the structural conditions \(\mathrm{(H1)-(H4)}\) hold. Let \(u\) be a viscosity solution to \eqref{Problem} such that \(\displaystyle\operatornamewithlimits{osc}_{Q_1} u \leq \mathrm{C}_1\). For all \(\delta > 0\), there exists \(\eta \in (0,1)\), depending only on \(p\), \(n\), \(\mathfrak{p}\), \(\mathfrak{q}\), \([\mathfrak{a}]_{C^{0,1}_{x}(Q_{1})}\), and \(\delta\), such that if \(\|f\|_{L^{\infty}(Q_{1})} \leq \eta\), then there exists a solution \(\mathfrak{h}\) to
\begin{eqnarray}\label{probhom}
\partial_{t}\mathfrak{h}-\mathscr{H}(x,t,\nabla \mathfrak{h})=0\,\,\, \text{in}\,\,\, Q_{7/8}
\end{eqnarray}
satisfying
\[
\|u-\mathfrak{h}\|_{L^{\infty}(Q_{7/8})}\leq\delta.
\]
\end{lemma}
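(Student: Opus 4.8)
The plan is to argue by contradiction and compactness, which is the standard route for approximation lemmas of this kind (cf.\ the analogous statements in \cite{Attouchi20} and \cite{FZ23}). Suppose the conclusion fails for some fixed $\delta_0>0$. Then for each $k\in\mathbb{N}$ there exist data $\mathfrak{a}_k$ (satisfying $(\mathrm{H}2)$--$(\mathrm{H}3)$ with the same structural constants), source terms $f_k$ with $\|f_k\|_{L^\infty(Q_1)}\le \tfrac{1}{k}$, and viscosity solutions $u_k$ of
\[
\partial_t u_k - \mathscr{H}_k(x,t,\nabla u_k)\,\Delta_p^{\mathrm{N}} u_k = f_k \quad\text{in } Q_1, \qquad \operatornamewithlimits{osc}_{Q_1} u_k \le \mathrm{C}_1,
\]
such that $\|u_k - \mathfrak{h}\|_{L^\infty(Q_{7/8})} > \delta_0$ for \emph{every} solution $\mathfrak{h}$ of the homogeneous limit problem in $Q_{7/8}$. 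Normalizing by subtracting constants we may assume $\|u_k\|_{L^\infty(Q_1)} \le \mathrm{C}_1$, so in particular the normalization hypotheses $\|u_k\|_{L^\infty(Q_1)}\le 4$ and $\|f_k\|_{L^\infty(Q_1)}\le 2$ used to fix $\mathrm{C}_{\mathrm{Lip}}$ hold for $k$ large.

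The key step is to extract compactness. By the interior Lipschitz estimate in the spatial variable (Lemma \ref{Lipschitzlema}) applied on $Q_{7/8}$, the family $(u_k)$ is uniformly Lipschitz in $x$ on, say, $\overline{Q_{13/16}}$ with a bound depending only on the universal constants and on $\mathrm{C}_1$; by the local Hölder estimate in time (Lemma \ref{Holderesttime}, part (a), since $0\le\mathfrak{p}\le\mathfrak{q}$) the family is equicontinuous in $t$ as well. Hence $(u_k)$ is precompact in $C^0(\overline{Q_{13/16}})$ and, up to a subsequence, $u_k \to u_\infty$ locally uniformly on $Q_{7/8}$. Simultaneously, since $\mathfrak{a}_k$ is uniformly bounded in $C^1_{x,t}$ by $(\mathrm{H}3)$, Arzelà--Ascoli gives (along a further subsequence) $\mathfrak{a}_k \to \mathfrak{a}_\infty$ locally uniformly, where $\mathfrak{a}_\infty$ still satisfies $(\mathrm{H}2)$, and $f_k \to 0$ uniformly. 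Now invoke the stability result, Lemma \ref{estabilidade}: the limit $u_\infty$ is a viscosity solution of
\[
\partial_t u_\infty - \mathscr{H}_\infty(x,t,\nabla u_\infty)\,\Delta_p^{\mathrm{N}} u_\infty = 0 \quad\text{in } Q_{7/8},
\]
which is precisely \eqref{probhom} (with coefficient $\mathfrak{a}_\infty$). Taking $\mathfrak{h} := u_\infty$ in the contradiction hypothesis yields $\|u_k - u_\infty\|_{L^\infty(Q_{7/8})} > \delta_0$ for all $k$, contradicting the local uniform convergence $u_k \to u_\infty$. This completes the argument, with the threshold $\eta$ being the value of $1/k$ at which the contradiction first occurs — depending only on $p$, $n$, $\mathfrak{p}$, $\mathfrak{q}$, $[\mathfrak{a}]_{C^{0,1}_x(Q_1)}$ (through the constants in Lemmas \ref{Lipschitzlema} and \ref{Holderesttime}) and $\delta$.

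The main obstacle I anticipate is ensuring that the homogeneous limit equation \eqref{probhom} is understood with a \emph{fixed} coefficient: the statement as written has $\mathscr{H}(x,t,\nabla\mathfrak{h})$ with the \emph{same} $\mathfrak{a}$ as in \eqref{Problem}, whereas the compactness argument naturally produces a limit coefficient $\mathfrak{a}_\infty$ obtained from a diagonal subsequence. One resolves this either by formulating the lemma (as is implicitly intended) for the whole class of coefficients satisfying $(\mathrm{H}2)$--$(\mathrm{H}3)$ — so that $\mathfrak{h}$ solves \eqref{probhom} for \emph{some} admissible $\mathfrak{a}_\infty$, which is all the subsequent flatness-improvement step needs — or, if one truly wants the coefficient frozen, by noting that the $C^1$ bound on $\mathfrak{a}$ together with a further rescaling makes the coefficient locally nearly constant, reducing to the constant-coefficient homogeneous equation. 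The secondary technical point is the applicability of Lemma \ref{estabilidade} here: one must check its hypotheses (local uniform convergence of $u_k$, $f_k$, $\mathfrak{a}_k$) are exactly what the compactness step supplies, which they are. No regularity of $u_\infty$ beyond being a viscosity solution is needed, so no further estimates are required to close the loop.
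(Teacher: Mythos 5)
Your proof follows the paper's argument in outline: argue by contradiction, extract a locally uniformly convergent subsequence via interior equicontinuity estimates (you invoke the spatial Lipschitz estimate of Lemma \ref{Lipschitzlema}, while the paper invokes the H\"older estimate of Lemma \ref{Holderestforregpro}; either gives Arzel\`a--Ascoli), pass to the limit using the Stability Lemma \ref{estabilidade} to produce a solution of the homogeneous equation \eqref{probhom}, and contradict the assumed lower bound $\|u_k - \mathfrak{h}\|_{L^\infty(Q_{7/8})} > \delta_0$. The one place you deviate is in allowing the modulating coefficient to vary along the contradiction sequence (your $\mathfrak{a}_k$), whereas the paper keeps $\mathscr{H}$ fixed and varies only $u_j$ and $f_j$. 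Since the lemma is stated for a fixed solution $u$ of \eqref{Problem} --- and hence a fixed $\mathfrak{a}$ --- and since \eqref{probhom} carries the same $\mathscr{H}$, keeping $\mathfrak{a}$ fixed is both faithful to the statement and sidesteps the coefficient-mismatch worry you raise at the end, which is in that sense self-inflicted. Your instinct does, however, touch a genuine point: the asserted dependence of $\eta$ only on $[\mathfrak{a}]_{C^{0,1}_x(Q_1)}$ (rather than on $\mathfrak{a}$ itself) is a uniformity claim that matters when the lemma is reapplied to the rescaled coefficients $\mathfrak{a}_j$ in Proposition \ref{improvflat2}, and a contradiction argument that varies $\mathfrak{a}_k$, with \eqref{probhom} then understood for some admissible limit coefficient, is one clean way to justify it. The compactness/stability core of your argument is correct and matches the paper's proof.
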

\begin{proof}
We prove the desired result by contradiction. Assume that the claim is not satisfied. Then, there exist a constant \(\delta_0 > 0\) and sequences of functions \((u_j)_{j \in \mathbb{N}}\) and \((f_j)_{j \in \mathbb{N}}\) related through
\[
\partial_{t} u_{j} - \mathscr{H}(x,t,\nabla u_{j})\Delta_{p}^{\mathrm{N}} u_{j}= f_{j}\,\,\, \text{in}\,\,\, Q_{1}
\]
with \(\|u_j\|_{L^{\infty}(Q_1)} \leq 1\) and \(\|f_j\|_{L^{\infty}(Q_1)} \leq \frac{1}{j}\), however, there is no solution \(h\) to \eqref{probhom} satisfying
\begin{eqnarray}\label{ujh}
\|u_{j} - \mathfrak{h}\|_{L^{\infty}(Q_{7/8})} > \delta_{0}. 
\end{eqnarray}
By Lemmas \ref{Holderestforregpro}, \ref{Holderesttime}, and the Arzelà-Ascoli compactness criterion, we can guarantee the existence of a subsequence \((u_{j_k})_{k \in \mathbb{N}}\) and a continuous function \(u_0\) such that \(u_{j_k}\) converges uniformly in \(Q_r\) for any \(\rho \in (0, 7/8)\). By stability arguments (Lemma \ref{estabilidade}) and the convergence \(f_{j_k} \to 0\) in \(L^\infty\)-norm, we can conclude that \(u_0\) is a viscosity solution to
\[
\partial_t u_{0} - \mathscr{H}(x,t,\nabla u_{0})\Delta_{p}^{\mathrm{N}}u_{0}= 0\,\,\, \text{in}\,\,\, Q_{7/8}
\]
which contradicts the condition \eqref{ujh} for \(k \gg 1\). This concludes the proof.
\end{proof}

By the Approximation Lemma \ref{Approximationlemma} and the gradient regularity theory for solutions to the homogeneous problem \eqref{Problem} (when \(f=0\)), we can establish the following improvement of flatness result.
\begin{proposition}\label{improvflat1}
Assume that the structural conditions \(\mathrm{(H1)}\)-\(\mathrm{(H4)}\) are satisfied. Let \(u\) be a viscosity solution to \eqref{Problem} such that \(\displaystyle \operatornamewithlimits{osc}_{Q_1} u \leq \mathrm{C_1}\). There exist constants \(\eta > 0\), \(\rho > 0\), and \(\bar{\theta} \in (0, 1)\), depending only on \(p\), \(n\), \(\mathfrak{p}\), \(\mathfrak{q}\), and \([\mathfrak{a}]_{C^{0,1}_x(Q_1)}\), with \(\rho < (1 - \bar{\theta})^{1 + \mathfrak{p}}\), such that if \(\|f\|_{L^\infty(Q_1)} \leq \eta\), then there exists a vector \(\mathfrak{v}\) that is universally bounded, such that
\[
\operatornamewithlimits{osc}_{(x,t)\in Q_{\rho}^{(1-\bar{\theta})}}(u(x,t)-\mathfrak{v}\cdot x)\leq \rho(1-\bar{\theta})
\]
\end{proposition}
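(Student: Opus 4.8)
The argument is a standard "compactness + regularity of the limiting profile" step, transported to the intrinsic scaling. First I would invoke the Approximation Lemma \ref{Approximationlemma}: given a small $\delta>0$ to be fixed later, there is $\eta=\eta(\delta,\dots)$ so that whenever $\|f\|_{L^\infty(Q_1)}\le\eta$ and $\operatornamewithlimits{osc}_{Q_1}u\le\mathrm{C}_1$, we can find a viscosity solution $\mathfrak{h}$ of the homogeneous equation $\partial_t\mathfrak{h}-\mathscr{H}(x,t,\nabla\mathfrak{h})\Delta_p^{\mathrm N}\mathfrak{h}=0$ in $Q_{7/8}$ with $\|u-\mathfrak{h}\|_{L^\infty(Q_{7/8})}\le\delta$. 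The key input about $\mathfrak{h}$ is that it enjoys the interior gradient regularity theory for the homogeneous problem (this is exactly the $C^{1,\alpha}$ estimate available for \eqref{Problem} with $f=0$, cf. \cite{FZ23}, \cite{Attouchi20}), so $\nabla\mathfrak{h}$ is locally Hölder continuous with a universal bound; in particular $\mathfrak{v}:=\nabla\mathfrak{h}(0,0)$ is universally bounded, and there is a universal modulus so that
\[
\operatornamewithlimits{osc}_{Q_r(0,0)}\bigl(\mathfrak{h}(x,t)-\mathfrak{v}\cdot x\bigr)\le \mathrm{C}_{\mathfrak{h}}\, r^{1+\alpha_{\mathfrak{h}}}
\]
for all small $r$, where $\alpha_{\mathfrak{h}}\in(0,1)$ and $\mathrm{C}_{\mathfrak{h}}$ depend only on $p,n,\mathfrak{p},\mathfrak{q},[\mathfrak{a}]_{C^{0,1}_x}$.

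\smallskip
\noindent Next I would choose the parameters in the correct order to close the geometric iteration. Pick $\bar\theta\in(0,1)$ and $\rho\in(0,1)$ small, subject to the scaling constraint $\rho<(1-\bar\theta)^{1+\mathfrak p}$ (so that the rescaled cylinder $Q_{\rho}^{(1-\bar\theta)}=B_\rho\times(-\rho^2(1-\bar\theta)^{-\mathfrak p},0]$ is genuinely contained in $Q_{7/8}$ and the scaling below is admissible), and small enough that
\[
\mathrm{C}_{\mathfrak{h}}\,\rho^{\alpha_{\mathfrak{h}}}\le \tfrac{1}{2}(1-\bar\theta).
\]
Then fix $\delta=\delta(\rho,\bar\theta)>0$ so small that $2\delta\le\tfrac12\rho(1-\bar\theta)$, which determines $\eta$ via the Approximation Lemma. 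Now on $Q_{\rho}^{(1-\bar\theta)}\subset Q_\rho(0,0)$ one estimates, using the triangle inequality together with the two displays above,
\[
\operatornamewithlimits{osc}_{Q_{\rho}^{(1-\bar\theta)}}\bigl(u(x,t)-\mathfrak{v}\cdot x\bigr)
\le 2\|u-\mathfrak{h}\|_{L^\infty(Q_{7/8})}+\operatornamewithlimits{osc}_{Q_\rho}\bigl(\mathfrak{h}(x,t)-\mathfrak{v}\cdot x\bigr)
\le 2\delta+\mathrm{C}_{\mathfrak{h}}\rho^{1+\alpha_{\mathfrak{h}}}\le \rho(1-\bar\theta),
\]
which is the claimed estimate. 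One subtlety to handle carefully is that the oscillation of $\mathfrak{h}-\mathfrak{v}\cdot x$ must be controlled over the \emph{intrinsic} cylinder $Q_\rho^{(1-\bar\theta)}$, whose time extent $\rho^2(1-\bar\theta)^{-\mathfrak p}$ exceeds $\rho^2$; since $(1-\bar\theta)^{-\mathfrak p}$ is a universal constant, one simply shrinks $\rho$ further so that $Q_\rho^{(1-\bar\theta)}\subset Q_{\rho'}(0,0)$ with $\rho'$ still satisfying $\mathrm{C}_{\mathfrak{h}}(\rho')^{\alpha_{\mathfrak{h}}}\le\frac12(1-\bar\theta)$, and absorbs the constant into the choice of $\rho$.

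\smallskip
\noindent \textbf{Main obstacle.} The genuinely delicate point is not the compactness or the algebra of choosing constants, but guaranteeing that the limiting/approximating profile $\mathfrak{h}$ actually has a \emph{universally quantified} $C^{1,\alpha}$ estimate that is independent of $u$ and of $f$ — i.e., that the regularity theory for the homogeneous equation \eqref{Problem} with $f=0$ is available with constants depending only on $p,n,\mathfrak{p},\mathfrak{q}$ and $[\mathfrak{a}]_{C^{0,1}_x(Q_1)}$ (and on the a priori bound $\operatornamewithlimits{osc}_{Q_1}u\le\mathrm{C}_1$, which is itself universal here). This is precisely where the non-homogeneous signature $\mathscr{H}(x,t,\xi)=|\xi|^{\mathfrak p}+\mathfrak{a}(x,t)|\xi|^{\mathfrak q}$ matters: one must know the homogeneous Fang–Zhang–type gradient regularity holds uniformly, and that the modulus $\alpha_{\mathfrak{h}}$ can be taken in $(0,\frac{1}{1+\mathfrak p})$ so that the scaling constraint $\rho<(1-\bar\theta)^{1+\mathfrak p}$ is compatible with $\mathrm{C}_{\mathfrak{h}}\rho^{\alpha_{\mathfrak{h}}}\le\frac12(1-\bar\theta)$. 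Once that regularity statement is in hand (it is the $f\equiv0$ case of the theorem being proved, available from \cite{FZ23}), the rest is the routine bookkeeping sketched above.
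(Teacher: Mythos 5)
Your proposal is correct and follows essentially the same route as the paper: approximate $u$ by a solution $\mathfrak{h}$ of the homogeneous equation via Lemma \ref{Approximationlemma}, invoke the Fang--Zhang $C^{1,\alpha}$ theory (Theorem \ref{FZ23Theorem2.3}) to produce a universally bounded $\mathfrak{v}$ and a power decay of $\operatorname{osc}(\mathfrak{h}-\mathfrak{v}\cdot x)$, and close with the triangle inequality. You also correctly identify the genuine crux, namely that $\mathfrak{h}$ must enjoy a \emph{quantitative} gradient H\"older estimate with constants depending only on the structural data. The only difference is in the bookkeeping of constants: you pick $\rho$ first and then acknowledge that the intrinsic cylinder $Q_\rho^{(1-\bar\theta)}$ (whose time extent $\rho^2(1-\bar\theta)^{-\mathfrak{p}}$ exceeds $\rho^2$) is not contained in $Q_\rho$, proposing a further shrink of $\rho$. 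The paper avoids this wrinkle more cleanly: it first chooses $r_0$ so that $\operatorname{osc}_{Q_{r_0}}(\mathfrak{h}-\mathfrak{v}\cdot x)\le\frac12 r_0(1-\bar\theta)^{2+\mathfrak{p}}$, and only then sets $\rho=r_0(1-\bar\theta)^{1+\mathfrak{p}}$, whence $Q_\rho^{(1-\bar\theta)}\subset Q_{r_0}$ and $\rho<(1-\bar\theta)^{1+\mathfrak{p}}$ hold automatically, with the required oscillation bound on $Q_\rho^{(1-\bar\theta)}$ falling out by substitution. Your version would work after the additional shrink you describe, so there is no gap—just a less streamlined choice of parameters.
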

\begin{proof}
Fix \(\delta > 0\), to be chosen \textit{a posteriori}. By the Approximation Lemma \ref{Approximationlemma}, there exists a universal constant \(\eta > 0\) and a function \(\mathfrak{h}\), a viscosity solution to
\[
\partial_{t}\mathfrak{h} - \mathscr{H}(x, t, \nabla \mathfrak{h}) = 0 \,\,\, \text{in} \,\,\, Q_{7/8},
\]  
such that
\begin{eqnarray}\label{ineq0prop5.2}
\|u - \mathfrak{h}\|_{L^{\infty}(Q_{7/8})} \leq \delta,
\end{eqnarray}  
provided that \(\|f\|_{L^{\infty}(Q_{1})} \leq \eta\). Thus, by choosing \(\delta\), the constant \(\eta\) is determined. By the regularity theory for the homogeneous problem (see Theorem \ref{FZ23Theorem2.3}), there exist universal constants \(C > 0\) and \(\alpha_{1} \in (0,1)\) such that, for all \(r \in (0, 5/8)\), there exists \(\mathfrak{v} \in \mathbb{R}^{n}\) satisfying
\[
|\mathfrak{v}| \leq \mathrm{C^{\prime}}(n, p, \mathfrak{p}, \mathfrak{q}, \mathfrak{a}^{-}, \mathfrak{a}^{+}, \mathfrak{A}_{0}, \|\mathfrak{h}\|_{L^{\infty}(Q_{7/8})}),
\]  
and
\[
\operatornamewithlimits{osc}_{(x,t) \in Q_{r}}(\mathfrak{h}(x, t) - \mathfrak{v} \cdot x) \leq \mathrm{C^{\prime}}r^{1 + \alpha_{1}}.
\]  
Now, we can take \(r_{0} \in (0, 5/8)\) such that
\begin{eqnarray}\label{ineq1prop5.2}
\operatornamewithlimits{osc}_{(x,t) \in Q_{r_{0}}}(\mathfrak{h}(x, t) - \mathfrak{v} \cdot x) \leq \frac{1}{2}r_{0}(1-\bar{\theta})^{2+\mathfrak{p}},
\end{eqnarray}  
for some \(\bar{\theta} \in (0,1)\). We make the following choices:  
\begin{eqnarray}
\rho = r_{0}(1-\bar{\theta})^{1+\mathfrak{p}}\,\,\, \text{and} \,\,\, \delta = \frac{1}{4}\rho(1-\bar{\theta}).
\end{eqnarray}  
By the choice of \(\rho\), it follows from \eqref{ineq1prop5.2} that 
\begin{eqnarray}\label{ineq2prop5.2}
\operatornamewithlimits{osc}_{(x,t) \in Q_{\rho}^{(1-\bar{\theta})}}(\mathfrak{h}(x, t) - \mathfrak{v} \cdot x) \leq \frac{1}{2}\rho(1-\bar{\theta}).
\end{eqnarray}  
Moreover, \(\rho < (1-\bar{\theta})^{1+\mathfrak{p}}\) (since \(r_{0} < 1\)) and \(\rho < 7/8\). Finally, by the choice of \(\delta\), \eqref{ineq0prop5.2}, and \eqref{ineq2prop5.2}, we have that
\begin{eqnarray*}
\operatornamewithlimits{osc}_{(x,t) \in Q_{\rho}^{(1-\bar{\theta})}}(u(x, t) - \mathfrak{v} \cdot x) &\leq& \operatornamewithlimits{osc}_{(x,t) \in Q_{\rho}^{(1-\bar{\theta})}}(u(x, t) - \mathfrak{h}(x, t)) + \operatornamewithlimits{osc}_{(x,t) \in Q_{\rho}^{(1-\bar{\theta})}}(\mathfrak{h}(x, t) - \mathfrak{v} \cdot x)\\
&\leq& 2\|u - \mathfrak{h}\|_{L^{\infty}(Q_{7/8})} + \frac{1}{2}\rho(1-\bar{\theta})\\
&\leq& 2\delta + \frac{1}{2}\rho(1-\bar{\theta})\\
&\leq& \rho(1-\bar{\theta}).
\end{eqnarray*}  
This concludes the proof of the result.
\end{proof}
With this proposition, we can proceed with the iterative process for the Degenerate Alternative. Specifically, the following result states that if the Degenerate Alternative occurs a finite number of times, then the next step will also occur.

\begin{proposition}[\bf Improvement of flatness]\label{improvflat2}
Assume that the structural conditions \(\mathrm{(H1)}\)-\(\mathrm{(H4)}\) are satisfied. Let \(u\) be a viscosity solution to \eqref{Problem} with \(\displaystyle\operatornamewithlimits{osc}_{Q_{1}} u \leq 1\). Moreover, assume that \(\|f\|_{L^{\infty}(Q_{1})} \leq \eta\), where \(\eta > 0\) is the constant in Proposition \ref{improvflat1}. Then, there exist constants \(\rho \in (0, 1)\) and \(\bar{\theta} \in (0, 1)\) with \(\rho < (1 - \bar{\theta})^{1 + \mathfrak{p}}\) such that the following assertion holds for all integers \(j \geq 0\): if for each \(i \in \{0, 1, \dots, j\}\), there exists \(\ell_i \in \mathbb{R}^{n}\) such that
\begin{itemize}
\item[(i)] \(|\ell_{i}|\leq \mathrm{C_{2}}(1-\bar{\theta})^{i}\)
\item [(ii)] \(\displaystyle\operatornamewithlimits{osc}_{(x,t)\in Q_{\rho^{i}}^{(1-\bar{\theta})^{i}}}(u(x,t)-\ell_{i}\cdot x)\leq \rho^{i}(1-\bar{\theta})^{i}\),
\end{itemize}
then, there exists a vector \(\ell_{j+1} \in \mathbb{R}^{n}\) such that
\begin{eqnarray}\label{teseimprov2flat}
\displaystyle\operatornamewithlimits{osc}_{(x,t)\in Q_{\rho^{j+1}}^{(1-\bar{\theta})^{j+1}}}(u(x,t)-\ell_{j+1}\cdot x)\leq \rho^{j+1}(1-\bar{\theta})^{j+1}\,\,\, \text{and}\,\,\, |\ell_{j+1}-\ell_{j}|\leq \mathrm{C_{3}}(1-\bar{\theta})^{j},
\end{eqnarray}
for some universal constant \(\mathrm{C_{3}} > 0\).
\end{proposition}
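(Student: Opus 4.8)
The plan is to argue by induction on $j$, using Proposition \ref{improvflat1} at each scale after a suitable rescaling that normalizes the cylinder and the equation. The base case $j=0$ is essentially Proposition \ref{improvflat1}: taking $\ell_0 = \mathfrak{v}$ (the vector produced there) gives (i) and (ii) with $i=0$, provided $\mathrm{C}_2$ is chosen at least as large as the universal bound on $\mathfrak{v}$, and $\rho,\bar\theta$ are exactly the constants from that proposition. For the inductive step, assume (i)–(ii) hold for all $i\le j$. The natural move is to define the rescaled function
\[
w_j(x,t) := \frac{u(\rho^j x,\; (1-\bar\theta)^{-j\mathfrak{p}}\rho^{2j}\, t) - \ell_j\cdot(\rho^j x)}{\rho^j (1-\bar\theta)^j},
\]
so that hypothesis (ii) for $i=j$ translates precisely into $\operatorname{osc}_{Q_1} w_j \le 1$, and $w_j$ is defined on $Q_1$.

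First I would compute the equation satisfied by $w_j$. Writing $v(x,t) = u(x,t) - \ell_j\cdot x$, one has $\partial_t v = \mathscr{H}(x,t,\nabla v + \ell_j)\Delta_p^{\mathrm N} v + f$, since $\Delta_p^{\mathrm N}$ annihilates affine functions and the Hessian of $v$ equals that of $u$. Performing the parabolic scaling $(x,t)\mapsto(\rho^j x,(1-\bar\theta)^{-j\mathfrak{p}}\rho^{2j}t)$ and dividing by $\rho^j(1-\bar\theta)^j$, the normalized $p$-Laplacian (being $0$-homogeneous in $D^2u$ up to the scaling of second derivatives) produces a factor $\rho^{-j}(1-\bar\theta)^{j\mathfrak{p}}/(1-\bar\theta)^{-j}\cdot(1-\bar\theta)^{-j\mathfrak{p}}\rho^{2j}/\rho^j$... the bookkeeping here is the routine part; the upshot is that $w_j$ solves a problem of the same structural form
\[
\partial_t w_j - \widetilde{\mathscr H}(x,t,\nabla w_j)\,\Delta_p^{\mathrm N} w_j = \tilde f_j(x,t)\quad\text{in }Q_1,
\]
where $\widetilde{\mathscr H}(x,t,\xi) = |\xi + \tilde\ell_j|^{\mathfrak p} + \tilde{\mathfrak a}|\xi+\tilde\ell_j|^{\mathfrak q}$ with $\tilde\ell_j := (1-\bar\theta)^{-j}\ell_j$ satisfying $|\tilde\ell_j|\le \mathrm C_2$ by (i), the rescaled coefficient $\tilde{\mathfrak a}$ still obeying (H2)–(H3) with the same bounds (indeed with a \emph{smaller} Lipschitz seminorm, since spatial rescaling by $\rho^j<1$ only shrinks $[\mathfrak a]_{C^{0,1}_x}$), and $\|\tilde f_j\|_{L^\infty(Q_1)} = (1-\bar\theta)^{j\mathfrak p}\rho^j(1-\bar\theta)^{-j}\|f\|_{L^\infty(Q_1)} \le \|f\|_{L^\infty(Q_1)} \le \eta$ since $\rho<(1-\bar\theta)^{1+\mathfrak p}$ forces $(1-\bar\theta)^{j\mathfrak p}\rho^j(1-\bar\theta)^{-j}\le 1$. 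The key point is that this shifted-gradient equation is \emph{not} literally of the form \eqref{Problem} but is exactly the translated problem \eqref{problemtransl} (with $\vec q = \tilde\ell_j$), for which Lemma \ref{Lipschtizregularityprobtransl} and the companion compactness and approximation results apply uniformly in $\tilde\ell_j$ as long as $|\tilde\ell_j|$ stays bounded — which is guaranteed by (i).

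Next I would apply Proposition \ref{improvflat1} to $w_j$ (after verifying its hypotheses hold with $\mathrm C_1$ replaced appropriately — here $\operatorname{osc}_{Q_1}w_j\le 1 \le \mathrm C_1$). This yields a universally bounded vector $\mathfrak v_j$, $|\mathfrak v_j|\le \mathrm C'$ (a universal constant, because the Lipschitz/approximation constants are controlled uniformly over $|\tilde\ell_j|\le\mathrm C_2$), such that
\[
\operatorname{osc}_{(x,t)\in Q_\rho^{(1-\bar\theta)}}\bigl(w_j(x,t) - \mathfrak v_j\cdot x\bigr) \le \rho(1-\bar\theta).
\]
Undoing the scaling, set $\ell_{j+1} := \ell_j + \rho^j(1-\bar\theta)^j\rho^{-j}\mathfrak v_j = \ell_j + (1-\bar\theta)^j\mathfrak v_j$; wait — tracking the homogeneity of the first-order term, $\mathfrak v_j\cdot x$ rescales to $\rho^j(1-\bar\theta)^j\rho^{-j}\mathfrak v_j\cdot(\text{original }x) = (1-\bar\theta)^j\mathfrak v_j\cdot x$, so $|\ell_{j+1}-\ell_j| = (1-\bar\theta)^j|\mathfrak v_j| \le \mathrm C'(1-\bar\theta)^j =: \mathrm C_3(1-\bar\theta)^j$, giving the second assertion of \eqref{teseimprov2flat}. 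The oscillation inequality for $w_j$ on $Q_\rho^{(1-\bar\theta)}$ translates, upon checking that the rescaled cylinder $Q_\rho^{(1-\bar\theta)}$ pulls back to $Q_{\rho^{j+1}}^{(1-\bar\theta)^{j+1}}$ in the original variables (this uses the definition $Q_r^\lambda = B_r\times(-r^2\lambda^{-\mathfrak p},0]$ and the identity $(\rho^j\cdot\rho)^2\,((1-\bar\theta)^{j}(1-\bar\theta))^{-\mathfrak p} = \rho^{2(j+1)}(1-\bar\theta)^{-(j+1)\mathfrak p}$ combined with the time-scaling factor), precisely into
\[
\operatorname{osc}_{(x,t)\in Q_{\rho^{j+1}}^{(1-\bar\theta)^{j+1}}}\bigl(u(x,t) - \ell_{j+1}\cdot x\bigr) \le \rho^{j+1}(1-\bar\theta)^{j+1}.
\]
Finally, to close the induction one must also re-verify (i) at level $j+1$: $|\ell_{j+1}|\le|\ell_j| + \mathrm C_3(1-\bar\theta)^j \le \mathrm C_2(1-\bar\theta)^j + \mathrm C_3(1-\bar\theta)^j$; this is \emph{not} automatically $\le \mathrm C_2(1-\bar\theta)^{j+1}$, so one argues instead by summing the geometric series $|\ell_{j+1}|\le |\mathfrak v|_{\text{base}} + \mathrm C_3\sum_{i=0}^{j}(1-\bar\theta)^i \le \mathrm C_3/\bar\theta + \mathrm C'$, and this is where the definition $\mathrm C_2 = 2\mathrm C_{\mathrm{Lip}} = 2\mathrm C_1^{(\cdots)}$ fixed before the proposition must be large enough to absorb $\mathrm C_3/\bar\theta$; since $\mathrm C_3 = \mathrm C'$ is itself universal, we simply enlarge $\mathrm C_2$ once and for all so that $\mathrm C_2 \ge \mathrm C_3/\bar\theta + \mathrm C'$. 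Strictly, (i) as stated (with the factor $(1-\bar\theta)^i$) should be read as a consequence of this summation bound together with a harmless adjustment — or, cleaner, one proves (i) in the stronger "accumulated" form $|\ell_i|\le \mathrm C'+ \mathrm C_3\sum_{k<i}(1-\bar\theta)^k$ and notes this is $\le \mathrm C_2$, consistent with the hypothesis as it is used later.

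The main obstacle is the bookkeeping of the two independent scalings — the parabolic space-time scaling by $\rho^j$ and the "intrinsic" time dilation by $(1-\bar\theta)^{-j\mathfrak p}$ — and confirming that after both are applied the equation for $w_j$ is genuinely the translated problem \eqref{problemtransl} with a uniformly bounded translation vector and a source still below the threshold $\eta$; the inequality $\rho<(1-\bar\theta)^{1+\mathfrak p}$ is exactly what makes the source contract, so its role is essential rather than cosmetic. Everything else — the oscillation arithmetic, the geometric-series bound on $|\ell_{j+1}|$, and the identification of the rescaled cylinders — is routine once the scaling is set up correctly.
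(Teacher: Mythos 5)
Your rescaling $w_j$, the cylinder arithmetic showing $Q_\rho^{(1-\bar\theta)}$ pulls back to $Q_{\rho^{j+1}}^{(1-\bar\theta)^{j+1}}$, and the final formula $\ell_{j+1}=\ell_j+(1-\bar\theta)^j\mathfrak v_j$ are all consistent with the paper's argument, but there is a genuine gap in the middle: you invoke Proposition \ref{improvflat1} for $w_j$, and $w_j$ does \emph{not} solve an equation of the form \eqref{Problem}. As you observe yourself, after rescaling the gradient variable has been shifted by $\vec q=\tilde\ell_j$, so $w_j$ solves the \emph{translated} problem \eqref{problemtransl} -- both the vector field $\mathscr H$ and the anisotropic part of $\Delta_p^{\mathrm N}$ depend on $\nabla w_j+\vec q$. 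Proposition \ref{improvflat1} (and the Approximation Lemma \ref{Approximationlemma} behind it) is stated and proved only for solutions of \eqref{Problem}; pointing at Lemma \ref{Lipschtizregularityprobtransl} does not supply a translated version of the approximation step. The paper's fix is a one-line change of unknown that you should have made here: set $v_j:=w_j+\vec q\cdot x$. Since $\nabla v_j=\nabla w_j+\vec q$ and $D^2v_j=D^2w_j$, the translated equation for $w_j$ is literally the untranslated equation \eqref{Problem} for $v_j$ (with the rescaled source $f_j$ and coefficient $\mathfrak a_j$), so Proposition \ref{improvflat1} applies to $v_j$. The cost is $\operatorname{osc}_{Q_1}v_j\le 1+2|\vec q|\le 1+2\mathrm C_2=\mathrm C_1$, which is precisely why Proposition \ref{improvflat1} is formulated with threshold $\mathrm C_1$ rather than $1$. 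Setting $\ell_{j+1}=(1-\bar\theta)^j\mathfrak v$ with $\mathfrak v$ the vector produced for $v_j$ is equivalent to your $\mathfrak v_j=\mathfrak v-\vec q$, but then the correct bound is $|\mathfrak v_j|\le|\mathfrak v|+|\vec q|\le\mathrm C'+\mathrm C_2=\mathrm C_3$, \emph{not} $|\mathfrak v_j|\le\mathrm C'$ as you asserted; the final increment estimate $|\ell_{j+1}-\ell_j|\le\mathrm C_3(1-\bar\theta)^j$ is unaffected, but the intermediate claim of a $\vec q$-independent bound for $\mathfrak v_j$ is unjustified.

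Separately, the closing paragraph about re-verifying (i) at level $j+1$ is addressing a claim the proposition does not make. The conclusion \eqref{teseimprov2flat} asserts only the oscillation bound at scale $j+1$ together with the increment bound; it deliberately does \emph{not} claim $|\ell_{j+1}|\le\mathrm C_2(1-\bar\theta)^{j+1}$. In the proof of Theorem \ref{Thm01} the iteration is permitted to stop at the first $j$ where (i)--(ii) fail, and the failure of (i) (the ``smooth alternative'') is then exploited: the resulting lower bound on $|\ell_j|$ keeps the gradient bounded away from zero, allowing the passage to the uniformly parabolic, smooth regime via Lemma \ref{Lipschtizregularityprobtransl} and \cite[Lemma 12.13]{Lieberman96}. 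So there is nothing to ``close'' there, and your attempt to strengthen $\mathrm C_2$ by a geometric-series argument is neither needed nor compatible with the alternatives structure.
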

\begin{proof}
We consider \(\rho,\ \theta,\ \eta \) and \(\mathrm{C^{\prime}}\) the constants in the Proposition \ref{improvflat1} and define 
\[
\mathrm{C_{3}}=\mathrm{C^{\prime}}+\mathrm{C_{2}}.
\]
Having made this choice, let us prove the statement. Indeed, if \(j = 0\), take \(\ell_{0} = 0\), and thus, by the hypothesis \(\displaystyle\operatornamewithlimits{osc}_{Q_{1}}u \leq 1 \leq \mathrm{C_{1}}\), the desired result follows from Lemma \ref{improvflat1}. Now assume that \(j > 0\). In this case, assume that the result holds for \(i = 0, 1, \ldots, j\). We must ensure the existence of a vector \(\ell_{j+1}\) such that \eqref{teseimprov2flat} holds. To this end, let us define the following auxiliary function:
\[
u_{j}(x,t)=\frac{u(\rho^{j}x,\rho^{2j}(1-\bar{\theta})^{-j\mathfrak{p}}t)-\ell_{j}\cdot (\rho^{j}x)}{\rho^{k}(1-\bar{\theta})^{j}}.
\]
Clearly, by the hypothesis, we have that \(\displaystyle\operatornamewithlimits{osc}_{Q_1}u_{j}\leq 1\) and \(|\ell_{j}|\leq \mathrm{C_{2}}(1-\bar{\theta})^{j}\). Moreover, by defining the function \(v_{j}(x, t) = u_{j}(x, t) + \vec{q} \cdot x\), where \(\vec{q} = \frac{\ell_{j}}{(1 - \bar{\theta})^{j}}\), we observe that \(v_{j}(x,t)\) is a solution to
\[
\partial_{t}v_{j}-\mathscr{H}_{j}(x,t,\nabla v_{j})\Delta_{p}^{\rm{N}}v_{j}=f_{j}(x,t)\,\,\, \text{in}\,\,\, Q_{1},
\] 
where
\begin{eqnarray*}
\left\{
\begin{array}{rcl}
f_{j}(x,t)& \defeq& \frac{\rho^{j}}{(1-\bar{\theta})^{j(1+\mathfrak{p})}}f(\rho^{j}x,\rho^{2j}(1-\bar{\theta})^{-j\mathfrak{p}}t),\\
\mathscr{H}_{j}(x,t,\xi) &\defeq& |\xi|^{\mathfrak{p}}+\mathfrak{a}_{j}(x,t)|\xi|^{\mathfrak{q}}, \\
\mathfrak{a}_{j}(x,t)&\defeq&(1-\bar{\theta})^{j(\mathfrak{q}-\mathfrak{p})}\mathfrak{a}(\rho^{j}x,\rho^{2j}(1-\bar{\theta})^{-j\mathfrak{p}}t).
\end{array}
\right.
\end{eqnarray*}
Note that \(\mathfrak{a}_{j}\in C^{1}_{x}(Q_{1})\cap C^{1}_{t}(Q_{1})\) and
\begin{eqnarray*}
|D_{x,t}\mathfrak{a}_{j}(x,t)|^{2}&\leq&\rho^{2j}|\nabla \mathfrak{a}(\rho^{j}x,\rho^{2j}(1-\bar{\theta})^{-j\mathfrak{p}}t)|^{2}+\rho^{4j}(1-\bar{\theta})^{-2j\mathfrak{p}}|\partial_{t}\mathfrak{a}(\rho^{j}x,\rho^{2j}(1-\bar{\theta})^{-j\mathfrak{p}}t)|^{2}\\
&\leq&|\nabla \mathfrak{a}(\rho^{j}x,\rho^{2j}(1-\bar{\theta})^{-j\mathfrak{p}}t)|^{2}+(1-\bar{\theta})^{2j(2+\mathfrak{p})}|\partial_{t}\mathfrak{a}(\rho^{j}x,\rho^{2j}(1-\bar{\theta})^{-j\mathfrak{p}}t)|^{2}\\
&\leq&|D_{x,t}\mathfrak{a}(\rho^{j}x,\rho^{2j}(1-\bar{\theta})^{-j\mathfrak{p}}t)|^{2}, \ (x,t)\in Q_{1},
\end{eqnarray*}
consequently, \(\|D_{x,t}\mathfrak{a}_{j}\|_{L^{\infty}(Q_{1})}\leq \|D_{x,t}\mathfrak{a}\|_{L^{\infty}(Q_{\rho^{j}}^{(1-\bar{\theta})^{j}})}\leq \mathfrak{A}_{0}<\infty\). Furthermore,
\[
\operatornamewithlimits{osc}_{Q_{1}}v_{j}\leq 1+2|\vec{q}|\leq 1+2\mathrm{C_{2}}\leq \mathrm{C_{1}}
\]
and
\[
\|f_{j}\|_{L_{\infty}(Q_{1})}=\left(\frac{\rho}{(1-\bar{\theta})^{1+\mathfrak{p}}}\right)^{j}\|f\|_{L^{\infty}(Q_{\rho^{j}}^{(1-\bar{\theta})^{j}})}\leq \|f\|_{L^{\infty}(Q_{1})}\leq \eta,
\]
since \(\rho < (1 - \bar{\theta})^{1 + \mathfrak{p}}\), we are under the assumptions of Proposition \ref{improvflat1}. With this result, we can ensure the existence of \(\mathfrak{v} \in \mathbb{R}^{n}\) such that \(|\mathfrak{v}| \leq \mathrm{C^{\prime}}\), and
\begin{eqnarray}\label{est1improv2flat}
\operatornamewithlimits{osc}_{(x,t)\in Q_{\rho}^{(1-\bar{\theta})}}(v_{j}(x,t)-\mathfrak{v}\cdot x)\leq \rho(1-\bar{\theta}).    
\end{eqnarray}
Now, by defining \(\ell_{j+1} = (1 - \bar{\theta})^{j} \mathfrak{v}\), we have that
\[
|\ell_{j+1}-\ell_{j}|\leq (|\mathfrak{v}|+|\ell_{j}|)(1-\bar{\theta})^{j}\leq (\mathrm{C^{\prime}}+\mathrm{C_{2}})(1-\bar{\theta})^{j}=\mathrm{C_{3}}(1-\bar{\theta})^{j}.
\]
Furthermore, by scaling the inequality \eqref{est1improv2flat}, we obtain that
\[
\operatornamewithlimits{osc}_{Q_{\rho^{j+1}}^{(1-\bar{\theta})^{j+1}}}(u(x,t)-\ell_{j+1}\cdot x)\leq \rho^{j+1}(1-\bar{\theta})^{j+1}
\]
which ensures \eqref{teseimprov2flat}. 
\end{proof}
As in \cite{Attouchi20}, the proof of Theorem \ref{Thm01} follows from the proof of the following result via a standard translation and scaling argument.

\begin{theorem}
Assume that the structural conditions \(\mathrm{(H1)}\)-\(\mathrm{(H4)}\) are satisfied. Let \(u\) be a viscosity solution to \eqref{Problem} with \(\displaystyle\operatornamewithlimits{osc}_{Q_{1}} u \leq 1\). If \(\|f\|_{L^{\infty}(Q_{1})} \leq \eta\) (where \(\eta\) is the constant from Lemma \ref{improvflat2}), then there exist \(\alpha \in \left(0, \frac{1}{1 + \mathfrak{p}}\right)\) and a constant \(\mathrm{C} > 0\) depending only on \(n\), \(p\), \(\mathfrak{p}\), \(\mathfrak{q}\), \(\mathfrak{a}^{-}\), \(\mathfrak{a}^{+}\), and \(\mathfrak{A}_{0}\) such that
\[
|\nabla u(x,t)-\nabla u(y,s)|\leq \mathrm{C}(|x-y|^{\alpha}+|t-s|^{\frac{\alpha}{2}})
\]
and
\[
|u(x,t)-u(x,s)|\leq \mathrm{C}|t-s|^{\frac{1+\alpha}{2}}.
\]
\end{theorem}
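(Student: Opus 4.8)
The strategy is the classical "method of alternatives" from Attouchi's work, combining the two iterative mechanisms of Proposition~\ref{improvflat2} (the degenerate alternative) and the uniformly parabolic regularity theory for equations with smooth coefficients (the nondegenerate alternative). Fix the universal constants $\rho, \bar\theta \in (0,1)$ from Proposition~\ref{improvflat2}, set $\alpha$ by the relation $\rho^{1+\alpha} = \rho(1-\bar\theta)$, i.e. $\alpha = \frac{\log(1-\bar\theta)}{\log\rho}$; since $\rho < (1-\bar\theta)^{1+\mathfrak p}$ one checks $\alpha < \frac{1}{1+\mathfrak p}$, which is the claimed exponent. Given $u$ with $\operatorname{osc}_{Q_1} u \le 1$ and $\|f\|_{L^\infty(Q_1)} \le \eta$, the plan is to run the iteration of Proposition~\ref{improvflat2} starting from $\ell_0 = 0$ and distinguish two cases.

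\textbf{Case 1 (degenerate alternative holds at every scale).} If for every $i \ge 0$ we can produce vectors $\ell_i$ satisfying (i)--(ii) of Proposition~\ref{improvflat2}, then the conclusion $|\ell_{j+1}-\ell_j| \le \mathrm{C}_3(1-\bar\theta)^j$ shows $(\ell_j)$ is Cauchy, hence $\ell_j \to \ell_\infty$ with $|\ell_\infty - \ell_j| \le \mathrm{C}(1-\bar\theta)^j$. Passing to the limit in (ii) (using continuity of $u$, and the fact that the intrinsic cylinders $Q_{\rho^j}^{(1-\bar\theta)^j}$ shrink to the origin in space and in time) gives that $\ell_\infty = \nabla u(0,0)$ and the pointwise estimate
\[
\operatorname{osc}_{(x,t)\in Q_{\rho^j}^{(1-\bar\theta)^j}}\bigl(u(x,t) - \nabla u(0,0)\cdot x\bigr) \lesssim \rho^j(1-\bar\theta)^j.
\]
A standard dyadic interpolation between consecutive radii, together with the intrinsic scaling $t \sim \rho^{2j}(1-\bar\theta)^{-j\mathfrak p}$ and the choice of $\alpha$, converts this into
\[
|u(x,t) - u(0,0) - \nabla u(0,0)\cdot x| \lesssim \bigl(|x| + |t|^{1/2}\bigr)^{1+\alpha}
\]
on a fixed neighborhood of the origin; differentiating this (or rather, deducing the $C^{1,\alpha}$ modulus directly from the oscillation decay of $u - \ell_j\cdot x$ in the manner of Caffarelli) yields the gradient Hölder bound at $(0,0)$, and the time-continuity estimate $|u(x,t)-u(x,s)| \lesssim |t-s|^{(1+\alpha)/2}$ follows from the same oscillation control restricted to the time slice $x$ fixed.

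\textbf{Case 2 (the alternative terminates).} Suppose there is a first index $k_0$ where one cannot find $\ell_{k_0}$ with $|\ell_{k_0}| \le \mathrm{C}_2(1-\bar\theta)^{k_0}$ satisfying (ii), i.e.\ the best slope $\ell_{k_0}$ from the previous step already forces $|\ell_{k_0}| \ge \mathrm{C}_2(1-\bar\theta)^{k_0}$ (here one uses $\mathrm{C}_2 = 2\mathrm{C}_{\mathrm{Lip}}$ as fixed before Lemma~\ref{Approximationlemma}, so that the Lipschitz bound from Lemma~\ref{Lipschtizregularityprobtransl} on the rescaled function is strictly smaller than the slope). In the rescaled picture this means $|\nabla u_{k_0}|$ is bounded below by a fixed positive constant on a cylinder, so that $\mathscr H(x,t,\nabla u_{k_0})$ is bounded above and below and the equation becomes uniformly parabolic with $C^{0,1}$ (indeed $C^1$ in the degenerate regime $\mathfrak q \ge \mathfrak p \ge 0$, after absorbing $|\nabla u_{k_0}|^{\mathfrak p}, |\nabla u_{k_0}|^{\mathfrak q}$) coefficients; invoking the classical interior Schauder/De Giorgi--Nash--Moser estimates for such equations (\cite{LSU68}, \cite{Lieberman96}) gives a $C^{1,\alpha'}$ bound on $u_{k_0}$ with a universal constant, and scaling back produces the desired estimate near $(0,0)$ with exponent $\min\{\alpha,\alpha'\}$ (shrink $\alpha$ if needed).

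\textbf{Main obstacle.} The delicate point is \emph{Case 2}: one must verify that the rescaled function $u_{k_0}$ genuinely has a gradient bounded away from zero on a full cylinder of fixed size — not merely that $|\ell_{k_0}|$ is large — and that this lower bound survives the nonlinearity $\mathscr H$ uniformly in $k_0$. This requires combining the termination inequality $|\ell_{k_0}| \ge \mathrm{C}_2(1-\bar\theta)^{k_0}$ with the oscillation control at step $k_0-1$ and the uniform Lipschitz estimate of Lemma~\ref{Lipschtizregularityprobtransl} (applied to the translated problem \eqref{problemtransl} with $\vec q = \ell_{k_0-1}/(1-\bar\theta)^{k_0-1}$), exactly as the "smooth alternative" is handled in \cite{Attouchi20}; the bookkeeping that the constants $\mathrm{C}_1, \mathrm{C}_2, \mathrm{C}_3$ and $\eta$ close the loop consistently (so that the hypotheses of Proposition~\ref{improvflat2} are genuinely reproduced at each surviving step) is the part that demands care. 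Everything else — the Cauchy argument, the interpolation across scales, and the passage from oscillation decay to the $C^{1,\alpha}$ and parabolic time-regularity norms — is routine and follows the template of \cite{Attouchi20,FZ23}.
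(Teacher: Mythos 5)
Your proposal is correct and tracks the paper's proof in Section~\ref{Section-Proof-Thm01} essentially step for step: the same two-case alternative (iteration of Proposition~\ref{improvflat2} at all scales versus termination at a finite scale), the same Campanato-style passage from the oscillation decay of $u-\ell_j\cdot x$ to the $C^{1+\alpha,\frac{1+\alpha}{2}}$ modulus in the first case, and in the second case the same reduction via Lemma~\ref{Lipschtizregularityprobtransl} to a lower gradient bound for the rescaled $v_j$, making the equation uniformly parabolic with smooth coefficients so that \cite[Lemma 12.13]{Lieberman96} applies. You also correctly identify the exponent $\alpha=\log(1-\bar\theta)/\log\rho<\frac{1}{1+\mathfrak{p}}$ and the delicate consistency check on the constants $\mathrm{C}_1,\mathrm{C}_2,\mathrm{C}_3,\eta$, which is exactly where the paper leans on the bookkeeping from \cite{Attouchi20}.
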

\begin{proof}
Consider \(\theta\) and \(\rho\) as the constants from Proposition \ref{improvflat1}, and let \(j\) be the smallest integer such that the conditions (i) and (ii) in this proposition are not satisfied. In this case, by Proposition \ref{improvflat2}, it follows that for any \(\mathfrak{v} \in \mathbb{R}^{n}\) such that \(|\mathfrak{v}| \leq \mathrm{C_{2}}(1 - \bar{\theta})^{j}\), the following holds:
\[
|u(x,t)-\mathfrak{v}\cdot x|\leq \mathrm{C}(|x|^{1+\zeta}+|t|^{\frac{1+\zeta}{2-\zeta\mathfrak{p}}}),\,\, \forall (x,t)\in Q_{1}\setminus Q_{\rho^{j+1}}^{(1-\bar{\theta})^{j+1}},
\]
where \(\zeta = \frac{\log(1 - \bar{\theta})}{\log \rho}\) and \(\mathrm{C} = \frac{1 + \mathrm{C_{2}} + \mathrm{C_{3}}(1 - \bar{\theta})^{-1}}{\rho(1 - \bar{\theta})}\). For what follows, we will analyze two cases based on the index \(j\):\\
\(\textbf{1°)}\) \(\bf{j=\infty}\)\\
In this case, the regularity it follows with
\[
\alpha=\min\{1,\zeta\}\in \left(0,\min\left\{1,\frac{1}{1+\mathfrak{p}}\right\}\right).
\]
Specifically, for each \(i\in\mathbb{N}\), there exist be a vector \(\ell_{i}\in\mathbb{R}^{n}\) satisfying \(|\ell_{i}|\leq \mathrm{C_{2}}(1-\bar{\theta})^{i}\) such that
\[
\operatornamewithlimits{osc}_{(x,t)\in Q_{\rho^{i}}^{(1-\bar{\theta})^{i}}}(u(x,t)-\ell_{i}\cdot x)\leq \rho^{i}(1-\bar{\theta})^{i},
\]
and the regularity it follows via equivalence between Campanato's and H\"{o}lder's spaces (see \cite[Lemma 4.3]{Lieberman96}).\\
\(\textbf{2°)}\) \(\bf{j<\infty}\)\\
We apply Proposition \ref{improvflat1} to conclude that for all \(i = 0, 1, \ldots, j\), there exists a vector \(\ell_{i} \in \mathbb{R}^{n}\) such that
\[
\operatornamewithlimits{osc}_{(x,t)\in Q_{\rho^{i}}^{(1-\bar{\theta})^{i}}}(u(x,t)-\ell_{i}\cdot x)\leq \rho^{i}(1-\bar{\theta})^{i}.
\]
Hereafter, \(|\ell_{i}|\leq \mathrm{C_{2}}(1-\bar{\theta})^{i}\) for \(i=0,1,\ldots, j-1\) and \(|\ell_{j}-\ell_{j-1}|\leq \mathrm{C_{3}}(1-\bar{\theta})^{j-1}\). Note that \(|\ell_{j}|\geq \mathrm{C_{2}}(1-\bar{\theta})^{j}\) due to the minimality of \(j\). This last inequality implies that
\begin{eqnarray}\label{ineq1Teo5.4}
\mathrm{C_{2}}(1-\bar{\theta})^{j}\leq |\ell_{j}|\leq |\ell_{j}-\ell_{j-1}|+|\ell_{j-1}|\leq (\mathrm{C_{3}}+\mathrm{C_{2}})(1-\bar{\theta})^{j-1}.
\end{eqnarray}
Now, we define the auxiliary function
\[
u_{j}(x,t)=\frac{u(\rho^{j}x,\rho^{2j}(1-\bar{\theta})^{-j\mathfrak{p}}t)-\ell_{j}\cdot (\rho^{j}x)}{\rho^{k}(1-\bar{\theta})^{j}}.
\]
Clearly, by the hypothesis, we have \(\displaystyle\operatornamewithlimits{osc}_{Q_1} u_{j} \leq 1\), and \(u_{j}\) is a viscosity solution to
\[
\partial_{t}u_{j}-\mathscr{H}_{j}(x,t,\nabla u_{j})\left[\Delta u_{j}+(p-2)\left\langle D^2u_{j}\frac{\nabla u_{j} +\vec{q}}{|\nabla u_{j}+\vec{q}|},\frac{\nabla u_{j} +\vec{q}}{|\nabla u_{j}+\vec{q}|}\right\rangle\right]=f_{j}(x,t)\,\,\, \text{in}\,\,\, Q_{1},
\]
where
\begin{eqnarray*}
\left\{
\begin{array}{rcl}
\vec{q}&\defeq& \frac{\ell_{j}}{(1-\bar{\theta})^{j}}\\
f_{j}(x,t)& \defeq& \frac{\rho^{j}}{(1-\bar{\theta})^{j(1+\mathfrak{p})}}f(\rho^{j}x,\rho^{2j}(1-\bar{\theta})^{-j\mathfrak{p}}t),\\
\mathscr{H}_{j}(x,t,\xi) &\defeq& |\xi+\vec{q}|^{\mathfrak{p}}+\mathfrak{a}_{j}(x,t)|\xi+\vec{q}|^{\mathfrak{q}}, \\
\mathfrak{a}_{j}(x,t)&\defeq&(1-\bar{\theta})^{j(\mathfrak{q}-\mathfrak{p})}\mathfrak{a}(\rho^{j}x,\rho^{2j}(1-\bar{\theta})^{-j\mathfrak{p}}t).
\end{array}
\right.
\end{eqnarray*}
We observe that \(\|f_{j}\|_{L^{\infty}(Q_{1})} \leq \eta \leq 1\) (as justified in the proof of Proposition \ref{improvflat2}). Thus, we can apply Lemma \ref{Lipschtizregularityprobtransl} and conclude that \(u_{j} \in C^{0,1}_{x}(Q_{3/4})\) and
\[
\|\nabla  u_{j}\|_{L^{\infty}(Q_{3/4})}\leq \mathrm{C_{1}}.
\]
By the lower bound of \(\ell_{j}\), we have that
\begin{eqnarray}\label{ineq2teo5.4}
|\nabla u_{j}(x,t)+\vec{q}|\geq |\vec{q}|-|\nabla u_{j}|\geq \mathrm{C_{1}},\, (x,t)\in Q_{3/4}.    
\end{eqnarray}
With these observations made, defining the function \(v_{j}(x,t)=u_{j}(x,t)+\vec{q}\cdot x\) we have that
\[
\partial_{t}v_{j}-\mathscr{H}^{\ast}_{j}(x,t,\nabla v_{j})\Delta_{p}^{\rm{N}}v_{j}=f_{j}(x,t) \,\,\ \text{in}\,\,\, Q_{1},
\]
where 
\begin{eqnarray*}
\left\{
\begin{array}{rcl}
\vec{q}&\defeq& \frac{\ell_{j}}{(1-\bar{\theta})^{j}}\\
f_{j}(x,t)& \defeq& \frac{\rho^{j}}{(1-\bar{\theta})^{j(1+\mathfrak{p})}}f(\rho^{j}x,\rho^{2j}(1-\bar{\theta})^{-j\mathfrak{p}}t),\\
\mathscr{H}_{j}(x,t,\xi) &\defeq& |\xi|^{\mathfrak{p}}+\mathfrak{a}_{j}(x,t)|\xi|^{\mathfrak{q}}, \\
\mathfrak{a}_{j}(x,t)&\defeq&(1-\bar{\theta})^{j(\mathfrak{q}-\mathfrak{p})}\mathfrak{a}(\rho^{j}x,\rho^{2j}(1-\bar{\theta})^{-j\mathfrak{p}}t).
\end{array}
\right.
\end{eqnarray*}
and by estimate \eqref{ineq2teo5.4} we conclude that
\[
\|v_{j}\|_{L^{\infty}(Q_{1})}\leq \|u_{j}\|_{L^{\infty}(Q_{1})}+\left|\frac{\ell_{j}}{(1-\bar{\theta})^{j}}\right|\leq 2+(\mathrm{C_{3}}+\mathrm{C_{2}})(1-\bar{\theta})^{-1}=\mathrm{C^{\ast}}.
\]
Consequently, by Local Lipschitz regularity in spatial variable Lemma \ref{Lipschitzlema} it follows that
\[
\|\nabla v_{j}\|_{L^{\infty}(Q_{3/4})}\leq \mathrm{C}\left(\|v_{j}\|_{L^{\infty}(Q_{1})}+\|v_{j}\|_{L^{\infty}(Q_{1})}^{\frac{1}{1+\mathfrak{p}}}+\|f_{j}\|_{L^{\infty}(Q_{1})}^{\frac{1}{1+\mathfrak{p}}}\right)\leq \mathrm{C_{\ast}},
\]
where \(\mathrm{C_{\ast}}>0\) depends only on \(n\), \(p\), \(\mathfrak{p}\), \(\mathfrak{q}\), \(\mathfrak{a}^{-}\) and \(\mathfrak{A}_{0}\). Now, we note the \(v_{j}\) is a solution to 
\[
\partial_t v_{j}-\sum_{i,k=1}^{n}a_{ik}^{j}(x,t,\nabla v_{j})\frac{\partial^{2}v_{j}}{\partial x_{i}\partial x_{k}}(x,t)=f_{j}(x,t)\,\,\, \text{in}\,\,\, Q_{3/4},
\]
where
\[
a^{j}_{ik}(x,t,\xi)=\mathscr{H}_{j}(x,t,\xi)\left(\delta_{ik}+(p-2)\frac{\xi_{i}\xi_{k}}{|\xi+\vec{q}|^{2}}\right)
\]
which is uniformly parabolic with parabolicity constants that depend only on \(n\), \(p\), \(\mathfrak{p}\), \(\mathfrak{q}\), \(\mathfrak{a}^{-}\) and \(\mathfrak{A}_{0}\) (because the estimates above) and smooth in the gradient variables. By \cite[Lemma 12.13]{Lieberman96} we can conclude that \(v_{j}\in C^{1+\tilde{\alpha},\frac{1+\tilde{\alpha}}{2}}(Q_{3/4})\) for some \(\tilde{\alpha}\in (0,1)\) that depends only on \(n\), \(p\), \(\mathfrak{p}\), \(\mathfrak{q}\), \(\mathfrak{a}^{-}\) and \(\mathfrak{A}_{0}\). Moreover,
\[
\|\nabla v_{j}\|_{C^{0,\tilde{\alpha}}(\Omega')}\leq \mathrm{C}(n, p,\mathfrak{p},\mathfrak{q},\mathfrak{a}^{-},\mathfrak{A}_{0},\dist(\Omega',\partial_{par}Q_{3/4})),\,\, \forall \Omega'\subset\subset Q_{3/4}.
\]
Proceeding analogously to \cite[Proposition 5.4]{Attouchi20}, the result follows.
\end{proof}


\section{Sharp geometric estimates: Proof of Theorem \ref{Optimal_continuity}}\label{Section-Proof-Optimao-Reg}

Now, we present a proof of Theorem \ref{Higher Reg}, where the main ideas were inspired by \cite{daSO19} and 
 \cite{DaSOS18}. 
\begin{proof}[{\bf Proof of Theorem \ref{Higher Reg}}]
Let \((x_{0},t_{0})\in Q_{1}\) be a local extremum point. Assume the point is a local minimum (the case where it is a local maximum follows by applying the same argument to the function \(-u\)). Without loss of generality, we assume that \((x_{0},t_{0})=(0,0)\) and that \(u(0,0)=0\). 

Now, by combining discrete iterative techniques with continuous reasoning (see, \textit{e.g.}, \cite{CKS00}), it is well established that obtaining estimate \eqref{Higher Reg} reduces to verifying the existence of a universal constant \( \mathrm{\mathfrak{C}}_0 > 0 \) such that for all \( j \in \mathbb{N} \), the following holds:
\begin{eqnarray}\label{Est-Shap-Iter}
\mathscr{S}_{j+1}[u] \leq \max\left\{\mathrm{\mathfrak{C}}_0 2^{-\theta(j+1)}, 2^{-\theta}\mathscr{S}_{j}[u]\right\}, \quad \forall j \in \mathbb{N},
\end{eqnarray}
where \(\theta = \frac{2+\mathfrak{p}}{1+\mathfrak{p}}\). For each \(j \in \mathbb{N}\), we adopt the notation:
\[
\mathscr{S}_{j}[u] = \sup_{Q_{2^{-j},\theta}} u(x,t).
\]

We proceed by contradiction. Suppose that \eqref{Est-Shap-Iter} fails to hold. Then, for each \(k \in \mathbb{N}\), there exists \(j_k \in \mathbb{N}\) such that
\begin{eqnarray}\label{conddecontradteo1.2}
\mathscr{S}_{j_{k}+1}[u] > \max\{k2^{-\theta(j_{k}+1)}, 2^{-\theta}\mathscr{S}_{j_{k}}[u]\},
\end{eqnarray}
and \(j_{k} \to \infty\) as \(k \to \infty\). 

Now, fixing \(\mathfrak{d}_{k} = 2^{-j_{k}(2+\mathfrak{p})}\mathscr{S}_{j_{k}+1}^{-\mathfrak{p}}[u]\), we define the following auxiliary function:
\[
u_{k}(x,t) = \frac{u(2^{-j_{k}}x, \mathfrak{d}_{k}t)}{\mathscr{S}_{j_{k}+1}[u]}, \quad (x,t) \in Q_{1,\theta}.
\]

In this case, \(u_{k}\) satisfies:
\begin{itemize}
\item \(u_{k}(0,0) = 0\)
\item  \(\partial_{t}u_{k}\geq 0\) a.e. in \(Q_{1,\theta}\)
\item \(u_{k} \geq 0\) in \(Q_{1,\theta}\)  for \(k \gg 1\), such that \(Q_{2^{-j_{k}},\theta} \subset Q_{r_{0}}\), where \((0,0)\) is a minimum point of \(u\) in \(Q_{r_{0}}\).
    \item \(u_{k} \leq 2^{\theta}\) in \(Q_{1,\theta}\), since, by condition \eqref{conddecontradteo1.2}, we have
    \[
    u_{k}(x,t) \leq \frac{u(2^{-j_{k}}x,\mathfrak{d}_{k}t)}{2^{-\theta}\mathscr{S}_{j_{k}}[u]} \leq 2^{\theta}\frac{\mathscr{S}_{j_{k}}[u]}{\mathscr{S}_{j_{k}}[u]} = 2^{\theta}, \quad \forall (x,t) \in Q_{1,\theta},
    \]
    because \(0 \geq \mathfrak{d}_{k}t > -2^{-j_{k}\theta}\) for all \(k \in \mathbb{N}\).
    \item \(\displaystyle\sup_{Q_{2^{-1},\theta}} u_{k} \geq \mathscr{S}_{j_{k}+1}^{-1}[u] \displaystyle\sup_{Q_{2^{-j_{k}+1}}} u \geq 1\), since \(Q_{2^{-j_{k}+1},\theta} \subset Q_{2^{-j_{k}+1}}\) and \(-2^{-\theta}\mathfrak{d}_{k} \geq -2^{-(j_{k}+1)}\).
\end{itemize}

Moreover, \(u_{k}\) solves the equation in the viscosity sense
\[
\partial_{t} u_{k} - \mathscr{H}_{k}(x,t,\nabla u_{k}) \Delta_{p}^{\rm{N}} u_{k} = f_{k}(x,t) \quad \text{in} \quad Q_{1,\theta},
\]
where
\[
\left\{
\begin{array}{rcl}
f_{k}(x,t) &\defeq & 2^{-j_{k}(2+\mathfrak{p})} \mathscr{S}_{j_{k}+1}^{-(1+\mathfrak{p})}[u] f(2^{-j_{k}}x,\mathfrak{d}_{k}t), \\
\mathscr{H}_{k}(x,t,\xi) &\defeq & |\xi|^{\mathfrak{p}} + \mathfrak{a}_{k}(x,t)|\xi|^{\mathfrak{q}}, \\
\mathfrak{a}_{k}(x,t) &\defeq &\left(2^{-j_{k}} \mathscr{S}_{j_{k}+1}^{-1}[u]\right)^{\mathfrak{q}-\mathfrak{p}} \mathfrak{a}(2^{-j_{k}}x,\mathfrak{d}_{k}t).
\end{array}
\right.
\]

By condition \eqref{conddecontradteo1.2}, it follows that
\[
|f_{k}(x,t)| \leq \frac{2^{-j_{k}(2+\mathfrak{p})}}{k^{1+\mathfrak{p}}} 2^{j_{k}\theta(1+\mathfrak{p})} \|f\|_{L^{\infty}(Q_{1})} = \frac{1}{k^{1+\mathfrak{p}}} \|f\|_{L^{\infty}(Q_{1})} \to 0
\]
as \(k \to \infty\), because \(\theta(1+\mathfrak{p}) = 2+\mathfrak{p}\). Similarly, we have \(\mathfrak{a}_{k} \to 0\) as \(k \to \infty\) if \(\mathfrak{p} < \mathfrak{q}\), while \(\mathfrak{a}_{k} \to \mathfrak{a}(0,0)\) if \(\mathfrak{p} = \mathfrak{q}\). Moreover, these convergences are uniform. By the Stability Lemma \ref{estabilidade} and compactness arguments (using Lemmas \ref{Holderestforregpro} and \ref{Holderesttime}), we conclude that, up to a subsequence, \(u_{k} \to u_{\infty}\) locally uniformly in \(Q_{\frac{3}{4},\theta}\). Moreover, as \(\mathfrak{p}=p-2\), \(u_{\infty}\) is a viscosity solution to \(p\)-evolution equation
\begin{eqnarray}\label{eqpevol}
\partial_{t} u_{\infty} - \kappa \Delta_{p} u_{\infty} = 0 \quad \text{in} \quad Q_{\frac{3}{4},\theta},
\end{eqnarray}
where
\[
\kappa =
\begin{cases}
1, & \text{if } \mathfrak{p} < \mathfrak{q}, \\
1 + \mathfrak{a}(0,0), & \text{if } \mathfrak{p} = \mathfrak{q}.
\end{cases}
\]
Moreover, the properties of the sequence \((u_k)_{k \in \mathbb{N}}\) imply that:
\begin{itemize}
\item \(u_{\infty}(0,0) = 0\),
\item \(\partial_{t} u_{\infty}\geq 0\) a.e. in \(Q_{\frac{3}{4},\theta}\),
\item \(0 \leq u_{\infty} \leq 2^{\theta}\) in \(Q_{1,\theta}\),
\item \(\displaystyle\sup_{Q_{2^{-1},\theta}} u_{\infty} \geq 1\).
\end{itemize}
By the equivalence of viscosity-weak solutions, e.g. \cite[Theorem 3.7]{FZ22}, we have that \(u_{\infty}\) is a weak solution to \eqref{eqpevol}. Consequently, we obtain that \(u_{\infty} = 0\) in \(Q_{\frac{3}{4},\theta}\) by the same argument as in \cite[Lemma 3.1]{DaSOS18}, which leads to a contradiction, since \(\mathscr{S}_{1/2}[u_{\infty}] \geq 1\). This concludes the proof.
\end{proof}

\begin{remark}
We say that an operator \(\mathcal{L} : \textrm{Sym}(n) \times \mathbb{R}^n \times \mathbb{R} \times Q_\mathrm{T} \to \mathbb{R}\), in the form
\[
\mathcal{L}(\mathrm{X},\xi,s,x,t)=s-\mathcal{G}(\mathrm{X},\xi,x,t),
\]
where \(\mathcal{G}=\mathcal{G}(\mathrm{X},\xi,x,t)\) is a fully nonlinear operator, has the \textit{Strong Minimum Principle} property \((\text{for short}\,\, \textbf{S.M.P.})\) if, for every bounded solution \(u\) of the problem \(\mathcal{L} u = 0\) such that \(u \geq 0\) and \(u\) admits a local minimum, it follows that \(u \equiv 0\). 

Now, consider the following class of operators:
\[
\mathfrak{S} = \left\{ \mathcal{L} : \textrm{Sym}(n) \times \mathbb{R}^n \times \mathbb{R} \times Q_1 \to \mathbb{R} : \mathcal{L} \text{ satisfies the \textbf{S.M.P.} property} \right\}.
\]
From the proof above, we observe that if the limiting operator \(\mathcal{L} u = \partial_tu - \kappa |\nabla u|^{\mathfrak{p}} \Delta_p^{\mathrm{N}} u\) belongs to the class \(\mathfrak{S}\) (for a general exponent $\mathfrak{p}>0$), then the optimal regularity holds at the critical points, namely
\[
\sup_{Q_{r,\theta}(0,0)} |u(x,t)-u(x_0,t_0)| \leq \mathrm{C}_0 r^{1 + \frac{1}{1 + \mathfrak{p}}}.
\]
\end{remark}

Finally, we present an example demonstrating the sharpness of the regularity exponent in Theorem \ref{Optimal_continuity}.

\begin{example}
Consider the function \( u : Q_1 \to \mathbb{R} \) defined by
\[
u(x,t) = |x|^{1 + \frac{1}{1 + \mathfrak{p}}} + t.
\]
Note that \( u \) satisfies \eqref{Problem} in the viscosity sense with
\[
f(x,t) = 1 - \left[ n - 1 + \frac{p - 1}{1 + \mathfrak{p}} \right] \cdot \left[ \left( \frac{2 + \mathfrak{p}}{1 + \mathfrak{p}} \right)^{1 + \mathfrak{p}} + |x|^{\frac{\mathfrak{q} - \mathfrak{p}}{1 + \mathfrak{p}}} \left( \frac{2 + \mathfrak{p}}{1 + \mathfrak{p}} \right)^{1 + \mathfrak{q}} \right],
\]
\(\mathfrak{a} \equiv 1\) and $\mathfrak{p} = p-2$. Note that \( f \in L^\infty(Q_1) \cap C^0(Q_1) \).

Furthermore, \( u(0,0) = 0 \), \( \nabla u(0,0) = \vec{0} \), and the solution satisfies
\[
\sup_{Q_{r,\theta}(0,0)} |u(x,t)| \leq 2 r^{1 + \frac{1}{1 + \mathfrak{p}}}.
\]

Therefore, this example demonstrates the sharpness of the regularity exponent in Theorem \ref{Optimal_continuity}, when $\mathfrak{p} = p-2>0$.
\end{example}


\section{Non-degeneracy of solutions: Proof of Theorem \ref{ThmNãoDeg}}\label{Section-Proof-ND}
This section will be devoted to establishing the non-degeneracy of solutions to problem \eqref{Problem} along local extremum points. Morally, the idea of the proof of Theorem \ref{ThmNãoDeg} is to construct a barrier function that serves as a supersolution to \eqref{Problem}, such that at some point on the parabolic boundary of a cylinder centered at the local extremum point, it lies below the solution of the problem.
\begin{proof}[{\bf Proof of Theorem \ref{ThmNãoDeg}}]
Consider a local extremum point \((x_{0}, t_{0}) \in Q_{1}\). Without loss of generality, we assume \((x_{0}, t_{0}) = (0, 0)\), and that it is a local maximum point. Since the right-hand side of \eqref{Problem} does not depend on \(u\), we may assume \(u(0, 0) > 0\). Now, we define the following comparison (barrier) function:  
\[
\Phi(x, t) = \mathfrak{c}\left[|x|^{1+\alpha} + (-t)^{\frac{1+\alpha}{\theta}}\right],
\]  
where \(\alpha \in (0, 1]\), \(\theta > 0\), and \(\mathfrak{c} > 0\) will be chosen \textit{a posteriori}. We claim that, for a suitable \(\mathfrak{c}\), the following inequality holds:  
\[
\partial_{t}\Phi(x, t) - \mathscr{H}(x, t, \nabla \Phi) \Delta_{p}^{\mathrm{N}} \Phi(x, t) \geq \sup_{Q_{1}} f(x, t) \,\,\, \text{in} \,\,\, Q_{r, \theta},
\]  
for all \(r > 0\) such that \(Q_{r, \theta} \subset Q_{1}\). To establish this, we can verify that:
\begin{itemize}
\item \(\partial_{t}\Phi = -\mathfrak{c}\left(\frac{1+\alpha}{\theta}\right)(-t)^{\frac{1+\alpha-\theta}{\theta}}\),
\item \(\nabla \Phi = \mathfrak{c}(1+\alpha)|x|^{\alpha-1}x\),
\item \(D^{2} \Phi = \mathfrak{c}(1+\alpha)|x|^{\alpha-1}\left(\mathrm{Id}_n - (1-\alpha)\frac{x}{|x|} \otimes \frac{x}{|x|}\right)\).
\end{itemize}
Using these expressions, we can estimate \(\mathscr{H}(x, t, \nabla \Phi) \Delta_{p}^{\mathrm{N}} \Phi\) as follows:  
\begin{eqnarray}
\mathscr{H}(x, t, \nabla \Phi) \Delta_{p}^{\mathrm{N}} \Phi &\leq& \mathfrak{c}^{\mathfrak{p}}(1+\alpha)^{\mathfrak{p}}|x|^{\alpha\mathfrak{p}} \nonumber \\
&+& \mathfrak{a}^{+} \mathfrak{c}^{\mathfrak{q}}(1+\alpha)^{\mathfrak{q}}|x|^{\alpha\mathfrak{q}}) \mathfrak{c}(1+\alpha)|x|^{\alpha-1}(n-1+\alpha(p-1)) \nonumber \\
&\leq& (1+\alpha)^{1+\mathfrak{q}}|x|^{\alpha(1+\mathfrak{q})-1}(\mathfrak{c}^{1+\mathfrak{p}} + \mathfrak{a}^{+}\mathfrak{c}^{1+\mathfrak{q}})(n-1+\alpha(p-1)) \nonumber \\
&\leq& 2^{1+\mathfrak{q}}|x|^{\alpha(1+\mathfrak{q})-1}(\mathfrak{c}^{1+\mathfrak{p}} + \mathfrak{a}^{+}\mathfrak{c}^{1+\mathfrak{q}})(n-1+\alpha(p-1)), \label{eqteo1.3}
\end{eqnarray}
for all \(\mathfrak{c} \in (0, 1]\).

Now, we make the following choices:  
\[
\alpha = \frac{1}{1+\mathfrak{p}} \quad \text{and} \quad \theta = 2 - \mathfrak{p} \alpha.
\]
With this choice of \(\alpha\) and \(\theta\), we observe that \(1 + \alpha = \theta\), \(\alpha(1 + \mathfrak{q}) - 1 = \frac{\mathfrak{q} - \mathfrak{p}}{1 + \mathfrak{p}} \geq 0\) (by condition \rm{(H1)}), and \(\partial_{t}\Phi = -\mathfrak{c}\). Substituting into \eqref{eqteo1.3}, we obtain:
\[
\mathscr{H}(x, t, \nabla \Phi) \Delta_{p}^{\mathrm{N}} \Phi \leq 2^{1+\mathfrak{q}} \mathfrak{c} (1 + \mathfrak{a}^{+})(n-1+\alpha(p-1)),
\]
since \(|x|^{\alpha(1+\mathfrak{q})-1} \leq 1\) and \(\mathfrak{c}^{1+\mathfrak{p}} \leq 1\). Consequently,  
\[
\partial_{t}\Phi - \mathscr{H}(x, t, \nabla \Phi) \Delta_{p}^{\mathrm{N}} \Phi \geq -\mathfrak{c} \left[1 + 2^{1+\mathfrak{q}}(1 + \mathfrak{a}^{+})(n-1+\alpha(p-1))\right].
\]
Thus, we can take:  
\[
\mathfrak{c} \leq \frac{\mathfrak{c}_{0}}{1 + 2^{1+\mathfrak{q}}(1 + \mathfrak{a}^{+})(n-1+\alpha(p-1))},
\]
and conclude the initial statement.  

Therefore, \(\Phi\) is a supersolution to \eqref{Problem}. Now, for any intrinsic cylinder \(Q_{r, \theta} \subset Q_{1}\), we claim that there exists \((x_{r}, t_{r}) \in \partial_{par} Q_{r, \theta}\) such that \(u(x_{r}, t_{r}) \geq \Phi(x_{r}, t_{r})\). If this does not hold, then \(u \leq \Phi\) on \(\partial_{par} Q_{r, \theta}\). By applying the Comparison Principle \ref{CompPrinc}, we conclude \(u \leq \Phi\) in \(Q_{r, \theta}\). However, this leads to a contradiction since \(u(0, 0) > 0 = \Phi(0, 0)\).  

Finally, we conclude:  
\[
\sup_{\partial_{par} Q_{r, \theta}} u \geq u(x_{r}, t_{r}) \geq \Phi(x_{r}, t_{r}) = \mathfrak{c}\left[|x_{r}|^{1+\alpha} + (-t_{r})\right].
\]
Furthermore, since \((x_{r}, t_{r}) \in \partial_{par} Q_{r, \theta}\), there are two cases:
\begin{itemize}
\item \((x_{r}, t_{r}) \in \partial B_{r} \times [-r^{\theta}, 0)\): In this case,  
\[
\Phi(x_{r}, t_{r}) = \mathfrak{c}(r^{1+\alpha} - t_{r}) \geq \mathfrak{c}r^{1+\alpha}.
\]
\item \((x_{r}, t_{r}) \in B_{r} \times \{-r^{\theta}\}\): Here, \(t_{r} = -r^{\theta}\). Thus,  
\[
\Phi(x_{r}, t_{r}) = \mathfrak{c}(|x_{r}|^{1+\alpha} - (-r^{\theta})) \geq \mathfrak{c}r^{\theta} = \mathfrak{c}r^{1+\alpha},
\]
since \(1 + \alpha = \theta\).
\end{itemize}

This concludes the proof.  
\end{proof}





\subsection*{Acknowledgments}

 FAPESP-Brazil has supported J. da Silva Bessa under Grant No. 2023/18447-3. J.V. da Silva has received partial support from CNPq-Brazil under Grant No. 307131/2022-0 and FAEPEX-UNICAMP 2441/23 Editais Especiais - PIND - Projetos Individuais (03/2023). G.S. S\'{a} expresses gratitude for the PDJ-CNPq-Brazil (Grant No. 174130/2023-6).

\subsection*{Declarations}

\subsection*{Conflict of interest}

On behalf of all authors, the corresponding author states that there is no conflict of interest.

\subsection*{Data availability statement}

Data availability does not apply to this article as no new data were created or analyzed in this study.

\end{document}